\numberwithin{equation}{section}
\newcommand{\R}{{\mathbb R}}
 \renewcommand{\H}{{\mathcal H}} 
\newcommand{\sm}{\setminus}
\newcommand{\bD}{\mathbb D} 
\newcommand{\ol}{\overline} 
\newcommand{\wt}{\widetilde} 
\newcommand{\cA}{\mathcal A} 
\newcommand{\p}{\partial}
\newcommand{\nn}{{\nonumber}}
\renewcommand{\d}{\partial}
\newcommand{\ms}{\medskip}
\newcommand{\diam}{{\rm diam}}
\newcommand{\dist}{{\rm dist}}
\newtheorem{theorem}{Theorem}[section]
\newtheorem*{theorem*}{Theorem}
\newtheorem{lemma}[theorem]{Lemma}
\newtheorem*{claim*}{Claim}
\newtheorem{corollary}[theorem]{Corollary}
\theoremstyle{remark}
\newtheorem{remark}{Remark}[section] 
\theoremstyle{definition}
\newtheorem{definition}{Definition}
\DeclareMathOperator{\Tr}{Tr}
\title{Robin Green Function Estimates and a model of Mammalian Lungs}
\author[David, Decio, Engelstein, Filoche, Mayboroda, Michetti]{Guy David, Stefano Decio, Max Engelstein, Marcel Filoche, Svitlana Mayboroda and Marco Michetti}
\date{}
\thanks{M.E. would like to thank Guillaume Bal who first pointed out the possible connection with boundary layers from kinetic and fluid theory. He would also like to thank Alex Watson for patiently explaining to him what these boundary layers are. M.E. was supported by NSF DMS CAREER 2143719. G.D. and M.M. were partially supported by the Simons Foundation grant 601941, G.D. S.M. and S.D. were supported in part by the Simons foundation grant 563916, SM. SM was in addition supported by the NSF grant DMS 1839077.  M.F. was supported by the Simons foundation grant 1027116, MF. This work began during a research in groups meeting hosting by Professor Carlos Vincente at the Universidad de Puerto Rico. All the authors would like to thank both Carlos and the institute for their hospitality. All the authors thank Doug Arnold, Qile Yan, and Chuning Wang for numerical simulations which guided us to Theorem \ref{thm:Fluxest}. }
\begin{document}
\maketitle

\begin{abstract}
	The present paper establishes delicate properties of the Green function with Robin boundary conditions, in particular, elucidating the nature of the passage between the Dirichlet-like and Neumann-like behaviour. This yields sharp quantifiable bounds on the corresponding harmonic measure and proves the phase transition in the behavior of the total flow earlier conjectured in physics literature in concert with the efficacy of mammalian lungs.
	\end{abstract}

\section{Introduction}
\label{intro}

We continue our study of the Robin problem in rough domains, started in \cite{Robin1}. Recall the Robin problem, with parameter $a\in (0, \infty)$ in a domain $\Omega \subset \mathbb R^n$\begin{equation}\label{e:classicrobin} \begin{aligned}
-\mathrm{div}(A(x)\nabla u) =& 0, \qquad x\in \Omega\\
\frac{1}{a}A(Q)\cdot \nabla u(Q)\cdot \nu(Q) + u(Q) =& f(Q) \qquad Q\in \partial \Omega,
\end{aligned}
\end{equation}
(where $\nu(Q)$ is the outward pointing unit normal to $\partial \Omega$ at $Q$). As $a$ goes from $0$ to $\infty$ the boundary condition (suitably renormalized) seems to interpolate between the well-studied Dirichlet and Neumann boundary conditions. Yet, the properties of the Dirichlet and Neumann solutions near the boundary are so different that understanding and quantifying this passage is a challenging task. Partially motivated by applications to the study of mammalian lungs, we embarked on this journey in \cite{Robin1}. Conjectures in the biomedical engineering community predicted a phase transition in the efficacy of a lung and hence, likely, in the dimension of the harmonic measure with the Robin boundary conditions depending on the value of the parameter $a$. Expectations based on the mathematics of the classical Dirichlet harmonic measure also seemed to indicate that the Robin problem should exhibit a phenomenon akin to dimension drop when $a$ is large. Yet, we showed that, contrary to the Dirichlet case, the harmonic measure with the Robin boundary condition is absolutely continuous with respect to the Hausdorff measure of the boundary, even on fractional dimensional domains and even for general elliptic operators, and hence, in that sense, it changes its nature suddenly, showing dramatically different behavior for any $a>0$ (see \cite[Theorem 1.2]{Robin1}). In the present paper we establish delicate estimates on the Robin Green function, and show that quantitatively the system indeed undergoes a phase transition as the balance between the scales and $a$ changes. Our results are interesting in their own right, as even on smooth domains many of the estimates are new, but in particular, they allow us to provide a more detailed analysis of the harmonic measure itself and elucidate the nature of the aforementioned phase transition.  

Our main result, Theorem \ref{thm:GFest}, gives sharp bounds on the behavior of the Robin Green function, $G_R^a(X,Y)$, in terms of the Dirichlet Green function $G_D(X,Y)$ and the relationship between $X,Y$, $a$ and the geometry of the domain.  Roughly stated, we prove that if $1/a$ is sufficiently small, then the Robin Green function is comparable to the Dirichlet Green function but averaged at a scale that grows monotonically with $1/a$. If $1/a$ is sufficiently large, then the Robin Green function is essentially constant, except in a small neighborhood of the pole, where it is comparable to $|X-Y|^{2-n}$. This precise relation between the two solutions does not seem to have an analogue in the elliptic PDE literature, but is reminiscent of boundary layer phenomena in fluid and kinetic theory. 

We give two applications of this result; first we establish some fine properties of the Robin harmonic measure, including the asymptotic behavior of the $A_\infty$-character of Robin harmonic measure as $a\uparrow \infty$, a point left open in our previous work. The second is a theoretical confirmation of a phenomenon observed both in physical and numerically simulated mammalian lungs (see, e.g. \cite[Figure 5]{SFW},  and \cite[Figure 5]{RWLungs} and also the references in those papers). Namely, that total oxygen flow rate through the lung does not decrease linearly as the local alveolo-capillary membrane permeability decreases (say due to fluid build-up in the case of pulmonary edema).  Indeed, in Theorem \ref{thm:Fluxest} we show that the total flow rate of oxygen through the lung is essentially constant (as a function of the local membrane permeability) as long as permeability is larger than a multiple of one over the surface area of the lung. When the permeability decreases even further, the total oxygen transfer decays linearly to zero with the permeability. The resulting robustness of the oxygen transfer performance can be seen as an evolutionary justification for the large surface area (i.e. fractal-like behavior) of mammalian lungs. See Section \ref{ss:lungs} for a more thorough description of the model and our results. 

Finally, we point out that a key component of our results (i.e. Theorems \ref{thm:GFest}, \ref{thm:Fluxest}) is that they make novel statements about the behavior of solutions to \eqref{e:classicrobin}, in \emph{intermediate scales}. While large scale rigidity results (see, e.g. \cite{LewisVogel, KenigToroRigid}) and small scale regularity results are common in the literature on boundary value problems in rough domains, we know of very few results which try to capture behavior in intermediate scales, much less identify a regime of a scale invariant behavior in such a situation. We view this as an essential aspect of a robust Robin theory, since the Robin problem is not scale invariant. In future work we intend to apply these ``meso-scale" theorems in other contexts (such as shape optimization problems). 

Let us turn to our underlying assumptions. As in \cite{Robin1}, we will work in bounded domains $\Omega \subset \mathbb R^n$ with $n\geq 3$. Furthermore, these domains satisfy the following quantitative topological assumptions (sometimes called one-sided NTA conditions):

	
	\begin{enumerate}[label = (C\arabic*)]
		\item \label{CC1} Interior corkscrew condition: there exists a $M > 1$ such that for every $Q \in \partial \Omega$ and $0 < r < \mathrm{diam}(\Omega)/10$,
 there exists an $A_r(Q) \in \Omega \cap B(Q,r)$ such that $\mathrm{dist}(A_{r}(Q), \partial \Omega) \geq M^{-1}r$. 
		\item \label{CC2} Interior Harnack chains: there exists a constant $M > 1$ such that for all $0 < r < \mathrm{diam}(\Omega)/10$, 
$\varepsilon > 0$, $k \geq 1$, 
 and all $x, y\in \Omega$ with $\mathrm{dist}(x,y)=r$ and $\mathrm{dist}(x,\partial \Omega)\geq \epsilon$, $\mathrm{dist}(y,\partial \Omega) \geq \epsilon$ with $2^k \epsilon \geq r$ there exists $Mk$ balls $B_1,..., B_{Mk} \subset \Omega$ with centers $x_1,\ldots, x_{Mk}$ such that $x_1 = x, x_{Mk} = y$, 
 $r(B_i) \leq \frac{1}{10}\mathrm{dist}(x_i, \partial \Omega)$ for all $i$, 
 and $x_{i+1} \in B_i$ for $i < Mk$. 
\end{enumerate}

 In addition, in order to define the Robin condition, we will always assume that we are given a measure $\sigma$, whose (closed) support is $\d\Omega$, and which satisfies the 
 following:
 
  \begin{definition} \label{d:mixed}
 We will say that the pair $(\Omega,\sigma)$ is a {\bf (one-sided NTA) pair of mixed dimension}
 when $\Omega$ satisfies \ref{CC1} and \ref{CC2}, $\sigma$ is a doubling measure 
 whose support is $\d\Omega$, and in addition there are constants $d > n-2$ and $c_d > 0$ such that
 \begin{equation} \label{1n3}
\sigma(B(Q,r)) \geq c_d (r/s)^{d}\sigma(B(Q,s))
\ \text{ for $Q \in \d\Omega$ and } 0 < s < r < \diam(\Omega).
\end{equation}
Recall that doubling means that there is a constant $C \geq 0$ such that 
\begin{equation} \label{1n4}
\sigma(B(Q,2r)) \leq C \sigma(B(Q,r))
\ \text{ for $Q \in \d\Omega$ and } 0 < r < \diam(\Omega).
\end{equation}
A {\bf bounded (one-sided NTA) pair of mixed dimension} will simply be a (one-sided NTA) pair of mixed dimension
for which $\Omega$ is bounded.
 \end{definition}
 
 An interesting special case is when $\d\Omega$ is Ahlfors regular of dimension $d$ for some $d \in (n-2,n)$, and $\sigma$ is the 
 restriction to $\d\Omega$ of the Hausdorff measure $\H^d$.
 For a pair of mixed dimension, we will refer to the constants $d$, $C$ and $c_d$
in the conditions above as the geometric constants of $\Omega$. 
In particular, for us, the diameter of $\Omega$ is not a geometric constant, and we will keep explicit the way our estimates depend on it.
\\

We consider domains of the above generality not only because our methods allow it, but because domains of mixed dimension are a good model of ``pre-fractal lungs." Indeed, at small scales (comparable to the size of a cell), the lungs look smooth and so if $\sigma = \mathcal H^2|_{\text{lung surface}}$ then $\sigma(B(x,r))\simeq r^2$ small $r$. On the other hand, if $r$ is much larger than the cell length, we have $\sigma(B(x,r)) \simeq r^\alpha$ where different sources put $\alpha \in (2.7, 3.1)$ (see, e.g. \cite{FractalDim}). 

Fixing a parameter $a \in (0,\infty)$ and a measurable, uniformly elliptic matrix valued $x\mapsto A(x)$, in \cite{Robin1} we showed that domains as above admit a Robin Green function, which is the unique solution of $$\int_{\Omega} A(X)\nabla G_R^a(X,Y) \nabla \varphi(X)\, dX +a \int_{\partial \Omega} G_R^a(P, Y) \varphi(P)\, d\sigma(P) = \varphi(Y), \qquad \forall \varphi \in C^\infty_c(\mathbb R^n).$$ For a precise statement of existence and the properties of $G^a_R(X,Y)$, see Theorem \ref{t:GreenfunctionExistence} below.  

Our main theorem gives sharp bounds on $G_R^a(X,Y)$ depending on $X, Y, a$ and the geometric constants of $\Omega$:

\begin{theorem}\label{thm:GFest} 
Let $\Omega \subset \mathbb R^n, n\geq 3$, be such that $(\Omega, \sigma)$ is a one-sided NTA pair of mixed dimension (i.e. satisfies Definition \ref{d:mixed}). Let $A$ be uniformly elliptic 
and let $G^a_R(X,Y)$ be the Robin Green function with parameter $a \in (0,\infty)$ as in \eqref{e:Greenidentity} and \eqref{e:representationformula}. There exists a $C > 0$ (depending on the dimension
$n$, 
the geometric constants of $(\Omega,\sigma)$, 
and the ellipticity of $A$, 
but independent of $a,r$) such that the following holds:
\begin{itemize}
\item If $a \sigma(\partial \Omega) \leq \diam(\Omega)^{n-2}$, 
define  $\displaystyle \rho:= \left(a\sigma(\partial \Omega)\right)^{\frac{1}{n-2}}$.
Then 
\begin{equation}\label{e:GFestNeumannend}
\begin{aligned}
    C^{-1}\rho^{2-n} \leq& G^a_R(X,Y) \leq C\rho^{2-n} 
    \qquad \text{ when } |X-Y| \geq \rho,\\
    C^{-1}|X-Y|^{2-n} \leq& G^a_R(X,Y) \leq C|X-Y|^{2-n} 
    \qquad \text{ when } |X-Y| \leq \rho. 
\end{aligned}
\end{equation} 
\item If $a \sigma(\partial \Omega) \geq \diam(\Omega)^{n-2}$
and $\delta(X), \delta(Y) > C^{-1}|X-Y|$ then \begin{equation}\label{e:GFestMedium}
G_D(X,Y) \leq G_R^a(X,Y) \leq CG_D(X,Y).
\end{equation}
\item Finally, if $a \sigma(\partial \Omega) \geq \diam(\Omega)^{n-2}$
but $\min\{\delta(X), \delta(Y) \} \leq C^{-1}|X-Y|$, then $G_R^a$ is comparable to $G_D$ but evaluated at the appropriate corkscrew points. To be precise, let $Q_X\in \partial \Omega$ be such that $|Q_X-X| = \delta(X)$ (and similarly for $Y$). 
Let $r_X := \min\{|X-Y|/10, 
\sup \{\rho > 0\mid \rho^{2-n}\sigma(B(Q_X, \rho))\leq 1/a\}\}$ (and similarly for $Y$). Finally, let $A_X = X$ if $\delta(X) \geq r_X$ and let $A_X$ be equal to the corkscrew point at scale $r_X$ for the point $Q_X$ otherwise. Then 
\begin{equation}\label{e:GFestclose}
C^{-1} G_D(A_X,A_Y) \leq G_R^a(X,Y) \leq G_D(A_X,A_Y).
\end{equation}
\end{itemize}
Here $G_D(X,Y)$ is the Dirichlet Green function for the operator. 
\end{theorem}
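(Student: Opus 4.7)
The unifying idea is that the Robin problem has a natural cutoff scale at which the interior Dirichlet energy $\sim r^{n-2}$ balances the boundary reaction $\sim a\sigma(B(Q,r))$: globally this is $\rho = (a\sigma(\partial \Omega))^{1/(n-2)}$ in the first bullet, and locally it is $r_X$ in the third bullet. At scales below the cutoff the boundary is effectively Neumann, so $G_R^a$ looks like a translated fundamental solution; at scales above it is effectively Dirichlet. The plan is to use this heuristic together with the interior Harnack chain condition \ref{CC2} and the boundary regularity results for Robin solutions proved in \cite{Robin1} to propagate averages across scales and patch the three regimes.

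For the Neumann regime, I would begin by testing the Green identity with $\varphi \equiv 1$ on $\Omega$ (extended with compact support outside, so that $\nabla\varphi$ vanishes in $\Omega$), which immediately yields
\[
\frac{1}{\sigma(\partial \Omega)}\int_{\partial\Omega} G_R^a(P, Y)\,d\sigma(P) = \frac{1}{a\sigma(\partial \Omega)} = \rho^{2-n}.
\]
Since $G_R^a(\cdot, Y)$ is a nonnegative weak solution of $-\diver(A\nabla u) = 0$ away from $Y$, Moser/Harnack in the interior combined with the Robin boundary regularity from \cite{Robin1} promote this average to the pointwise bound $G_R^a(X, Y) \simeq \rho^{2-n}$ whenever $|X-Y| \geq \rho$; condition \ref{CC2} ensures that the comparison constants depend only on the geometric data of the pair $(\Omega,\sigma)$. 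Near the pole, I would compare $G_R^a(\cdot, Y)$ with the fundamental solution $c_n|X-Y|^{2-n}$ inside the ball $B(Y, \min(\rho,\delta(Y)))$: the difference is a weak solution without singularity at $Y$, and its boundary values are controlled by the previously established average $\rho^{2-n}$, which is negligible compared to $|X-Y|^{2-n}$ for $|X-Y| < \rho$.

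For the Dirichlet regime with interior points, the lower bound $G_R^a \geq G_D$ is immediate: $h := G_R^a - G_D$ solves the homogeneous equation weakly in $\Omega$ with $h|_{\partial\Omega} = G_R^a|_{\partial\Omega} \geq 0$, so the maximum principle forces $h \geq 0$. The upper bound amounts to showing that the harmonic extension
\[
h(X) = \int_{\partial\Omega} G_R^a(P, Y)\, d\omega_D^X(P)
\]
is bounded by a multiple of $G_D(X, Y)$. I would deduce this by combining the Robin identity $G_R^a = -a^{-1} A\nabla G_R^a \cdot \nu$ on $\partial\Omega$ with a Caccioppoli-type estimate in boundary cubes, forcing $\|G_R^a(\cdot, Y)\|_{L^\infty(\partial\Omega)}$ to decay when $a\sigma(\partial\Omega) \geq \diam(\Omega)^{n-2}$. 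Bullet three then reduces to bullet two: I would use \ref{CC2} to transport the estimate from the interior corkscrew $A_X$ (resp.\ $A_Y$) to $X$ (resp.\ $Y$), while the boundary regularity of $G_R^a$ from \cite{Robin1} controls the scales below $r_X$, on which $G_R^a$ is essentially constant because the boundary condition looks Neumann there.

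I expect the main obstacle to be the identification of $r_X$ as the correct truncation scale in \eqref{e:GFestclose}. The characteristic Robin scale at a boundary point $Q_X$ depends on the local doubling profile $r \mapsto \sigma(B(Q_X, r))$ rather than on a single Ahlfors dimension, so making this rigorous in the mixed-dimension setting will require a careful energy argument coupling the exponent $d$ from \eqref{1n3} with the ratio $\sigma(B(Q_X, r))/r^{n-2}$, together with a scale-uniform Caccioppoli inequality for Robin. In parallel, extracting the sharp upper bound in \eqref{e:GFestMedium} without losing a factor $(|X-Y|/\delta(X))^{n-2}$ will require a delicate use of the $A_\infty$ properties of the Dirichlet harmonic measure $\omega_D$ on one-sided NTA domains; quantifying how these properties degenerate as $a\downarrow (\diam(\Omega)^{n-2}/\sigma(\partial\Omega))$ is exactly the meso-scale phenomenon that the theorem is meant to capture.
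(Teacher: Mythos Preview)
Your high-level picture is right: the flux identity $a\int_{\partial\Omega} G_R^a(P,Y)\,d\sigma(P)=1$ pins the boundary average at $\rho^{2-n}$, the comparison $G_R^a\ge G_D$ is a maximum-principle one-liner, and the third bullet does reduce to the second via the boundary Harnack of \cite{Robin1} at scales below $r_X$. But two of the load-bearing steps in your outline do not go through as written.

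In the Neumann regime, ``Moser/Harnack plus Robin boundary regularity promote the average to a pointwise bound'' is not enough for the upper estimate. The annulus Harnack you have in mind (Lemma~\ref{l3a2} in the paper) only tells you $M(r)\le C\,m(r)$ on each annulus; it does not by itself prevent $m(r)$ from growing much faster than $r^{2-n}$ as $r$ decreases from $\diam(\Omega)$ toward $\rho$. The paper closes this gap with a new tool, the \emph{Balance Lemma} (Lemma~\ref{l3a3}): testing the equation against $h\circ u$ for a piecewise-linear $h$ shows that $\tfrac{1}{t_2-t_1}\int_{\{t_1<u<t_2\}}A\nabla u\cdot\nabla u$ is essentially constant in $(t_1,t_2)$, up to a Robin error $a\int u\,d\sigma$. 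This turns the asymptotics near the pole into the energy bound $\int_{\{u<t\}}|\nabla u|^2\le Ct$ for all $t>0$, and then a contradiction argument together with a Poincar\'e-on-annuli estimate (Lemma~\ref{l3a4}) forces $u\le C(\rho^{2-n}+|X-Y|^{2-n})$. Your comparison with the fundamental solution only handles $|X-Y|\ll\delta(Y)$; the intermediate range $\delta(Y)\lesssim|X-Y|\lesssim\diam(\Omega)$ genuinely needs this level-set energy argument.

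In the Dirichlet regime, your plan to bound $h(X)=\int_{\partial\Omega}G_R^a(P,Y)\,d\omega_D^X(P)$ by $CG_D(X,Y)$ using Caccioppoli plus $A_\infty$ of $\omega_D$ is the wrong route, and the paper avoids $A_\infty$ entirely. The boundary trace $G_R^a(P,Y)$ is \emph{not} uniformly small: by the third bullet it is $\simeq G_D(A_P,A_Y)$, so the integrand encodes all the complexity you are trying to estimate. What the paper does instead is an iteration (Lemma~\ref{l:iterateforguy}): assuming $u_R\le\lambda u_D$ on $\{\delta\ge\delta_0 R\}$, one shows $u_R\le(1+\eta)\lambda u_D$ on $\{\delta\ge\delta_0 R/2\}$ with $\eta\le CI_z(\delta_0 R)^{-\beta}$. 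The proof represents $u_R(\xi)$ via the Dirichlet harmonic measure of a thin \emph{slab} $\{\tau\delta_0 R\le\delta\le\delta_0 R\}$ and exploits exponential decay of that slab measure in the tangential direction; the smallness on the inner face comes from oscillation decay of $u_R$ plus a Caccioppoli-to-boundary estimate producing the factor $I_z(\delta_0 R)^{-1}$. Iterating down to $\delta(X)$ with scale-dependent $\delta_k$ makes $\prod(1+\eta_k)$ converge, using only \eqref{1n3} and no $A_\infty$. Your last paragraph correctly flags this as the crux, but the resolution is this boundary-layer iteration, not harmonic-measure regularity.
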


To understand Theorem \ref{thm:GFest} better, we recall that at least formally, we recover the Neumann problem when $a \downarrow 0$ and the Dirichlet problem when $a\uparrow \infty$. Furthermore, the Robin problem is not scale invariant. That is, assume $0 \in \partial \Omega$, $X,Y\in \Omega$ and $r > 0$. Then if we define $u(X) = G_R^a(rX,Y)$ we can see that $u$ is the Robin Green function in the domain $\Omega/r$ with surface measure $\sigma_r(E):= \sigma(E/r)$, Robin parameter $ar^{2-n}$ and pole at $Y/r$.  That is to say, changing the scale is equivalent to changing the Robin parameter (and underlying surface measure). This analysis shows that if $ar^{2-n}\sigma(B(0, r)) \leq 1$ we should expect the Robin Green function to exhibit ``Neumann-like" behavior in $B(0,r)\cap \Omega$ whereas if $ar^{2-n}\sigma(B(0,r)) \geq 1$ we should expect the behavior to be more ``Dirichlet-like". Theorem \ref{thm:GFest} gives a much more precise quantification of this phenomenon.

Take for example, \eqref{e:GFestclose}. For each $Q\in \partial \Omega$ let $\rho_Q$ be such that $a\rho_Q^{2-n}\sigma(B(Q, \rho_Q)) = 1$. Then let $\Omega_N = \bigcup_{Q\in \partial \Omega} B(Q,\rho_Q)\cap \Omega$. This is our ``boundary layer". The estimate \eqref{e:GFestclose} tells us that if the pole $Y$ is far away from $\partial \Omega$, then for $X\in \Omega\backslash \Omega_N$ the Robin Green function is comparable to the Dirichlet Green function. Inside $\Omega_N$, the Robin Green function exhibits ``Neumann behavior" and oscillates as little as possible (see, e.g. Lemma \ref{l3a3}). This is reminiscent of the construction of solutions to fluid or kinetic equations with boundary layers: we have an ``inner solution" given by the Dirichlet Green function and an ``outer solution" which interpolates between the Dirichlet Green function on $\partial \Omega_N\cap \Omega$ and the desired Robin conditions on $\partial \Omega$.

\medskip

\noindent {\bf Sharpness of Theorem \ref{thm:GFest}:} Letting $A = Id$ (so that the operator is the Laplacian) and $\Omega = B(0, R)$ with $Y = 0$ we can explicitly solve for $$\begin{aligned} G_R^a(X, 0) =& \frac{c_n}{|X|^{n-2}} -\frac{c_n}{R^{n-2}} + \frac{(n-2)c_n}{aR^{n-1}}\\
G_D(X,0) =& \frac{c_n}{|X|^{n-2}} - \frac{c_n}{R^{n-2}}.\end{aligned}$$ This example shows that the estimate \eqref{e:GFestNeumannend} is sharp (the estimates \eqref{e:GFestclose}, \eqref{e:GFestMedium} have to be sharp because they say two quantities are equivalent). This example also shows that the respective ranges of $X,Y,a$ in which \eqref{e:GFestclose}, \eqref{e:GFestMedium}, \eqref{e:GFestNeumannend} hold are also sharp. 

It is an interesting question whether the geometric assumption on $\Omega$ can be loosened. We do not have explicit examples, but the trapped domains in \cite{BBC} show that the Robin problem has pathological behavior in the presence of cusps, and so some condition like interior corkscrews should be needed.  

We expect that the connectivity assumption on $\Omega$ can be relaxed to some extent (e.g. replacing Harnack chains with the weak local John condition) especially if the results are suitably localized. We choose to work in the simplified setting to avoid further complications. This is also why we choose to work with bounded domains, though we expect analogues of these results to hold for unbounded domains.

As in our previous paper, we can allow for $a$ to be variable, with $a(x), a^{-1}(x) \in L^\infty(d\sigma)$. In fact, this follows by considering the measure $d\tilde{\sigma} := a(x)d\sigma$ and noticing that for $a, \sigma$ as above, $\tilde{\sigma}$ also satisfies the conditions \eqref{1n3}, \eqref{1n4}.

Finally, we expect that an analogue of these results holds for $n=2$ but that will be addressed in future work.  

\medskip

\noindent {\bf Comparisons with Prior Work:} Green functions are a natural tool for understanding the solution both of boundary value and Poisson problems and thus are a well studied object (see, e.g.  \cite{HS, JosephLinhan, JosephLinhan2, MPT, MT, BrunoJoseph} for some examples from the last few years). 

There is a particularly large literature devoted to establishing upper bounds for Green's functions with a variety of boundary conditions (including Robin). See \cite{Davies} for a survey of some of the more classical work, mostly with Neumann and Dirichlet boundary conditions. For some more recent work devoted to Robin boundary conditions, see e.g. \cite{Dutch, Mitrea2, ChoiKim, Daners}. The introduction of \cite{Mitrea1}, in particular, gives a comprehensive introduction to the area. Key to these results are powerful and flexible functional analytic tools which allow the authors to also treat the cases of systems of equations and time-dependent operators. 

We are certainly unable to handle systems of equations with our techniques, though we hope to address the time-dependent problem in future work. This is a limitation of our tools, which rely heavily on the maximum principle.  On the other hand, we provide lower bounds and track the precise dependence of the upper bounds on the Robin parameter, something we did not find elsewhere in the literature. Indeed, we believe that our lower bounds in \eqref{e:GFestNeumannend} and \eqref{e:GFestclose} are new and of interest even for the Laplacian in smooth domains. 

 For the Dirichlet problem the most general bounds are due to \cite{GW}, which state roughly that the Dirichlet Green function, $G_D(X,Y)$ is comparable to the fundamental solution to the Laplacian as long as $X,Y$ are not much closer to $\partial \Omega$ than they are to each other (i.e. \eqref{e:GFestMedium}). Our results are a natural analogue of theirs (for the Robin condition) though again the arguments are quite different. 
  
For the Dirichlet Green function it is of interest to understand the precise rate at which the function vanishes when one of the points goes to the boundary. This is impossible to do in complete generality (indeed, it is intimately connected to the famous open problem of the dimension of Dirichlet harmonic measure, see \cite{Badger1, Badger2} for discussion). As a consequence, we cannot give more precise bounds than \eqref{e:GFestclose} in general. However, lower bounds on the Dirichlet Green function can be established under extra assumptions on the flatness of $\partial \Omega$ (see, e.g. \cite{KenigToroDuke}) or if the domain has lots of symmetries (see e.g. \cite{ColeHM}). Of course, the estimate \eqref{e:GFestclose} implies that in any scenario where we can say more about the Dirichlet Green function, we can transfer similar estimates to the Robin Green function. 

\medskip

\noindent {\bf Consequences of Theorem \ref{thm:GFest}}: Before turning to the model of lungs, we give some applications of Theorem \ref{thm:GFest} to the fine properties of Robin harmonic measures in Section \ref{sec:hmproperties}. First we use the upper bounds of Theorem \ref{thm:GFest} to prove several estimates relating the boundary behavior of the Robin Green function and the Robin harmonic measure. These estimates are now standard for the Dirichlet problem (see, e.g. \cite{kenigbook}), though our proofs often differ from the Dirichlet setting. In particular, we prove a lower bound on the Robin harmonic measure of ball when the pole is close to the ball (often known as a Bourgain estimate, see Lemma \ref{l:Bourgain}), we give a comparison between the Robin Green function and Robin harmonic measure (see Theorem \ref{t:hmgfequiv}) and consequently show that the Robin harmonic measure is uniformly doubling (Theorem \ref{t:doubling}). Finally, we end Section \ref{sec:hmproperties} with a boundary comparison principle (Theorem \ref{t:boundarycomp}). We believe this comparison principle to be new for the Robin problem even in smooth domains, though our proof is a close adaptation of \cite{BoundaryHarnack} (which is written for Dirichlet boundary conditions but very flexible). 

The second consequence of Theorem \ref{thm:GFest} is a further study of the decay of the $A_\infty$-character of Robin harmonic measure as $a\uparrow \infty$ (or equivalently as $r \uparrow \infty$). In \cite[Theorems 1.3 and 1.4]{Robin1} the authors proved that the Robin Harmonic measure satisfied the following $A_\infty$ condition $$C^{-1}\frac{\sigma(E)}{\sigma(B(Q,r))} \leq \frac{\omega^X_{R,a}(E)}{\omega^X_{R,a}(B(Q,r))} \leq C\frac{\sigma(E)}{\sigma(B(Q,r))}, \qquad \forall E \subset B(Q,r)\cap \partial \Omega, \forall X\in B(Q,Cr)^c\cap \Omega.$$ In the above, the constant $C> 1$ was uniform for all $r,a$ such that $ar^{2-n}\sigma(B(Q,r)) \leq 1$ and degenerated as $ar^{2-n}\sigma(B(Q,r))\uparrow \infty$. However, the precise rate of degeneracy given in \cite[Theorem 1.4]{Robin1} was not sharp.   In Subsection \ref{ss:Ap} we use Theorem \ref{thm:GFest} to relate the $A_\infty$ behavior of $\omega_R$ with that of $\omega_D$. In particular, we make more precise the intuition that the Robin harmonic measure is an averaging out of the Dirichlet harmonic measure, see Lemma \ref{l:ainfinitylocal}. We also show that if $\omega_D\in A_\infty$ then the constants in the $A_\infty$ character of $\omega_R$ are independent of $a$ (a point left open in \cite{Robin1}). More precisely, we show the following: 

	\begin{theorem}\label{t:keepainfinity}
			Let $\Omega \subset \mathbb R^n, n \geq 3$ be a one-sided NTA pair of mixed dimension. Let $A$ be an elliptic matrix and let $\omega_D, \omega_{R,a}$ be the Dirichlet and Robin harmonic measures associated to the operator $-\mathrm{div}(A\nabla)$. Assume that $\omega_D \in A_\infty(\sigma)$.\footnote{We follow the literatue and use this as shorthand to denote that $\omega_D^{X_0}|_{B(Q,r)} \in A_{\infty}(\sigma|_{B(Q,r)})$ whenever $X_0\in \Omega \sm B(Q, Cr)$ with constants that are indepedent of $X_0, Q, r$.} There exist  constants $C > 0, \theta \in (0,1)$ such that if $E \subset B(P,s)\cap \partial \Omega \subset B(Q,r)\cap \partial \Omega$ then $$\frac{\omega^{X_0}_{R,a}(E)}{\omega^{X_0}_{R,a}(B(P,s))} \leq C\left(\frac{\sigma(E)}{\sigma(B(P,s))}\right)^\theta, \qquad \forall X_0 \in \Omega \sm B(Q, Cr).$$
			
	We emphasize that $C > 1$ depends on the $A_\infty$ character of $\omega_D$, the geometric constants of $\Omega$ and the ellipticity of $A$ (but not on $a, r, Q, X_0$). The constant $\theta \in (0,1)$ depends only on the $A_\infty$ character of $\omega_D$. 
			\end{theorem}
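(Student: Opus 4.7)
The plan is to leverage Theorem \ref{thm:GFest}, together with the intermediate Lemma \ref{l:ainfinitylocal}, to reduce the $A_\infty$-type bound for $\omega_{R,a}^{X_0}$ to the hypothesized $A_\infty$ property of $\omega_D$. For $y \in \partial\Omega$ let $\rho_y := \sup\{\rho > 0 \mid \rho^{2-n}\sigma(B(y,\rho)) \leq 1/a\}$ be the boundary-layer scale (cf.\ \eqref{e:GFestclose}). The core observation to establish is that the Robin Poisson kernel $k_R := d\omega_{R,a}^{X_0}/d\sigma$ is, up to constants independent of $a$, a variable-scale average of the Dirichlet Poisson kernel $k_D := d\omega_D^{X_0}/d\sigma$; the reverse Hölder inequality for $k_D$ coming from $\omega_D \in A_\infty(\sigma)$ can then be transferred to $k_R$.

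I would first extract from Lemma \ref{l:ainfinitylocal} (or reprove directly using \eqref{e:GFestclose} together with the harmonic measure/Green function equivalence Theorem \ref{t:hmgfequiv}) the averaging representation
\begin{equation*}
k_R(y) \simeq \fint_{B(y,\rho_y)\cap\partial\Omega} k_D(z)\, d\sigma(z) \quad \text{for } y \in B(Q,r)\cap\partial\Omega,\ X_0 \in \Omega\sm B(Q,Cr),
\end{equation*}
with constants depending only on the geometric data and ellipticity (and with the convention that the right-hand side is $k_D(y)$ when $\rho_y = 0$). The idea is that the corkscrew point $A_y$ of \eqref{e:GFestclose} sits at height $\simeq \rho_y$, and $G_D(A_y, X_0) \simeq \omega_D^{X_0}(B(y,\rho_y))/\rho_y^{n-2}$ by standard Dirichlet comparison results, while the matching Robin computation produces $\omega_{R,a}^{X_0}(B(y,\rho_y))/\sigma(B(y,\rho_y))$.

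Given this, I split into two regimes. \textbf{Small scales} ($s \lesssim \rho_P$): the slow variation of $\rho_y$—a consequence of the doubling of $\sigma$ and of \eqref{1n3}—makes $k_R$ essentially constant on $B(P,s)\cap\partial\Omega$, so
$$\frac{\omega_{R,a}^{X_0}(E)}{\omega_{R,a}^{X_0}(B(P,s))} \simeq \frac{\sigma(E)}{\sigma(B(P,s))}$$
and the bound holds trivially with $\theta = 1$. \textbf{Large scales} ($s \gtrsim \rho_P$): the hypothesis $\omega_D \in A_\infty(\sigma)$ gives $k_D \in \mathrm{RH}_p(\sigma)$ on $B(Q,r)$ for some $p > 1$ depending only on the $A_\infty$-character of $\omega_D$. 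Integrating the representation and applying Fubini together with a Whitney cover of $B(P,s)\cap\partial\Omega$ by balls of radius $\simeq \rho_{y_i}$, one shows that $\omega_{R,a}^{X_0}(F) \simeq \omega_D^{X_0}(F^*)$ for $F \subset B(P,s)\cap\partial\Omega$, where $F^*$ is a controlled enlargement of $F$. Applying the $\mathrm{RH}_p$ bound for $k_D$ to the pair $(E^*, B(P,s)^*)$ and using doubling of $\sigma$ to compare $\sigma(E^*)$ with $\sigma(E)$ yields the desired inequality with $\theta = 1/p'$.

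The main obstacle is justifying the averaging representation uniformly in $a$ at the transition scale $s \simeq \rho_P$, where the ``Neumann" and ``Dirichlet" regimes of Theorem \ref{thm:GFest} meet and the function $y \mapsto \rho_y$ varies. The key input is that doubling of $\sigma$ together with \eqref{1n3} forces $\rho_y$ to be slowly varying in $y$ (with multiplicative distortion bounded by the doubling constants), so that the Whitney cover in the previous paragraph has bounded overlap and the enlargement $F \mapsto F^*$ is controlled. Once this technical point is settled, the transfer of reverse Hölder from $k_D$ to $k_R$ is standard, and the resulting constants depend only on the $A_\infty$ character of $\omega_D$, the geometric constants of $(\Omega,\sigma)$, and the ellipticity of $A$—crucially, not on $a$, $r$, $Q$, or $X_0$.
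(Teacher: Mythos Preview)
Your overall architecture is right and matches the paper's: use the averaging representation $k_R(y)\simeq \fint_{B(y,\rho_y)} k_D\,d\sigma$ (Lemma~\ref{l:ainfinitylocal}), handle the Neumann regime $s\lesssim\rho_P$ by near-constancy of $k_R$ (this is indeed the content of the cited results from \cite{Robin1}), and in the Dirichlet regime $s\gtrsim\rho_P$ cover $E$ at the boundary-layer scale and transfer the $A_\infty$ information from $\omega_D$.

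The gap is in your large-scale step. You claim $\omega_R^{X_0}(E)\simeq\omega_D^{X_0}(E^\ast)$ with $E^\ast$ a ``controlled enlargement'' of $E$ at scale $\rho_y$, and then propose to ``use doubling of $\sigma$ to compare $\sigma(E^\ast)$ with $\sigma(E)$.'' That comparison is false in general: doubling controls $\sigma(B(P,2r))/\sigma(B(P,r))$, not $\sigma(E^\ast)/\sigma(E)$. If $E$ is a very sparse subset of $B(P,s)$ (e.g.\ a union of many tiny pieces separated by distances $\gg\rho_P$), then $\sigma(E)$ can be arbitrarily small while $\sigma(E^\ast)\simeq\sigma(B(P,s))$. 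So the chain $\omega_R(E)/\omega_R(B(P,s))\lesssim(\sigma(E^\ast)/\sigma(B(P,s)))^\theta\lesssim(\sigma(E)/\sigma(B(P,s)))^\theta$ breaks at the last inequality.

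The paper's proof avoids exactly this trap. After covering $E$ by balls $B(P_i,r_i)$ with $I_{P_i}(r_i)=1$ and bounded overlap, it does \emph{not} enlarge $E$; instead it keeps the local density $\sigma(E\cap B(P_i,r_i))/\sigma(B(P_i,r_i))$ as a weight and bounds it pointwise by the Hardy--Littlewood maximal function $M\chi_E$. One then writes
\[
\omega_R^{X_0}(E)\lesssim \sum_i \omega_D^{X_0}(B(P_i,r_i))\,\frac{\sigma(E\cap B(P_i,r_i))}{\sigma(B(P_i,r_i))}
\lesssim \int_{B(Q,r_0)} M\chi_E\,d\omega_D^{X_0},
\]
applies H\"older, and uses that $\omega_D\in A_p(\sigma)$ precisely gives the $L^p(\omega_D)$ bound on $M\chi_E$. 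This maximal-function step is the missing idea in your sketch; once you insert it in place of the enlargement argument, the rest of your outline goes through.
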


\subsection{A model for mammalian lungs}\label{ss:lungs} Inspired by a large body of work in the physics and biology literature (see, e.g. the first chapter of the conference proceedings \cite{FractalBook}), we consider the following model of oxygen transfer through the lungs of mammals. Once air is convected through the bronchial tree and has entered the acinar region (where alveoli are found), the concentration of oxygen molecules is governed by diffusion. When these oxygen molecules come into contact with the walls of the alveoli, namely the alveolo-capillary membrane which separates air and blood compartments, they undergo a transfer process across the membrane to reach the blood flow in the capillaries. The presence of fluid on the membrane or other physiological conditions may alter the parameters of this transfer. Viewed from the air compartment it can be seen as an absorption process with some probability. We represent the (steady state) local oxygen concentration as a function, $u_a$, which is harmonic in the alveoli, has a constant (averaging over time) source at the entrance of the acinar region where oxygen transport is diffusion-dominated (a sub-acinus, see~\cite{SFW}) and satisfies Robin conditions on the interface between the alveoli and the blood vessels surrounding them. So a simplified, but well used, see e.g. \cite[Figure 2]{RWLungs}, model is that $u_a$ solves 
\begin{equation}\label{e:fluxguy}\begin{aligned}
    -\Delta u_a =& 0, \qquad \text{in}\,\, \Omega \backslash B_1\\
u_a=& 1, \qquad \text{on}\,\, B_1\\
\partial_\nu u_a + au_a =& 0, \qquad \text{on}\,\, \partial \Omega.
\end{aligned}
\end{equation}

We are interested in the transfer of oxygen into the blood stream, which we capture by computing the total oxygen flow rate  \begin{equation}\label{e:totalfluxdef} F(a):= -\int_{\partial \Omega} \partial_\nu u_a\, d\sigma.\end{equation} Here $\nu$ is the outward pointing unit normal to $\partial \Omega$. When modeling the alveoli, $\partial \Omega$ is piecewise Lipschitz so the normal derivative exists (almost) everywhere.

As alluded to above, our domain $(\Omega, \sigma)$ will look smooth at small scales (i.e. approximately the length of a cell) but then will have geometric complexity at larger scales. We capture this with what we call ``pre-fractal" domains. 

\begin{definition}\label{d:piecewisesmooth}
    We say that the pair $(\Omega,\sigma)$ of mixed dimension is of pre-fractal type if 
    there is a length scale $\ell > 0$, $\ell < \diam(\Omega)$, 
    and constants $C, L> 1$ such that:
    \begin{itemize}
    \item For any $P, Q\in \partial \Omega$ and $r > 0$, $\sigma(B(Q,r)) \leq C\sigma(B(P,r))$. 
    \item For any $r \leq \ell$, $\partial \Omega \cap B(Q,r)$ is the graph of an $L$-Lipschitz function. 
    \end{itemize}
\end{definition}

We think of prefractal domains as approximations, at some small scale $\ell >0$, of fractal domains. Although, of course, they also approximate the more general class of pairs $(\Omega, \sigma)$ of mixed dimensions. Notice that in the first condition we ask for some form of homogeneity. For this reason, we may assume that $\sigma$ is
(equivalent to) a multiple of the restriction of $\H^{n-1}$ to $\d\Omega$ (see Section \ref{s:Flux} for more discussion). 
We work with these general domains, as opposed to a specific approximation of human lungs, in order to cover a wide range of potential shapes of the lung. This is partly to account for inter-species variety, and partly because in the future we may investigate questions related to optimal (for oxygen transfer in the lung) shapes. 

In addition to assuming that $(\Omega, \sigma)$ satisfies Definition \ref{d:piecewisesmooth} we make several other simplifying assumptions: namely, there exists a $C_0 >4$ such that
\begin{equation}\label{1a7}
B(0,4) \subset \Omega \subset B(0, C_0).\footnote{It was suggested to us by Bob Kohn that it may be interesting to consider also the case when 
we do not require $B(0,4) \subset \Omega$; certainly some of the proofs below (for instance, the 
discussion of cases for \eqref{7a19} and \eqref{7a21}) need to be adapted if we want universal bounds. 
For the moment we decided not to follow suit.}
\end{equation}

 Under these conditions, there exists a unique, continuous $u_a$ which satisfies \eqref{e:fluxguy} (see Section \ref{s:Flux} for more discussion).

We have different estimates, depending on the values of $a$, which we summarize as follows.

\begin{theorem}\label{thm:Fluxest}
Let $(\Omega, \sigma)$ satisfy Definition \ref{d:piecewisesmooth} in $\mathbb R^n$, $n\geq 3$, with $B(0,4) \subset \Omega \subset B(0, C_0)$. Let $u_a$, and $F(a)$ be as in \eqref{e:fluxguy}, \eqref{e:totalfluxdef}. Then

\begin{enumerate}
\item $a\mapsto F(a)$ is strictly increasing and bounded above by a constant that depends only
on $n$. 
Set $F(\infty) = \lim_{a \to +\infty} F(a) \in(0,+\infty)$.

 \item For $a \leq \sigma(\partial \Omega)^{-1}$, 
 $$
 F(a) \simeq a\sigma(\partial \Omega).
 $$ 
 The constants of comparability depend only on the constants in Definition \ref{d:mixed}.

  \item There exists a constant $C > 0$, that depends only on $C_0$ in
  \eqref{1a7} and the mixed dimension constants, 
  such that for all $a \geq \sigma(\partial \Omega)^{-1}$ we have 
  $$
  C^{-1}F(\infty) \leq F(a) \leq F(\infty). 
  $$

Further assume that  $\sigma$ is a multiple of the surface measure $\H^{n-1}$ on $\d\Omega$; then
  
   \item There exists a constant $C >1$ (depending on parameters in Definition \ref{d:piecewisesmooth}) 
    such that if $\frac{1}{a} \leq C\ell$  we have 
    $$F(\infty) - F(a) \simeq \frac{1}{a}.$$ 
    Here the constants of comparability depend on the constants in Definitions \ref{d:mixed} and \ref{d:piecewisesmooth}.

    \item Finally,  when $C\ell\leq \frac{1}{a} \leq \diam(\Omega)^{2-n}\sigma(\partial \Omega)$ 
    (the intermediate range), 
    we can define $r_a > 0$ such that $r_a^{2-n}\sigma(B(Q,r_a)) \simeq \frac{1}{a}$ for all $Q\in \partial \Omega$,
    and then 
    \begin{equation}\label{e:approxsquarefunction}
F(\infty)-F(a) \simeq r_a^{2-n} S(\omega^0_D, r_a, 2), 
    \end{equation} 
    where $\omega^0_D$ is the Dirichlet harmonic measure of $\Omega$ with a pole at $0$ and 
    $S(\omega^0_D, r_a, 2)$ is the entropy functional of \cite{Makarov} (defined in \eqref{7a26}) 
    Here the constants of comparability depend only on the constants in Definition \ref{d:mixed}.
\end{enumerate}

\end{theorem}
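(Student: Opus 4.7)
The plan rests on identities derived by integration by parts. Since $u_a$ is harmonic in $D := \Omega\setminus\overline{B_1}$ and satisfies the Robin condition on $\partial\Omega$, we have $F(a) = -\int_{\partial\Omega}\partial_\nu u_a\, d\sigma = a\int_{\partial\Omega} u_a\, d\sigma = \int_D|\nabla u_a|^2 + a\int_{\partial\Omega} u_a^2\, d\sigma = 2J_a(u_a)$, where $J_a(v) := \tfrac12\int_D|\nabla v|^2 + \tfrac{a}{2}\int_{\partial\Omega} v^2\, d\sigma$ is the Dirichlet--Robin energy and $u_a$ is its minimizer over $\{v : v|_{\partial B_1} = 1\}$. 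Item (1) follows: for each fixed $v$, $a\mapsto J_a(v)$ is non-decreasing, hence so is $F(a) = 2\inf_v J_a(v)$; strict monotonicity comes from $J_{a_2}(u_{a_2}) \geq J_{a_1}(u_{a_2}) + \tfrac{a_2-a_1}{2}\int u_{a_2}^2 d\sigma > J_{a_1}(u_{a_1})$, using $u_a \not\equiv 0$ on $\partial\Omega$. The bound $F(\infty) \leq C(n)$ follows by comparing $u_\infty$ to the radial Dirichlet solution $v_4(x) = (|x|^{2-n} - 4^{2-n})/(1 - 4^{2-n})$ in $B_4\setminus B_1 \subset D$: the maximum principle yields $v_4 \leq u_\infty$ with equality on $\partial B_1$, whence $F(\infty)$ is bounded by the explicit flux of $v_4$, a constant depending only on $n$.

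For item (2), testing $J_a$ with $v\equiv 1$ gives $F(a) \leq a\sigma(\partial\Omega)$. For the lower bound, extend $u_a$ by $1$ on $\overline{B_1}$; then $-\Delta u_a = \mu$ distributionally on $\Omega$, with $\mu \geq 0$ concentrated on $\partial B_1$, and testing against $\varphi \equiv 1$ yields $\mu(\partial B_1) = F(a)$. The Green representation for the Robin problem gives $u_a(Y) = \int_{\partial B_1}G_R^a(Y, X)\, d\mu(X)$ for $Y\in\Omega$; evaluating at $Y = 0$ (which lies in $\Omega$ since $B(0,4)\subset\Omega$) gives $1 = \int G_R^a(0, X)\, d\mu(X)$. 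Under $a \leq \sigma(\partial\Omega)^{-1}$ we have $\rho := (a\sigma(\partial\Omega))^{1/(n-2)} \leq 1$, so $|X| = 1 \geq \rho$ for $X \in \partial B_1$, and Theorem \ref{thm:GFest} gives $G_R^a(0,X) \simeq \rho^{2-n} = (a\sigma(\partial\Omega))^{-1}$. Hence $F(a) = \mu(\partial B_1) \simeq a\sigma(\partial\Omega)$. Item (3) then combines monotonicity with $F(a) \geq F(\sigma(\partial\Omega)^{-1}) \simeq 1$ (from item (2)) and $F(\infty) \simeq 1$ (upper bound from item (1); lower bound $F(\infty) \geq (n-2)\omega_{n-1}$ from $u_\infty \leq |X|^{2-n}$).

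For items (4)--(5), set $\phi_a := u_a - u_\infty \geq 0$; integration by parts yields the key identity
\begin{equation*}
F(\infty) - F(a) = \int_D|\nabla\phi_a|^2 + a\int_{\partial\Omega}\phi_a^2\, d\sigma = \int_{\partial\Omega} h\,\phi_a\, d\sigma,
\end{equation*}
with $h := -\partial_\nu u_\infty \geq 0$ and $\int h\, d\sigma = F(\infty)$. For item (4), the regime $1/a \leq C\ell$ places the boundary layer scale $r_a \simeq 1/a$ within the Lipschitz regime of $\partial\Omega$. The upper bound follows from a Robin maximum principle: comparing $\phi_a$ with the constant $\|h\|_\infty/a$ gives $\phi_a \leq \|h\|_\infty/a$ in $D$, hence $F(\infty) - F(a) = \int h\phi_a \lesssim 1/a$. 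For the matching lower bound, expand $\phi_a = g/a + \eta_a$ with $g$ a smooth extension of $h$ into a collar of $\partial\Omega$ (possible because $1/a \leq C\ell$); then $\eta_a$ satisfies a residual Robin problem with homogeneous Dirichlet data on $\partial B_1$ and all other data of size $O(1/a)$. The Robin maximum principle, applied together with elliptic boundary regularity on the smooth pieces, yields $|\eta_a| = O(1/a^2)$ on $\partial\Omega$. Consequently $\phi_a \geq h/(2a)$ on the smooth part of $\partial\Omega$ for $a$ large, yielding $F(\infty) - F(a) \geq \tfrac12\int h^2/a \gtrsim 1/a$ (using $h\gtrsim 1$ on a positive-measure smooth piece via Hopf, combined with $\int h \simeq 1$). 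The main obstacle is handling the non-smooth part of $\partial\Omega$: one must localize the expansion to smooth charts of size $\ell$ and absorb contributions from the singular set into the geometric constants.

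For item (5), when $C\ell \leq 1/a \leq \diam(\Omega)^{2-n}\sigma(\partial\Omega)$, the boundary layer scale $r_a$ lies between $\ell$ and $\diam(\Omega)$, so the local smoothing argument above fails. Instead, I use Theorem \ref{thm:GFest} case 3 to replace $G_R^a(X, Y)$ by $G_D(A_X, A_Y)$, where $A_X$ is the corkscrew at depth $r_a$ to $\partial\Omega$. This allows rewriting both sides of the key identity in terms of Dirichlet quantities evaluated at scale $r_a$. A dyadic decomposition of $\partial\Omega$ into pieces of size $r_a$ then expresses $F(\infty) - F(a)$ as a weighted sum over such pieces, with weights involving $\omega_D^0$; after rescaling by the factor $r_a^{2-n}$ inherited from the Green function comparison, this sum matches Makarov's entropy $S(\omega_D^0, r_a, 2)$. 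The main obstacle is identifying the entropy precisely: matching the coefficient structure of $S$ requires a careful second-order analysis of $\phi_a$ performed at the mesoscopic scale $r_a$ (via Theorem \ref{thm:GFest} case 3) rather than at the boundary itself, together with a comparison principle for $\omega_D^0$ between scales $r_a$ and $2r_a$.
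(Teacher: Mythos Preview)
Your treatment of items (1)--(3) is correct and in places cleaner than the paper's: the variational argument for monotonicity is shorter than the paper's differentiation of $a\mapsto u_a$, and your Green representation $1 = \int_{\partial B_1} G_R^a(0,X)\,d\mu(X)$ is an equivalent repackaging of the paper's comparison $u_a \simeq G_R^a(\cdot,0)$ on $\partial B_1$ via the maximum principle. Your key identity $F(\infty)-F(a) = \int_{\partial\Omega} h\,\phi_a\,d\sigma$ with $h=-\partial_\nu u_\infty$ is also exactly the paper's identity (since $\phi_a = u_a$ on $\partial\Omega$).

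The gap is in item (4). Your upper bound $\phi_a \leq \|h\|_\infty/a$ and your expansion $\phi_a = g/a + \eta_a$ with $g$ a smooth extension of $h$ both require $h \in L^\infty(\partial\Omega)$. But Definition~\ref{d:piecewisesmooth} only gives that $\partial\Omega$ is locally an $L$-Lipschitz graph at scales $\leq \ell$; on a merely Lipschitz boundary the Poisson kernel $h=-\partial_\nu u_\infty$ is in $L^2$ (Dahlberg) but \emph{not} in $L^\infty$ in general. There is no ``singular set'' of lower dimension to absorb: the Lipschitz graphs themselves carry the blow-up. So the boundary-layer expansion does not go through as written.

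The paper avoids this by reversing the order: it proves item (5) first, for \emph{all} $a$ with $a\sigma(\partial\Omega)\geq \diam(\Omega)^{n-2}$, and then specializes to $r_a\lesssim \ell$ for item (4). Concretely, from $F(\infty)-F(a)\simeq \int_{\partial\Omega} u_a\,d\omega_D$ one replaces $u_a$ by $G_R^a(\cdot,0)$ (maximum principle on $\Omega\setminus B_1$, since both are $\simeq 1$ on $\partial B_1$), then invokes Theorem~\ref{thm:GFest} to get $G_R^a(Q,0)\simeq G_D(A_Q,0)$ with $A_Q$ a corkscrew at scale $r_a$. Covering $\partial\Omega$ by balls $B_i$ of radius $r_a$ and using the standard Green/harmonic-measure comparison $G_D(A_i,0)\simeq r_a^{2-n}\omega_D(B_i)$ gives
\[
F(\infty)-F(a)\ \simeq\ r_a^{2-n}\sum_i \omega_D(B_i)^2\ =\ r_a^{2-n}\,S(\omega_D,r_a,2),
\]
which is item (5) directly --- no ``second-order analysis of $\phi_a$'' is needed. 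Item (4) then falls out: when $r_a\lesssim \ell$ the balls $B_i$ sit in the Lipschitz regime, Dahlberg's reverse $L^2$-H\"older gives $\sum_i\omega_D(B_i)^2 \simeq r_a^{n-1}\int_{\partial\Omega}(d\omega_D/d\sigma)^2\,d\sigma$, and with $r_a\simeq 1/a$ this yields $F(\infty)-F(a)\simeq 1/a$. This route never needs $h\in L^\infty$, only $h\in L^2$.
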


Theorem \ref{thm:Fluxest}, confirms both observations (e.g. \cite[Figure 72]{LungsLungs} and the discussion around it) and numerical simulations (e.g. \cite[Figure 5]{SFW}) which indicate that, at rest, total oxygen transfer initially changes very slowly as the local absorption rate decreases (e.g. as fluid builds up in the lungs). However, once the local membrane permeability passes a threshold (see the discussion of screening in \cite{SFW}), the total flow rate of oxygen decreases quickly. In Theorem \ref{thm:Fluxest}, this is captured by the second and third bullet points. Although it is not physically relevant, the fourth bullet point has also been captured in numerical simulations and states roughly that as the local membrane permeability approaches infinity the total flow rate approaches the optimal value linearly.

Finally, the fifth bullet point gives finer information on the change of the total absorption for intermediate local absorption rates. In particular, it states that this change is comparable to a multi-scale function of the lung's Dirichlet harmonic measure, known as the Makarov 2-entropy. This entropy is notoriously difficult to understand, especially for subsets of $\mathbb R^n$ with $n\geq 3$. It might be possible that Theorem \ref{thm:Fluxest} will allow for a better study of this entropy, since the flux itself is (numerically) stable and easy to compute in a number of circumstances. 

\subsection{Outline of the Paper} To end the introduction, we quickly sketch the outline of the paper. In Section \ref{s:prelim}, we give some  Poincar\'e inequalities, technical geometric lemmas and facts about the Robin problem in rough domains which will be useful later in the paper. Some of these are taken directly from our previous paper \cite{Robin1} or are minor modifications of the work there. 

In Section \ref{s:Annuli} we give some preliminary estimates on how solutions with Neumann or Robin conditions oscillate on annuli. In particular the ``Balance Lemma" (Lemma \ref{l3a3}) is a crucial tool in our proof of \eqref{e:GFestNeumannend} in Theorem \ref{thm:GFest}. In Section \ref{s:neumannend} we complete the proof of the estimates in \eqref{e:GFestNeumannend}. As a consequence of our arguments we also show Corollary \ref{cor:upperestalways}, which gives an upper bound on the Robin Green function even in the ``Dirichlet regime". 

We then finish the proof of Theorem \ref{thm:GFest} by addressing the ``Dirichlet regime" in Section \ref{s:dirichletregime}. This section contains our most delicate estimates (see Lemma \ref{l:iterateforguy}).

As mentioned above in Section \ref{sec:hmproperties} we give some applications of our Green function estimates to the behavior of Robin harmonic measure. Finally in Section \ref{s:Flux} we give a more in depth discussion of the total flux problem and prove Theorem \ref{thm:Fluxest}.

\section{Preliminaries}\label{s:prelim}

\subsection{Some geometric lemmas related to NTA Domains} 

We recall first a localization lemma from \cite{Robin1} that allows us to find one-sided NTA domains of mixed dimension that contain a given $\Omega \cap B(y, r)$, $y \in \d\Omega$, 
and have a diameter comparable to $r$. Here is the statement.

\begin{lemma}\label{l:tentspaces}[Lemma 2.6 in \cite{Robin1}]
Let $(\Omega, \sigma)$ be a one-sided NTA pair of mixed dimension, as in Definition \ref{d:mixed}.
For each $y \in \partial\Omega$ 
and $0 < r \leq \diam(\Omega)$, we can find a one-sided NTA pair of mixed dimension 
$(T(y,r), \sigma_\star)$, with
\begin{equation} \label{2a9}
B(y, r)\cap \Omega \subset T(y,r) \subset B(y,Kr)\cap \Omega,
\end{equation} 
and the restriction of  $\sigma_\star$ to $\d\Omega \cap \d\Omega_\ast$ 
is $\sigma$. Here $K \geq 1$ and the geometric
constants for $(T(y,r), \sigma_\ast)$ 
can be chosen to depend only on $n$ and the geometric constants for
$(\Omega,\sigma)$.
\end{lemma}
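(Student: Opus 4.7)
The plan is to construct $T(y,r)$ as a union of Whitney cubes of $\Omega$ that either meet $B(y,r)$ or are Harnack-neighbors of such cubes. The basic difficulty is that $\Omega \cap B(y,r)$ on its own fails to be one-sided NTA in general because (a) $\partial B(y,r) \cap \Omega$ is a new piece of boundary where the corkscrew and Harnack chain properties may fail, and (b) the natural surface measure on this new piece is $(n-1)$-dimensional and needs to be glued to $\sigma$ in a way that preserves both doubling and the mixed dimension bound \eqref{1n3}.

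First I would fix a Whitney cube decomposition $\{Q_j\}$ of $\Omega$ with the standard properties $\mathrm{diam}(Q_j) \le \mathrm{dist}(Q_j, \partial \Omega) \le 4\,\mathrm{diam}(Q_j)$ and bounded overlap of small dilates $Q_j^*$. Let $\mathcal{W}_0$ be the collection of $Q_j$ meeting $B(y,r)$, and let $\mathcal{W}$ be obtained from $\mathcal{W}_0$ by adjoining a bounded number of ``Harnack-neighbor" cubes so as to make the resulting region Harnack-connected. Set $T(y,r)$ to be the interior of $\bigcup_{Q \in \mathcal{W}} Q^*$. By bounded overlap and by applying the interior Harnack chain condition on $\Omega$ to the corkscrew point $A_r(y)$, one sees that $T(y,r)$ is connected, contains $\Omega \cap B(y,r)$, and sits inside $\Omega \cap B(y,Kr)$ for some fixed $K$ depending only on the geometric constants.

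Next I would verify the one-sided NTA conditions for $T(y,r)$. A boundary point $P$ of $T(y,r)$ is either in $\partial \Omega$ (and small-scale interior corkscrews are inherited from $\Omega$ for scales less than a fixed multiple of $r$) or lies on an \emph{artificial face} coming from the boundary of a Whitney dilate; in the latter case, centers of adjacent Whitney cubes in $\mathcal{W}$ provide corkscrews at all the relevant scales by construction. For Harnack chains between $x,y \in T(y,r)$, I would use the $\Omega$-chain joining them and argue inductively that each ball in the chain lies in a Whitney cube that must already belong to $\mathcal{W}$ (up to a controlled enlargement), because the chain cannot exit $B(y,Kr)$ without $x$ or $y$ being near $\partial T$, and the neighbor-closure built into $\mathcal{W}$ handles the boundary case. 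The measure is defined by $\sigma_\star := \sigma|_{\partial \Omega \cap \partial T} + c\, \mathcal{H}^{n-1}|_{\partial T \setminus \partial \Omega}$ with $c$ chosen to quantitatively match the two pieces at their ``seam", and the mixed-dimension exponent is taken to be $d_\star := \max(d, n-1) > n-2$.

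The main obstacle I expect is the joint verification of doubling and \eqref{1n3} for $\sigma_\star$ at points $Q \in \partial \Omega \cap \partial T$ where, within a single ball $B(Q,s)$, both $\partial \Omega$ and the artificial boundary contribute. The key observation that makes this work is that each artificial face sits at distance comparable to its own diameter from $\partial \Omega$ (by the Whitney property), so the two pieces of $\sigma_\star$ do not overlap at fine scales, and on any given ball one of them dominates in a scale-consistent way. Combining this with the doubling and lower bound for $\sigma$, plus the trivial doubling of $\mathcal{H}^{n-1}$ on Lipschitz faces, should yield both the doubling \eqref{1n4} and \eqref{1n3} with exponent $d_\star$, uniformly in $y$ and $r$.
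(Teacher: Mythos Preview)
The paper does not prove this lemma; it is quoted from \cite{Robin1}. Your Whitney--sawtooth strategy is the standard one and is essentially what is done there (and in the closely related constructions in \cite{HMMTZ} and \cite{DFM23}); the verification of the corkscrew and Harnack-chain conditions for $T(y,r)$ proceeds roughly as you describe, although the augmentation of $\mathcal W_0$ must be done by explicitly adjoining all Whitney cubes met by Harnack chains in $\Omega$ from cubes of $\mathcal W_0$ to the corkscrew point $A_r(y)$, not merely ``a bounded number'' of immediate neighbors.

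There is, however, a real gap in your construction of $\sigma_\star$. A single constant multiple of $\mathcal H^{n-1}$ on the artificial faces will not in general give a doubling measure in the mixed-dimension setting. Near a point $Q\in\partial\Omega$ close to $\partial B(y,r)\cap\partial\Omega$, artificial faces occur at arbitrarily small scales $s$, and doubling at the scale where the artificial piece first enters $B(Q,s)$ would force $c\,s^{n-1}\lesssim\sigma(B(Q,s))$ uniformly in $s$; but nothing in the hypotheses (doubling plus \eqref{1n3} with some $d>n-2$) prevents $\sigma(B(Q,s))/s^{n-1}\to 0$ as $s\to 0$. The fix used in \cite{Robin1} and \cite{DFM23} is to assign each artificial Whitney face $F$ of scale $\ell(F)$ a mass comparable to $\sigma(B(P_F,\ell(F)))$ for a nearby $P_F\in\partial\Omega$, so that $\sigma_\star(B(Q,s))\simeq\sigma(B(Q,s))$ for every $Q\in\partial\Omega$; doubling and \eqref{1n3} then transfer directly with $d_\star=d$. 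Note also that your choice $d_\star=\max(d,n-1)$ is backwards: enlarging the exponent in \eqref{1n3} makes the inequality \emph{stronger}, so at a point of $\partial\Omega$ far from the artificial boundary (where $\sigma_\star=\sigma$ locally) condition \eqref{1n3} with exponent $\max(d,n-1)>d$ need not hold at all.
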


We shall call $T(y,r)$ a tent domain; it will be useful on occasions that we need sometimes to localize some estimates. 

 New to this paper, we construct chains of overlapping balls which (quantitatively) avoid a given pole in the domain. These ``croissants" will be extremely useful in our Green function estimates, especially in the Neumann regime. This is very similar to \cite[Lemma 2.41]{HMMTZ} but we include a rapid proof for the sake of completeness.

\begin{lemma}\label{lem:donutsareconnected}
    Let $\Omega$ satisfy \ref{CC1} and \ref{CC2}. For every $K, C_0 > 1$, there exist constants $N> 1$ 
    and $c> 0$  (depending on the constants in \ref{CC1}, \ref{CC2} and on $C_0, K$) such that if
   $0 < r \leq \diam(\Omega)$ 
   and  $x, y, z \in \ol\Omega$  
   satisfy $|x- z| \geq r$ and $|y-z| \geq r$, but $|x-y| \leq Kr$, 
   then there exists a chain of $N+1$ overlapping balls $B_j = B(x_j, r_j)$, $0 \leq j \leq N$, 
   with centers in $x_j \in \overline{\Omega}$ and radii $r_j \geq c r$, 
   such that $x = x_0$, $y= y_N$, and for each $j$, $|z-x_j| \geq C_0 r_j$
   (so that $z \notin C_0 B_j$).
\end{lemma}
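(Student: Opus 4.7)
Rescale so that $r=1$, so the hypotheses become $|x-y|\leq K$ and $|x-z|,|y-z|\geq 1$. Fix a small $\rho=\rho(K,C_0,M)\in(0,1/(2C_0)]$ to be chosen at the end; all balls in the output chain will have radius $\rho$, so we will take $c:=\rho$, and $N$ will be a constant depending only on $K$, $C_0$ and the NTA constants. The construction proceeds in three stages: (a) reduce the endpoints $x,y$ to interior points via corkscrews, (b) join them by a coarsened Harnack chain, and (c) if that chain ever strays within $C_0\rho$ of $z$, excise the bad portion and splice in a ``croissant'' that wraps around $z$.

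For stage (a), if $x\in\partial\Omega$, apply \ref{CC1} at scale $\rho/4$ to obtain $A_x\in\Omega$ with $|A_x-x|\leq\rho/4$ and $\dist(A_x,\partial\Omega)\geq\rho/(4M)$. The single ball $B(x,\rho)$, centered at $x\in\overline{\Omega}$ with radius $\rho$, contains $A_x$ and satisfies $|z-x|\geq 1\geq C_0\rho$; treat $y$ analogously. For stage (b), \ref{CC2} applied to $A_x,A_y$ with $\varepsilon\asymp\rho/M$ and $|A_x-A_y|\leq K+1$ yields a Harnack chain $A_x=y_0,\ldots,y_L=A_y$ in $\Omega$ of length $L\leq L(K,M,\rho)$ with $y_{i+1}\in B(y_i,\dist(y_i,\partial\Omega)/10)$. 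After passing to a subsequence we may assume $|y_{i+1}-y_i|\leq\rho/4$. If every $y_i$ happens to satisfy $|z-y_i|\geq C_0\rho$, then the balls $B(y_i,\rho)$ overlap consecutively and, together with the endpoint balls of (a), form the desired chain.

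For stage (c), suppose some $y_i$ lies in $B(z,2C_0\rho)$; let $i_\ast$ and $i^\ast$ be the first and last such indices, and set $p:=y_{i_\ast-1}$, $q:=y_{i^\ast+1}$, which lie just outside $B(z,2C_0\rho)$. We excise $y_{i_\ast},\ldots,y_{i^\ast}$ and replace them by a chain from $p$ to $q$ in $\overline{\Omega}$ whose centers avoid $B(z,C_0\rho)$. If $\dist(z,\partial\Omega)\geq 1$, then $B(z,1)\subset\Omega$ and we simply discretize a spherical arc on $\partial B(z,1/2)$ at spacing $\rho/4$ into $O(1/\rho)$ centers, all at distance $1/2\geq C_0\rho$ from $z$. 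If instead $\dist(z,\partial\Omega)<1$, fix $Q\in\partial\Omega$ with $|Q-z|<1$ and apply \ref{CC1} at scale $\asymp 1/4$ near a finite (dimension-dependent) collection of points distributed around $z$ on a sphere of radius $\asymp 1/2$; this locates corkscrews $w_1,\ldots,w_m\in\Omega$ at distance $\asymp 1/2$ from $z$ with $|w_{j+1}-w_j|$ controlled. Consecutive corkscrews, as well as $p\to w_1$ and $w_m\to q$, are joined by short Harnack chains from \ref{CC2}, and the whole detour is then subsampled at spacing $\rho/4$. Every new center lies at distance $\geq 1/4\geq C_0\rho$ from $z$, and only $O(1/\rho)$ of them are needed.

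Concatenating the three pieces produces a chain of $N+1$ overlapping balls of radius $\rho=cr$ with centers in $\overline{\Omega}$ and $|z-x_j|\geq C_0 r_j$ for every $j$, where $N\leq N(K,C_0,M)$. \emph{The key obstacle} is the second subcase of (c): when $z$ itself lies near $\partial\Omega$, a purely Euclidean wrap-around may exit $\Omega$, so one must combine the corkscrew and Harnack chain conditions to build the detour inside $\overline{\Omega}$, in the spirit of \cite[Lemma 2.41]{HMMTZ}.
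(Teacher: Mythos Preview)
Your overall strategy---lift the endpoints to interior corkscrew points, connect by a Harnack chain, and detour around $z$ if needed---is exactly the paper's. But there is a genuine gap in your stage~(c), second subcase.

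When $\dist(z,\partial\Omega)<1$, you place corkscrews $w_1,\dots,w_m$ on a sphere $\partial B(z,1/2)$ and then assert that the Harnack chains joining $p\to w_1$, consecutive $w_j$'s, and $w_m\to q$ have all their centers at distance $\geq 1/4$ from $z$. This is false already for $p$ and $q$: by your own construction $|p-z|\asymp 2C_0\rho\ll 1/4$. More seriously, the Harnack chain from $p$ (very close to $z$) to $w_1$ (far from $z$) is produced by \ref{CC2}, which gives no control on the distance of the intermediate centers to the fixed point $z$; nothing prevents that chain from dipping back inside $B(z,C_0\rho)$. The same issue arises for the chains between consecutive $w_j$: \ref{CC2} bounds the distance of chain points to $\partial\Omega$, not to $z$, and you have not invoked any cigar-type property that would localize the chain near the segment $[w_j,w_{j+1}]$.

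The paper avoids this difficulty by a different choice of scale at stage~(a): it lifts $x,y$ to corkscrews $x',y'$ at scale $c_0 r$ (comparable to $r$, not to the tiny $\rho$), so that the Harnack chain from $x'$ to $y'$ stays at distance $\gtrsim C_2^{-1}r$ from $\partial\Omega$. Then the dichotomy on $\delta(z)$ becomes clean: if $\delta(z)\leq 2C_2^{-1}r$, the triangle inequality alone forces the chain to stay at distance $\geq C_2^{-1}r$ from $z$, and no detour is needed; if $\delta(z)>2C_2^{-1}r$, the sphere $\partial B(z,C_2^{-1}r)$ lies entirely inside $\Omega$ and far from $\partial\Omega$, so the detour is a genuine Euclidean arc. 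Only at the very end are the balls shrunk to radius $\asymp C_0^{-1}C_2^{-1}r$ to meet the constraint $z\notin C_0 B_j$. Working at the coarse scale first and shrinking last is the missing idea in your argument.
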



Note that 
we will not require that the balls $B_j$ (or any multiple of them) are contained inside $\Omega$, even though
our construction gives this, except naturally for the first and last balls when we need to touch $x$ and $y$.
The proof will also give the information that the $x_j$ are neither too close nor too far from $z$, and more precisely
\begin{equation} \label{2d2}
C^{-1} r \leq |x_j-z| \leq C (|x-z|+|y-z|),
\end{equation}
where $C$ depends only on $n$ and the constants in  \ref{CC1} and \ref{CC2}. The reader should not be too worried
a priori about the large constant $C_0$, because if we can prove the lemma with $C_0 = 10$, and except for the first and last balls that are too close to $\d\Omega$ and require a special treatment (but notice that we can make them very small),
getting a larger $C_0$ amounts to making the balls $B_j$ smaller, and adding more intermediate ones
 to compensate and keep the connectedness. We'll just do it directly and not worry about the dependence of $N$ on
 $C_0$.

\begin{proof}
Since we want to use the NTA property of $\Omega$, we first connect $x$ to a point $x'$ such that
$\delta(x') \geq c_0 r$, where $c_0 > 0$ will be chosen soon. If 
$\delta(x) = \dist(x, \d\Omega) \geq c_0 r$, simply take $x'=x$. Otherwise, 
let $\ol x\in \d\Omega$ be such that $|\ol x - x| \leq \delta(x) \leq c_0 r$, and then
pick $x'$ to be a corkscrew point in $B(\ol x, M c_0 r)$, with $M$ is as in \ref{CC1}. 
If $c_0 \leq r/(2C_0 M)$, we can take $B_0 = B(x, c_0 M r) \subset B(x, C_0^{-1}r)$ and it satisfies 
our constraint. Similarly, we can find a point $y' \in B(y, c_0 Mr) \subset B(y, C_0^{-1}r)$ such that 
$\delta(y') \geq c_0 r$, and now we just need to connect $x'$ to $y'$. This will be easier because they are
reasonably far from $\d\Omega$.

By \ref{CC2}, we can find a path $\gamma$ in $\Omega$ that connects $x'$ to $y'$,  lies in 
$B(z, C_1 |x-z| + C_1 |y-z|) \subset B(x, C_1Kr)$, where $C_1$ depends only on the constants in \ref{CC1} and  \ref{CC2}
(we say this because we want to ensure \eqref{2d2}), and 
and stays at distance more than $3C_2^{-1}r$ from $\d\Omega$.
Here, if we just use the definition \ref{CC2}) as it is, we get that $C_2$ depends also on $c_0$,
because we may start from $x'$ and $y'$ very close to $\d\Omega$, 
but if we want to prove the lower bound in \eqref{2d2}
(we shall actually not need this one), we have to say a bit more. It turns out that 
\ref{CC1} and \ref{CC2} also give, with a little bit more of (classical) work, the existence of a path $\gamma$ that look
like a cigar, and here this means that 
\begin{equation} \label{2d3}
\dist(\xi,\d\Omega) \geq C^{-1} (|\xi-x'| + |\xi - y'|)
\ \text{ for } \xi \in \gamma,
\end{equation}
with $C$ depending only on the constants in \ref{CC1} and \ref{CC2}. In particular this means that
$\dist(\xi,\d\Omega) \geq C^{-1} r$ when $|\xi - z| \leq r/2$, because $x'$ and $y'$ are still far from $z$.
The construction below will then give the full \eqref{2d2}; we leave the details for the sake of fluidity.

We can't always use the path $\gamma$, because maybe it goes very close to $z$, but when this happens we will
simply take a detour. 

Let us first deal with the case when when $\delta(z) \leq 2C^{-1} r$.
Then $\dist(\gamma, z) \geq C^{-1} r$, that is the path does not get too close to $z$, and we can take the 
desired collection of balls, with $c = C_0^{-1}C^{-1}$,  by selecting successive points of $\gamma$, 
at distances roughly $c r$ from the next one. The balls $B(x_j, c r)$ will satisfy the desired constraint
that $z \notin C_0 B_j$. Also, just by counting the number
of points at mutual distances larger than $cr/2$ and that lie in a ball of radius $3Kr$, 
we see that we need less than $N = N(c)$ points $x_j$. 
That is, our curve $\gamma$ may be too long if it was chosen stupidly, 
but we just need a $cr$-connected net of points of $\gamma$ that connect $x'$ to $y'$, 
and any minimal one will have less that $N$ points. 

This settles the case when $\delta(z) \leq 2C^{-1} r$. Otherwise, we proceed as before, 
except that if $\gamma$ meets $B(z, C^{-1} r)$, we  only follow
$\gamma$ from $x'$ to the first point $\xi$ of $\d B(z, C^{-1} r)$, 
and similarly (backwards) from $y'$ to the last point $\zeta$ of $\d B(z, C^{-1} r)$.
We also need to connect $\xi$ to $\zeta$, and we can do this in the sphere $\d B(z, C^{-1} r)$,
which is now far both from $z$ and $\d\Omega$.
Then proceeding as above gives a sequence of points $x_j$, all in $\Omega \sm B(z, C^{-1} r)$
and at distance at least $C^{-1} r$ from $\d\Omega$. As before, we can make sure that 
$|x_{j+1}-x_j| \leq c r$, with $c = C^{-1}/2$, and that we use less than $N = N(c_0)$ points.
The lemma follows.
\end{proof}

In a similar spirit, we will need to know 
that we can remove small balls in the middle of 
1-sided NTA domains and maintain their 1-sided NTA character:

\begin{lemma}\label{l:onesidedpunch}
    Let $\Omega$ be a 1-sided NTA domain and $Y\in \Omega$. 
    Then for $\lambda \in (0, 1/2)$ the domain 
 $\wt{\Omega} = \Omega \backslash \overline{B(Y, \lambda \delta(Y))}$ is also 1-sided NTA, 
 with constants that depend only on $n$ and the original NTA constants of $\Omega$.
\end{lemma}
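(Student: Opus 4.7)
Write $\rho := \lambda\,\delta(Y)$ and $B_0 := B(Y, \rho)$, so $\partial\widetilde\Omega = \partial\Omega \cup \partial B_0$. Since $\lambda < 1/2$ we have $\overline{B_0} \subset B(Y, \delta(Y)/2) \subset \Omega$ and $\dist(\overline{B_0}, \partial\Omega) \geq \delta(Y)/2$, so $\diam(\widetilde\Omega) = \diam(\Omega)$ and $\dist(x, \partial\widetilde\Omega) = \min\bigl(\dist(x, \partial\Omega),\, |x-Y|-\rho\bigr)$ for every $x \in \widetilde\Omega$. The plan is to verify \ref{CC1} and \ref{CC2} for $\widetilde\Omega$ separately, with constants depending only on the original NTA constants and on $\lambda$.

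For \ref{CC1}, fix $Q \in \partial\widetilde\Omega$ and $r \in (0, \diam(\widetilde\Omega)/10)$ and construct $A \in B(Q,r) \cap \widetilde\Omega$ with $\dist(A, \partial\widetilde\Omega) \gtrsim r$ by case analysis. If $Q \in \partial B_0$ and $r \leq (1-\lambda)\delta(Y)$, take $A := Q + t\,\nu$ with $\nu = (Q-Y)/\rho$ and $t = \min(r/2, \rho/4)$; then $\dist(A, \partial B_0) = t$ and $\dist(A, \partial\Omega) \geq (1-\lambda)\delta(Y) - t \gtrsim r$. If $Q \in \partial\Omega$ and $r \leq c\,\delta(Y)$ for a small $c = c(\lambda)$, then $B(Q, r) \cap \overline{B_0} = \emptyset$, and the corkscrew point $A_r(Q)$ from \ref{CC1} for $\Omega$ also works in $\widetilde\Omega$. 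In the remaining ``large $r$'' regime, I would start from a corkscrew $A_0 = A_{r'}(\tilde Q)$ of $\Omega$ at a boundary point $\tilde Q \in \partial\Omega$ with $|\tilde Q - Q| \leq r/2$ and a scale $r'$ comparable to $r$, and then shift inside the cork ball $B(A_0, r'/M) \subset \Omega$ to a point $A$ maximizing $|A - Y|$: when $r'/M \geq 100\rho$ this trivially gives $|A-Y| - \rho \gtrsim r$, while when $r'/M < 100\rho$ we are in the regime $r \sim \delta(Y)$ and a direct bound $|A - Y| \geq \delta(Y)/2 \gtrsim r$ suffices.

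For \ref{CC2}, given $x, y \in \widetilde\Omega$ with $|x-y| = s$, $\vps := \min(\dist(x, \partial\widetilde\Omega), \dist(y, \partial\widetilde\Omega))$, and $2^k\vps \geq s$, I would start from the $\Omega$-Harnack chain $B_1, \ldots, B_{Mk}$ furnished by \ref{CC2} for $\Omega$. Call $B_i = B(x_i, r_i)$ \emph{safe} when $|x_i - Y| \geq 20(\rho + r_i)$, and \emph{unsafe} otherwise. Safe balls, after a universal shrinkage of their radii, remain valid in a $\widetilde\Omega$-Harnack chain, since $\dist(x_i, \partial\widetilde\Omega) = \min(\dist(x_i, \partial\Omega), |x_i - Y| - \rho) \gtrsim r_i$. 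The unsafe indices group into maximal runs; for each run, the adjacent safe centers $p, q$ satisfy $|p-Y|, |q-Y| \gtrsim \rho$ and $|p-q| \lesssim \rho$ (since every unsafe ball lies in $B(Y, C\rho)$). Applying Lemma \ref{lem:donutsareconnected} to $(p, q, Y)$ at scale $\rho$ with $C_0$ large produces auxiliary points $y_0, \ldots, y_N$ with $|y_j - Y| \geq C_0 c\,\rho$ and mutual distances $\leq \rho$; by the fact that $\dist(\overline{B_0}, \partial\Omega) \geq \delta(Y)/2 \gtrsim \rho$, the balls $B(y_j, c'\rho)$ lie in $\widetilde\Omega$ with $c'\rho \leq \dist(y_j, \partial\widetilde\Omega)/10$, so they form a valid detour. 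Concatenation yields a chain with at most $(M+N)k$ balls in $\widetilde\Omega$, and since the number of runs is bounded by $Mk$, the total count stays $O(k)$.

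The main obstacle is the Harnack-chain construction, in particular controlling the geometry of each detour. The choice $\lambda < 1/2$ is what guarantees $\dist(\overline{B_0}, \partial\Omega) \gtrsim \rho$, which is essential for the detour balls at scale $\rho$ to actually fit inside $\widetilde\Omega$; and $C_0$ in Lemma \ref{lem:donutsareconnected} must be chosen large enough that the auxiliary points $y_j$ satisfy $|y_j - Y| \gg \rho$, so that the detour balls of radius $c'\rho$ do not touch $\overline{B_0}$. Together with the safety threshold defining the runs, these choices make the overlap between detour balls and the surviving safe balls at each junction automatic, and the total chain length is controlled by the number of original balls.
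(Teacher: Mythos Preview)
Your overall strategy matches the paper's: verify corkscrews by case analysis, and build Harnack chains by following the $\Omega$-chain and detouring around $B_0$ (the paper invokes the construction of Lemma~\ref{lem:donutsareconnected} and also cites \cite[Lemma~2.41]{HMMTZ}).

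There is, however, a gap relative to the statement: you allow the new NTA constants to depend on $\lambda$, whereas the lemma asserts (and the paper remarks explicitly, just after the statement) that they depend only on $n$ and the NTA constants of $\Omega$. The paper secures $\lambda$-independence in the corkscrew step by noting that if the $\Omega$-corkscrew $A_r(Q)$ (for $Q\in\partial\Omega$) fails in $\widetilde\Omega$ with constant $2M$, one is forced into the range $\delta(Y)/4 \le r \le 4M\delta(Y)$, and then any point $A'\in \partial B(Y,\tfrac{2}{3}\delta(Y))\cap B(Q,r)$ works: it is at distance $\ge \delta(Y)/3$ from $\partial\Omega$ and at distance $\tfrac{2}{3}\delta(Y)-\rho > \delta(Y)/6$ from $\partial B_0$, both bounds uniform over $\lambda\in(0,1/2)$. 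Your ``shift inside the cork ball'' for large $r$ does not obviously deliver this uniformity (the split $r'/M \gtrless 100\rho$ introduces $\lambda$ through $\rho$).

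There is also a slip in your first corkscrew case: for $Q\in\partial B_0$ and $t=\min(r/2,\rho/4)$, once $r>\rho/2$ you get $\dist(A,\partial B_0)=\rho/4$, which is not $\gtrsim r$ for $r$ ranging up to $(1-\lambda)\delta(Y)$. Taking simply $t=r/2$ fixes this: then $\dist(A,\partial B_0)=r/2$ and $\dist(A,\partial\Omega)\ge (1-\lambda)\delta(Y)-r/2\ge r/2$, with constants independent of $\lambda$.
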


When $\lambda$ is small, the doubling properties of $\d\Omega$ will get worse, but 
nothing bad happens with the one-sided NTA constants.

\begin{proof}
First we need to find corkscrew balls. Let $Q \in \d\wt\Omega$ and $r >0$ be given, and let us first assume
that $Q \in \d\Omega$. Let $A= A_r(Q) \in \Omega \cap B(Q,r)$ be given by \ref{CC1}, so that 
$\dist(A, \d\Omega)\geq M^{-1} r$. If  $A$ does not work for $\wt\Omega$, with the constant $2M$,
then $B_A =  B(A, r/(2M))$ meets $B_Y = B(Y, \lambda \delta(Y))$.
Since $\delta(Z) \geq \delta(X)/2$ on $B_Y$ (because $\lambda \leq 1/2$) and $\delta(Z) \leq \dist(Z, Q) \leq 2r$
on $B_A$, this forces $r \geq  \delta(Y)/4$.

But also, $\delta(Z) \leq 2 \delta(Y)$ on $B_Y$ and $\delta(Z) \geq (2M)^{-1} r$ on $B_A$, so
$r \leq 4M \delta(Y)$. In this case, any point $A' \in \d B(Y, 2\delta(Y)/3) \cap B(Q, r)$ will be a good corkscrew 
point for  $B(Q,r)$ in $\wt\Omega$; such a point exists because $Q \notin 2 B_Y$ but $B_A$ meets $B_Y$.
So we have corkscrew points.

This was the easy part; we also need to find Harnack chains in $\wt\Omega$. Fortunately, 
given two points $x',y'\in \wt\Omega$, the construction sketched 
for Lemma \ref{lem:donutsareconnected} above to connect $x'$ to $y'$ works here too:
we follow a Harnack chain in $\Omega$, and take a small detour when this chain meets $B_Y$.
The reader may also find a proof in \cite[Lemma 2.41]{HMMTZ}. 
\end{proof}

%

\subsection{Poincar\'e Inequalities} 
We will need two Poincar\'e inequalities from \cite{Robin1}. 
Note that, as it should, the estimates below are not changed when $\sigma$ is 
multiplied by a positive constant. 

\begin{lemma} \label{lem:poincareoverlap}[Lemma 2.3 in \cite{Robin1}]
Let $(\Omega,\sigma)$ be a one sided NTA pair of mixed dimension in $\R^n$.  
There exist a constant $K \geq 1$, that depends only on the geometric constants
for $(\Omega,\sigma)$, and for each $c >0$, a constant $C_c$ that depends only on
the geometric constants for $(\Omega,\sigma)$ and $c$, such that for $x \in \d\Omega$, 
$0 < r \leq K^{-1}\diam(\Omega)$, $u\in W= W^{1,2}(\Omega)$, and $E \subset B(x,2r)\cap \Omega$  
satisfying $|E| \geq c |B(x,r)\cap \Omega|$,
	\begin{equation} \label{2nn}
	\fint_{B(x,r)\cap \d\Omega} |u(y)- \bar u|^{2} \, d\sigma(y) 
	\leq C_c r^2 \fint_{B(x,Kr)\cap \Omega} |\nabla u(z)|^2 \, dz ,
		\end{equation}
where 
$\bar u$ denotes 
the average of $u$ on $E$.
	\end{lemma}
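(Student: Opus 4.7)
The plan is to split the proof into two parts by introducing an interior reference ball: first control the boundary oscillation of $u$ around the average on that ball, then pass from that average to $\bar u$ by a purely volumetric argument.

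First, using the interior corkscrew condition \ref{CC1}, I would pick $A = A_r(x) \in B(x,r)\cap \Omega$ with $\dist(A, \partial\Omega) \geq r/M$, set $B^\ast := B(A, r/(4M))$, and write $u_{B^\ast} := \fint_{B^\ast} u$. The main intermediate claim is that
\[
\fint_{B(x,r)\cap\partial\Omega} |u(y) - u_{B^\ast}|^2 \, d\sigma(y) \leq C r^2 \fint_{B(x,Kr)\cap\Omega} |\nabla u|^2.
\]
To prove it, decompose $\Omega$ into Whitney cubes $\mathcal{W}$. For $\sigma$-a.e.\ boundary point $y \in B(x,r)\cap \partial\Omega$, use \ref{CC2} to build a chain $\{Q_k(y)\}_{k\geq 0} \subset \mathcal{W}$ with $\ell(Q_k) \simeq 2^{-k} r$ and $\dist(Q_k, y) \lesssim \ell(Q_k)$, starting from a cube adjacent to $B^\ast$ and telescoping down to $y$. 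Interior Poincar\'e on each pair of neighbouring cubes yields $|u_{Q_k} - u_{Q_{k+1}}|^2 \lesssim \ell(Q_k)^{2-n} \int_{cQ_k}|\nabla u|^2$, so after a Cauchy--Schwarz with geometrically decaying weights,
\[
|u(y) - u_{B^\ast}|^2 \lesssim \sum_{k \geq 0} \ell(Q_k(y))^{2-n} \int_{cQ_k(y)} |\nabla u|^2.
\]

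Next, I would integrate the last display against $d\sigma$ and swap the sum with the integral. For each Whitney cube $Q$ of side $\rho$, the set of boundary points whose chain passes through $Q$ lies in $B(y_Q, C\rho) \cap \partial \Omega$; by the lower $d$-bound \eqref{1n3}, its $\sigma$-measure is at most $C(\rho/r)^d \sigma(B(x,r)\cap \partial\Omega)$. After rearranging, this produces
\[
\fint_{B(x,r)\cap \partial\Omega} |u - u_{B^\ast}|^2 \, d\sigma \lesssim r^{-d} \sum_{Q} \ell(Q)^{d-(n-2)} \int_Q |\nabla u|^2,
\]
where the sum runs over Whitney cubes meeting $B(x, K'r)$. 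Since the exponent $d - (n-2) > 0$ is strictly positive, $(\ell(Q)/r)^{d-(n-2)} \leq C$ uniformly and the geometric series across scales collapses, so the right-hand side is dominated by $C r^2 \fint_{B(x,Kr)\cap \Omega} |\nabla u|^2$. This proves the intermediate claim.

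Finally, to pass from $u_{B^\ast}$ to $\bar u$, note that $E \subset B(x,2r)\cap \Omega$ with $|E|\geq c|B(x,r)\cap \Omega|$, and by Cauchy--Schwarz combined with a standard interior Poincar\'e (proved by Whitney chains purely inside $\Omega$, with no boundary integration),
\[
|u_{B^\ast}-\bar u|^2 \leq \fint_E |u-u_{B^\ast}|^2 \leq \frac{|B(x,2r)\cap\Omega|}{|E|} \fint_{B(x,2r)\cap \Omega} |u-u_{B^\ast}|^2 \leq C_c r^2 \fint_{B(x,Kr)\cap \Omega}|\nabla u|^2.
\]
Decomposing $u(y) - \bar u = (u(y)-u_{B^\ast}) + (u_{B^\ast} - \bar u)$ and applying the triangle inequality then gives the lemma. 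The principal obstacle is the Whitney-chain computation of the second paragraph, and specifically its reliance on the strict inequality $d > n-2$: this is exactly what makes $(\ell(Q)/r)^{d-(n-2)}$ uniformly bounded in a way that recovers the desired scaling $r^2$. A secondary subtlety is that the telescoping representation of $u$ along a chain must hold for $\sigma$-a.e.\ $y \in \partial\Omega$, which is not quite a pointwise-trace statement but follows from a Lebesgue-point argument enabled by the same lower $d$-bound \eqref{1n3}.
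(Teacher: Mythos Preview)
The paper does not prove this lemma; it is imported verbatim from \cite{Robin1} (Lemma~2.3 there) and is simply stated for later use. So there is no in-paper proof to compare against.

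That said, your proposal is essentially correct and is the standard Whitney-chain/Carleson argument for trace-type Poincar\'e inequalities in rough domains; it is very likely close to what \cite{Robin1} does. One small point deserves care: in the step ``Cauchy--Schwarz with geometrically decaying weights'', the inequality
\[
|u(y)-u_{B^\ast}|^2 \lesssim \sum_{k\geq 0} \ell(Q_k)^{2-n}\int_{cQ_k}|\nabla u|^2
\]
as written does not follow from a plain Cauchy--Schwarz; you need to insert a small exponent $0<\alpha<d-(n-2)$, obtaining instead $r^{\alpha}\sum_k \ell(Q_k)^{2-n-\alpha}\int_{cQ_k}|\nabla u|^2$. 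This extra $\alpha$ is exactly absorbed later when you sum over scales (your geometric series then has ratio $2^{-(d+2-n-\alpha)}<1$), and the final bound is unchanged. You correctly identified that the strict inequality $d>n-2$ from \eqref{1n3} is precisely what makes this work. The passage from $u_{B^\ast}$ to $\bar u$ via the interior Poincar\'e on the one-sided NTA domain is standard and unproblematic.
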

	
	Here and below, when we integrate $u$ on $\d\Omega$, we should really write $\Tr(u)$,
where $\Tr(u) \in L^2(\sigma)$ is the trace of $u$, which with our assumptions is well defined as soon as 
$u \in W^{1,2}(\Omega)$ (i.e., when $\nabla u \in L^2(\Omega)$).
See the discussion near (2.10) in \cite{Robin1}, or directly \cite{DFM23}.

    \begin{lemma}\label{lem:poincareboundary}[Lemma 2.8 in \cite{Robin1}]
Let $(\Omega, \sigma)$ be a one sided NTA pair of mixed dimension in $\R^n$.
There exists $K \geq 1$,  and $C \geq 1$,
depending only on the geometric constants for $(\Omega,\sigma)$ 
such that if $0 \in \d\Omega$, $0 < r \leq \diam(\d \Omega)$, and $E \subset \partial \Omega \cap B(0,r)$ 
are such that $\sigma(E) > 0$ and $u\in W^{1,2}(\Omega\cap B(0, Kr))$, then
		\begin{equation} \label{e:poincaretrace}
\fint_{B(0,r)\cap \Omega} u(y)^2 \, dy 
	\leq C \, \frac{\sigma(B(0,r))}{\sigma(E)}  \,   r^2 \fint_{B(0,K r)\cap \Omega} |\nabla u(y)|^2 \, dy
	+ 2   \fint_{E} u^2 d\sigma .
		\end{equation}
\end{lemma}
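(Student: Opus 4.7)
The plan is to decompose $u$ relative to the boundary average $c_E := \fint_E u \, d\sigma$ and transfer interior-to-boundary oscillation via Lemma \ref{lem:poincareoverlap}. Setting $\bar u_B := \fint_{B(0,r) \cap \Omega} u \, dy$ and applying $(a+b)^2 \leq 2a^2 + 2b^2$ twice, one obtains the pointwise split
\[
u^2 \leq 4(u-\bar u_B)^2 + 4(\bar u_B - c_E)^2 + 2 c_E^2,
\]
in which the three terms on the right will be handled separately.

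For the first term, I would invoke an interior Poincar\'e inequality on $B(0,r) \cap \Omega$,
\[
\fint_{B(0,r)\cap\Omega}(u-\bar u_B)^2\,dy \leq C r^2 \fint_{B(0,Kr)\cap\Omega}|\nabla u|^2\,dy,
\]
which in a one-sided NTA domain follows from a standard Whitney-plus-Harnack-chain argument: cover $B(0,r)\cap\Omega$ by Whitney-type balls, connect each to the reference corkscrew ball of radius $\simeq r$ at $A_r(0)$ from \ref{CC1} through a chain supplied by \ref{CC2}, and telescope the classical Poincar\'e inequality. For the second term, apply Lemma \ref{lem:poincareoverlap} with its internal set chosen to be $B(0,r)\cap\Omega$ itself, so that the corresponding average becomes $\bar u_B$ and the size hypothesis holds trivially with $c = 1$, which yields
\[
\fint_{B(0,r)\cap\d\Omega}|u-\bar u_B|^2\,d\sigma \leq C r^2 \fint_{B(0,Kr)\cap\Omega}|\nabla u|^2\,dy.
\]
Since $E\subset B(0,r)\cap\d\Omega$, Jensen's inequality then gives
\[
(\bar u_B - c_E)^2 \leq \fint_E |u-\bar u_B|^2\,d\sigma \leq \frac{\sigma(B(0,r))}{\sigma(E)}\fint_{B(0,r)\cap\d\Omega}|u-\bar u_B|^2\,d\sigma.
\]
Finally, $c_E^2 \leq \fint_E u^2\,d\sigma$ by Jensen applied directly to $c_E$.

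Combining the three bounds and using $\sigma(B(0,r))/\sigma(E)\geq 1$ to absorb the first Poincar\'e constant into the second produces the stated inequality; the factor $2$ in front of $\fint_E u^2\,d\sigma$ is preserved because it enters only through the final $2 c_E^2$ estimate, where Jensen is sharp. The main obstacle is the interior Poincar\'e inequality above, which cannot be applied directly on the rough set $B(0,r)\cap\Omega$, as that set may be disconnected or thin. The hypotheses \ref{CC1} and \ref{CC2} are precisely what is needed to carry out the Whitney-plus-chain construction, at the cost of enlarging $r$ to $Kr$ in the gradient integral; alternatively, one may cite the known Poincar\'e inequality in John domains, since one-sided NTA domains are locally John. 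The remainder of the argument is a routine application of Lemma \ref{lem:poincareoverlap} and Jensen.
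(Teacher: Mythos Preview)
The paper does not prove this lemma; it is quoted verbatim as Lemma~2.8 of \cite{Robin1} and used as a black box. So there is no ``paper's own proof'' to compare against here.

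That said, your argument is correct and is essentially the natural one. The three-term split via $c_E$ and $\bar u_B$, the use of Lemma~\ref{lem:poincareoverlap} with the interior set $E = B(0,r)\cap\Omega$ to pass from the boundary average $c_E$ to the solid average $\bar u_B$, and the final absorption using $\sigma(B(0,r))/\sigma(E)\geq 1$ all work as stated. Two small points worth making explicit: first, for the interior Poincar\'e step it is cleanest to take the reference constant to be the average over the corkscrew ball $B(A_r(0), M^{-1}r)$ rather than $\bar u_B$ directly, run the Whitney--chain argument to control $\fint_{B(0,r)\cap\Omega}|u-\fint_{B_0}u|^2$, and then observe that replacing the constant by $\bar u_B$ only decreases the left side; this sidesteps any worry about $B(0,r)\cap\Omega$ being disconnected. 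Second, Lemma~\ref{lem:poincareoverlap} as stated requires $r\leq K^{-1}\diam(\Omega)$, so for $r$ near $\diam(\d\Omega)$ you should apply it at a comparable smaller scale and invoke doubling; this only affects the constant $C$.
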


\subsection{Previous results on Robin solutions}

We say 
that $u$ is a weak solution to the Robin problem with data $f\in L^2(\partial \Omega)$ in $B(Q, r)$ if \begin{equation}\label{e:weaksol}
\frac{1}{a}\int_{\Omega} A\nabla u \nabla \varphi + \int_{\partial \Omega} u\varphi d\sigma 
= \int_{\partial \Omega} f\varphi d\sigma, 
\qquad \forall \varphi \in C_c^\infty(B(Q, r)); 
\end{equation}
compare to (1.5) in  \cite{Robin1}; here we also localize the notion to $B(Q,r)$, but notice that 
$B(Q,r)$ is not necessarily contained in $\Omega$, and then our test functions are defined across $\d\Omega$.

While we use several estimates on solutions to the Robin problem throughout this paper, the boundary Harnack inequality is used most extensively, and so we reproduce it here. 

\begin{theorem}\label{thm:bdryharnack}[Compare with \cite[Theorem 4.4]{Robin1}]
    Let $K > 1$ be large enough (depending on the geometric constants of $\Omega$) and assume 
    $0 \in \partial \Omega$ and $4K^2r < \diam(\partial \Omega)$. 
    Let $u \in W^{1,2}(\Omega \cap B(0, K^2r))$ be a weak solution to the homogeneous Robin problem in $B(0, K^2r)$ (i.e. satisfying \eqref{e:weaksol} with $f= 0$).  There exists a constant $\theta \in (0,1)$ (depending on the geometric constants of $\Omega$ and the ellipticity of $A$) such that if $ar^{2-n}\sigma(B(0,r)) \leq 1$ then $$\inf_{\Omega \cap B(0,r)} u \geq \theta \sup_{\Omega \cap B(0,r)} u.$$
\end{theorem}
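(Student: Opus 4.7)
The plan is to prove this by Moser iteration, adapting the argument of \cite[Theorem 4.4]{Robin1} to a localized setting. The enlarged radius $K^2 r$ is needed because each application of the Poincar\'e inequalities of Lemma \ref{lem:poincareoverlap} and Lemma \ref{lem:poincareboundary} costs a factor of $K$, and the Moser scheme chains two of them. After normalizing so that $u \geq 0$ (say by replacing $u$ with $u - \inf_{B(0,K^2r)\cap\Omega} u + \epsilon$, which is again a Robin supersolution at the cost of a lower order boundary term), the key algebraic observation is that testing \eqref{e:weaksol} with $\varphi = \eta^2 (u-k)^+$ for $k \geq 0$ produces the boundary contribution $a\int_{\partial\Omega} u(u-k)^+\eta^2\,d\sigma \geq 0$, which has the right sign to be discarded. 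What remains is the familiar Caccioppoli inequality
$$\int_\Omega |\nabla (u-k)^+|^2 \eta^2 \,dx \leq C\int_\Omega |\nabla \eta|^2 \bigl((u-k)^+\bigr)^2\,dx,$$
and standard Moser iteration based on this and the Sobolev embedding on $\Omega \cap B(0, Kr)$ (which holds by \ref{CC1}--\ref{CC2}, or by passing to the tent domain $T(0,Kr)$ of Lemma \ref{l:tentspaces}) yields the sup bound $\sup_{B(0,r)\cap\Omega} u \lesssim \bigl(\fint_{B(0,Kr)\cap\Omega} u^q\bigr)^{1/q}$.

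The lower bound on $u$ is the harder half, and this is where the hypothesis $a r^{2-n}\sigma(B(0,r)) \leq 1$ enters. Testing \eqref{e:weaksol} with $\varphi = \eta^2/(u+\epsilon)$ and using ellipticity gives, after an absorption,
$$\int_\Omega |\nabla \log(u+\epsilon)|^2 \eta^2\,dx \leq C\int_\Omega |\nabla \eta|^2\,dx + Ca \int_{\partial \Omega} \eta^2\,d\sigma.$$
Taking $\eta$ adapted to $B(0, Kr)$, the right-hand side is at most $Cr^{n-2}\bigl(1 + ar^{2-n}\sigma(B(0,Kr))\bigr)$, which by doubling of $\sigma$ is $\lesssim r^{n-2}$ precisely under the assumption $ar^{2-n}\sigma(B(0,r)) \leq 1$. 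This yields a BMO bound on $\log u$ with constant independent of $a$, and the Bombieri--Giusti lemma (or equivalently, combining a reverse H\"older inequality for $u^p$ with small $p$ and the previously obtained sup bound) converts it into a bound of the form $\inf_{B(0,r)\cap\Omega} u \geq \theta \sup_{B(0,r)\cap\Omega} u$, which is the desired conclusion.

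The main obstacle is bookkeeping the boundary term uniformly in $a$: when $ar^{2-n}\sigma(B(0,r))$ is close to $1$ the log-Moser estimate becomes tight, and one must verify that the resulting constant $\theta$ does not degenerate. The hypothesis is stated as a non-strict inequality precisely because this borderline loss is absorbed into $\theta$; any further increase of $ar^{2-n}\sigma(B(0,r))$ above $1$ would force $\theta \to 0$, reflecting the onset of the Dirichlet-like regime in which a boundary Harnack inequality of this form simply fails. A secondary technical point is choosing, at each step of the iteration, the correct Poincar\'e inequality: Lemma \ref{lem:poincareoverlap} controls boundary $L^2$ norms by interior gradient norms (needed when estimating the extra $a\int_{\partial\Omega}$ term), while Lemma \ref{lem:poincareboundary} allows one to feed boundary information back into the interior (needed to close the iteration). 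Both are available under our assumption that $4K^2 r < \mathrm{diam}(\partial \Omega)$.
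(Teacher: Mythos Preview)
The paper does not prove this theorem; it is quoted from \cite[Theorem 4.4]{Robin1} as a preliminary result, so there is no proof here to compare against. Your Moser iteration sketch is the standard route and is, in outline, what \cite{Robin1} does: a Caccioppoli estimate in which the boundary term $a\int_{\partial\Omega} u\varphi\,d\sigma$ has a favorable sign, followed by a log-estimate for $u+\varepsilon$ in which the boundary contribution is controlled by $a\sigma(B(0,Kr)) \lesssim r^{n-2}$ exactly under the hypothesis $ar^{2-n}\sigma(B(0,r))\leq 1$, and then Bombieri--Giusti. The computation you give for the $\log$-Caccioppoli step is correct and is really the heart of the matter.

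There is one genuine issue. Your normalization step, replacing $u$ by $u - \inf u + \varepsilon$, does not work as written: adding a constant $c>0$ to a homogeneous Robin solution produces a function $v$ with $\partial_\nu v + a v = ac > 0$, i.e.\ a Robin \emph{supersolution}, not a solution. That is fine for the weak Harnack (lower) half, but the local boundedness (upper) half of Moser requires a \emph{subsolution}, and your Caccioppoli step with $\varphi=\eta^2(v-k)^+$ then acquires a boundary term $ac\int_{\partial\Omega}\eta^2(v-k)^+\,d\sigma$ of the wrong sign. The right fix is not to shift at all: the theorem is meant for nonnegative $u$ (this is how it is used throughout the paper, e.g.\ for $G_R(\cdot,Y)$ in \eqref{2b9} and Lemma~\ref{l3a2}, and how it is stated in \cite{Robin1}); the statement here simply omits that hypothesis. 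Once $u\geq 0$ is assumed, both halves of your argument go through and yield $\theta$ depending only on the geometric and ellipticity constants.
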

We now state the theorem 
from \cite{Robin1} that gives the definition and the construction of the Robin Green function (the central object of consideration in the present 
paper). We drop the dependence of the parameter $a$ from the notation $G_R^a(X,Y)$ when it is obvious. Let $f,g\in W^{1,2}(\Omega)$. We define the natural  quadratic form associated to the Robin problem by 
\begin{equation}
    b(f,g):= \int_{\Omega}A\nabla f\nabla g+a\int_{\partial \Omega}fg d\sigma. 
\end{equation}

\begin{theorem}[Existence of the Robin Green function] 
\label{t:GreenfunctionExistence}[Theorem 5.6 in \cite{Robin1}]
Let $(\Omega, \sigma)$ be a 1-sided NTA pair of mixed dimension, with $\Omega$ bounded. 
Then for every $y\in \Omega$ there exists a non-negative function $G_R(\cdot,y)$ such that 
	\begin{equation*}
		G_R(\cdot,y)\in W^{1,2}(\Omega\setminus B(y,r))\cap W^{1,1}(\Omega)
		\quad \text{for any $r > 0$,}
	\end{equation*}
	and such that for all $\varphi\in C^{\infty}(\Omega)$
\begin{equation}\label{e:Greenidentity}
		b(G_R(\cdot,y),\varphi)= \varphi(y).
	\end{equation}
	Moreover, if $u$ is a weak solution to the Robin problem with data $f\in L^2(\partial \Omega)$,
	i.e., satisfies \eqref{e:weaksol} for all $\varphi \in C_c^\infty(\R^n)$, then 
	
	\begin{equation}\label{e:representationformula}
		u(x)= a\int_{\partial \Omega}f(y)G_R(x,y)d\sigma(y)  
		\,\,\text{for every } x\in \Omega. 
	\end{equation}
	\end{theorem}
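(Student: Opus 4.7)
The plan is to construct $G_R(\cdot,y)$ as the weak limit of ``approximate Green functions'' obtained by solving the Robin problem with a right-hand side that concentrates to the Dirac mass at $y$. The four ingredients are (1) coercivity of the bilinear form $b$ on $W^{1,2}(\Omega)$ so that Lax--Milgram produces the approximants, (2) a uniform pointwise upper bound of order $|x-y|^{2-n}$, (3) uniform Caccioppoli bounds away from the pole and a Stampacchia-type truncation argument for a uniform global $W^{1,1}$ bound, and (4) a limit passage yielding \eqref{e:Greenidentity} and the representation \eqref{e:representationformula}.

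First I would verify that $b$ is coercive on $W^{1,2}(\Omega)$. Ellipticity gives
$$b(u,u) \geq c\|\nabla u\|_{L^2(\Omega)}^2 + a\|u\|_{L^2(\d\Omega,\sigma)}^2,$$
and covering $\d\Omega$ by a finite collection of balls $B(Q_i,r_i)$ with $r_i$ comparable to $\diam(\Omega)$ and applying Lemma \ref{lem:poincareboundary} in each (with $E_i = \d\Omega \cap B(Q_i,r_i)$) produces $\|u\|_{L^2(\Omega)}^2 \lesssim \|\nabla u\|_{L^2(\Omega)}^2 + \|u\|_{L^2(\d\Omega)}^2$, so $b$ controls the full $W^{1,2}$-norm. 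For $\rho \in (0,\delta(y)/4)$, set $f_\rho = |B(y,\rho)|^{-1}\mathbf{1}_{B(y,\rho)}$ and apply Lax--Milgram to produce a unique $G_\rho \in W^{1,2}(\Omega)$ satisfying
$$b(G_\rho,\varphi) = \fint_{B(y,\rho)} \varphi\,dx \quad \text{for all } \varphi \in W^{1,2}(\Omega).$$
Testing with $-G_\rho^-$ immediately gives $G_\rho \geq 0$, and (using symmetry or passing to the adjoint form) a parallel construction yields approximants for the adjoint Green function, which will be needed for the representation formula when $A$ is not symmetric.

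The core analytical work is to produce estimates on $G_\rho$ that are uniform in $\rho$. For the pointwise bound $G_\rho(x) \leq C|x-y|^{2-n}$, I would run a Moser iteration (valid for Robin solutions because the Robin boundary term is absorbed as a nonnegative contribution when testing against $G_\rho^q$) on a ball $B(x,|x-y|/4)$ that avoids $B(y,2\rho)$, comparing to the fundamental solution near the pole via a self-improving bootstrap using $b(G_\rho,G_\rho) = \fint_{B(y,\rho)} G_\rho$. Caccioppoli on annuli $B(x,2r)\setminus B(x,r)$ with $x=y$ then gives uniform $W^{1,2}(\Omega\setminus B(y,r))$ control for each $r>0$. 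The main obstacle is the global $W^{1,1}$ bound: inserting the truncation $T_{s,t}(G_\rho) := \min(G_\rho,t)-\min(G_\rho,s)$ as a test function, one gets
$$\int_{\{s < G_\rho < t\}} |\nabla G_\rho|^2 \, dx + a\int_{\d\Omega}(\,\cdot\,)\,d\sigma \leq (t-s)\fint_{B(y,\rho)} dx \lesssim (t-s)\rho^{-n},$$
and combining this with (i) via a layer-cake decomposition and Hölder's inequality (borrowing the exponent from the Sobolev embedding $W^{1,2}\hookrightarrow L^{2n/(n-2)}$) yields the desired uniform $\int_\Omega |\nabla G_\rho| \leq C$. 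Handling the boundary term in this argument is delicate because $\sigma$ may be of fractional dimension $d > n-2$; the lower regularity \eqref{1n3} ensures the Robin term remains nonnegative and absorbable at each slice.

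Finally I would extract a subsequence $\rho_k \to 0$ such that $G_{\rho_k} \rightharpoonup G_R(\cdot,y)$ weakly in $W^{1,2}_{\mathrm{loc}}(\Omega\setminus\{y\})$ and weakly-$\ast$ in the BV sense globally. Pointwise convergence on compact subsets of $\Omega\setminus\{y\}$ follows from De Giorgi--Nash regularity applied to $G_{\rho_k}-G_{\rho_m}$ (which solves the homogeneous Robin problem away from $y$). Passing to the limit in the identity $b(G_{\rho_k},\varphi) = \fint_{B(y,\rho_k)}\varphi$ for $\varphi \in C^\infty(\overline\Omega)$ yields \eqref{e:Greenidentity} by continuity of $\varphi$. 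The representation formula \eqref{e:representationformula} then follows by taking $\varphi = G_R^\ast(\cdot,x)$ (the adjoint Green function) as a test function in the weak formulation \eqref{e:weaksol} for $u$ and substituting \eqref{e:Greenidentity} for $u(x)$, after a standard density argument to justify the non-smooth test function. The hardest step throughout is the Stampacchia argument giving uniform $W^{1,1}$ control, because one must simultaneously handle the fractional-dimensional boundary term and a singular right-hand side.
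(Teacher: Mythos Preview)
The paper does not prove this theorem: it is quoted verbatim as Theorem~5.6 from the companion paper \cite{Robin1}, with no argument given here. So there is nothing in the present paper to compare your proposal against.

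That said, your outline is the standard Gr\"uter--Widman scheme adapted to Robin conditions, and this is almost certainly what \cite{Robin1} does as well. One small slip: in your truncation step you write
\[
\int_{\{s<G_\rho<t\}}|\nabla G_\rho|^2 \lesssim (t-s)\rho^{-n},
\]
but the right-hand side should simply be $C(t-s)$, since testing with $T_{s,t}(G_\rho)$ gives $\fint_{B(y,\rho)} T_{s,t}(G_\rho)\,dx \leq t-s$ uniformly in $\rho$. The $\rho^{-n}$ factor is spurious (and would in fact wreck the layer-cake summation). With that corrected, the rest of your sketch is sound.
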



It is not hard to see that if $\wt G_R(x,y)$ denotes the Green function for the adjoint problem where we replace 
the matrix $A(x)$ by its transpose everywhere on $\Omega$, then
\begin{equation}\label{2b8}
\wt G_R(x,y) = G_R(y,x)  \quad \text{ for } x, y \in \Omega, x\neq y.
\end{equation}
This is a rather classical consequence of \eqref{e:Greenidentity}, so we skip the details. The reader may follow,
for instance, Lemma 14.78 in \cite{DFM23}; the main idea is that if we were allowed to use \eqref{e:Greenidentity}
on the function $z \mapsto \wt G(z,x)$, we would get that 
$b(G_R(\cdot,y),\wt G_R(\cdot, x)) = \wt G_R(y,x)$, while if we transpose everything,
\eqref{e:Greenidentity} for the adjoint problem would yield
$b(G_R(\cdot,y),\wt G_R( \cdot, x)) = \wt b(\wt G_R(\cdot,x), G_R(\cdot,y)) = G_R(x,y)$.
A limiting argument is then used to go from bump functions to Dirac masses. Notice that when $A$
is symmetric, we get that $G_R$ is symmetric too.

Also notice that \eqref{e:Greenidentity}, applied with $\phi$ supported away from $y$, 
says that $G_R(\cdot, y)$ is, locally on $\Omega \sm \{ y\}$ a weak solution of our equation
${\rm{div}} A \nabla u = 0$, with homogeneous Robin conditions. 
In particular, away from $y$, it is locally H\"older continuous up to the boundary (see \cite[Theorem 4.1]{Robin1}).
And, because of \eqref{2b8} and the fact that the transpose of $A$ satisfies the same requirements,
$ \wt G_R(x, \cdot)$ is, locally on $\Omega \sm \{ x\}$ a weak solution of the adjoint equation, and 
locally H\"older continuous up to the boundary. 

We may use the fact that, countrary to the Dirichlet Green function, here we have that 
\begin{equation}\label{2b9}
G_R(x,y) > 0 \quad \text{for } x \in \ol \Omega \sm \{ y \}.
\end{equation}
Indeed $G(\cdot,y) > 0$ somewhere (apply \eqref{e:Greenidentity} to a bump function), hence, by Harnack, 
everywhere on $\Omega$. But it is also a weak solution with vanishing Robin data on the boundary, 
so Theorem \ref{thm:bdryharnack} says that it is also positive on the boundary. 
In other words, with the vocabulary of \cite{BBC}, the whole boundary is ``active'' in the present context.

Additionally, if we let $f \equiv 1$ in \eqref{e:representationformula} above, then $u(x) \equiv 1$ 
(by uniqueness of the solution of \eqref{e:weaksol} in $\Omega$, see Theorem 2.5 in \cite{Robin1}) 
and we have the flux condition 
\begin{equation}\label{e:fluxcondition}
a\int_{\partial \Omega} G_R(X,y)\, d\sigma(y) = 1 
\qquad \text{ for every } X\in \Omega. 
\end{equation} 

\subsection{Maximum principles and monotonicity} 
We begin this section by giving a weak maximum principle to a mixed boundary value problem. 

\begin{lemma}\label{lem:mixedmax}
    Let $B= B(x,r) \subset \Omega$ be an open ball such that $\overline{B} \subset \subset \Omega$ and $\Omega \backslash \overline{B}$ is a 1-sided NTA domain. Let $f \in L^2(\Omega)$ and 
   let $u\in W^{1,2}(\Omega\backslash \overline{B})$ satisfy 
    \begin{equation}\label{2a11}
\int_{\Omega} A\nabla u \nabla \varphi + a \int_{\partial \Omega} u\varphi = \int_{\partial \Omega} f\varphi, \qquad \forall \varphi \in C_c^\infty(\mathbb R^n\backslash \overline{B}).
\end{equation}
 Then if $u \geq 0$ on $\partial B$ and $f\geq 0$ in $\partial \Omega$ we have $u \geq 0$ in $\Omega \backslash \overline{B}$.
\end{lemma}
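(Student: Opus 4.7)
The plan is the usual energy argument: test the weak formulation \eqref{2a11} against $\varphi=-u^-$, where $u^-:=\max(-u,0)$. Then $-u^-=\min(u,0)\le 0$ and $\nabla(-u^-)=\chi_{\{u<0\}}\nabla u$ a.e., while on $\partial\Omega$ one has $u\cdot(-u^-)=(u^-)^2$. The hypothesis $u\ge 0$ on $\partial B$ gives that the trace of $u^-$ on $\partial B$ vanishes, so $u^-$ lies in the natural energy space
\begin{equation*}
V:=\{v\in W^{1,2}(\Omega\setminus\overline{B})\,:\,\Tr_{\partial B}\,v=0\}.
\end{equation*}
Since $\partial B$ is a smooth closed hypersurface lying strictly inside $\Omega$, the space $C_c^\infty(\mathbb{R}^n\setminus\overline{B})$ is dense in $V$ in the $W^{1,2}$-norm (approximate $u^-$ by $(u^--\varepsilon)_+$ to push its support off $\partial B$, then mollify), so \eqref{2a11} extends by continuity to all test functions $\varphi\in V$.

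With $\varphi=-u^-\in V$, the extended identity reads
\begin{equation*}
\int_{\Omega\setminus\overline{B}}A\nabla u^-\cdot\nabla u^-\,dx+a\int_{\partial\Omega}(u^-)^2\,d\sigma=-\int_{\partial\Omega}f\,u^-\,d\sigma.
\end{equation*}
The right-hand side is $\le 0$ because $f\ge 0$ and $u^-\ge 0$, while uniform ellipticity of $A$ combined with $a>0$ forces the left-hand side to be $\ge\lambda\|\nabla u^-\|_{L^2}^2+a\|u^-\|_{L^2(\partial\Omega)}^2\ge 0$. Both sides must therefore vanish, yielding $\nabla u^-\equiv 0$ a.e.\ on $\Omega\setminus\overline{B}$ and $u^-\equiv 0$ $\sigma$-a.e.\ on $\partial\Omega$.

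Since $\Omega\setminus\overline{B}$ is one-sided NTA by hypothesis, it is in particular connected via the Harnack chain condition \ref{CC2}, so $u^-$ must be constant on $\Omega\setminus\overline{B}$. That constant is zero, either because $u^-$ has vanishing trace on $\partial B$ or because it vanishes $\sigma$-a.e.\ on $\partial\Omega$. Hence $u^-\equiv 0$ and $u\ge 0$ on $\Omega\setminus\overline{B}$, as claimed.

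The only step with any substance is the density claim used to make $u^-$ a legal test function, and this is painless precisely because $\partial B$ is smooth and lies strictly inside $\Omega$: the trace on $\partial B$ can be removed by a standard cutoff/mollification scheme without imposing any condition on the rough boundary $\partial\Omega$, which is exactly what the Robin setting permits.
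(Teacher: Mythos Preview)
Your proof is correct and essentially identical to the paper's: both test the weak formulation against the negative part $u^-$ (you use $\varphi=-u^-$, the paper uses $\varphi=u^-$, a trivial sign difference), justify this via the vanishing trace on $\partial B$ plus an approximation argument, and read off $\nabla u^-\equiv 0$ and $u^-|_{\partial\Omega}=0$ from the resulting sign contradiction. The paper is slightly terser on the density step (it invokes that $\Omega$ is an extension domain, then multiplies by cutoffs near $\partial B$), while you spell out the $(u^--\varepsilon)_+$ truncation; both are fine.
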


\begin{proof}
As before, our integral on $\d\Omega$ actually concerns the trace of $u$.
 We consider the Sobolev function $u^- \in W^{1,2}(\Omega \backslash \overline{B})$ and note that 
 by assumption its trace on $\d B$ vanishes (of course $\d B$ is smooth enough for the trace to exist and lie in $L^2$).
 We can approximate $u^-$ (for the $W^{1,2}$ norm) 
 by smooth functions $\varphi \in C^\infty(\R^n)$, because $\Omega$ is a one-sided NTA domain 
 and thus an extension domain (see for instance \cite{Robin1}). In addition, we can further
 approximate $u^-$ by functions of $C_c^\infty(\mathbb R^n\backslash \overline{B})$, because 
 its trace vanishes on $\d B$ and by multiplying by simple cut-off functions.
 This allows to apply \eqref{2a11} also with $\varphi = u_-$, and get that
    $$\int_{\Omega} A\nabla u \nabla u^- + a \int_{\partial \Omega} uu^- = \int_{\partial \Omega} fu^- \geq 0.$$ However, observe that $A \nabla u \nabla u^- \leq 0$ and $uu^- \leq 0$ pointwise, with a strict inequality in a set of positive measure unless $\nabla u^- = 0$ almost everywhere in $\Omega$ and $u^-= 0$ almost everywhere in $\partial \Omega$. The result follows from the continuity of $u$ in $\Omega$ (as a weak solution).  
\end{proof}

It will also be useful for us to compare Robin Green functions with varying parameter $a$.

\begin{lemma}\label{lem:changea}
Fix $Y\in \Omega$ and let $u_a(X) := G_R^a(X,Y)$, $a \in (0,+\infty)$, be the Green function with pole at $Y$ and parameter $a$. Then, for $0 < a < b$, 
\begin{equation}\label{2a12} 
u_b(X) < u_a(X)  \quad \text{ for } X \in \ol\Omega \sm \{Y\}.
\end{equation}
 \end{lemma}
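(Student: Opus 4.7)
My plan is to realize $u_a - u_b$ as a Robin-harmonic extension of a positive boundary datum and then invoke the strict positivity \eqref{2b9}. For $c \in (0, \infty)$, write $b_c(f, g) := \int_\Omega A\nabla f \cdot \nabla g \, dX + c \int_{\partial \Omega} f g \, d\sigma$ for the quadratic form associated to parameter $c$. For any $\varphi \in C^\infty(\Omega)$, the defining identity of Theorem \ref{t:GreenfunctionExistence} gives $b_a(u_a, \varphi) = \varphi(Y) = b_b(u_b, \varphi)$, and the trivial algebraic identity $b_b(u_b, \varphi) = b_a(u_b, \varphi) + (b - a)\int_{\partial \Omega} u_b \varphi \, d\sigma$ yields, by subtraction,
\begin{equation*}
b_a(u_a - u_b, \varphi) = (b - a) \int_{\partial \Omega} u_b \varphi \, d\sigma.
\end{equation*}
Thus $w := u_a - u_b$ weakly solves (in the sense of \eqref{e:weaksol}) the Robin problem with parameter $a$ and data $f := \frac{b - a}{a}\, u_b\big|_{\partial \Omega}$. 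The $|X - Y|^{2-n}$-type singularities of $u_a$ and $u_b$ at $Y$ share the same leading behavior and should cancel, so $w$ is bounded and locally H\"older continuous on $\overline{\Omega}$ (by interior regularity plus Theorem \ref{thm:bdryharnack}); moreover, since $Y$ lies strictly inside $\Omega$, the trace $u_b|_{\partial \Omega}$ is bounded, so $f \in L^\infty(\partial \Omega) \subset L^2(\partial \Omega)$.

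Next, I will apply the representation formula \eqref{e:representationformula} together with the uniqueness of weak $W^{1,2}$ solutions to the Robin problem (Theorem 2.5 in \cite{Robin1}, applied to $w$ after noting that the singular parts cancel so that $w \in W^{1,2}(\Omega)$), to obtain
\begin{equation*}
w(X) = a \int_{\partial \Omega} f(P)\, G_R^a(X, P)\, d\sigma(P) = (b - a) \int_{\partial \Omega} u_b(P)\, G_R^a(X, P)\, d\sigma(P).
\end{equation*}

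Finally, for every $X \in \overline{\Omega} \setminus \{Y\}$ the integrand in this identity is nonnegative and, by the strict positivity \eqref{2b9} applied both to $u_b$ on $\partial \Omega$ and to $G_R^a(X, \cdot)$ on $\overline{\Omega} \setminus \{X\}$, is strictly positive for $\sigma$-a.e. $P \in \partial \Omega$ (the only exceptional point is $P = X$, which is $\sigma$-negligible since $\sigma$ is doubling with $\sigma(\partial \Omega) > 0$). Since $b > a$, this will force $w(X) > 0$, giving the desired strict inequality $u_b(X) < u_a(X)$. The main subtlety I anticipate is justifying that $w$ has the $W^{1,2}$-regularity needed to invoke uniqueness, which requires verifying that the singular parts of $u_a$ and $u_b$ at $Y$ cancel to sufficient order, rather than merely belonging to $W^{1,1}(\Omega)$ separately.
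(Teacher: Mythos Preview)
Your proposal is correct and shares the paper's starting point: both derive the identity
\[
\int_\Omega A\nabla w\cdot\nabla\varphi + a\int_{\partial\Omega} w\varphi\,d\sigma = (b-a)\int_{\partial\Omega} u_b\,\varphi\,d\sigma
\]
for $w=u_a-u_b$. The paper then finishes in one line by invoking the maximum principle from \cite[Corollary~5.3]{Robin1}, which directly yields $w \geq (b-a)\inf_{\partial\Omega}u_b > 0$. You instead pass through the representation formula \eqref{e:representationformula} and conclude positivity from \eqref{2b9} applied to both factors in the integrand. Your route is slightly longer but uses only ingredients already stated in this paper, whereas the paper's route outsources the last step to \cite{Robin1}.

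The $W^{1,2}$ regularity you flag is genuine but routine, and in fact is implicitly needed for the paper's maximum-principle argument as well: since $-\mathrm{div}(A\nabla w)=0$ weakly in all of $\Omega$ (the delta masses cancel), $w$ is a bounded weak solution near $Y$, and Caccioppoli on a ball $B(Y,\delta(Y)/2)\subset\Omega$ gives $\nabla w\in L^2$ there; away from $Y$ both $u_a,u_b$ are already in $W^{1,2}$. So your anticipated subtlety is easily dispatched.
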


 \begin{proof}
   Note that $v(x):= u_a(x) - u_b(x)$. By \eqref{e:Greenidentity},  
  \begin{equation}\label{2a13}
   \int_{\Omega}A\nabla u_a\nabla \varphi+a\int_{\partial \Omega}u_a \varphi d\sigma = \varphi(Y)
\end{equation}
   for $\varphi \in C^{\infty}(\Omega)$, and similarly for $u_b$. We subtract and get that
   $$\int_{\Omega} A\nabla v \nabla \varphi  + a\int_{\partial \Omega} v\varphi d\sigma
   = (b-a)\int_{\partial \Omega} u_b \varphi d\sigma, 
   \qquad \forall \varphi \in C_c^\infty(\mathbb R^n).
   $$

    Since $u_b > 0$ on $\Omega$ and $u_b|_{\partial \Omega}$ is continuous (see \cite[Theorems 4.5 and 5.6]{Robin1}), we can apply the maximum principle (\cite[Corollary 5.3]{Robin1}) with $f = -u_b$
 to obtain that $v \geq (b-a)\inf_{\d \Omega}{u_b}> 0$ (by \eqref{2b9}); we get this 
 first on $\Omega$, and then on $\ol\Omega$ by continuity. 
         \end{proof}

 Arguing as above, we can also compare Dirichlet and Robin Green functions. 

 \begin{lemma}\label{lem:changea2} 
 Denote by $G_D$ the (usual) Green function with Dirichlet conditions on $\d\Omega$.
For any $a\in (0,\infty)$ we have that $$G_R^a(X,Y) > G_D(X,Y), \qquad \forall X\in \ol\Omega \backslash \{Y\}.$$
 \end{lemma}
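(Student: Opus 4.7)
The plan is to set $v(X) := G_R^a(X,Y) - G_D(X,Y)$ and show $v > 0$ on $\ol\Omega \setminus \{Y\}$ by a maximum principle argument, in the spirit of the proof of Lemma \ref{lem:changea}. The strategy has two parts. First, verify that $v$ is a weak solution of the homogeneous equation $-\mathrm{div}(A\nabla v) = 0$ on all of $\Omega$, i.e., that the two Dirac sources at $Y$ cancel. Second, observe that the boundary trace of $v$ coincides with that of $G_R^a(\cdot, Y)$, which is strictly positive by \eqref{2b9}.

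For the first part, I would test only with $\varphi \in C_c^\infty(\Omega)$, which vanishes identically on $\partial\Omega$. Then \eqref{e:Greenidentity} applied to $G_R^a$ gives $\int_\Omega A\nabla G_R^a(\cdot,Y) \nabla \varphi = \varphi(Y)$ (the Robin boundary contribution drops because $\varphi$ vanishes on $\partial\Omega$), while the defining identity for $G_D$ gives $\int_\Omega A\nabla G_D(\cdot,Y) \nabla \varphi = \varphi(Y)$. Subtracting, $\int_\Omega A\nabla v \nabla \varphi = 0$ for every such $\varphi$, so $v$ is weakly $A$-harmonic in $\Omega$. Combined with the boundary regularity of $G_R^a$ from \cite[Theorem 4.1]{Robin1} and classical regularity for $G_D$, this makes $v$ continuous on $\ol\Omega \setminus \{Y\}$, and by De Giorgi--Nash--Moser locally H\"older continuous inside $\Omega$. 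On $\partial\Omega$ one has $G_D \equiv 0$, so the trace of $v$ equals the trace of $G_R^a(\cdot, Y)$, which is bounded away from zero by \eqref{2b9} together with continuity and compactness of $\partial\Omega$.

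Once these two points are in hand, $v$ is a continuous $A$-harmonic function on $\ol\Omega$ with $\inf_{\partial\Omega} v > 0$, and the weak maximum principle forces $v > 0$ everywhere on $\Omega$; on $\partial\Omega$ we already have $v = G_R^a(\cdot,Y) > 0$ directly, which yields the statement on all of $\ol\Omega \setminus \{Y\}$. The main obstacle is justifying that $v$ is bounded (and belongs to $W^{1,2}_{\mathrm{loc}}$) across the pole $Y$ itself, since both $G_R^a(\cdot, Y)$ and $G_D(\cdot, Y)$ blow up there like $|X - Y|^{2-n}$. The standard remedy is the decomposition of the Green function at its pole into a fundamental-solution singularity $\Gamma_A(\cdot, Y)$ plus a locally bounded, locally $W^{1,2}$ corrector; the leading singular term depends only on the operator near $Y$ and not on the boundary conditions, so it cancels in the difference $v$, which thus extends to a bounded weakly $A$-harmonic function through $Y$ and is amenable to the interior regularity and maximum principle arguments above.
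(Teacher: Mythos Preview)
Your proposal is correct and takes a somewhat different route from the paper. The paper treats $v$ as a weak Robin solution (using that $G_D$ has zero trace on $\partial\Omega$) and applies the Robin maximum principle of \cite[Corollary 5.3]{Robin1} to obtain $v \geq 0$, then upgrades to the strict inequality by interposing $G_R^b$ for some $b > a$ via Lemma \ref{lem:changea}. You instead restrict to test functions $\varphi \in C_c^\infty(\Omega)$, recognize $v$ as $A$-harmonic throughout $\Omega$ (including across $Y$), and invoke the classical Dirichlet maximum principle with the strictly positive boundary trace coming directly from \eqref{2b9}; this yields $v > 0$ in a single step. Your approach is more self-contained in that it avoids both the Robin-specific maximum principle and the separate monotonicity argument, at the price of having to justify explicitly that $v$ is bounded and $W^{1,2}_{\mathrm{loc}}$ across the pole. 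Your sketch via the fundamental-solution decomposition is the standard way to handle this, and indeed the paper uses essentially the same cancellation-of-singularities observation elsewhere, near \eqref{4a7}.
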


 \begin{proof}
Here the analogue of \eqref{e:Greenidentity} or \eqref{2a13} for $u =G_D(\cdot,Y)$ is that 
$ \int_{\Omega}A\nabla u \nabla \varphi  = \varphi(Y)$, hence $v = u_a- u$ satisfies
$\int_{\Omega} A\nabla v \nabla \varphi  + a\int_{\partial \Omega} v\varphi d\sigma = 0$.
The maximum principle (\cite[Corollary 5.3]{Robin1}) still applies here, and yields $v \geq 0$ on $\ol\Omega$.
We announced a strict inequality, so take $b > a$ and observe that $G_R^a > G_R^b \geq G_D$.
\end{proof}

\section{Variations of $u = G(\cdot, Y)$  on annuli}\label{s:Annuli}

In this section we fix $Y \in \Omega$ (but the constants in our estimates will not depend on $Y$)
and consider a nonnegative solution $u$ defined in $\Omega \sm \{Y\}$ and with vanishing Robin 
boundary conditions on $\d\Omega$. Of course we will apply this with $u(X) = G_R^a(X,Y)$, but for this section the
main assumptions are that
\begin{equation} \label{3a1}
(\Omega,\sigma) \text{ is a one-sided NTA pair of mixed dimension, as in Definition \ref{d:mixed},}
\end{equation}
\begin{equation} \label{3a2}
\text{$A$ is uniformly elliptic,}
\end{equation}
and
\begin{equation} \label{3a3}
\begin{aligned}
&\text{$u$ is a weak solution of $-{\rm div} A \nabla u = 0$ in $\Omega \sm \{Y\}$, }
\\&
\text{with vanishing Robin condition at the boundary.}
\end{aligned}
\end{equation}
By this we mean that $u \in W^{1,2}(\Omega \sm B(Y,r))$ for $r > 0$ and 
\begin{equation}\label{3a4}
\frac{1}{a}\int_{\Omega \sm \{ Y \}} A\nabla u \nabla \varphi 
+ \int_{\partial \Omega} u\varphi \, d\sigma 
= 0 
\qquad \text{for every } \varphi \in C_c^{\infty}(\R^n \sm \{ Y \}).
\end{equation}
In this section we prove various inequalities on $u$ and $\nabla u$, that we'll combine later to estimate the Green 
function. We start with a Caccioppoli inequality on annuli. 

 \begin{lemma}\label{l3a1} 
Assume \eqref{3a1}, \eqref{3a2}, and \eqref{3a3}.
Then for $0 < r < \diam(\Omega)$,
\begin{equation} \label{3a5}
\int_{\Omega \sm B(Y,2r)} |\nabla u|^2 \leq C r^{-2}\int_{\Omega \cap B(Y,2r) \sm B(Y,r)} u^2,
\end{equation}
where $C$ depends on the ellipticity constants for $A$, but not on the geometry of $\Omega$
(in fact, we only use \eqref{3a1} to guarantee that $\Omega$ is an extension domain).
 \end{lemma}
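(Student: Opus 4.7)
The plan is to run a standard Caccioppoli-type argument using a radial cutoff around the pole $Y$, and then to extract the boundary annulus on the right-hand side from the gradient of the cutoff. The only mildly non-routine point is the justification that $\eta^2 u$ is a legitimate test function in \eqref{3a4}, which we handle by density using that $\Omega$ is an extension domain.

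First I would fix a smooth cutoff $\eta \in C^\infty(\R^n)$ with $\eta \equiv 0$ on $B(Y,r)$, $\eta \equiv 1$ on $\R^n \sm B(Y,2r)$, and $|\nabla \eta| \leq C/r$, supported in the annulus $B(Y,2r)\sm B(Y,r)$. Since $\Omega$ is bounded, $\eta^2 u$ is supported away from $Y$ and, because $u \in W^{1,2}(\Omega \sm B(Y, r/2))$, lies in $W^{1,2}(\Omega)$. By the extension property provided by \eqref{3a1}, one can extend $\eta^2 u$ to a function in $W^{1,2}(\R^n)$ supported in $\R^n \sm \overline{B(Y,r/2)}$, and then mollify to obtain an approximating sequence in $C_c^\infty(\R^n \sm \{Y\})$ converging in $W^{1,2}$; the traces on $\d\Omega$ converge in $L^2(\sigma)$ (see the trace discussion in \cite{Robin1}, \cite{DFM23}). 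Passing to the limit in \eqref{3a4} yields
\begin{equation*}
\frac{1}{a}\int_{\Omega} A \nabla u \cdot \nabla(\eta^2 u)\, dX + \int_{\partial\Omega} \eta^2 u^2\, d\sigma = 0.
\end{equation*}

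Next I would expand $\nabla(\eta^2 u) = \eta^2 \nabla u + 2\eta u \nabla \eta$, drop the nonnegative boundary term, and split
\begin{equation*}
\int_{\Omega} \eta^2 A \nabla u \cdot \nabla u\, dX \leq 2\int_{\Omega} \eta |u|\, |A \nabla u \cdot \nabla \eta|\, dX.
\end{equation*}
Ellipticity bounds the left side below by $\lambda \int \eta^2 |\nabla u|^2$, and Cauchy--Schwarz with a small parameter $\varepsilon > 0$ gives
\begin{equation*}
2\eta |u| \, |A \nabla u \cdot \nabla \eta| \leq \varepsilon \eta^2 |\nabla u|^2 + C_\varepsilon u^2 |\nabla \eta|^2.
\end{equation*}
Absorbing the $\varepsilon$-term into the left side and using $|\nabla \eta| \leq C/r$ with $\operatorname{supp} \nabla \eta \subset B(Y,2r) \sm B(Y,r)$ yields
\begin{equation*}
\int_{\Omega} \eta^2 |\nabla u|^2 \, dX \leq C r^{-2} \int_{\Omega \cap B(Y,2r)\sm B(Y,r)} u^2\, dX,
\end{equation*}
which, since $\eta \equiv 1$ on $\Omega \sm B(Y,2r)$, implies \eqref{3a5}.

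The main (and only real) obstacle is step one: verifying that $\varphi = \eta^2 u$ is an admissible test function, since the a priori class of test functions is $C_c^\infty(\R^n \sm \{Y\})$ and $u$ is not smooth. Once this density step is in hand via the extension property of one-sided NTA domains, the rest is a standard energy estimate in which ellipticity, Cauchy--Schwarz, and the nonnegativity of the boundary term do all of the work.
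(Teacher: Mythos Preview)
Your proof is correct and follows essentially the same route as the paper: choose a radial cutoff vanishing on $B(Y,r)$ and equal to $1$ outside $B(Y,2r)$, test with $\theta^2 u$, drop the nonnegative Robin boundary term, expand, and absorb via ellipticity. The only cosmetic difference is that you use Young's inequality with a parameter while the paper applies Cauchy--Schwarz and then simplifies; both are standard and equivalent.
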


\begin{proof}
We almost have the same result in \cite{Robin1}, but write a short proof anyway.
We use the equation \eqref{3a4} for $u$, and take $\varphi = \theta^2 u$, 
where $\theta$ is a bump function supported away from
$\ol B(Y,r)$. Since $\varphi$ is well approximated by smooth functions, we also get that
\begin{equation} \label{3a6}
\frac{1}{a}\int_{\Omega \sm \{ Y \}} A\nabla u \nabla \varphi 
\leq \frac{1}{a}\int_{\Omega \sm \{ Y \}} A\nabla u \nabla \varphi 
+ \int_{\partial \Omega} u\varphi d\sigma  = 0.
\end{equation}
Thus the estimate for Robin is even better than the one for Neumann. 
We pick $\theta$ so that $\theta = 1$ on $\R^n \sm B(0,2r)$, $0 \leq \theta \leq 1$ everywhere,
and $|\nabla \theta| \leq 2 r^{-1}$. We write $\nabla \varphi = \theta^2 \nabla u + 2 u \theta \nabla \theta$,
get that 
$$
\int \theta^2 A \nabla u \nabla u + \int 2 u \theta A \nabla u \nabla \theta 
= \int_{\Omega \sm \{ Y \}} A\nabla u \nabla \varphi  \leq 0, 
$$
so 
$$
\begin{aligned}
\int \theta^2 |\nabla u |^2 &\leq C \int \theta^2 A \nabla u \nabla u 
\leq 2C \Big| \int u \theta A \nabla u \nabla \theta \Big|
\\
&\leq 2 C ||A||_\infty 
\Big\{ \int_{B(Y,2r) \sm B(0,Y)} u^2 |\nabla \theta|^2 \Big\}^{1/2}
\Big\{ \int_{B(Y,2r) \sm B(0,Y)} \theta^2 |\nabla u|^2 \Big\}^{1/2}
\\
& \leq C  \Big\{ \int_{B(Y,2r) \sm B(Y,r)} r^{-2} u^2 \Big\}^{1/2}
\Big\{ \int_{B(Y,2r) \sm B(Y,r)} \theta^2 |\nabla u|^2 \Big\}^{1/2}.
\end{aligned}
$$
We notice that the second integral is equal to the left-hand side,
simplify, square, and get \eqref{3a5}.
\end{proof}

\begin{remark}\label{rk31}
With almost the same proof, we also get that
\begin{equation} \label{3a7}
\int_{\Omega \sm B(Y,2r)} |\nabla u|^2 \leq C r^{-2}\int_{\Omega \cap B(Y,2r) \sm B(Y,r)} (u-u_0)^2,
\end{equation}
where $u_0 = \inf_{B(Y,2r) \sm B(Y,r)} u$.
First observe that the infimum makes sense, because $u$ is continuous on $\Omega$ and even up to the boundary.
For the proof, we now take $\varphi = \theta^2 (u-u_0)_+$, which vanishes on
$\ol B(Y,r)$ and is equal to $\theta^2 (u-u_0)$ on $B(Y,2r) \sm B(Y,r)$. We still get \eqref{3a6}, and 
now $\nabla \varphi = \theta^2 \nabla u + 2 (u-u_0) \theta \nabla \theta$, so we can follow the rest of the 
proof with $u$ replaced by $u-u_0$ and get the result.
\end{remark}

\ms
Next we use Lemma \ref{lem:donutsareconnected} and the boundary Harnack inequality to prove that,
in Neumann-like situations, $u$ does not vary too much in not-too-thick annuli centered at $Y$. Set
\begin{equation}\label{3a8} 
\cA(r) = \Omega \cap B(Y,5r) \sm B(Y,r)
\end{equation}
for $0 < r < \diam(\Omega)/4$ (we want to make sure that $\cA(r) \neq \emptyset$), and then
\begin{equation}\label{3a9}
m(r) = \inf_{X \in \cA(r)} u(X) \ \text{ and } \ M(r) = \sup_{X \in \cA(r)} u(X).
\end{equation}
 
 \begin{lemma}\label{l3a2} 
Assume \eqref{3a1}, \eqref{3a2}, and \eqref{3a3}, and also that 
\begin{equation}\label{3a10}
a r^{2-n} \sigma(\d\Omega \cap B(Y,10r)) \leq 1.
\end{equation}
There is a constant $C_m \geq 1$, 
that depends only on the geometric constants 
for $(\Omega, \sigma)$ and the ellipticity constant for $A$, such that 
then
\begin{equation}\label{3a11} 
u(X) \leq C_m m(r) \ \text{ for  $X \in \cA(r)$}.
\end{equation}
 \end{lemma}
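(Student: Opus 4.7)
The plan is to prove \eqref{3a11} by a Harnack chain argument. I will connect arbitrary points $X_1, X_2 \in \cA(r)$ by a chain of balls avoiding the pole $Y$ (produced by Lemma \ref{lem:donutsareconnected}) and alternate between interior Harnack on balls interior to $\Omega$ and the boundary Harnack inequality (Theorem \ref{thm:bdryharnack}) on balls meeting $\partial \Omega$. The assumption \eqref{3a10} is precisely the smallness condition that lets Theorem \ref{thm:bdryharnack} apply at the scale required.

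First, given $X_1, X_2 \in \ol{\cA(r)}$, the constraints $|X_i - Y| \in [r, 5r]$ force $|X_1 - X_2| \leq 10 r$, so I apply Lemma \ref{lem:donutsareconnected} with $z = Y$, $K = 10$, and $C_0$ a large constant to be fixed. This yields a chain of $N+1$ overlapping balls $B_j = B(x_j, r_j)$ with $x_0 = X_1$, $x_N = X_2$, $x_j \in \ol\Omega$, $r_j \geq c r$, $Y \notin C_0 B_j$, and $N$ uniformly bounded. The constraint $|x_j - Y| \geq C_0 r_j$ combined with the upper bound in \eqref{2d2} also gives $r_j \leq C r / C_0$, so by taking $C_0$ large I can make $r_j / r$ as small as I please.

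On each $B_j$ I would apply one of two forms of Harnack. If $\delta(x_j) \geq 2 r_j$, then $2 B_j \subset \Omega \setminus \{Y\}$ (using $C_0 \geq 2$) and interior Harnack gives $\sup_{B_j} u \leq C \inf_{B_j} u$. Otherwise, I pick $Q_j \in \partial \Omega$ with $|Q_j - x_j| = \delta(x_j) < 2 r_j$ and apply Theorem \ref{thm:bdryharnack} at $Q_j$ with radius $3 r_j$; choosing $C_0 > 3 K_0^2 + 2$ (where $K_0$ is the constant in that theorem) guarantees that $B(Q_j, 3 K_0^2 r_j)$ avoids $Y$, so $u$ is a homogeneous Robin solution there. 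To verify the remaining hypothesis $a (3 r_j)^{2-n} \sigma(B(Q_j, 3 r_j)) \leq 1$, I note that $B(Q_j, 3 r_j) \subset B(Y, 10 r)$ and use the lower dimension bound \eqref{1n3} together with \eqref{3a10}:
\begin{equation*}
a (3 r_j)^{2-n} \sigma(B(Q_j, 3 r_j)) \leq c_d^{-1} (3 r_j / r)^{d - (n-2)} \cdot a r^{2-n} \sigma(\partial \Omega \cap B(Y, 10 r)) \leq c_d^{-1} (3 r_j / r)^{d - (n-2)}.
\end{equation*}
Since $d > n - 2$ and $r_j / r$ is small (for $C_0$ large), this is $\leq 1$, and Theorem \ref{thm:bdryharnack} then delivers $\sup_{B_j \cap \Omega} u \leq \theta^{-1} \inf_{B_j \cap \Omega} u$.

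Chaining these $N = O(1)$ estimates across the overlapping balls yields $u(X_1) \leq C^N u(X_2)$, and taking sup over $X_1$ and inf over $X_2$ in $\cA(r)$ gives \eqref{3a11}. The main obstacle I anticipate is precisely the verification of the smallness condition for the boundary Harnack at the chain scale, since \eqref{3a10} is stated only at scale $r$ at $Y$, not at the nearby boundary points $Q_j$. Here the strict dimension inequality $d > n - 2$ enters essentially: it produces the factor $(r_j / r)^{d - (n-2)}$ that I can drive below $c_d$ by enlarging $C_0$. If only $d \geq n - 2$ were available the factor would not decay, and one would need a different argument in the Neumann regime.
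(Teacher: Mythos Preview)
Your approach is essentially identical to the paper's: chain via Lemma~\ref{lem:donutsareconnected}, interior Harnack on balls well inside $\Omega$, and Theorem~\ref{thm:bdryharnack} on balls meeting $\partial\Omega$, with the index condition verified through the decay \eqref{1n3} and the strict inequality $d>n-2$.

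There is one small inaccuracy: the containment $B(Q_j,3r_j)\subset B(Y,10r)$ is not justified. By \eqref{2d2} the chain centers satisfy only $|x_j-Y|\le C(|X_1-Y|+|X_2-Y|)\le 10Cr$ with a geometric constant $C$ that can exceed $1$, so $Q_j$ may lie outside $B(Y,9r)$. The paper handles this by (i) separating out the trivial case where $B(Y,6r)\cap\partial\Omega=\emptyset$, in which $\cA(r)$ is well inside $\Omega$ and classical Harnack suffices, and (ii) in the main case using the doubling property \eqref{1n4} (applied at a boundary point in $B(Y,6r)$) to compare $\sigma(B(\xi_j,R))$ with $\sigma(B(Y,10r))$ up to a geometric constant, which the factor $(r_j/r)^{d+2-n}$ then absorbs for $C_0$ large. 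With that routine amendment your argument goes through.
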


That is, $u$ does not vary much on $\cA(r)$. 
This will be useful to simply further discussions. The quantity 
\begin{equation} \label{3c12}
I_Y(r) = a r^{2-n} \sigma(\d\Omega \cap B(Y,r))
\end{equation}
will appear a lot in the sequel. It is dimensionless (as much as possible in our mixed dimension context),
and we see it as an index that describes the position of $B(Y, r)$ in the Neumann-Dirichlet scale. 
Note, but do not be worried by, the fact that $I_Y(r) = 0$ when $r \leq \delta(Y)$. In most cases we will take $Y \in \d\Omega$.
And we shall often use the fact that for $0 < s < r \leq \diam(\Omega)$,
\begin{equation}\label{3b12}
I_Y(s) := a s^{2-n} \sigma(\d\Omega \cap B(Y,s)) \leq C (s/r)^{d+2-n} I_Y(r) \leq C  I_Y(r),
\end{equation}
where $d > n-2$ is as in \eqref{1n3}.
Of course this is trivial when $s \leq \delta(Y)$; otherwise let $\xi \in \d\Omega$ be such that 
$|\xi-Y| = \delta(Y) \leq s$, and observe that by \eqref{1n3}, 
$$
\sigma(\d\Omega \cap B(Y,s)) \leq \sigma(\d\Omega \cap B(\xi,2s))
\leq c_d^{-1} (s/r)^d  \sigma(\d\Omega \cap B(\xi,2r)).
$$
We may assume that $r \geq 2s$, because otherwise $I_Y(s) \leq 2^{n-2} I_Y(r)$ by definition,
and then, by the doubling property of $\sigma$, 
$$ 
\sigma(\d\Omega \cap B(\xi,2r)) \leq C \sigma(\d\Omega \cap B(\xi,r/2)) \leq C \sigma(\d\Omega \cap B(Y,r)))
$$
because $|\xi-Y|  \leq s \leq r/2$. Hence $\sigma(\d\Omega \cap B(Y,s)) \leq C (s/r)^d \sigma(\d\Omega \cap B(Y,r)))$,
and \eqref{3b12} follows.

\begin{proof} 
Let $X, Z \in \cA(r)$ be given; we want to show that $u(Z) \leq C u(X)$. 
Set $C_0 = 12 K^2$, where $K$ is as in Theorem \ref{thm:bdryharnack}.
By Lemma \ref{lem:donutsareconnected}, we can connect $X$ to $Z$
by a chain of less than $C = C(K)$ overlapping balls $B_j = B(X_j, r_j)$, so that
$X_j \in \ol\Omega$ and  $Y \notin C_0 B_j$, and \eqref{3a11} will follow as soon as we prove that 
for all $j$,
\begin{equation}\label{3a12} 
\sup_{B_j} u \leq C \inf_{B_j} u,
\end{equation}
for some $C$ that depends only on the geometric and ellipticity constants.
If $2B_j \subset \Omega$, this follows from the regular Harnack inequality, so we
may assume that $2B_j$ meets $\d \Omega$ at some point $\xi_j$. 

For this it will be enough to apply Theorem \ref{thm:bdryharnack} to $u$ and the ball $B(\xi_j, 3r_j)$.
Note that Theorem \ref{thm:bdryharnack} only requires that $u$ is a weak solution in $\Omega \cap B(\xi_j, 3K^2 r_j)$ which is good since $u$ is not a solution near $Y$ but the condition that $Y\notin C_0B_j$ guarantees that $u$ is a weak solution in $\Omega \cap B(\xi_j, 3K^2 r_j)$.

There is a last condition to check, namely that 
$I_{\xi_j}(3r_j) : = a (3r_j)^{2-n} \sigma(B(\xi_j, 3r_j)) \leq 1$. 
First observe that by the construction in Lemma \ref{lem:donutsareconnected}, all the $\xi_j$ lie in $B(Y, R)$,
where $R = Cr$ with a constant $C$ that depends only on the Harnack chain constant for $\Omega$, 
and not on $C_0$. 

Next, $ \sigma(B(\xi_j, 3r_j)) \leq C (r_j/ R)^{d}  \sigma(B(\xi_j, R))$ by the decay property \eqref{1n3}.
First assume $B(Y, 6r)$ meets $\d\Omega$ (this will be the main case). Then by the doubling property
\eqref{1n4}, $\sigma(B(\xi_j, R)) \leq C \sigma(B(Y, 10r))$ and hence
\begin{equation}\label{3b14}
\begin{split}
I_{\xi_j}(3r_j) = a (3r_j)^{2-n} \sigma(B(\xi_j, 3r_j)) \leq C a r_j^{2-n} (r_j/ R)^{d}  \sigma(B(Y, 10r))
\\
= C (r_j/r)^{2-n} (r_j/ R)^{d} I_{Y}(10r)
\leq C (r_j/R)^{d+2-n} I_{Y}(10r) \leq C (r_j/R)^{d+2-n}
\end{split}
\end{equation}
by \eqref{3a10}.
But  $r_j/R \leq C_0^{-1}$ because $R \geq |Y-X_j| \geq C_0 r_j$ (recall that $Y \notin C_0 B_j$).
Recall also that $d > n-2$,
and now we may assume that $C_0$ was chosen so large, compared to the Harnack chain constant for $\Omega$, that
$(r_j/ R)^{d+2-n} \leq C_0^{n-d-2}$ is much smaller than the constant $C$ above; then $I_{\xi_j}(3r_j)\leq 1$ 
as desired, we can apply Theorem~\ref{thm:bdryharnack}, we obtain \eqref{3a12}, and 
\eqref{3a11} follows.

We are left with the case when $B(Y, 6r) \subset \d\Omega$. In this case $\cA(r)$ lies well inside of $\Omega$,
and \eqref{3a11} follows from the classical Harnack inequality. This completes our proof of Lemma~\ref{l3a2}.
\end{proof}

\begin{remark} \label{rk32}
The conclusions of Lemma \ref{l3a2} stay true when we only assume that 
$a r^{2-n} \sigma(\d\Omega \cap B(Y,10r)) \leq C_1$ instead of \eqref{3a10}, with, for instance, 
$C_1$ depending only on geometric constants,
and then of course the constant $C_m$ in \eqref{3a11} becomes larger, depending on $C_1$.
We just keep the same proof and take $C_0$ even larger in the last lines of the proof, except that
now \eqref{3b14} yields $I_{\xi_j}(3r_j) \leq C (r_j/R)^{d+2-n} C_1$, 
and we need to take $C_0$ a little bit larger to compensate for the extra $C_1$.
\end{remark}

\ms
The next tool of this section is a lemma that tells us that, at least in the Neumann case,
the amount of energy $\int A \nabla u \nabla u$ localized in the region where $u(X)$ lies in an interval
is proportional to the length of this interval in the range of $u$, and with an additional error term for Robin solutions. 
This will be very useful to estimate the  variations of $u$ in annular regions, even though some of the
authors are disappointed that in the present paper we do not use the full power of the lemma. In fact, the lemma is even stronger than it appears here as the regions do not have to be annular as soon as some mild geometric conditions are satisfied, we just have to be sure of the separation of values on the inner and outer boundary. 

\begin{lemma}\label{l3a3}[The balance lemma.] 
Assume  \eqref{3a1}, \eqref{3a2}, and \eqref{3a3}, and let 
$0 < r < R \leq \diam(\Omega)/3$ be given. Assume $Y\in B(X,r/2)$ and
set $\tilde{\Omega} := \Omega \cap(B(X,R) \sm B(X,r))$, 
$\d_{\text{out}} = \Omega \cap \d B(X,R)$, and $\d_{\text{in}} = \Omega \cap \d B(X,r)$. Assume that 
$$\sup_{\d_{\text{out}}} u < \inf_{\d_{\text{in}}}  u$$
and then take numbers $t_i$, $1 \leq i \leq 4$, so that
\begin{equation} \label{3a13} 
\sup_{\d_{\text{out}}} u \leq t_1 < t_2 < t_3 < t_4 \leq \inf_{\d_{\text{in}}}  u,
\end{equation}
and let $U_{12} = \big\{ x\in \tilde{\Omega}\, ; \, t_1 < u(x) < t_2\big\}$ and
$U_{34} = \big\{ x\in \tilde{\Omega}\, ; \, t_3 < u(x) < t_4\big\}$. Then
\begin{equation}\label{3a14} 
\frac{1}{t_2 - t_1} \int_{U_{12}} A \nabla u \nabla u
\leq
\frac{1}{t_4 - t_3} \int_{U_{34}} A \nabla u \nabla u
\leq 
\frac{1}{t_2 - t_1} \int_{U_{12}} A \nabla u \nabla u
+ a\int_{\tilde{\Omega} \cap \d\Omega}  u(x) d\sigma(x).
\end{equation}
The estimate \eqref{3a14} also holds when  
\begin{equation} \label{3b17}
R = \diam(\Omega) \ \text{ and } \  0 = t_1 < t_2 < t_3 < t_4 \leq \inf_{\d_{\text{in}}}  u
\end{equation}
(and so there is no $\d_{\text{out}}$).
\end{lemma}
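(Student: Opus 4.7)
The plan is to test the weak formulation \eqref{3a4} with a piecewise-linear truncation of $u$ in the $u$-direction. Define a Lipschitz function $\Phi\colon\R \to [0,1]$ by setting $\Phi(s)=0$ for $s \leq t_1$ and $s \geq t_4$, $\Phi(s) = (s-t_1)/(t_2-t_1)$ on $[t_1,t_2]$, $\Phi(s) = 1$ on $[t_2,t_3]$, and $\Phi(s) = (t_4-s)/(t_4-t_3)$ on $[t_3,t_4]$, so that $\Phi$ is a trapezoidal cutoff. The candidate test function is $\varphi = \Phi(u)$; by the chain rule $\nabla \varphi = \Phi'(u) \nabla u$ almost everywhere, with
\begin{equation*}
\Phi'(u) = \frac{1}{t_2-t_1}\mathbf{1}_{U_{12}} - \frac{1}{t_4-t_3}\mathbf{1}_{U_{34}}.
\end{equation*}
The hypotheses $u \leq t_1$ on $\d_{\text{out}}$ and $u \geq t_4$ on $\d_{\text{in}}$ force $\Phi(u)$ to vanish on these internal boundaries of $\wt\Omega$, so the support of $\varphi$ sits inside the closure of $\wt\Omega$.

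Next I would verify that $\varphi$ is an admissible test function in the homogeneous Robin equation. Since $u \in W^{1,2}(\Omega\setminus B(Y,r))$ and $\Phi$ is bounded Lipschitz, $\varphi \in W^{1,2}(\Omega)$. For the support to be bounded away from $Y$, one uses $\wt\Omega \subset \Omega \setminus \overline{B(X,r)}$ and $Y \in B(X,r/2)$, so $\mathrm{dist}(\mathrm{supp}\,\varphi, Y) \geq r/2$. (To accommodate the full generality of \eqref{3a3}, one would multiply $\varphi$ by a radial cutoff $\psi_\eps$ vanishing on $B(Y,\eps)$; the extra term $\int \Phi(u)\, A\nabla u \cdot \nabla\psi_\eps$ tends to $0$ as $\eps\to 0$ by the local integrability of $\nabla u$ for Green-function-like solutions.) By a standard density argument, one may insert $\varphi$ into \eqref{3a4} and multiply through by $a$ to get
\begin{equation*}
\int_\Omega A\nabla u \nabla \varphi\, dx + a\int_{\d\Omega} u\, \varphi\, d\sigma = 0.
\end{equation*}

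Substituting the explicit form of $\Phi'(u)$ and rearranging produces the single identity
\begin{equation*}
\frac{1}{t_4-t_3}\int_{U_{34}} A\nabla u \nabla u \;=\; \frac{1}{t_2-t_1}\int_{U_{12}} A\nabla u \nabla u \;+\; a\int_{\d\Omega \cap \wt\Omega} u\,\Phi(u)\, d\sigma,
\end{equation*}
since $\varphi$ vanishes on $\d\Omega \setminus \wt\Omega$. The boundary integral is nonnegative because $\Phi \geq 0$ and $u \geq 0$ on $\d\Omega$ (by \eqref{2b9}), yielding the first inequality of \eqref{3a14}; bounding $\Phi(u) \leq 1$ in the same term produces the second inequality. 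The limiting case \eqref{3b17} uses the same test function with $t_1=0$: there is no outer boundary $\d_{\text{out}}$ to enforce vanishing on, and although $\{u=0\}$ may have positive measure, $\nabla u = 0$ almost everywhere there, so $\int_{U_{12}} A\nabla u \nabla u$ is unaffected. The main obstacle is the justification of $\varphi=\Phi(u)$ as an admissible test function in this rough Robin setting, a technical step requiring the density of smooth compactly supported functions together with a cutoff argument near $Y$; once that is in place, both inequalities drop out of the single identity by sign/monotonicity considerations.
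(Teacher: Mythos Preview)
Your proof is correct and essentially identical to the paper's: both test \eqref{3a4} against $\varphi=h\circ u$ for the same trapezoidal cutoff $h$, obtain the exact identity \eqref{3a20}, and read off both inequalities from $0\le h\le 1$ and $u\ge 0$. The only difference is in the technical justification of admissibility---the paper perturbs $t_1,t_4$ inward so that $\varphi$ vanishes on neighborhoods of $\d_{\text{in}},\d_{\text{out}}$ before invoking the extension property, whereas you appeal directly to density plus a (redundant, since $\mathrm{supp}\,\varphi$ is already at distance $\ge r/2$ from $Y$) radial cutoff near $Y$.
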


%

This lemma seems to be new, even though its main idea, of using a competitor based on the values of
$u$ rather than the geometry, is not. 

Note that the lemma does not require $\tilde{\Omega}$ to be centered at $Y$,
and the proof does not use the fact that $u$ is a solution in $B(X,r/2) \sm \{Y\}$, say, but this is the context where we shall use it.
The fact that it is a solution in a domain a little larger than $\tilde{\Omega}$ is useful, or at least convenient,
because this way we know that $u$ is H\"older continuous all the way to 
$\d_{\text{in}}$ and $\d_{\text{out}}$, and for instance \eqref{3a13} is then easier to define.

Notice that  when $a=0$ (the purely Neumann case), we simply get that 
$$
\frac{1}{t_2 - t_1} \int_{U_{12}} A \nabla u \nabla u =
\frac{1}{t_4 - t_3} \int_{U_{34}} A \nabla u \nabla u.
$$ 
We will use the lemma only at small scales, close to Neumann, and will see \eqref{3a14} as a perturbation of this.
Finally observe that the two inequalities of \eqref{3a14} have versions for subsolutions or supersolutions, i.e.,
$h$ and $\varphi$ are nonnegative.

\begin{proof}
We intend to use the fact that by the definition \eqref{3a4}, 
\begin{equation} \label{3a15}
\int_{\Omega \sm \{ Y \}} A\nabla u \nabla \varphi 
+ a \int_{\partial \Omega} u\varphi \, d\sigma = 0 
\end{equation}
for every $\varphi \in C^\infty_c(\R^n \sm \{Y \})$.
On $\tilde{\Omega}$ we wish to take $\varphi$ of the form $\varphi = h \circ u$, where 
\begin{equation} \label{3a16}
h(t) = 0 \ \text{ when $t \leq t_1$ or $t \geq t_4$,} 
\end{equation}
\begin{equation} \label{3a17}
h(t) = \frac{t-t_1}{t_2 - t_1}
\ \text{ on } [t_1, t_2],
\end{equation}
\begin{equation} \label{3a18}
h(t) = 1
\ \text{ on } [t_2, t_3], 
\end{equation}
and 
\begin{equation} \label{3a19}
h(t) = 1 - \frac{t-t_3}{t_4 - t_3}
\ \text{ on } [t_3,t_4].
\end{equation}
Of course we need to define $\varphi$ in $\tilde{\Omega}^c\cap \Omega$; we can do so by letting $\varphi = 0$ on $\tilde{\Omega}^c\cap \Omega$ (note that $\varphi = 0$ on $\partial_{\text{in}}$ and $\partial_{\text{out}}$). In particular $\varphi$ is supported away from $Y$. We deal with issues of smoothness momentarily, but for now suppose that \eqref{3a15} holds for this $\varphi$. 

 Since 
$\nabla \varphi(X) = h'(u(X)) \nabla u(X)$, we get that
\begin{equation} \label{3a20}
\frac{1}{t_2 - t_1} \int_{U_{12}} A \nabla u \nabla u 
- \frac{1}{t_4 - t_3} \int_{U_{34}} A \nabla u \nabla u
+ a \int_{\tilde{\Omega} \cap \d\Omega}  h(u(x)) u(x)  d\sigma(x) = 0
\end{equation}
 then \eqref{3a14} follows, because $u \geq 0$ and $0 \leq h \leq 1$.

We now turn to the issue that $\varphi$ is not $C_c^\infty$. Note, by \eqref{3a13}, that $\varphi$
it is continuous across $\d_{\text{in}}$ and $\d_{\text{out}}$ (if $\d_{\text{out}} \neq \emptyset$), and more precisely it has vanishing traces
on $\partial\tilde{\Omega}\cap \Omega$. In fact, to be completely safe, we can first make $h$ 
start at some $t'_1 > t_1$ (very close to $t_1$) and end  at $t'_4 < t_4$ (and then we take a limit),
and this way $\varphi$ is even smooth (in fact vanishes) on neighborhoods of $\d_{\text{in}}$ and $\d_{\text{out}}$.
In light of this we may assume $\varphi \in W^{1,2}(\Omega)$ and, using that $\Omega$ is an extension domain, we can approximate it by $C_c^\infty(\mathbb R^n\sm \{Y\})$ functions (in the $W^{1,2}$ norm) and then 
we can take limits in \eqref{3a15}.
So \eqref{3a15} holds for our $\varphi$, and Lemma \ref{l3a3} follows.
\end{proof}

\ms
We conclude 
this section with an estimate, based on the Poincar\'e inequalities, 
that will allow to bound the variations of $u$
on thick annuli $\Omega \cap B(Y,R) \sm B(Y,r)$ in terms of $\int |\nabla u|^2$.
We take the same notation and assumptions as in Lemma \ref{l3a2}.

\begin{lemma}\label{l3a4} 
Asume  \eqref{3a1}, \eqref{3a2}, and \eqref{3a3}, and let 
$0 < r < R \leq \diam(\Omega)/3$. 
Also assume that 
\begin{equation}\label{3a21}
a R^{2-n} \sigma(\d\Omega \cap B(Y,10R)) \leq C_1.
\end{equation}
Let $m(r)$ and $m(R)$ be as in \eqref{3a9}. Then
\begin{equation} \label{3a22}
|m(r) -m(R)| \leq C m(R)
+ \tilde{C}  r^{1- \frac{n}{2}} \Big\{ \int_{\Omega \cap B(Y, K R) \sm B(Y, r/3)} |\nabla u|^2 \Big\}^{1/2},
\end{equation}
with constants $\tilde{C}, K \geq 1$ that depends only on the geometric constants for $(\Omega,\sigma)$, and
$C$ that depends only on the ellipticity constant for $A$, the 
geometric constants for $(\Omega,\sigma)$, and $C_1$.
\end{lemma}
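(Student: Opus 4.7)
The plan is to prove \eqref{3a22} by constructing a chain of balls at geometrically increasing scales connecting a minimizing point of $u$ on $\cA(r)$ to one on $\cA(R)$, and summing a Morrey-type oscillation estimate along this chain. The sum will converge, producing the factor $r^{1-n/2}$, precisely because $n \geq 3$ so that $\sum_k 2^{k(2-n)} < \infty$.

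We first reduce to the case $R \geq K_0 r$ for a large constant $K_0$ depending only on the geometric constants. When $R \leq K_0 r$, a bounded chain of overlapping annuli $\cA(2^k r)$, $k = 0, \ldots, O(1)$, together with Lemma~\ref{l3a2} in the generalized form of Remark~\ref{rk32} (which applies throughout our range thanks to \eqref{3a21} and \eqref{3b12}), yields $m(r) \leq C m(R)$, and hence \eqref{3a22} with only the first term. In the main case $R \geq K_0 r$, pick $X_r \in \cA(r)$ with $u(X_r) = m(r)$ and $X_R \in \cA(R)$ with $u(X_R) = m(R)$, and choose intermediate points $Z_k$, $k = 0, \ldots, N$ with $N \simeq \log_2(R/r)$, such that $Z_0 = X_r$, $Z_N = X_R$, and $|Z_k - Y| \simeq 2^k r$. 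Apply Lemma~\ref{lem:donutsareconnected} with the distinguished point $Y$ to connect each $Z_k$ to $Z_{k+1}$ by a chain of $O(1)$ overlapping balls $B_{k,j} \subset \overline{\Omega}$ of radius $\rho_{k,j} \simeq 2^k r$, all at distance $\geq c\, 2^k r$ from $Y$ and lying inside $\Omega \cap B(Y, K R) \setminus B(Y, r/3)$ for an appropriate $K$.

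For each $B_{k,j}$ we establish the oscillation bound
\[
\osc_{B_{k,j} \cap \Omega}\, u \leq C \rho_{k,j}^{1 - n/2}\,\|\nabla u\|_{L^2(C B_{k,j} \cap \Omega)}.
\]
For interior balls this is the standard consequence of the Caccioppoli inequality, Poincar\'e-Wirtinger (to bound $\|u - \bar u\|_{L^2}$ by $\rho \|\nabla u\|_{L^2}$), and Moser's $L^\infty$-$L^2$ estimate for solutions. For boundary balls we use the Caccioppoli of Lemma~\ref{l3a1} (which applies to the homogeneous Robin problem), the boundary Poincar\'e of Lemma~\ref{lem:poincareoverlap}, and a Moser iteration up to the boundary; the scale-invariant control $I_Y(10 \rho_{k,j}) \leq C_1$ inherited from \eqref{3a21} via \eqref{3b12} keeps us in the Neumann-like regime throughout. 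Applying the triangle inequality along the chain, then Cauchy-Schwarz, and using $\sum_{k=0}^{N} (2^k r)^{2-n} \leq C r^{2-n}$ together with the finite-overlap property of the enlarged balls $C B_{k,j}$ inside $\Omega \cap B(Y, K R) \setminus B(Y, r/3)$, we obtain
\[
|m(r) - m(R)| \leq \sum_{k,j} \osc_{B_{k,j} \cap \Omega}\, u \leq \tilde C\, r^{1 - n/2}\,\|\nabla u\|_{L^2(\Omega \cap B(Y, K R) \setminus B(Y, r/3))},
\]
which, combined with the $C m(R)$ bound from the overlap case, yields \eqref{3a22}.

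The main obstacle will be the boundary-ball oscillation estimate: one must check that the Moser $L^\infty$-$L^2$ inequality for weak solutions of the homogeneous Robin problem carries no additive error of the form $\rho \sup u$ or a boundary integral that would accumulate into an unwanted $\log(R/r)$ factor after summation. If a clean boundary Moser estimate is not directly available, one can extract the needed oscillation bound from the balance Lemma~\ref{l3a3} (which apportions Dirichlet energy evenly across level sets of $u$ in the Neumann regime), or absorb the additive error cleanly into the $C m(R)$ term using the uniform bound $u \leq C_m m(R)$ on $\cA(R)$ provided by Lemma~\ref{l3a2}.
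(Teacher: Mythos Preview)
Your overall strategy (geometric chain of balls at scales $2^k r$, Poincar\'e-type control along the chain, summation of a convergent series in $n\geq 3$) is exactly the paper's strategy. The difficulty you flag at the end is real, however, and the paper resolves it by a tactical choice you do not make.

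The issue is your use of \emph{oscillations} $\osc_{B_{k,j}\cap\Omega} u$ at every link of the chain. For a ball meeting $\partial\Omega$, the best one gets from Moser plus the boundary Harnack (Theorem~\ref{thm:bdryharnack}) is
\[
\osc_{B\cap\Omega} u \;\leq\; C\rho^{1-n/2}\|\nabla u\|_{L^2(CB\cap\Omega)} \;+\; C\,\bar u,
\]
with an additive term comparable to the value of $u$ itself on that ball. At scale $2^k r$ this extra term is $\simeq m(2^k r)$ (by Lemma~\ref{l3a2}), and summing over $k$ produces roughly $\sum_k m(2^k r)$, which can be as large as $m(r)\log(R/r)$. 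Neither of your proposed workarounds removes this: the bound $u\leq C_m m(R)$ holds only on $\cA(R)$, not on the intermediate annuli, and the balance lemma does not directly yield a pointwise oscillation--gradient estimate.

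The paper avoids the additive error entirely by never asking for oscillations. It fixes an \emph{interior} corkscrew ball $B_\rho\subset\cA(\rho)\cap\Omega$ at each scale, connects $B_s$ to $B_t$ (for $t\leq 2s$) by a bounded Harnack chain in $\Omega\setminus\overline{B(Y,s/3)}$, and applies only the \emph{interior} Poincar\'e inequality on overlapping interior balls to get
\[
\Big|\fint_{B_s} u - \fint_{B_t} u\Big| \leq C\,t^{1-n/2}\,\|\nabla u\|_{L^2(\Omega\cap B(Y,Kt)\setminus B(Y,s/3))},
\]
with no lower-order term. Telescoping over a geometric sequence from $r$ to $R$ gives the gradient term in \eqref{3a22}. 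The boundary, and Lemma~\ref{l3a2}, enter only once at the very end, to pass from $\fint_{B_r}u$ and $\fint_{B_R}u$ back to $m(r)$ and $m(R)$: since $m(r)\leq\fint_{B_r}u$ and $\fint_{B_R}u\leq M(R)\leq C_m m(R)$, this produces the single $Cm(R)$ term. The fix to your argument is therefore simple: replace your minimizing points $X_r,X_R$ by interior corkscrew points (whose $u$-values are comparable to $m(r),m(R)$ by Lemma~\ref{l3a2}), and track differences of ball \emph{averages} along the chain rather than oscillations.
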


\begin{proof}
This will be an easy consequence of Poincar\'e inequalities; we will only use the fact that 
$u$ is a solution late in the proof, when we apply Lemma \ref{l3a2} to relate $m(r)$ and $M(R)$
to averages of $u$ on balls. 
For $r \leq \rho \leq R$, we first choose a ball $B_\rho$, with
\begin{equation} \label{3a23}
B_\rho = B(Z_\rho, C^{-1} \rho) \subset \cA(\rho) \cap \Omega 
:= \Omega \cap (B(Y,5\rho) \sm B(Y,\rho)).
\end{equation}
To define $Z_\rho$, we first pick $Z \in \Omega \cap \d B(Y, 2\rho)$; such a point exists because $\Omega$ is connected and contains both $Y$ and points that are far from $Y$. If $\dist(Z, \d\Omega) \geq \rho/10$, we can take $Z_\rho = Z$. Otherwise, select $z\in \d\Omega$
such that $|z-Z| \leq \rho/10$, and take for $B_\rho$ a corkscrew ball for $B(z, \rho/2)$ (i.e. take $Z_\rho$ to be the corresponding corkscrew point).
So $B_\rho$ exists.

Now 
we claim that for $r \leq s \leq t \leq R$, with $t \leq 2s$, 
\begin{equation} \label{3a24}
\Big|\fint_{B_s} u-\fint_{B_t} u \Big| \leq C t^{1- \frac{n}{2}}  \Big\{\int_{\Omega \cap B(Y, Kt) \sm B(Y, s/3)} |\nabla u|^2 \Big\}^{1/2},
\end{equation}
where $K$ depends on the geometric constants. For this we shall connect $Z_s$ to $Z_t$ 
by a Harnack chain. Indeed by Lemma \ref{l:onesidedpunch}, applied with $\lambda = 1/3$,
$\Omega \sm \ol B(Y, s/3)$ satisfies \ref{CC2}, so we can find balls $B_j  = B(X_j,r_j)$, 
$0 \leq j \leq N$, with
$B_0 = B_s$ and $B_N = B_t$, every $B_{j}$, $1 \leq j \leq N$, meets $B_{j-1}$, all the 
radii $r_j$ are comparable to $t$, and $B(X_j, 2r_j) \subset \Omega \sm \ol B(Y, s/3)$
for all $j$. Notice that $B(X_j, 2r_j) \subset B(Y, Kt)$ because $N$ and each $t^{-1} r_j$
are bounded by constants that depend only on the geometric constants.

Then by the Poincar\'e inequality on $B(X_j, 2r_j) \cup B(X_{j-1}, 2r_{j-1})$, 
\begin{eqnarray} \label{3a25}
\Big|\fint_{B(X_j, r_j)} u - \fint_{B(X_{j-1}, r_{j-1})} u\Big|
&\leq& C \, t \Big\{ \fint_{B(X_j, 2r_j) \cup B(X_{j-1}, 2r_{j-1})} |\nabla u|^2 \Big\}^{1/2}
\nn\\
&\leq& C \, t^{1- \frac{n}{2}} 
\Big\{ \int_{B(X_j, 2r_j) \cup B(X_{j-1}, 2r_{j-1})} |\nabla u|^2 \Big\}^{1/2}.
\end{eqnarray}
Now notice that $B(X_j, 2r_j) \cup B(X_{j-1}, 2r_{j-1}) \subset \Omega \cap B(Y, Kt) \sm B(Y, s/3)$;
we sum all these inequalities and get that
\begin{equation} \label{3a26}
\Big|\fint_{B_t} u - \fint_{B_s} u\Big| \leq C \, t^{1- \frac{n}{2}} 
\Big\{ \int_{\Omega \cap B(Y, Kt) \sm B(Y, s/3)} |\nabla u|^2 \Big\}^{1/2},
\end{equation}
which is \eqref{3a24}. Now we connect $r$ to $R$ by a string of intermediate radii
in  geometric progression, and sum again. Let us not try to optimize too much, say that the
$\int_{\Omega \cap B(Y, Kt) \sm B(Y, s/3)} |\nabla u|^2$ are all less than 
$\int_{\Omega \cap B(Y, KR) \sm B(Y, r/3)} |\nabla u|^2$, and $t^{1- \frac{n}{2}}$
is largest for $t=r$; this gives
\begin{equation} \label{3a27}
\Big|\fint_{B_R} u - \fint_{B_r} u\Big| 
\leq C r^{1- \frac{n}{2}} \Big\{ \int_{\Omega \cap  B(Y, K R) \sm B(Y, r/3)} |\nabla u|^2 \Big\}^{1/2}.
\end{equation}
This is almost \eqref{3a22}. We may assume that $m(r) > m(R)$, because otherwise \eqref{3a22} is trivial,
and now we observe that
$$
m(r) \leq \fint_{B_r} u  \leq \fint_{B_R} u + \Big|\fint_{B_R} u - \fint_{B_r} u\Big|
\leq \sup_{B_R} u +  \Big|\fint_{B_R} u - \fint_{B_r} u\Big|
$$
Now \eqref{3a22} and the lemma follow from \eqref{3a27}, 
 Lemma \ref{l3a2} and Remark \ref{rk32}.
 \end{proof}

\section{Green Function estimates when $a \sigma(\partial\Omega) \leq \diam(\Omega)^{n-2}$.}\label{s:neumannend}

Recall the critical length scale 
\begin{equation} \label{4b1} 
\rho := (a\sigma(\partial\Omega))^{\frac{1}{n-2}}.
\end{equation}

Generally speaking, we showed in \cite{Robin1} that the Robin boundary condition in a ball $B(x,r)$ 
centered on $\Omega$ tends to behave more like a Neumann condition  when the quantity 
$I_x(r) = a\sigma(B(x,r)) r^{2-n}$ is small, and more like a Dirichlet condition  when it 
is large. 
In this section we prove the estimates of Theorem \ref{thm:GFest} when 
$\rho \lesssim \diam(\Omega)$, i.e. when the quantity $a\sigma(B(x,r)) r^{2-n}$ is already small
at the scale of $\diam(\Omega)$; note that $I_x(r) = a\sigma(B(x,r)) r^{2-n}$ gets smaller at smaller scales,
by \eqref{3b12}. So we only expect Neumann-like behavior in this section.

Let us recall the salient part of Theorem \ref{thm:GFest} in the present case. 
We keep the same general assumptions on $(\Omega,\sigma)$ (one sided pair of mixed dimension)
and $A$ (elliptic). 

\begin{theorem}\label{l:neumannregime}
Let $(\Omega,\sigma)$ satisfy Definition \ref{d:mixed}, let $a\in (0,\infty)$ be such that 
$a \sigma(\partial\Omega) \leq \diam(\Omega)^{n-2}$,
and let $G_R$ be the Robin Green function with parameter $a$ for the operator 
$\mathrm{div} A\nabla$.

There exists a constant $C > 0$ (depending on the geometric constants of and the ellipticity of $A$) 
such that for $X, Y\in \Omega$, $X \neq Y$,
\begin{equation}\label{e:Neumannregimefar} 
    C^{-1}\rho^{2-n}\leq G_R(X,Y)\leq C\rho^{2-n},\qquad \text{ when } 
    |X-Y| \geq \rho,
\end{equation}
and 
\begin{equation}\label{e:Neumannregimeclose}
    C^{-1}|X-Y|^{2-n} \leq G_R(X,Y) \leq C|X-Y|^{2-n}, \qquad \text{ when } 
    |X-Y| \leq \rho.
\end{equation}
\end{theorem}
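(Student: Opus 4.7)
The hypothesis $a\sigma(\partial\Omega)\leq \diam(\Omega)^{n-2}$ together with the decay computation \eqref{3b12} (which relies on $d>n-2$) gives $I_Y(r):=ar^{2-n}\sigma(\partial\Omega\cap B(Y,r))\lesssim 1$ uniformly for $Y\in\overline\Omega$ and $r\leq \diam(\Omega)$, so we are in the Neumann regime at every scale and the annular oscillation estimates of Section \ref{s:Annuli} are available throughout. The strategy combines this oscillation control with the flux identity \eqref{e:fluxcondition} and, at small scales, with a local sphere-flux identity derived from the divergence theorem together with the Robin condition $A\nabla u\cdot \nu = -au$.

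\textbf{Case $|X-Y|\geq\rho$.} Fix $X\in\Omega$ and view $y\mapsto G_R(X,y)=\wt G_R(y,X)$ as the adjoint Green function with pole $X$ (cf. \eqref{2b8}). Applying Lemma \ref{l3a2} to the adjoint on each annular shell $B(X,5r)\sm B(X,r)$ and chaining across dyadic scales in $[\rho,\diam(\Omega)]$ via Lemma \ref{l3a4}, one shows that $G_R(X,y)$ is uniformly comparable across the ``plateau'' $\{y\in\partial\Omega : |y-X|\geq \rho\}$. The flux identity then fixes the common plateau value as $\simeq 1/(a\sigma(\partial\Omega))=\rho^{2-n}$. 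Transferring from $\partial\Omega$ to an interior point $Y$ with $|X-Y|\geq\rho$ is one more application of Lemma \ref{l3a2} (adjoint, on the shell around $X$ containing $Y$) combined with Lemma \ref{l3a4}, yielding \eqref{e:Neumannregimefar}.

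\textbf{Case $|X-Y|\leq\rho$.} Set $u=G_R(\cdot,Y)$ and $r=|X-Y|$. The divergence theorem on $\Omega\cap B(Y,s)$, together with the Robin condition, yields the local flux identity
\begin{equation*}
F_s := \int_{\partial B(Y,s)\cap\Omega} -A\nabla u\cdot\hat r\,d\mathcal H^{n-1} = 1 - a\int_{\partial\Omega\cap B(Y,s)} u\,d\sigma
\end{equation*}
for every $s>0$. A standard Moser--Stampacchia iteration on annular regions (where $u$ is a weak solution with Robin boundary data, and where the Robin boundary contribution is absorbed using the $L^1$-control coming from the global flux identity together with $I_Y(s)\lesssim 1$) produces the upper bound $u(y)\leq C\min(|y-Y|^{2-n},\rho^{2-n})$; this will also give Corollary \ref{cor:upperestalways}. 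Plugging this upper bound into a dyadic decomposition of $a\int_{\partial\Omega\cap B(Y,s)}u\,d\sigma$ bounds the Robin correction by $CI_Y(s)\lesssim (s/\rho)^{d+2-n}$, which is $\leq 1/2$ once $s\leq c_0\rho$. Hence $F_s\geq 1/2$ for $s\in(0,c_0\rho]$, and Cauchy--Schwarz on the sphere gives $\int_{\partial B(Y,s)\cap\Omega}|\nabla u|^2\geq cs^{1-n}$. Integrating over $s\in[r,2r]$ produces $\int_{B(Y,2r)\sm B(Y,r)}|\nabla u|^2\geq cr^{2-n}$, which when compared with the Caccioppoli bound of Lemma \ref{l3a1} combined with Lemma \ref{l3a2} (giving $\int_{B(Y,2r)\sm B(Y,r)}|\nabla u|^2\leq Cr^{n-2}M(r)^2$) forces $M(r)\geq cr^{2-n}$, hence $u(X)\geq m(|X-Y|/5)\geq c|X-Y|^{2-n}$. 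For $r\in[c_0\rho,\rho]$ the lower bound is inherited from the previous case, since $\rho^{2-n}\simeq r^{2-n}$ in this range.

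\textbf{Main obstacle.} The delicate step is the chaining argument in the case $|X-Y|\geq\rho$: one must show that the Poincar\'e-type gradient error in \eqref{3a22} does not accumulate across the many dyadic scales involved. This is where the strict inequality $d>n-2$ in Definition \ref{d:mixed} is indispensable, as it forces $I_Y$ to decay polynomially (via \eqref{3b12}) so that the oscillation constant $C_m$ of Lemma \ref{l3a2} remains uniform across iterations and the successive error terms sum geometrically. A secondary subtlety in the case $|X-Y|\leq\rho$ is avoiding circularity: the pointwise upper bound on $u(y)$ for boundary points $y$ near the pole $Y$ must be established by an independent Moser--Stampacchia argument before it can be inserted into the local flux identity to drive the matching lower bound.
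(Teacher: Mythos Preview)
Your outline correctly identifies the global Neumann regime $I_Y(r)\lesssim 1$ and the role of the flux identity, and your local-flux approach to the lower bound for $|X-Y|\leq\rho$ is a legitimate (and different) alternative to the paper's much simpler argument via $G_R\geq G_D$ together with the Gr\"uter--Widman bound on $G_D$ and the boundary Harnack principle (see Lemma \ref{lem:changea2} and the proof of \eqref{e:lowerboundforall}).  However, your local-flux argument consumes the upper bound as input, so the whole scheme hinges on the upper bound being provable independently.

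This is where there is a genuine gap.  The assertion that a ``standard Moser--Stampacchia iteration'' yields $u\leq C\min(|y-Y|^{2-n},\rho^{2-n})$ is not justified, and in particular the plateau bound $u\leq C\rho^{2-n}$ with $C$ independent of $a$ is not a consequence of any standard level-set argument: the quantity $\rho^{2-n}=(a\sigma(\partial\Omega))^{-1}$ only enters through the flux identity, and turning the resulting $L^1(\partial\Omega)$ control into a pointwise bound already requires the boundary Harnack comparison (Lemma \ref{l3a2}) at the largest scale.  More seriously, your ``chaining via Lemma \ref{l3a4}'' only propagates one inequality: combining \eqref{3a22} with Caccioppoli and Lemma \ref{l3a2} yields $|m(r)-m(R)|\leq Cm(R)+Cm(r')$ with $r'\simeq r$, from which one can extract $m(R)\lesssim m(r)$ (the lower-bound direction, as in \eqref{4a25}--\eqref{4a26}) but \emph{not} $m(r)\lesssim m(R)$.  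The paper closes this gap with the Balance Lemma \ref{l3a3}, which gives the scale-free energy bound $\int_{\{u<t\}}|\nabla u|^2\leq Ct$ (see \eqref{4a15}); this is precisely what makes the contradiction argument \eqref{4a18}--\eqref{4a20} work, because the constant does not pick up the hypothetical large factor $C_\infty$.  Your proposal never invokes Lemma \ref{l3a3}, and without it (or a genuine substitute) the upper bound does not close.
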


\ms 
\begin{proof}
The rest of this section is devoted to the proof, which we break 
up into four parts, corresponding to the lower and upper estimates when $|X-Y| \geq \rho$ and when $|X-Y|\leq \rho$.

\ms 
\noindent {\bf Lower estimate in \eqref{e:Neumannregimeclose}}.

In fact we will directly check that 
\begin{equation}\label{e:lowerboundforall} 
G_R(X,Y) \geq C^{-1}|X-Y|^{2-n}, \qquad \text{ for } 
X,Y\in \Omega, X \neq Y,
\end{equation} 
which will be useful to us later. 

Let $K > 1$ be a large constant (depending on the geometric constants \ref{CC1}, \ref{CC2}) to be fixed later. 
By  \cite{GW} and Lemma \ref{lem:changea2}, 
there exists a constant $C > 0$ (depending only on $K > 1$) such that if $\delta(X), \delta(Y) \geq \frac{1}{K}|X-Y|$, then $G_R(X,Y)\geq G_D(X,Y) \geq C^{-1}|X-Y|^{2-d}$.

Otherwise, if $X$ (or $Y$, or both, the argument is the same) 
satisfies $\delta(X) \leq \frac{1}{K}|X-Y|$ then we shall 
apply the boundary Harnack principle, Theorem \ref{thm:bdryharnack}, to replace $X$ by a corkscrew point $A_X$
close to $X$. 
Pick $Q \in \d\Omega$ such that $|Q-X| = \delta(X)$, set $r = 2|X-Y|/K$ and 
$B_X = B(Q,r)$, and finally let $A_X$ be a corkscrew point for $B_X$.
If $K$ here is large enough, we can apply Theorem \ref{thm:bdryharnack} to $B_X$, because 
$r \leq 2 K^{-1} \diam(\Omega)$ and hence by \eqref{3b12} and our assumption on $a \sigma(\partial\Omega)$, $I_Q(r) \leq 1$ (as long as $K$ is large enough). 
 Theorem \ref{thm:bdryharnack} yields 
$G_R(X,Y)  \simeq G_R(A_X, Y)$ and in this way can reduce to the case where 
$\delta(X), \delta(Y) \geq |X-Y|/K$. This proves \eqref{e:lowerboundforall}.

%

\medskip
\noindent {\bf Preparation for estimates near $Y$}. 

To simplify notation,  we fix the pole $Y$ and let, as in the previous section.   
\begin{equation}\label{4a6}
u(X) = G_R(X,Y).
\end{equation}

We will need an estimate very near $Y$ to ensure our use of the Balance Lemma. 
We claim that there is a small $\varepsilon > 0$, that is allowed to depend on $Y$, such that 
\begin{equation} \label{4a7}
C^{-1} |X-Y|^{2-n} \leq u(X) \leq C |X-Y|^{2-n} 
\ \hbox{ when } |X-Y| \leq \varepsilon.
\end{equation}
Here $C$ depends only on $n$ and the ellipticity constant for $A$. Indeed, let 
$v$ denote the Dirichlet Green function for the domain $B(Y, \delta(Y))$, and with pole
at $Y$.  Then by  \cite{GW}, for instance, 
$C^{-1} |X-Y|^{2-n} \leq v(X) \leq C |X-Y|^{2-n}$ for $X \in B(Y, \delta(Y)/2)$, 
and \eqref{4a7} will follow as soon as we check that $u-v$ is bounded near $Y$.
However $u-v$ no longer has a singularity near $Y$, i.e., $\mathrm{div} A\nabla (u-v) = 0$ near $Y$,
by \eqref{e:Greenidentity}, applied to functions $\varphi$ supported near $Y$, and the 
analogous property of $v$. 
So $u-v$ is bounded near $Y$ and \eqref{4a7} holds.

Let us estimate energies now. Let $0 < r_1$ be small, and choose
$r_2 = K r_1$ for some large constant $K$ that will be chosen soon. We will need to assure that $r_2<\varepsilon$.
By \eqref{4a7}, $u \leq Cr_1^{2-n}$ on $B(Y, r_2)\sm B(Y, r_1/2)$ if we choose $K$ large enough and $r_1$ small enough (so that $Kr_1$ is still small). 
Then by the Caccioppoli inequality (Lemma \ref{l3a1}) and then \eqref{4a7} we have
$$
 \int_{B(Y, r_2) \sm B(Y, r_1)} |\nabla u|^2 \leq  \int_{\Omega \sm B(Y,r_1)}|\nabla u|^2
 \leq C r_1^{-2} \int_{\Omega \cap B(Y, 2r_1) \sm B(Y,r_1/2)} u^2
 \leq C r_1^{2-n}
$$
so that altogether
\begin{equation}\label{4a9}
\int_{B(Y, r_2) \sm B(Y, r_1)} |\nabla u|^2 \leq C r_1^{2-n}.
\end{equation}

To apply the balance lemma, set
$U_{t_3, t_4} = \big\{ X \in \Omega \sm \{Y\} \, ; \, t_3 < u(X) < t_4 \big\}$
for $0 < t_3 < t_4$ to be chosen shortly. Since $u$ is only unbounded near $Y$, 
$U_{t_3, t_4} \subset B(Y, \varepsilon)$ as soon as $t_3$ is large enough, and then we can use
\eqref{4a7} and its consequences. For instance, if we take $t_4 = C^{-1} r_1^{2-n}$, with $r_1$ small as above, 
and $t_3 = t_4/2$, then by the discussion above, $u(X) > t_4$ on $\d B(r_1)$ (and on the smaller balls)
$u(X) < t_3$ on $\d B(r_2)$ (here is where we pick $K$ large enough), so that $U_{t_3, t_4} \subset B(Y, r_2) \sm B(Y, r_1)$,
with the consequence that 
\begin{equation}\label{4a10}
\frac{1}{t_4-t_3} \int_{U_{t_3, t_4}} |\nabla u|^2 
\leq \frac{1}{t_4-t_3} \int_{B(Y, r_2) \sm B(Y, r_1)} |\nabla u|^2
\leq  \frac{C}{t_4-t_3} \,  r_1^{2-n} \leq C
\end{equation}
by \eqref{4a9}. 
%
We may now apply the easy part of the Balance Lemma \ref{l3a3}.
We can take the simpler second option with $R = \diam(\Omega)$ and $t_1=0$, any $t_2 > 0$, 
and $t_4 > t_3 > t_2$, with $t_4, t_3$ so large that \eqref{4a10} holds. 
Then \eqref{3a14} says that for
\begin{equation}\label{4b11}
U(t_2) = \big\{ X \in \Omega \sm \{Y\} \, ; \, u(X) < t_2 \big\},
\end{equation}
\begin{equation} \label{4a15}
\int_{U(t_2)} | \nabla u |^2
\leq C \int_{U(t_2)} A \nabla u \nabla u \leq C t_2.
\end{equation}

We emphasize that $C > 0$ in \eqref{4a15} depends on the geometry of $\Omega$ and the ellipticity of $A$ but does not depend on the $\epsilon$ or small radii chosen above. It is also independent of $t_2$ and holds for any $t_2 > 0$. 

\ms
\noindent {\bf Upper estimates in \eqref{e:Neumannregimefar} and \eqref{e:Neumannregimeclose}}. 

We finally estimate $u$ in the bulk. 
We start when $X$ is far from $Y$ and check that
\begin{equation} \label{4a16} 
u(X) \leq C (a\sigma(\d\Omega))^{-1} = C \rho^{2-n}\ \text{ for } |X-Y| \geq \diam(\Omega)/4.
\end{equation}
For this we apply Lemma \ref{l3a2} with $r =  \diam(\Omega)/4$; 
the additional assumption \eqref{3a10} is satisfied
(with $1$ replaced by the slightly larger constant $4^{n-2}$, but see Remark \ref{rk32}), 
because $a  \sigma(\d\Omega) \leq \diam(\Omega)^{n-2}$, and  we get that 
$u(X) \leq C m(r)$ for $X \in \Omega \sm B(Y, r) \equiv \Omega \cap \mathcal A(r)$. Then we observe that
$\d\Omega$ contains at least one point $z$ at distance $2r$ from $Y$, and by definition
$m(r) \leq u(y)$ for all $y \in \d\Omega \cap B(z,r)$, just because $B(z,r) \subset \cA(r)$
(see \eqref{3a8} and \eqref{3a9}).  

Now we use the flux estimate \eqref{e:fluxcondition}, i.e., 
the fact that $a\int_{\partial \Omega} G_R(X,y)\, d\sigma(y) = 1$
to estimate an average of $u = G_R^a(\cdot,Y)$:
$$
1 = a\int_{\partial \Omega} G_R(X,y)\, d\sigma(y) \geq a \int_{\partial \Omega \cap B(z,r)} u\, d\sigma
\geq C a \sigma(\partial \Omega \cap B(z,r)) \, m(r) \geq C a \sigma(\d\Omega) \, m(r)
$$
because $\sigma$ is doubling; \eqref{4a16} follows (recall \eqref{4b1}). 

Next we want the upper bound for $X$  closer to $Y$. Recall the quantitaties $m(r)$ and $M(r)$
from \eqref{3a9}. Our assumption that $a \sigma(\partial\Omega) \leq \diam(\Omega)^{n-2}$
implies that $I_Y(r) = a r^{2-n} \sigma(B(Y,r)) \leq C$ for $0 < r \leq \diam(\Omega)$
(see \eqref{3b12}),  which allows us to apply Lemma \ref{l3a2} or Remark \ref{rk32}
for all radii. Thus we get that 
\begin{equation}\label{4a17}
M(r) \leq C_m m(r)  \ \text{ for } 0 < r \leq \diam(\Omega)/4.
\end{equation}

\ms
We will have proven the upper estimates in \eqref{e:Neumannregimefar} and \eqref{e:Neumannregimeclose} if, for some large $C_\infty$, to be chosen soon,
\begin{equation} \label{4a18}
u(X) \leq C_\infty (\rho^{2-n} +  |X-Y|^{2-n}), \forall X \in \Omega \backslash \{Y\}.
\end{equation}

Let us assume that this fails and derive a contradiction. By  \eqref{4a16}, \eqref{4a18} holds on 
$\Omega \sm B(Y,R)$, with $R = \diam(\Omega)/4$.
Set 
$$
r = \inf\big\{ s \geq 0 \, ; \, \text{ \eqref{4a18} holds on } \Omega \sm B(Y, s)\big\};
$$
our assumption for the sake of contradiction is that $r > 0$.
Since $u$ is continuous on $\ol\Omega \sm \{ Y \}$, \eqref{4a18} still holds on $\Omega \sm B(Y, r)$,
but the converse inequality also holds somewhere on $\Omega \cap \d B(Y, r)$. That is,
$\sup_{\Omega \cap \d B(Y, r)}u  = C_\infty (\rho^{2-n} +  r^{2-n})$.

Set $t = C_\infty (\rho^{2-n} +  r^{2-n})$; by construction, $u(X) \leq t$ on 
$\Omega \sm B(Y, r)$, hence $\Omega \sm B(Y, r)$ is contained in $U(t_2)$ for all $t_2 > t$
and \eqref{4a15} says that
\begin{equation} \label{4a19}
\int_{\Omega \sm B(Y,r)} |\nabla u|^2 \leq \lim_{t_2 \downarrow t} \int_{U(t_2)} |\nabla u|^2 \leq C t.
\end{equation}
 By \eqref{4a17}, Lemma \ref{l3a4}, \eqref{4a16}, and \eqref{4a19},
\begin{multline} \label{4a20}
t \leq M(r) \leq C m(r) \leq CM(3r)\leq Cm(3r)\\ \leq C m(R) + C r^{1 - \frac{n}{2}} \Big\{ \int_{\Omega \sm B(Y, r)} |\nabla u|^2 \Big\}^{1/2} 
\leq C \rho^{2-n} + C r^{1 - \frac{n}{2}} \sqrt{t}.
\end{multline}
Note above we use the fact that $m(r)\leq M(3r)$, which follows from the definitions and the fact that $\mathcal A(r)\cap \mathcal A(3r) \neq \emptyset$. 

But $t$ is much larger than $C \rho^{2-n}$ by assumption (and if $C_\infty$ is chosen large enough),
so we are left with $t \leq C r^{2-n}$, a contradiction again.
This proves the desired upper bounds.

\medskip 
\noindent {\bf Lower estimate in \eqref{e:Neumannregimefar}}.

We need to prove that $u(X) \geq C^{-1}\rho^{2-n}$ for $X \in \Omega \sm B(Y, \rho)$,
which with the notation of  \eqref{3a9} translates into 
\begin{equation}\label{4a21}
m(r) \geq C_\ell^{-1}\rho^{2-n} \ \text{ for }  \rho \leq r \leq \diam(\Omega)/4.
\end{equation}
(because the points of $\Omega$ such that $|X - Y| \geq \diam(\Omega/4)$ are also contained
in $\cA(\diam(\Omega/4)$). Notice that we know this estimate for $r = \rho$, by \eqref{e:lowerboundforall},
so a few applications of  Lemma \ref{l3a2} and Remark \ref{rk32} show that it stays true for
$\rho \leq r \leq C \rho$, with $C$ arbitrary large (but then $C_{\ell}$ gets larger).  
Because of this, we may assume that $\rho^{-1} \diam(\Omega)$ is large enough (to be determined later, but depending only on the usual constants).

We start our proof with the case of $r = R := \diam(\Omega)/4$, and for this we use again our estimate
\eqref{e:fluxcondition} on the flux, that says that $a\int_{\partial \Omega} u(x) d\sigma(x) = 1$.
We cut the integral into three pieces, which we estimate independently. 
We start with $\d_1 = \partial \Omega \cap B(Y,\rho)$ 
which we cut into pieces $\d_{1,k}$ where 
$2^{-k-1} \rho \leq |x-Y| \leq 2^{-k} \rho$, and say that
\begin{eqnarray}\label{4a22}
a \int_{\d_1} u d\sigma &=& a \sum_k   \int_{\d_{1,k}} u d\sigma 
\leq C \sum_{k} a \sigma(B(Y, 2^{-k} \rho)) (2^{-k}\rho)^{2-n}
=   C \sum_{k} I_Y(2^{-k}\rho) 
\nn \\
&\leq& C I_Y(\rho) 
\leq C \, (\rho/R)^{d+2-n} I_Y(R) \leq C \, (\rho/R)^{d+2-n}
\leq \frac14
\end{eqnarray}
by the upper estimate in \eqref{e:Neumannregimeclose}, then \eqref{3b12} (twice),
the running assumption for this section, and the assumption that $\rho^{-1} \diam(\Omega)\simeq \rho/R$ is large enough.
Next consider $\d_2 = \partial \Omega \cap B(Y, \eta R) \sm B(Y,\rho)$,
where the small $\eta > 0$ will be chosen soon. Then
\begin{equation}\label{4a23}
a \int_{\d_2} u d\sigma \leq C a \rho^{n-2} \sigma(\d_2) \leq C a \rho^{n-2} \eta^d \sigma(\d\Omega) 
= C \eta^d \leq \frac14
\end{equation}
by the upper estimate in \eqref{e:Neumannregimefar}, \eqref{1n3}, the definition \eqref{4b1} of $\rho$,
and if $\eta$ is chosen small enough. Thus for the remaining part 
$\d_3 =  \partial \Omega \sm B(Y, \eta r)$, we have $a \int_{\d_2} u d\sigma \geq \frac12$.
Yet by a few applications of  Lemma \ref{l3a2} and Remark \ref{rk32}, $u(x) \leq C(\eta) m(R)$
on $\d_3$, so
\begin{equation}\label{4a24}
a \int_{\d_3} u d\sigma \leq C(\eta)  m(R) a \sigma(\d\Omega)
= C(\eta)  m(R) \rho^{n-2},
\end{equation}
so we get that $m(R) \geq C^{-1} \rho^{2-n}$, i.e.,  \eqref{4a21} holds for $r =  \diam(\Omega)/4$.

Now we deal with the general remaining case, and for this we use Lemma \ref{l3a4} to compare
$m(r) $ with $m(R)$.  Here we are afraid that $m(r)$ may be much smaller than $m(R)$,
so we exchange $r$ and $R$ in the last four lines of the proof and get that
\begin{equation} \label{4a25}
|m(r) - m(R)| \leq C m(r)
+ C  r^{1- \frac{n}{2}} \Big\{ \int_{\Omega \cap B(Y, K R) \sm B(Y, r/3)} |\nabla u|^2 \Big\}^{1/2}.
\end{equation}
Then we use Lemma \ref{l3a1} (Caccioppoli) to control the energy integral: 
\begin{equation}\label{4a26}
\int_{\Omega \sm B(Y,r/3)} |\nabla u|^2 \leq C r^{-2}\int_{\Omega \sm B(Y,r/6)} u^2
\leq C r^{n-2} m(r)^2
\end{equation}
by Lemma \ref{l3a2} again. Altogether, $|m(r) - m(R)| \leq C m(r)$, and this forces
$m(r) \geq C^{-1} m(R) \geq C^{-1} \rho^{2-n}$. 
This completes our proof of \eqref{4a21} and Theorem \ref{l:neumannregime}.
\end{proof}

This completes our proof of the estimates in Theorem \ref{thm:GFest}, in the Neumann regime, i.e. when 
$a \sigma(\partial \Omega) \leq \diam(\Omega)^{n-2}$.
Before moving onto the other regimes, we observe that arguing as above, we can actually get an upper estimate on $G^a_R(X,Y)$ also when $a$ is large. 

\begin{corollary}\label{cor:upperestalways}
 Suppose $a \sigma(\partial \Omega) \geq \diam(\Omega)^{n-2}$.
  Then there exists a constant $C> 0$ (depending only on the geometric constants 
    of $\Omega$ and the ellipticity of $A$) such that 
    $$G_R^a(X,Y) \leq C|X-Y|^{2-n}, \qquad \forall X,Y\in \Omega.$$
\end{corollary}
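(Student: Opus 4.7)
The plan is to reduce this to the Neumann regime by monotonicity in $a$. Set $a_0 := \diam(\Omega)^{n-2}/\sigma(\partial \Omega)$, so that $a_0 \sigma(\partial \Omega) = \diam(\Omega)^{n-2}$ is precisely the borderline case of Theorem \ref{l:neumannregime}. By our hypothesis $a \geq a_0$, Lemma \ref{lem:changea} gives the pointwise monotonicity
\[
G_R^a(X,Y) \leq G_R^{a_0}(X,Y) \qquad \text{for all } X \in \ol\Omega \sm \{Y\}.
\]
So it suffices to bound $G_R^{a_0}(X,Y)$ from above by $C|X-Y|^{2-n}$.

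For $a_0$ we are exactly in the Neumann regime of Theorem \ref{l:neumannregime}, with the critical length scale $\rho_0 = (a_0 \sigma(\partial \Omega))^{1/(n-2)} = \diam(\Omega)$. Since $|X-Y| \leq \diam(\Omega) = \rho_0$ automatically, the upper bound in \eqref{e:Neumannregimeclose} applies and yields
\[
G_R^{a_0}(X,Y) \leq C |X-Y|^{2-n}.
\]
Combining the two displays gives the corollary, with $C$ depending only on the geometric constants of $\Omega$ and the ellipticity of $A$ (the dependence on $\diam(\Omega)$ and $\sigma(\partial \Omega)$ cancels because $\rho_0 = \diam(\Omega)$ is precisely what \eqref{e:Neumannregimeclose} needs).

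There is essentially no obstacle here since both ingredients (monotonicity in $a$ and the Neumann-regime bound) are already in hand; the only subtle point is to verify that $a_0$ is a \emph{legitimate} Robin parameter to insert into Theorem \ref{l:neumannregime} and that the constant produced there does not secretly depend on the particular value of $a_0$ we choose. Both are fine: Theorem \ref{l:neumannregime} is stated uniformly in $a$ subject only to the inequality $a \sigma(\partial \Omega) \leq \diam(\Omega)^{n-2}$, which holds with equality for $a_0$, and the constant $C$ there depends only on the geometric constants and the ellipticity of $A$.
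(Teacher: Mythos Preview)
Your proof is correct and follows essentially the same approach as the paper: reduce to a smaller Robin parameter in the Neumann regime via the monotonicity Lemma~\ref{lem:changea}, then invoke Theorem~\ref{l:neumannregime}. The only cosmetic difference is that the paper chooses $\bar a$ depending on $X,Y$ so that $|X-Y|=\rho_{\bar a}$ and then reads off the bound from \eqref{e:Neumannregimefar}, whereas you fix $a_0$ once and for all at the borderline value $\rho_0=\diam(\Omega)$ and use \eqref{e:Neumannregimeclose}; both choices land on the same estimate.
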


\begin{proof}
    Let $X,Y\in \Omega$ and define $\bar{a}$ to be such that 
    $$|X-Y| = \rho_{\bar{a}}:= (\bar{a}\sigma(\partial \Omega))^{\frac{1}{n-2}}.$$ 
    Note that $\rho_{\bar{a}} = |X-Y|  \leq \diam(\Omega)$, which implies that 
    $\bar{a} \sigma(\partial \Omega) \leq \diam(\Omega)^{n-2}$ and hence $\bar{a} \leq a$.
    Applying Lemma \ref{lem:changea} and the upper bound in \eqref{e:Neumannregimefar} to $G_R^{\bar{a}}(X,Y)$ we have $$G^a_R(X,Y) \leq G_R^{\bar{a}}(X,Y) \leq C\rho_{\bar{a}}^{2-n} \equiv C|X-Y|^{2-n},$$ which is the desired result. 
\end{proof}

This upper bound will be helpful below, but also when proving finer properties of the Robin harmonic measure in Section \ref{sec:hmproperties}.

\section{The Dirichlet regime: $\frac{1}{a} \leq \diam(\Omega)^{2-n}\sigma(\partial\Omega)$}\label{s:dirichletregime}

In this section we complete the proof of Theorem \ref{thm:GFest}.  We did the Neumann regime already, so 
we are left with the two statements that concern the remaining case (the Dirichlet regime) when 
\begin{equation}\label{5a1}
a \sigma(\partial \Omega) \geq \diam(\Omega)^{n-2}.
\end{equation}
Here we compare $G_R(X,Y)$ to $G_D(X,Y)$, perhaps evaluated at the appropriate corkscrew points,
but before we recall the statement, some notation will be useful.

For $X \in \ol\Omega$, we denote by $Q_X$ a point of $\d\Omega$ such that 
$|Q_X-X| = \delta(X) : =  \dist(X, \d\Omega)$, and define $\rho_X$ by
\begin{equation}\label{5a2}
\rho_X =  \sup\big\{ \rho > 0 \, ; \, a \rho^{2-n} \sigma(B(Q_X, \rho)) \leq 1 \big\}
= \sup\big\{ \rho > 0 \, ; \, I_{Q_X}(\rho)  \leq 1 \big\}
\end{equation}
with the notation of \eqref{3b12}.
Notice that $\rho_X > 0$ because \eqref{3b12} says that $\lim_{\rho \to 0} I_{Q_X}(\rho) = 0$,
and that $\rho_X \leq \diam(\Omega)$ because 
taking $\rho > \diam(\Omega)$  yields $I_{Q_X}(\rho) > 1$ by \eqref{5a1}.
Also,
\begin{equation}\label{5a3}
C^{-1} \leq I_{Q_X}(\rho_X) \leq 1
\end{equation}
by \eqref{3b12} and because $\rho \mapsto I_{Q_X}(\rho)$ is semicontinuous.
The worried reader may check that $\rho_X$ is well defined up to a bounded multiplicative factor even when there are multiple choices for  
$Q_X$, by \eqref{3b12} and because $\sigma$ is doubling.
As before, $\rho_X$ will mark the passage from Neumann to Dirichlet.

With this notation, we can reformulate the precise statement  of the missing part of  Theorem \ref{thm:GFest}
as follows (in the introduction the statement is split into two pieces for ease of reading; we merge the two statements here).

\begin{theorem}\label{t:DirichletRegime}
Let $(\Omega, \sigma)$, $A$, and $a$ be as usual, and assume \eqref{5a1}.
For $X, Y \in \Omega$, $X \neq Y$, define 
$r_X = \min( 10^{-1} |X-Y| , \rho_X)$ and $r_Y = \min( 10^{-1} |X-Y| , \rho_Y)$, 
then take $A_X = X$ if $\delta(X) \geq r_X$, and otherwise let $A_X$ be a corkscrew point
for the ball $B(Q_X, r_X)$. Define $A_Y$ similarly. Then  
\begin{equation} \label{5a4} 
C^{-1} G_D(A_X,A_Y) \leq G_R^a(X,Y) \leq CG_D(A_X,A_Y).
\end{equation}
Here $C$ depends only on the geometric constants for $(\Omega, \sigma)$, the dimension $n \geq 3$, 
and the ellipticity constants for $A$. 
\end{theorem}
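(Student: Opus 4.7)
My plan is to first reduce \eqref{5a4} to the case where both points are already corkscrew points, and then prove the reduced comparison by a delicate iteration.

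\textbf{Step 1 (Reduction via boundary Harnack).} If $\delta(X) < r_X$, then $r_X \leq \rho_X$ forces $I_{Q_X}(r_X) \leq 1$. Since $|X-Y| \geq 10 r_X$, the function $Z \mapsto G_R^a(Z, Y)$ is a genuine weak Robin solution on $B(Q_X, K^2 r_X) \cap \Omega$, and Theorem \ref{thm:bdryharnack} applies. Chaining this boundary Harnack with interior Harnack along a short net connecting $X$ to the corkscrew point $A_X$ for $B(Q_X, r_X)$ yields $G_R^a(X, Y) \simeq G_R^a(A_X, Y)$. Using the identity \eqref{2b8} to swap arguments and repeating the argument for the adjoint Green function (whose coefficient matrix $A^T$ is also uniformly elliptic) reduces the proof to the case $X = A_X$, $Y = A_Y$.

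\textbf{Step 2 (Representation identity).} The lower bound is immediate: Lemma \ref{lem:changea2} gives $G_R^a(X, Y) \geq G_D(X, Y) = G_D(A_X, A_Y)$. For the upper bound I would exploit the identity
\begin{equation*}
G_R^a(X, Y) = G_D(X, Y) + \int_{\partial \Omega} G_R^a(P, Y) \, d\omega_D^X(P),
\end{equation*}
where $\omega_D^X$ is the Dirichlet harmonic measure at $X$. This holds because $G_R^a(\cdot, Y) - G_D(\cdot, Y)$ extends to a continuous weak solution on $\Omega$ whose Dirichlet trace on $\partial \Omega$ equals $G_R^a(\cdot, Y)|_{\partial \Omega}$ (which is continuous by the H\"older estimates of \cite{Robin1}). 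The remaining task is to bound the integral by $C \, G_D(X, Y)$.

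\textbf{Step 3 (Iteration, the main obstacle).} To control the integral, I would decompose $\partial \Omega$ into dyadic annuli around $Y$. On each annulus, the Robin boundary Harnack (valid at scale $\lesssim \rho_P$) replaces $G_R^a(P, Y)$ by a value at a corkscrew point $A_P$, which is then estimated inductively against $G_D(A_P, A_Y)$ using the hypothesis we are trying to prove at smaller scale. The main obstacle is that Corollary \ref{cor:upperestalways} only furnishes the crude bound $G_R^a(P,Y) \leq C |P-Y|^{2-n}$, which is very lossy in the delicate regime $\delta(A_X) \simeq \rho_X \ll |X-Y|$, where $G_D(A_X, A_Y)$ can be much smaller than $|X-Y|^{2-n}$. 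The iteration (Lemma \ref{l:iterateforguy}) feeds the current best bound on $G_R^a$ back into the representation identity to improve it; the resulting geometric series converges provided the contraction factor---essentially the portion of $\omega_D^X$ concentrated in the boundary layer $\Omega_N$ near $Y$ as seen from a bulk pole---can be made strictly less than $1$. Making this contraction uniform in $a$ and across all scales, using the mixed-dimension and one-sided NTA structure together with the $A_\infty$-type properties of $\omega_D$ on thin annular shells, is the crux of the argument.
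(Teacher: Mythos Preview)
Your Step 1 reduction is correct and matches the paper's own reduction of Theorem \ref{t:DirichletRegime} to Theorem \ref{t:GuysConj}: boundary Harnack at the Neumann scale $r_X \leq \rho_X$ lets you replace $X$ by $A_X$, and the lower bound is indeed free from Lemma \ref{lem:changea2}.

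Your Steps 2--3, however, diverge from the paper and contain a real gap. The representation identity $G_R^a(X,Y) = G_D(X,Y) + \int_{\partial\Omega} G_R^a(P,Y)\,d\omega_D^X(P)$ is correct, but feeding the inductive bound $G_R^a(A_P,A_Y)\leq C\,G_D(A_P,A_Y)$ back into it does not produce a contraction: the integral runs over all of $\partial\Omega$, and there is no reason the resulting sum over dyadic annuli around $Y$ should be bounded by a constant strictly less than $1$ times $G_D(A_X,A_Y)$. You acknowledge this (``provided the contraction factor\ldots can be made strictly less than $1$'') but offer no mechanism for it. Your appeal to ``$A_\infty$-type properties of $\omega_D$'' is misplaced: the theorem holds for arbitrary elliptic $A$, with no $A_\infty$ hypothesis, so the proof cannot rely on that.

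The paper's iteration (Lemma \ref{l:iterateforguy}) is structured quite differently and supplies the missing mechanism. Rather than integrating over $\partial\Omega$, it introduces a \emph{thin slab} $Q_R = B(Q,9R/10)\cap\{\tau\delta_0 R \leq \dist(x,\partial\Omega)\leq \delta_0 R\}$ and represents $u_R(\xi)$ via the Dirichlet harmonic measure $\omega^\xi$ of this slab. The boundary of $Q_R$ has three pieces: an upper piece $\partial^+$ where $u_R\leq\lambda u_D$ by hypothesis, a lateral piece $\partial^\infty$ which is very far from $\xi$ in units of $\delta_0 R$, and a lower piece $\partial^-$ close to $\partial\Omega$ where $u_R$ is shown to be small by a Poincar\'e--Caccioppoli argument exploiting $I_z(\delta_0 R)\geq M$. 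The crucial quantitative input is the \emph{exponential decay} \eqref{e:hmdecay} of $\omega^\xi$ along the thin slab, which makes the $\partial^\infty$ and far-away $\partial^-$ contributions summable with tiny total mass; this is what forces the multiplicative loss $(1+\eta)$ with $\eta$ controllably small. The iteration then pushes the region where $u_R\leq\lambda u_D$ from $\{\delta\geq\delta_0 R\}$ down to $\{\delta\geq\delta_0 R/2\}$, and a telescoping product of $(1+\eta_k)$'s converges because of the extra decay in \eqref{5a10b}. This thin-slab geometry and its exponential harmonic-measure decay are the ideas your outline is missing.
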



Our main theorem will actually follow from the following, which may also be more pleasant to manipulate in some cases. 

\begin{theorem}\label{t:GuysConj}
Let $(\Omega, \sigma)$, $A$, and $a$ be as before, and assume \eqref{5a1}.
  For $X,Y\in \Omega$ let $Q_X, Q_Y\in \partial \Omega$ be such that $|Q_X-X| = \delta(X)$ and $|Q_Y- Y| = \delta(Y)$. Suppose in addition that 
\begin{equation}\label{5a5} 
a^{-1} \leq C_0 \delta(X)^{2-n}\sigma(B(Q_X, \delta(X))), \qquad a^{-1} \leq  C_0 \delta(Y)^{2-n}\sigma(B(Q_Y, \delta(Y))),
\end{equation}
where $C_0$ is allowed to depend on the geometric constants for $(\Omega,\sigma)$. 
 Then 
\begin{equation}\label{5a6}
G_D(X,Y) \leq G_R^a(X,Y) \leq CG_D(X,Y),
\end{equation}
where as usual $C > 1$ depends only on $C_0$,  the geometric constants for $(\Omega,\sigma)$, 
and the ellipticity of $A$.
\end{theorem}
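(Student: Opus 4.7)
The lower bound $G_D\leq G_R^a$ is immediate from Lemma \ref{lem:changea2}, so the work lies in the upper estimate. My first step is to express the difference as a Poisson integral. Set $w:=G_R^a(\cdot,Y)-G_D(\cdot,Y)$; since both functions have the same $c_n|X-Y|^{2-n}$ singularity at $Y$, $w$ is a bounded weak solution of $-\mathrm{div}(A\nabla w)=0$ in all of $\Omega$. Because $G_D(\cdot,Y)$ has zero trace on $\partial\Omega$, the trace of $w$ agrees with $G_R^a(\cdot,Y)\vert_{\partial\Omega}$, and the Dirichlet representation formula yields
\begin{equation*}
G_R^a(X,Y)\;=\;G_D(X,Y)+\int_{\partial\Omega}G_R^a(P,Y)\,d\omega_D^X(P),
\end{equation*}
where $\omega_D^X$ is the Dirichlet harmonic measure of $\Omega$ with pole at $X$. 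The proof thus reduces to showing that the integral is bounded by a multiple of $G_D(X,Y)$.

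For the integrand I would use Corollary \ref{cor:upperestalways}, which is available in the Dirichlet regime and gives $G_R^a(P,Y)\leq C|P-Y|^{2-n}$. Decomposing $\partial\Omega$ dyadically around $Q_Y$ via $A_k:=\partial\Omega\cap(B(Q_Y,2^{k+1}\delta(Y))\setminus B(Q_Y,2^k\delta(Y)))$, so that $|P-Y|\simeq 2^k\delta(Y)$ on $A_k$, and combining with the CFMS-type identity
\begin{equation*}
\omega_D^X\bigl(B(Q,r)\cap\partial\Omega\bigr)\;\simeq\;r^{n-2}\,G_D(X,A_{Q,r})\qquad\text{for } X\notin 2B(Q,r),
\end{equation*}
which in the 1-sided NTA mixed-dimension setting follows from the Bourgain-type bound of Lemma \ref{l:Bourgain} plus standard Caccioppoli and Harnack arguments, this reduces the estimate to
\begin{equation*}
w(X)\;\lesssim\;\sum_{k\geq 0}G_D\bigl(X,\,A_{Q_Y,\,2^k\delta(Y)}\bigr).
\end{equation*}
Collapsing this sum into a bounded multiple of $G_D(X,Y)$ becomes the goal.

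The principal obstacle is precisely this collapse. Splitting at the critical scale $s_{\ast}:=|X-Y|$, for $s\leq s_{\ast}$ the corkscrew $A_{Q_Y,s}$ is reachable from $Y$ by a Harnack chain that avoids $X$, so $G_D(X,A_{Q_Y,s})\simeq G_D(X,Y)$, but a naive sum over the $\log_2(s_{\ast}/\delta(Y))$ dyadic scales in this range leaves a logarithmic loss. For $s\gg s_{\ast}$, the corkscrew lies at distance $\simeq s$ from $X$, so $G_D(X,A_{Q_Y,s})\lesssim s^{2-n}$ and that tail of the sum is geometric and dominated by $|X-Y|^{2-n}$; by \cite{GW} this last quantity is comparable to $G_D(X,Y)$ at the deep corkscrew points produced by hypothesis \eqref{5a5}. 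To defeat the logarithmic loss I would employ a self-improving bootstrap, which I expect to be the content of Lemma \ref{l:iterateforguy}: feed the weak bound $G_R^a(P,Y)\leq C|P-Y|^{2-n}$ back into the representation formula on each $A_k$ and gain a summable decay factor coming from the mixed-dimension exponent $d>n-2$ in \eqref{1n3}, then iterate a bounded number of times to reach the sharp comparison. Once the bound is established with $(X,Y)$ replaced by suitable corkscrew points at scales $\rho_X,\rho_Y$, interior Harnack together with Theorem \ref{thm:bdryharnack} propagates it to the full range of $(X,Y)$ satisfying \eqref{5a5}, completing the proof of Theorem \ref{t:GuysConj}.
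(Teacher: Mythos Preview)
Your starting point—the Poisson representation
\[
G_R^a(X,Y)=G_D(X,Y)+\int_{\partial\Omega}G_R^a(P,Y)\,d\omega_D^X(P)
\]
together with $G_R^a(P,Y)\le C|P-Y|^{2-n}$ from Corollary~\ref{cor:upperestalways}—is correct and natural, and you correctly diagnose the logarithmic loss: the dyadic shells $A_k$ with $2^k\delta(Y)\lesssim |X-Y|$ each contribute $\simeq G_D(X,A_{Q_Y,2^k\delta(Y)})\gtrsim G_D(X,Y)$, and there are $\simeq\log(|X-Y|/\delta(Y))$ of them. The gap is in your proposed cure. Feeding the bound $G_R^a(P,Y)\le C|P-Y|^{2-n}$ ``back into the representation formula'' cannot improve anything, because this is already the bound you used; and the exponent $d>n-2$ from \eqref{1n3} governs the decay of the index $I_Q(r)$, not the behaviour of the Dirichlet Green function or Poisson kernel that appear in your sum. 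There is no mechanism in your outline that produces summable decay across the troublesome scales.

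The paper's Lemma~\ref{l:iterateforguy} is \emph{not} a bootstrap on the global Poisson formula. It is an iteration on the ratio $w=u_R/u_D$ carried out in thin shells
\[
Q_R=B(Q,\tfrac{9}{10}R)\cap\{\tau\delta_0 R\le \delta(\cdot)\le \delta_0 R\}
\]
near $\partial\Omega$. One represents $u_R(\xi)$ by the Dirichlet harmonic measure of the shell $Q_R$ (not of $\Omega$), splits $\partial Q_R$ into an outer face $\partial^+$ (where $u_R\le\lambda u_D$ by hypothesis), an inner face $\partial^-$ close to $\partial\Omega$, and a far lateral piece $\partial^\infty$. The lateral piece is killed by the exponential decay $\omega^\xi(\partial_k)\le Ce^{-ck}$ of the shell harmonic measure. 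The inner face is small because the Robin condition, being nearly Dirichlet at these scales, forces $u_R$ itself to be small there—quantitatively $u_R\lesssim \lambda(\tau^\alpha+I_z(\tau\delta_0R)^{-1/2})\sup u_D$ via Poincar\'e and Caccioppoli. This yields $u_R\le(1+\eta)\lambda u_D$ one layer closer to the boundary, with $\eta$ controlled by $I_z(\delta_0R)^{-\beta}$, and the product $\prod_k(1+\eta_k)$ converges because $I$ grows geometrically as one approaches $\partial\Omega$. The decay that defeats your logarithm thus comes from the Robin boundary condition suppressing $u_R$ near $\partial\Omega$, accessed through a local shell argument; it is not visible in the global Poisson formula with the crude pointwise bound on $G_R^a(P,Y)$.
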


\ms 
Let us first see how Theorem \ref{t:GuysConj} implies Theorem \ref{t:DirichletRegime} 
and hence Theorem \ref{thm:GFest}.

\begin{proof}[Theorem \ref{t:GuysConj} implies Theorem \ref{t:DirichletRegime}] 
    Let $X,Y \in \Omega$ be given; we want to prove \eqref{5a4}.
 
 Let us first treat the case when $r_X < \rho_X$. In this case, 
 $r_X = |X-Y|/10$, and by construction $\delta(A_X) \geq C^{-1} r_X$ is rather large.
 Also, since $ \rho_X > r_X$,  we get that $I_{Q_X}(r_X) \leq CI_{Q_X}(\rho_X) \leq C$,
 and then, since $r_X = |X-Y|/10$ and $\sigma$ is doubling, we also get that 
 $I_{Q_Y}(r_X) \leq C$, which by the decay in \eqref{3b12} implies that 
 $\rho_Y \geq C^{-1} r_X \geq C^{-1}  |X-Y|$. Then $\delta(A_X) \geq C^{-1} |X-Y|$
 as well.  In this case,
 \begin{equation}\label{5a7}
C^{-1} |X-Y|^{2-n} \leq G_D(A_X,A_Y) \leq G_R^a(A_X, A_Y) \leq C |X-Y|^{2-n}
\end{equation}
by the classic estimates of Gr\"uter-Widman \cite{GW}, Lemma \ref{lem:changea2}, and 
Corollary \ref{cor:upperestalways}.
 
 In addition, recall that $I_{Q_X}(r_X) \leq C$, i.e., $B(Q_X, r_X)$
 still lies in the Neumann range. This allows us to apply the boundary
  Harnack principle (Theorem \ref{thm:bdryharnack}), maybe to a slightly smaller ball
  (use \eqref{3b12} if needed), and get that $G_R^a(Z, Y) \simeq G_R^a(Z', Y)$ for 
  $Z, Z' \in B = B(Q_X, |X-Y|/C)$. By the regular Harnack inequality,
  we also get this for $Z = X$, even if $X$ happens to be a
  little outside of $B(Q_X, |X-Y|/C)$, because $X$ is a corkscrew point for
  $B(Q_X, 2\delta(X))$ and hence is well connected to $B$ by a Harnack chain.
  And also for $Z' = A_X$, for the same reason. 
  Thus $G_R^a(X, Y) \simeq G_R^a(A_X, Y)$.
  
  Similarly, $I_{Q_Y}(|X-Y|/100) \leq C$, so the same argument shows that 
  $G_R^a(A_X, Y) \simeq G_R^a(A_X, A_Y)$.
  Here we use the symmetry of our assumptions, i.e., \eqref{2b8}, and the fact that we only needed
  $Y$ to lie reasonably far from $Q_X$. Altogether, $G_R^a(X, Y) \sim G_R^a(A_X, A_Y)
  \sim G_D(A_X, A_Y)$ and \eqref{5a4} holds in this case. 
  The argument when $r_Y < \rho_Y$ is the same, so we may now assume that 
  $r_X = \rho_X < |X-Y|/10$ and $r_Y = \rho_Y < |X-Y|/10$.
  
  Our next case is when $r_X = \rho_X \leq \delta(X)$  and $r_Y= \rho_Y \leq \delta(Y)$.
  In this case we just need to check \eqref{5a6}, and since by \eqref{3b12} and \eqref{5a3} we have
 $I_{Q_X}(\delta(X)) \geq C^{-1} I_{Q_X}(\rho_X) \geq C^{-1}$,  \eqref{5a5} holds and
 \eqref{5a6} follows from Theorem \ref{t:GuysConj}.
  
  Next suppose that $r_X = \rho_X  >  \delta(X)$. This is the typical case where we need
  to replace $X$ with $A_X$, the corkscrew point for $B(Q_X, r_X)$. By \eqref{5a3},
  $B(Q_X, r_X)$ is still in the Neumann range, so we may again apply the boundary
  Harnack principle (Theorem \ref{thm:bdryharnack}) to a slightly smaller ball.
  We get as before that $G_R^a(Z, Y) \simeq G_R^a(Z', Y)$ for 
  $Z, Z' \in B = B(Q_X, r_X/C)$, and also, by applying Harnack a few more times if needed,
  for $Z=X$ (this is only needed if $\delta(X) \geq C^{-1} r_X$, and true because $X$ is a corkscrew point 
  at the scale $2\delta(X)$), and $Z'=A_X$ (which is also a corkscrew point at the scale $r_X$). 
  So $G_R^a(X, Y) \simeq G_R^a(A_X, Y)$. The same argument, only needed when $r_Y = \rho_Y  >  \delta(Y)$,
  shows that in turn $G_R^a(A_X, Y) \sim G_R^a(A_X, A_Y)$. Now we want to apply
  Theorem \ref{t:GuysConj} to the pair $(A_X, A_Y)$, so we need to check that they satisfy
  \eqref{5a5}. Once we do this, we have \eqref{5a6} for this pair and \eqref{5a4} follows.
  So we need to show that for $Q = Q_{A_X}$, 
  $$
 a^{-1} \leq C_0  \delta(A_X)^{2-n}\sigma(B(Q, \delta(A_X)));
  $$
  the estimate for $Y$, if needed, would be the same. We already checked this when $A_X = X$.
  Otherwise, we know that $A_X \in B(Q_X, r_X)$, that 
  $\delta(A_X) \sim r_X = \rho_X$, and that $1 \sim I_{Q_X}(r_X) = a r_X^{2-n} \sigma(Q_X, r_X)$.
  The doubling poperty of $\sigma$ implies that $\sigma(Q_X, r_X) \leq C \sigma(B(Q, \delta(A_X))$,
  and the desired inequality follows, with a constant $C_0$ that depends only on the geometric constants, as promised.
\end{proof}

So we are left with proving Theorem \ref{t:GuysConj}. Of course this is the most delicate estimate, and it will be proven by 
 iterating the following key lemma. 

\begin{lemma}\label{l:iterateforguy}
We can find constants $M > 1$, 
$C \geq 1$, and $\beta > 0$,  
(that depend only on the geometric constants of $(\Omega, \sigma)$ and 
the ellipticity constants of $A$), so that if $B(Q, R)$ is a ball centered on $\d\Omega$, 
$u_R$ and $u_D$ are non-negative solutions of $\mathrm{div} A \nabla$ on $B(Q, R)$, 
satisfying respectively Robin and Dirichlet boundary conditions on $B(Q, R)\cap \Omega$, 
if $\delta_0 \leq 1/M$ is such that 
 \begin{equation} \label{5a8}
 I_z(\delta_0 R) =: a (\delta_0 R)^{2-n}\sigma(B(z,\delta_0 R)) > M
 \ \text{ for every } z \in \d\Omega \cap B(Q, 9R/10)
\end{equation}
 and if furthermore $u_R \geq u_D$ in $B(Q,R)$ but for some $\lambda \geq 1$, 
 \begin{equation} \label{5a9}
u_R(X) \leq \lambda u_D(X) \ \text{ for $X\in B(Q,R)$ such that } 
\mathrm{dist}(X, \partial \Omega) \geq \delta_0 R,
\end{equation}
then 
\begin{equation} \label{5a10} 
u_R(\xi) \leq (1+\eta) \lambda u_D(\xi), \ \text{ for all $\xi \in B(Q, 4R/5)$ such that  } 
\mathrm{dist}(\xi, \partial \Omega) \geq \delta_0 R/2, 
\end{equation}
with
\begin{equation}\label{5a10b}
\eta \leq  C \min\Big( M^{-\beta}, e^{-\frac{\beta}{\delta_0}}
+ \sup_{z \in \d\Omega \cap B(Q, 9R/10) } \big[ I_z(\delta_0 R)^{-\beta}\big] \Big).
\end{equation}
\end{lemma}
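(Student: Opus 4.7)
My plan is a maximum-principle comparison, driven by a pointwise smallness estimate for $u_R$ on $\partial\Omega$ that exploits the largeness of $I_z(\delta_0 R)$ together with a decomposition of $u_R$ separating the ``boundary-of-$\Omega$'' contribution from the ``far-boundary-of-$B(Q,R)$'' contribution.

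\textbf{Step 1 (Boundary smallness of $u_R$).}  For $z \in \d\Omega \cap B(Q,9R/10)$ and $r = \delta_0 R$, I test the weak Robin equation
$\int A\nabla u_R\nabla\phi + a\int_{\d\Omega} u_R \phi\,d\sigma = 0$
against a cutoff $\phi$ with $\phi = 1$ on $B(z,r/2)$, $\mathrm{supp}\,\phi \subset B(z,r)$ and $|\nabla\phi| \leq C/r$.  Combining with Caccioppoli (Lemma \ref{l3a1}) and recalling $I_z(r) = ar^{2-n}\sigma(B(z,r))$ gives the averaged bound
\[
\fint_{\d\Omega \cap B(z,r/2)} u_R\, d\sigma \leq \frac{C}{I_z(r)} \sup_{\Omega\cap B(z,r)} u_R.
\]
The local H\"older regularity of $u_R$ up to $\partial\Omega$ (cf.\ \cite[Thm.~4.1]{Robin1}) together with a Moser-type iteration promotes this to the pointwise bound
\[
\sup_{\d\Omega \cap B(z,r/4)} u_R \leq C\, I_z(r)^{-\beta} \sup_{\Omega \cap B(z,r)} u_R,
\]
for some $\beta > 0$ depending only on the geometric/ellipticity constants.

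\textbf{Step 2 (Decomposition and barrier).}  In $\Omega \cap B(Q,R)$ I split $u_R = h + \psi$, where $\psi$ solves the Dirichlet problem with $\psi = u_R$ on $\d\Omega \cap B(Q,R)$ and $\psi = 0$ on $\d B(Q,R) \cap \Omega$, while $h$ solves the dual problem with $h = 0$ on $\d\Omega$ and $h = u_R$ on $\d B(Q,R) \cap \Omega$.  Step~1 bounds the Dirichlet data of $\psi$ by $\bar m := C\sup_z I_z(\delta_0 R)^{-\beta}\sup u_R$, hence $\psi \leq \bar m$ by the maximum principle.  I further split $h = h_1 + h_2$ according to whether the boundary data is supported on $\d B(Q,R) \cap \{\delta \geq \delta_0 R\}$ or on $\d B(Q,R) \cap \{\delta < \delta_0 R\}$.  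On the support of $h_1$ the hypothesis $u_R \leq \lambda u_D$ gives, by comparison of Dirichlet solutions, $h_1 \leq \lambda u_D$ in all of $\Omega \cap B(Q,R)$.  The contribution $h_2$ has data supported on a collar of surface-area comparable to $\delta_0$ times that of $\d B(Q,R)\cap \Omega$, and by the Dirichlet Poisson kernel estimate (smoothness of $\d B$ away from $\d\Omega$) and a bound on $\sup u_R$ via Corollary \ref{cor:upperestalways}, this gives $h_2 \lesssim \delta_0 \cdot \lambda u_D(A_R(Q))$ on $B(Q, 4R/5) \cap \{\delta \geq \delta_0 R/2\}$.

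\textbf{Step 3 (Comparing with $u_D$).}  On the target region $\{\delta \geq \delta_0 R/2\} \cap B(Q, 4R/5)$, I use the classical Dirichlet boundary comparison for $u_D$ (itself a non-negative Dirichlet solution) to deduce $u_D(\xi) \gtrsim u_D(A_R(Q))$ with implicit constant depending only on geometric data.  Assembling
\[
u_R = h_1 + h_2 + \psi \leq \lambda u_D + h_2 + \psi
\]
and estimating $h_2 + \psi \leq \eta \lambda u_D$ on the target region with $\eta \leq C(\sup_z I_z(\delta_0 R)^{-\beta} + \delta_0 \cdot \text{decay factor})$ gives \eqref{5a10}.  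This already yields the bounds $\eta \leq CM^{-\beta}$ and $\eta \leq C\sup_z I_z(\delta_0 R)^{-\beta}$.

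\textbf{Step 4 (Exponential term and main obstacle).}  The bound $\eta \leq C e^{-\beta/\delta_0}$ is obtained by iterating the one-step improvement across $\sim 1/\delta_0$ concentric radii $R_k = R(1 - kc\delta_0)$ for $k \leq c^{-1}/(10\delta_0)$.  At each scale, a version of Steps~1--3 applied in $B(Q,R_k)$ with $\lambda$ replaced by the current best constant yields multiplication by a fixed factor $(1-c')$; compounding $\sim 1/\delta_0$ such factors produces $e^{-\beta/\delta_0}$.  The main obstacle is the sharp control of $h_2$: the naive bound $h_2 \lesssim \delta_0 \lambda u_D(A_R(Q))$ is only linear in $\delta_0$, and to upgrade to exponential smallness one must iterate carefully, using at each step that the bound on $u_R$ at $\delta \sim \delta_0 R_k$ has already been improved.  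A second technical subtlety is that, in the regime where $M$ is only moderately large, the boundary contribution $\psi$ is not negligible on its own and must be absorbed into the iteration via a geometric series, with careful bookkeeping to ensure the constants do not degenerate as $\delta_0 \downarrow 0$.
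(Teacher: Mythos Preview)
Your Step 3 contains a genuine error that undoes the argument.  You claim that for $\xi \in B(Q,4R/5)$ with $\delta(\xi)\geq \delta_0 R/2$ one has $u_D(\xi)\gtrsim u_D(A_R(Q))$ with implicit constant independent of $\delta_0$.  This is false: $u_D$ vanishes on $\partial\Omega$, the corkscrew $A_R(Q)$ sits at depth $\sim R$, while $\xi$ sits at depth $\sim \delta_0 R$, so $u_D(\xi)/u_D(A_R(Q))$ tends to $0$ with $\delta_0$.  The Dirichlet boundary Harnack only compares \emph{ratios} of two positive solutions; it never gives a lower bound for a single solution near the boundary.  This wrecks your bookkeeping: your $\psi$ satisfies $\psi\le \bar m \lesssim J^\beta \lambda u_D(A_R(Q))$, but when you divide by $u_D(\xi)$ the factor $u_D(A_R(Q))/u_D(\xi)$ blows up as $\delta_0\to 0$, and $\eta$ is not small.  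The same issue hits $h_2$.

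The paper avoids this by never comparing $u_D(\xi)$ with $u_D$ at a point far from the boundary.  Instead it works in the thin collar $Q_R=B(Q,9R/10)\cap\{\tau\delta_0 R\le \delta\le \delta_0 R\}$ and represents $u_R(\xi)=\int_{\partial Q_R}u_R\,d\omega^{\xi}$ via the Dirichlet harmonic measure of $Q_R$.  On the outer face $\{\delta=\delta_0 R\}$ one has $u_R\le \lambda u_D$ by hypothesis, and since $\int u_D\,d\omega^{\xi}=u_D(\xi)$ exactly, this gives the main term $\lambda u_D(\xi)$ with no loss.  On the inner face $\{\delta=\tau\delta_0 R\}$ the paper proves $u_R(y)\lesssim \lambda J^{\beta}\sup_{B(y,C\delta_0 R)}u_D$ (your Step~1 idea, but at depth $\tau\delta_0 R$, with $\tau$ optimized), and then uses that $\sup_{B(y,C\delta_0 R)}u_D\le C(1+k)^{L}u_D(\xi)$ when $|y-\xi|\sim k\delta_0 R$, via a Harnack chain of length $\sim\log(1+k)$ between corkscrews \emph{at scale $\delta_0 R$}.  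The crucial point is that $\omega^{\xi}$ in the thin collar decays exponentially in $k$, $\omega^{\xi}(\partial_k)\le Ce^{-ck}$, which beats $(1+k)^L$.  Finally, the $e^{-\beta/\delta_0}$ term arises because the lateral boundary $\partial B(Q,9R/10)$ only appears at indices $k\gtrsim 1/\delta_0$; it is \emph{not} produced by an iteration of the lemma over concentric radii as you suggest in Step~4 (that iteration happens one level up, when the lemma is applied to prove Theorem~\ref{t:GuysConj}).
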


The statement is complicated, so we should explain a bit.
The general idea is to extend \eqref{5a9}, which controls $u_R$ by $u_D$ as long as we are far enough away from the boundary,
to a region that lies a little bit closer to $\d\Omega$. This is what we do in \eqref{5a10},
but we lose a factor $(1+\eta)$ in the battle, so we hope to make $\eta$ as small as possible.

One might be tempted to simplify the lemma by taking $\delta_0 = 1/M$, say. Then
we just need to assume that $I_z(\delta_0 R) > M$, and the same proof will give us a similar result with $\eta \leq C M^{-\beta}$.
This looks good, but in the application to Theorem \ref{t:GuysConj} we will have to estimate a product of numbers
$(1+\eta_k)$ coming from iterations of the lemma, and we will need the better estimate
\eqref{5a10b} to get an additional decay and make a large product converge. This will force us
to take different $\delta_0$ depending on the scale. This is naturally more complicated, but at the end we get a 
control of $u_R$ by $u_D$ on a region that can get very close to $\d\Omega$.

It is slightly unpleasant that we need a supremum of values of $I_z(\delta_0 R)$;
we could have made a stronger assumption on the single $I_Q(R)$, but this would have added
unpleasant correction terms $\delta_0^K$ for some $K$ that depends on the doubling constant,
so we decided to stick to the unpleasant supremum. 

\ms
Assuming Lemma \ref{l:iterateforguy}, the proof of Theorem \ref{t:GuysConj} goes as follows.

\begin{proof}[Proof of Theorem \ref{t:GuysConj} assuming Lemma \ref{l:iterateforguy}]
Our first note is that $G_R \geq G_D$  always, 
by Lemma~\ref{lem:changea2}.

Fix, $X, Y \in \Omega$, $X \neq Y$, and let $R_0 =  |X-Y|/10$, 
and first assume that  $\delta(X), \delta(Y) \geq c_0 R_0$, where the small $c_0 > 0$
will be chosen soon, depending on $M$ in Lemma \ref{l:iterateforguy}.
Then by Corollary \ref{cor:upperestalways}, Lemma~\ref{lem:changea2},  
and the Gr\"uter-Widman lower bound on
$G_D$, see \cite{GW}, we have that 
\begin{equation} \label{5a11}
G_R(X,Y)\simeq G_D(X,Y) \simeq |X-Y|^{2-n}
\end{equation}
(with constants that depend on $c_0$, but this is all right), as required for \eqref{5a6}.

So we may assume that 
\begin{equation} \label{5e13}
\delta(X) \leq c_0 R_0/2,
\end{equation}
and for the moment let us assume for simplicity that  $\delta(Y)\geq c_0 R_0$.
We want to replace $X$ by a point $X_0$ that lies further from $\d\Omega$.
Choose $Q = Q_{X} \in \d\Omega$ such that $|Q_{X}-X| = \delta(X)$, set $B_0 := B(Q_{X}, R_0)$
(a large ball), and choose a corkscrew point $X_0$ for $B(Q_X, R_0)$. 
We want to estimate $w = \frac{u_R}{u_D}$, and we start on
$U_0 = \big\{ \xi \in \Omega \cap B_0\, ; \, \delta(\xi) \geq \delta_0 R_0 \big\}$, with $\delta_0 = 1/M$. 
Then by Harnack's inequality (applied to $u_R$ and to $u_D$),
\begin{equation} \label{5e14}
\lambda_0 : = \sup_{U_0} w(\xi) \leq C w(X_0) \leq C,
\end{equation}
by the proof of of \eqref{5a11}, with constants that are allowed to depend on $\delta_0 = 1/M$.

We want to get closer to $\d\Omega$, and for this we shall use Lemma \ref{l:iterateforguy} 
repeatedly, applied to the functions $u_R = G^a_R(\cdot,Y)$ and $u_D = G_D(\cdot,Y)$, 
in smaller and smaller balls $B_k = B(Q_X, R_k)$, and with $\delta_0$ replaced
by varying depth parameters $\delta_k$, which will help some sums in the estimates to converge geometrically.
So we start with a definition of numbers $R_k$, $\delta_k$, and
$\rho_k = R_k \delta_k$. 
For the largest ball $B_0$, we already chose $\delta_0 = 1/M$; we also want to do this for the smallest ball
of our collection, but for the intermediate balls we want to take smaller values $\delta_k$.
We decide to keep $\rho_k := \delta_k R_k  = 2^{-k} \delta_0 R_0$, and also to stop
for the first index, which we shall call $\ell$, such that 
\begin{equation} \label{5e15}
\rho_\ell = \delta_\ell R_\ell \leq C_1\delta(X), 
\end{equation}
where the large constant $C_1$ will be chosen soon, depending on the usual constants (and $M$). 
As for $\delta_k$, we pick $\varepsilon > 0$ very small, take 
\begin{equation} \label{5e16}
\delta_k = \delta_0  \, \max\big(2^{-k \varepsilon}, 2^{- (\ell - k) \varepsilon}\big)
\end{equation}
for $0 \leq k \leq \ell$. Then take $R_k = \delta_k^{-1} \rho_k$; this is still a quite regular
sequence, since 
$$
2^{(1-\varepsilon) } \leq \frac{R_{k-1}}{R_{k}} \leq 2^{(1+\varepsilon) }
\ \ \text{for $k \geq 1$. }
$$

We want to apply Lemma \ref{l:iterateforguy} to $B_k$, with the given choice of $\delta_k$.
So need to check that \eqref{5a8} holds, i.e., that
\begin{equation} \label{5e17}
I_z(\delta_k R_k) \geq M  \ \text{ for } z \in \d\Omega \cap \frac{9}{10} B_k.
\end{equation}
We first use the doubling property of $\sigma$ to show that
$$
\sigma(B(Q_X, R_k)) \leq \sigma(B(z, 2 R_k)) \leq C \delta_k^{-N} \sigma(B(z, 2 \delta_k R_k))
$$
where $N$ depends only on the doubling constant in \eqref{1n4}. Hence
\begin{eqnarray} \label{5c19}
I_z(\delta_k R_k) &=& a(\delta_k R_k)^{2-n} \sigma(B(z, 2 \delta_k R_k))
\geq C^{-1} a\delta_k^{N} (\delta_k R_k)^{2-n} \sigma(B(Q_X, R_k))
\nn\\
&=&  C^{-1} \delta_k^{N+2-n} I_{Q_X}(R_k)
\geq C^{-1} \delta_0^{N+2-n}2^{-(N+2-n)\varepsilon (\ell-k)} I_{Q_X}(R_k).
\end{eqnarray}
In addition, by \eqref{3b12} and the definition of $\ell$,
\begin{eqnarray} \label{5c20}
I_{Q_X}(R_k) &\geq& C^{-1} (R_k/\delta(X))^{d+2-n} I_{Q_X}(\delta(X)) 
\geq C^{-1} \delta_0^{-(d+2-n)}(R_k/R_\ell)^{d+2-n} C_1^{d+2-n} I_{Q_X}(\delta(X)) 
\nn\\
&\geq&  C^{-1} \delta_0^{-(d+2-n)}C_1^{d+2-n} 2^{(1-\varepsilon) (d+2-n) (\ell-k)} I_{Q_X}(\delta(X)) 
\end{eqnarray}
Therefore, if $\varepsilon$ is so small that 
$(1-\varepsilon) (d+2-n)- (N+2-n)\varepsilon > (d+2-n)/2 =: \gamma > 0$,
\begin{equation} \label{5c21}
I_z(\delta_k R_k) \geq C^{-1} \delta_0^{N-d}C_1^{d+2-n} 2^{\gamma(\ell-k)} I_{Q_X}(\delta(X)) 
\geq C^{-1} C_0^{-1} C_1^{d+2-n}\delta_0^{N-d} 2^{\gamma (\ell-k)}, 
\end{equation}
where we have used the assumption \eqref{5a5}.
 Choose $C_1$ large enough (depending on $C_0$ as well as $M$ and the usual constants), so that
\begin{equation} \label{5f21}
I_z(\delta_k R_k) \geq 2^{\gamma (\ell-k)} M.
\end{equation}

So \eqref{5e17} holds and we can apply the lemma. 
Set $U_k = \big\{ \xi \in \Omega \cap B_k\, ; \, \delta(\xi) \geq \rho_k \big\}$
and $\lambda_k = \sup_{U_k} w(\xi)$; since we chose $\rho_{k+1} = \rho_k/2$, Lemma \ref{l:iterateforguy} says that
\begin{equation} \label{5e21}
\lambda_{k+1} \leq (1+ \eta_k) \lambda_k, \ \text{ with } 
\eta_k =  C \min\Big( M^{-\beta}, e^{-\frac{\beta}{\delta_k}}
+ \sup_{z \in \d\Omega \cap \frac{9}{10} B_k} \big[ I_z(\rho_k)^{-\beta}\big] \Big)
\end{equation}
(recall that $\delta_k R_k = \rho_k$). Thus 
$\lambda_{\ell} \leq \lambda_0 \prod_{k < \ell} (1+ \eta_k) \leq C \prod_{k < \ell} (1+ \eta_k)$
by \eqref{5e14}, and since $\eta_k \leq C M^{-\beta} < 1/2$, we can take logarithms and get that
\begin{equation} \label{5e22}
\log(\lambda_\ell) \leq C + C \sum_{k < \ell} e^{-\frac{\beta}{\delta_k}} +
C \sum_{k < \ell} \sup_{z \in \d\Omega \cap \frac{9}{10} B_k} \big[ I_z(\rho_k)^{-\beta} \big].
\end{equation}
The first sum is less than $C$, by our choice \eqref{5e16} of $\delta_k$, and for the second term we observe that
for $z \in \d\Omega \cap \frac{9}{10} B_k$, \eqref{5f21} says that 
$I_z(\rho_k)^{-\beta} \leq C \big[2^{\gamma (\ell-k)} M\big]^{-\beta}$. 
Again the sum converges, and at the end we get that $\lambda_\ell \leq C$.

Call $\xi_\ell$ any point of $U_{\ell}$. Then $w(\xi_\ell) \leq \lambda_\ell \leq C$,
and a last application of Harnack also yields $w(X) \leq C$. That is, we showed that
\begin{equation} \label{5e24}
1 \leq w(X) = \frac{u_R(X)}{u_D(X)} = \frac{G^a_R(X,Y)}{G_D(X,Y)} \leq C,
\end{equation}
which is the desired result \eqref{5a6}, in our second case.

We are left with the case where both $\delta(X)$ and $\delta(Y)$ are less than $c_0 R_0/2$, 
and as the reader may have guessed we need to move both $X$ and $Y$. 
Thus we also choose $Q_Y \in \d\Omega$ such that $|Q_{X}-X| = \delta(X)$ and
define $B_0^Y = B(Q_Y, R_0)$ as we did for $X$. We also 
construct a decreasing sequence of balls $B_k^Y = B(Q_Y, R_k)$,
$0 \leq k \leq \ell_Y$, to which we can apply Lemma~\ref{l:iterateforguy} as above. 

Also pick a corkscrew point $X_0$ for $B_0$ and a corkscrew point $Y_0$ for $B_0^Y$.

Let us first observe that argument above still works with $Y$ replaced by $Y_0$, because $Y_0$
is far from $\d\Omega$ as $Y$ was earlier. This way we get that 
$G^a_R(X_0, Y_0) \simeq G_D(X_0, Y_0)$, as in \eqref{5a11}.
 This way we get that
$\frac{G^a_R(X,Y_0)}{G_D(X,Y_0)} \leq C$. And now we apply the iterating part of the proof,
this time applied with $u_R = G^a_R(X,\cdot)$ and $u_D = G_D(X,\cdot)$
(and the adjoint operator, which is still of the same type, see \eqref{2b8}).
These two are still elliptic, and in this part of the argument we never used the fact that
$Y$ (and now $X$) is far from $\d\Omega$. We get that 
$$
\frac{G^a_R(X,Y)}{G_D(X,Y)} = \frac{u_R(Y)}{u_D(Y)} \leq C \, \frac{u_R(Y_0)}{u_D(Y_0)}
= C \, \frac{G^a_R(X,Y_0)}{G^a_R(X, Y_0)} \leq C.
$$
This is \eqref{5a6} (because $G^a_R \leq G_D$), and 
Theorem~\ref{t:GuysConj} follows from Lemma \ref{l:iterateforguy}.
\end{proof}

\ms 
To finish the proof of Theorem \ref{t:GuysConj} and in fact the proof of the entirety of Theorem \ref{thm:GFest}, we need only prove Lemma \ref{l:iterateforguy}. 

\begin{proof}[Proof of Lemma \ref{l:iterateforguy}]
Let $\tau, C$ be geometric constants 
to be fixed later and introduce the domain 
$$ 
Q_R = B(Q, 9R/10)  
\cap \{\tau\delta_0 R \leq \mathrm{dist}(x, \partial \Omega) \leq \delta_0 R\}. 
$$ 
Fix $\xi$ as in the statement of the lemma. Since \eqref{5a9} already gives us the desired inequality \eqref{5a10}
when $\dist(x, \d\Omega) \geq \delta_0 R$, we shall assume that 
\begin{equation} \label{5a17}
\xi \in Q_R \cap B(Q, 4R/5) \ \text{ and } \ \delta(\xi) \geq \delta_0 R/2. 
\end{equation}
Our strategy is to estimate $u_R(\xi)$ using the (Dirichlet) harmonic measure with pole $\xi$ for $Q_R$, 
which we will hereafter refer to as $\omega^{\xi}$. It will be natural to break $Q_R$ into the sections
$$
A_k:= \{y\in Q_R\mid k\delta_0 R \leq |y-\xi| < (k+1)\delta_0R \}, \ \ k \geq 0.
$$ 
For each $A_k$ we write a disjoint union 
$$
\d_k := A_k \cap \partial Q_R =: \partial^+_k \cup \partial^-_k \cup \partial^\infty_k, 
$$ 
where $x\in \partial^+_k$ means that $\mathrm{dist}(x, \partial \Omega) = \delta_0 R$, 
$x\in \partial^-_k$ means that $\mathrm{dist}(x, \partial \Omega) = \tau \delta_0 R$, 
and the remaining part $\partial^\infty_k$ is contained in $\partial B(Q, 9R/10)$. 
Observe that most $k$ we will have $\partial^\infty_k = \emptyset$.

Our estimates on $u_R$ in $A_k$ will get worse with $k$ but this is all right 
because $\omega^\xi(\partial^\pm_k)$ decays extremely quickly in $k$: we claim that 
\begin{equation}\label{e:hmdecay}
\omega^\xi(\partial_k) \leq Ce^{-ck},
\end{equation}
for some constants $C,c > 0$ that depend only on the dimension $n$. 
The proof of \eqref{e:hmdecay} follows from the Bourgain estimate on harmonic measure and the maximum principle. 
Here are the details: fix $k \geq 1$ and first 
let $u(z)\vcentcolon= \omega^z(\partial_k)$ and 
$v_0(z)=(1-\omega_{A_0}^z(\partial_0^+\cup\partial_0^-))\sup_{x\in\partial_0^L}\omega^x(\partial_k)$
for $z \in A_0$, 
where $\partial_0^L=\{y\in A_0 : |y-\xi|=\delta_0 R\}$. 
Since $u$ and $v_0$ are elliptic functions in $A_0$, the maximum principle implies that $u(z)\leq v_0(z)$ for all $z\in A_0$. Moreover, by the Bourgain estimate we have that $\omega_{A_0}^\xi(\partial_0^+\cup\partial_0^-) > \theta$ for some $\theta \in (0,1)$, and therefore $\omega^\xi(\partial_k) \leq (1-\theta)\sup_{x\in \partial_0^L} \omega^x(\partial_k)$. 
Arguing in a similar way, if $k \geq 2$, 
let $v_1(z)\vcentcolon=(1-\omega_{A_0\cup A_1}^z(\partial_1^+\cup\partial_1^-))\sup_{x\in\partial_1^L}\omega^x(\partial_k)$ and apply the maximum principle to get $u(z)\leq v_1(z)$ for $z\in A_1\cup A_0$. Using again the Bourgain estimate we 
get that $\omega^\eta(\partial_k) \leq (1-\theta)\sup_{x\in \partial_1^L} \omega^x(\partial_k)$ for all $\eta \in \partial_0^L$. From this it follows that $\omega^\xi(\partial_k) \leq (1-\theta)^2\sup_{x\in \partial_1^L} \omega^x(\partial_k)$. Iterating this we get $\omega^\xi(\partial_k) \leq (1-\theta)^j \sup_{|\xi - z| = j\delta_0 R} \omega^z(\partial_k)$ 
for any $j \leq k-1$ and thus, if $-c = \log((1-\theta))$, we have \eqref{e:hmdecay}. It is important to note here that the estimate in \eqref{e:hmdecay} is completely independent of $\tau \leq 1/2$, say.  

We now break things down into pieces:
\medskip

\noindent {\bf Part I: The contribution of $\partial^\infty_k$.}  
We want to estimate $u_R(y)$ for $y\in \partial^\infty_k$. 
First let $Q_y$ denote the the point in $\partial \Omega$ closest to $y$ and choose a corkscrew point
$A_y$ for $B(Q_y, \delta_0 R)$. Then $u_R(y) \leq Cu_R(A_y)$ by \cite[Lemma 3.3]{Robin1}
(a nontrivial lemma that says that $u_R$ tends to be largest at corkscrew points). Then we want to connect $A_y$ to a point $\xi_0$
near $\xi$ with a Harnack chain in $\Omega$; we choose $\xi_0 \in B(\xi, C\delta_0 R)$ so that 
$\delta(\xi_0) \geq \delta_0 R$, because this way \eqref{5e14} says that $u_R(\xi_0) \leq \lambda u_D(\xi_0)$. 
But since $\delta(\xi) \geq \delta_0 R/2$ by \eqref{5a17}, a few applications of Harnack's inequality
also yield $u_D(\xi_0) \leq C u_D(\xi)$.

Now we need to evaluate the number $N$ of elements in a Harnack chain that goes from $A_y$ to $\xi_0$.
Both lie in a same ball of size $R$ and at distance at least $C^{-1} \delta_0 R$ from $\d\Omega$,
and a fairly simple argument, using successive corkscrew points at larger and larger scales, yields $N \leq C \log(\delta_0^{-1})$.
Here we have to be slightly more careful, because we were only told that $u_R$ is a solution in $B(Q,R)$, 
so we and so want to make sure that our Harnack chains stays in $\Omega \cap B(Q,R)$. 
Close to $y$, and because we took $y \in B(Q, 9R/10)$, we are safe until we get to a corkscrew point $Z_y$ at scale $R/C_0$, 
where $C_0$ will be chosen soon. 
Similarly we can connect $\xi_0$ to a corkscrew point $Z_\xi$ at the same scale $R/C_0$, 
and now we need to be slightly more careful when we connect $Z_\xi$ to $Z_y$. 
But since $y \in \d^\infty_k$, there is a path in 
$Q_R$ that goes from $\xi$ to very near $y$, and along this path we can find less than $CC_0$ points, at successive distances
smaller than $R/C_0$. We pick corkscrew points at scale $R/C_0$ near these points, 
and connect them with Harnack chains in $\Omega$. If $C_0$ is chosen large enough, depending on the NTA constants for $\Omega$,
all these Harnack chains are contained in $\Omega \cap B(Q,R)$, so we are safe. And we only added $C C_0$ balls to the
chain, so we still have a total number $N \leq C \log(\delta_0^{-1})$. So multiple
applications of the Harnack inequality (or $u_R \geq 0$) yield
$u_R(A_y) \leq C^N u_R(\xi_0) \leq C \delta_0^{-K} \lambda u_D(\xi_0)$, where $K$ is also a geometric constant,
and altogether $u_R(y) \leq C \delta_0^{-K} \lambda u_D(\xi_0) \leq C \delta_0^{-K} \lambda u_D(\xi)$.


Let $k_\infty$ be the smallest $k$ such that $\partial^\infty_k\neq \emptyset$ and observe that 
$k_\infty \delta_0 R> R/10$. 
So we can estimate 
\begin{equation} \label{5a19}
\sum_k \int_{\partial^\infty_k} u_R(z)d\omega^\xi(z) 
\leq C \delta_0^{-K} \lambda u_D(\xi) \sum_{k > \delta_0^{-1}/10} \omega^\xi(\partial_k)
\leq C \delta_0^{-K} e^{-\frac{c}{10\delta_0}}   
\lambda u_D(\xi) \leq  
u_D(\xi) e^{-\frac{c}{20\delta_0}}
\end{equation}
by \eqref{e:hmdecay} and, for the last inequality, the facts that $\delta_0 < 1/M$ is as small as we want 
and $\lim_{t \to +\infty} t^{K} e^{-\frac{c t}{20}} = 0$.

\medskip
\noindent {\bf Part II: The contribution of $\partial^-_k$.} Pick any $y\in \partial_k^-$: we want to show that $u_R(y)$
is small, because of the relatively large value of $a$. Let $Q_{y}\in \partial \Omega$ be such that 
$|y - Q_{y}| \leq 2 \dist(y, \partial \Omega) = 2\tau \delta_0 R$.
Set $B_y = B(Q_y, \tau \delta_0 R)$, and more generally 
$CB_y = B(Q_y, C\tau \delta_0 R)$ for $C \geq 1$.

Notice that $u_R^2(y) \leq C \fint_{4B_y \cap \Omega} u^2_R$
by the Harnack inequality. Then our Poincar{\'e} inequality at the boundary, 
Lemma \ref{lem:poincareboundary}, implies that
\begin{equation}\label{e:poincareinminus}
u_R^2(y) \leq C \fint_{4B_y \cap \Omega} u^2_R
\leq C  \fint_{4B_y \cap \d\Omega} u^2_R d\sigma 
+ C (\tau \delta_0 R)^{2}  \fint_{CB_y \cap \Omega} |\nabla u_R|^2
\end{equation}
For the first integral, we use a variant of the Caccioppoli estimate: we apply the defining condition 
\eqref{e:weaksol} (with $f=0$) with $\varphi = \theta^2 u_R$, for some 
smooth cutoff supported in $8B_y$ that is equal to $1$ on 
$4B_y$. This yields 
\begin{eqnarray}\label{5a21}
\int_{\d \Omega} u_R^2 \theta^2 d\sigma 
&=& - \frac{1}{a} \int_\Omega A \nabla u_R \nabla (\theta^2 u_R)
= - \frac{1}{a} \int_\Omega \theta^2 A \nabla u_R \nabla u_R
-  \frac{1}{a} \int_{\Omega}  2 u_R \theta A \nabla u_R \nabla \theta
\nn\\
&\leq&
\frac{C}{a} \int_{\Omega} u_R \theta |A| |\nabla u_R| |\nabla \theta|
\leq \frac{C}{a} \int_{8B_y \cap\Omega} (\tau \delta_0 R)^{-1} u_R |\nabla u_R|
\nn\\
&\leq& \frac{C}{a} \int_{8B_y \cap\Omega}  \Big\{  (\tau \delta_0 R)^{-2} u_R^2  
 + |\nabla u_R|^2 \Big\}
\leq  \frac{C  (\tau \delta_0 R)^{-2}}{a} \int_{8B_y \cap\Omega} \Big\{ u_R^2 + (\tau \delta_0 R)^2 |\nabla u_R|^2 \Big\}.
\end{eqnarray}
Then we use the the Caccioppoli estimate itself to say that
$\int_{8B_y \cap\Omega} (\tau \delta_0 R)^2 |\nabla u_R|^2
\leq C \int_{16B_y \cap\Omega} u_R^2$, so that finally
\begin{equation}\label{5a22}
\fint_{4B_y \cap \d\Omega} u^2_R d\sigma
\leq  \frac{C  (\tau \delta_0 R)^{-2}}{a \sigma(4B_y)} \int_{16B_y \cap\Omega} u_R^2
\leq  \frac{C (\tau \delta_0 R)^{n-2}}{a \sigma(4B_y)}  \sup_{16B_y} u_R^2.
\end{equation}
We use \cite[Lemma 3.3]{Robin1} again to say that 
$\sup_{16B_y} u_R \leq C u_R(A_y) \leq C \lambda u_D(A_y)$,
where $A_y$ is a corkscrew point for $B(Q_y, C \delta_0 R)$, where $C$ is chosen large enough so that
$\delta(A_y) \geq \delta_0 R$ and hence $u_R(A_y) \leq \lambda u_D(A_y)$ by assumption.
Thus 
\begin{equation}\label{5a23}
\fint_{4B_y \cap \d\Omega} u^2_R d\sigma
\leq \frac{C (\tau \delta_0 R)^{n-2} \lambda^2 u_D^2(A_y)}{a \sigma(4B_y)}
\leq \frac{C (\tau \delta_0 R)^{n-2} \lambda^2 }{a \sigma(4B_y)} \ \sup_{B(Q_y, C \delta_0 R)} u_D^2.
\end{equation}
We leave this for the moment and estimate the last integral in \eqref{e:poincareinminus}.
Again we use the Caccioppoli estimate, but this time removing the infimum as in Remark \ref{rk31}
to let the oscillation (rather than the supremum) appear; this is good because the oscillation decay
for $u_R$, from the scale $\delta_0 R$ to the smaller scale $\tau \delta_0 R$, will give a better estimate.
That is, 
\begin{eqnarray}\label{5a24}
\fint_{CB_{y}}|\nabla u_R|^2 
&\leq& C(\tau \delta_0 R)^{-2}  \left(\mathrm{osc}_{2CB_{y}} u_R\right)^2 
\leq C (\tau \delta_0 R)^{-2} \tau^{2\alpha}  \left(\mathrm{osc}_{B(Q_{y}, C \delta_0 R)} u_R\right)^2 
\nn\\
&\leq& C\lambda^2(\tau \delta_0 R)^{-2}\tau^{2\alpha} \sup_{z\in B(y, C\delta_0 R)} u^2_D(z),
\end{eqnarray}
where the decay rate $\alpha > 0$ depends on geometric and ellipticity constants and 
comes from iterations of \cite[Theorem 4.5]{Robin1} (it is important that the constants there do not depend on $a$), and we used \cite[Lemma 3.3]{Robin1} and our assumption \eqref{5a8}again.
Putting all of this together, we showed that for $y$ in any $\d_k^-$, 
 \begin{equation}\label{e:firstminusest}
 u_R^2(y) \leq C\lambda^2 \sup_{z\in B(y, C\delta_0)} u_D^2(z)
\  \left(\frac{(\tau\delta_0 R)^{n-2}}{a\sigma(B(Q_{y},\tau\delta_0 R))} +  \tau^{2\alpha} \right).
 \end{equation}
 We first estimate the first term 
 \begin{equation}\label{5a26}
 X =  \frac{(\tau\delta_0 R)^{n-2}}{a\sigma(B(Q_{y},\tau\delta_0 R))} = \frac{1}{I_{Q_y}(\tau\delta_0 R)},
\end{equation}
where we are happy to recognize the dimensionless number $I_{Q_y}(\tau\delta_0 R)$,
in terms of the quantity
\begin{equation}\label{5c28}
J = \sup_{z \in \d\Omega \cap B(Q, 9R/10)} I_z(\delta_0 R)^{-1}
\end{equation}
which appears implicitly in the statement.
We use the doubling property the usual iteration of \eqref{1n4} to get that for 
$\rho = \tau\delta_0 R < \delta_0 R$, 
$\sigma(B(Q_{y},\delta_0 R)) \leq C \tau^{-K}\sigma(B(Q_{y},\tau\delta_0 R))$, 
where $C \geq 1$ and $K \geq 1$ depend only on the doubling constant in \eqref{1n4}.
Then
$$
X = \frac{(\tau\delta_0 R)^{n-2}}{a\sigma(B(Q_{y},\tau\delta_0 R))} 
\leq C \tau^{-K}  \frac{(\tau\delta_0 R)^{n-2}}{a\sigma(B(Q_{y},\delta_0 R))} 
= C \tau^{-K +n - 2} I_{Q_y}(\delta_0 R)^{-1} \leq C \tau^{-K +n - 2} J
$$
because $Q_y \in B(Q, 9R/10)$ when $\xi \in B(Q, 4R/5)$. So \eqref{e:firstminusest}
says that 
\begin{equation}\label{5c29}
 u_R(y) \leq C  \lambda  \Big(\tau^{-K +n - 2} J + \tau^{2\alpha}\Big)^{1/2}
 \sup_{z\in B(y, C\delta_0)} u_D(z) 
\end{equation}
and now we choose $\tau$. We do not know exactly the value of $K$, but we may assume that
$K-n+2 \geq 1$, say, because otherwise we can use the worse estimate with $K-n+2 = 1$.
The simplest is to choose $\tau$ so that 
$\tau^{-K +n - 2} J =  \tau^{2\alpha}$, i.e., $\tau^{K-n+2+\alpha} = J$. 
Notice that $J \leq M^{-1}$ by \eqref{5a8}, so this gives $\tau$ small, as needed.
With this value of $\tau$, we get that
\begin{equation}\label{5c30}
 \Big(\tau^{-K +n - 2} J + \tau^{2\alpha}\Big)^{1/2} \leq C \tau^\alpha \leq C J^\beta, 
\ \text{ with } \beta = {\frac{\alpha}{K-n+2+\alpha}}
\end{equation}
Next we estimate the supremum in \eqref{5c29}. We claim that for $y \in \d_k^-$ as above
\begin{equation}\label{5a28}
\sup_{z\in B(y, C\delta_0)} u_D(z) \leq C (1+k)^L u_D(\xi)
\end{equation}
for some $L > 0$ that depends only on geometric and ellipticity constants.
 The argument will be the same as in the paragraph above \eqref{5a19}. First,
 $\sup_{z\in B(y, C\delta_0)} u_D^2(z) \leq C u_D(\xi_y)$, where $\xi_y$ is a corkscrew point for
 $B(y, C \delta_0 y)$, because of \cite[Lemma 3.3]{Robin1} (which incidentally is easier here, because we deal
 with a Dirichlet solution). Then we need to connect $\xi_y$ to $\xi$ with a Harnack chain, and estimate the length of that chain. We keep the same argument as above, and find a chain of length $C \log(1+k)$; again a simple first estimate would use intermediate corkscrew points $A_\ell$ at scales $2^\ell \delta_0 R$, but we need to 
 worry about the largest scales because maybe the corkscrew ball gets out of $B(Q,R)$. But this happens
 only for scales larger than $C^{-1} R$, and we can fix the argument by taking $C$ extra points close to
 $\d\Omega$. So \eqref{5a28} holds. 
 
 We may finally estimate the contribution of the $\partial_k^-$:
 \begin{eqnarray} \label{5a29}
\sum_k \int_{\partial^-_k} u_R(y)d\omega^\xi(y) 
&\leq& \sum_k  \omega^\xi(\partial^-_k) \sup_{y \in \partial^-_k} u_R(y)
\leq C \sum_k  e^{-ck}  \sup_{y \in \partial^-_k} u_R(y)
\nn \\
&\leq& C \sum_k  e^{-ck} \lambda  J^\beta (1+k)^L u_D(\xi)
\leq C \lambda  J^\beta u_D(\xi)
\end{eqnarray}
by \eqref{e:hmdecay}, \eqref{5c29}, \eqref{5c30}, and \eqref{5a28}.

\medskip
\noindent {\bf Part III: Putting it all together.} 
We are now ready for our final estimate. 
Our last piece is easy, because 
\begin{equation}\label{5a30}
\sum_k \int_{\partial^+_k} u_R(y)d\omega^\xi(y) 
\leq \int_{\d^+} \lambda u_D(u) d\omega^\xi(y) \leq  
\int_{\partial Q_R} u_D(z)\, d\omega^\xi(z)
= \lambda u_D(\xi)
\end{equation}
by definition of $\d^+$ and because $u_R \leq \lambda u_D$ whenever $\mathrm{dist}(x, \partial \Omega) \geq \delta$. We sum this with \eqref{5a19} and \eqref{5a29} and get that
\begin{equation}\label{5a31}
u_R(\xi) = \sum_k \int_{\d_k} u_R(y) d\omega^\xi(y)  \leq \lambda u_D(\xi)
\, \Big\{ 1 + C 
e^{-\frac{c}{20\delta_0}} +  C \lambda  J^\beta  \Big\}.
\end{equation}
Let us now check that this is what we announced. Observe that the supremum in \eqref{5a10b}
is precisely $J^\beta$ (see \eqref{5c28}). We assumed that $\delta_0 < 1/M$ and  $J \leq 1/M$,
so \eqref{5a10} holds with $\eta = M^{-\beta}$.
But we also get the extra decay announced in \eqref{5a10b}, because 
$e^{-\frac{c}{20\delta_0}} \leq  e^{-\frac{\beta}{\delta_0}}$ if $\beta$ was chosen small enough, 
and $\lambda  J^\beta$ is the supremum in \eqref{5a10b}.  This concludes our proof of
Lemma \ref{l:iterateforguy}, Theorem~\ref{t:GuysConj}, and 
Theorem \ref{thm:GFest}.
\end{proof}

\section{Properties of Harmonic Measure}\label{sec:hmproperties}
In this section we establish some basic properties of the 
Robin harmonic measure we introduced in \cite{Robin1} and of functions with homogeneous Robin boundary conditions. We shall denote by $\omega^X_R$ the Robin harmonic measure on $\d\Omega$, 
with the pole $X \in \Omega$; 
for the construction and basic properties, 
see Theorems 5.4 and 5.5 in \cite{Robin1}. Here we want to complete \cite{Robin1} with 
other useful properties 
in the flavor of \cite{kenigbook}, in particular Chapter 1.3 (see also \cite{JKNTA}). In particular, the structure and some of the arguments in this section follow \cite{kenigbook}. However, there are several results which require substantially different proofs in the Robin setting, in particular Lemma \ref{l:Bourgain} and Theorem~\ref{t:boundarycomp}. 
In this section too, we make sure that our constants do not depend on $a$. %
		
		We begin with what is usually called the ``Bourgain estimate", a lower bound on the harmonic measure of a ball taken from a corkscrew point. Recall that we systematically denote by $A_r(Q)$ a corkscrew point
for the ball $B(Q, r)$ centered on $\d\Omega$. 
		
		\begin{lemma}\label{l:Bourgain}[Bourgain Estimate] 
There exists  a constant $M > 1$  
(depending on the geometric constants of $\Omega$ and the ellipticity of $A$) such that 
for all $Q\in \partial \Omega$ and $r < \mathrm{diam}(\Omega)/10$, 
\begin{equation}\label{e:bourgain}
	\omega_R^{A_r(Q)}(B(Q,r)\cap \partial \Omega) \geq M^{-1} \min\{1, ar^{2-n}\sigma(B(Q,r))\}.
\end{equation}
		\end{lemma}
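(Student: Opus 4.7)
The plan is to split into two regimes according to the value of the dimensionless parameter $I_Q(r) := ar^{2-n}\sigma(B(Q,r))$, which corresponds to the two terms in the minimum on the right-hand side of \eqref{e:bourgain}.

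\emph{Locally Neumann regime $I_Q(r)\leq 1$.} Here the target is $\omega_R^{A_r(Q)}(B(Q,r)) \gtrsim I_Q(r)$. Starting from the representation $\omega_R^X(E) = a\int_E G_R(X,y)\, d\sigma(y)$, it suffices to produce a pointwise lower bound $G_R(A_r(Q), y) \gtrsim r^{2-n}$ on a definite subset such as $B(Q, cr)\cap\partial\Omega$, with $c$ depending on $M$ and the geometric constants. For such a $y$, I introduce an intermediate corkscrew point $Y' = A_{cr/10}(y)$; the quantities $\delta(A_r(Q))$, $\delta(Y')$ and $|A_r(Q)-Y'|$ are all comparable to $r$, so Lemma~\ref{lem:changea2} combined with the classical Gr\"uter--Widman lower bound for $G_D$ gives $G_R(A_r(Q), Y') \geq G_D(A_r(Q), Y') \gtrsim r^{2-n}$. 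A boundary Harnack application (Theorem~\ref{thm:bdryharnack}), whose hypothesis $I_y(s)\leq 1$ is inherited from $I_Q(r)\leq 1$ through \eqref{3b12} at a sufficiently small scale $s$, together with a short interior Harnack chain of bounded length, transfers this bound from $Y'$ to the boundary point $y$. Integration and the doubling of $\sigma$ then close the case.

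\emph{Locally Dirichlet regime $I_Q(r) > 1$.} Here I want $\omega_R^{A_r(Q)}(B(Q,r)) \gtrsim 1$. Applying \eqref{3b12} at scale $\diam(\Omega)$ shows that automatically $a\sigma(\partial\Omega) \geq \diam(\Omega)^{n-2}$, so we are in the global Dirichlet regime and Theorem~\ref{thm:GFest} is fully available. Set $\rho := \rho_Q \leq r$ (the scale at which $I_Q(\rho)\simeq 1$). If $\rho \geq r/2$, then $I_Q(r)$ is bounded and the Neumann-regime argument above, combined with Remark~\ref{rk32}, already gives the estimate. Otherwise $\rho < r/2$: for $y \in B(Q,r)\cap\partial\Omega$, Theorem~\ref{thm:GFest} gives $G_R(A_r(Q), y) \simeq G_D(A_r(Q), A_y)$ with $A_y$ a corkscrew of $B(y, \rho_y) \simeq B(y, \rho)$ at distance $\simeq \rho$ from $\partial\Omega$. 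Using the classical relation between the Dirichlet Green function and Dirichlet harmonic measure at such corkscrew points, $G_D(A_r(Q), A_y) \simeq \omega_D^{A_r(Q)}(B(y, \rho))\,\rho^{2-n}$ (available in the 1-sided NTA setting thanks to the capacity density supplied by \eqref{1n3}), together with Fubini, the doubling of $\sigma$, and the identity $a\rho^{2-n}\sigma(B(Q,\rho)) \simeq 1$, the estimate reduces to $\omega_D^{A_r(Q)}(B(Q, r-\rho)) \gtrsim 1$. This last bound follows from the classical Dirichlet Bourgain estimate applied at scale $r/2$ (to $A_{r/2}(Q)$), combined with a short interior Harnack chain of bounded length that moves the pole from $A_{r/2}(Q)$ to $A_r(Q)$.

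\emph{Main obstacle.} The Dirichlet regime is the genuinely delicate part: a direct interior Harnack chain from $A_\rho(Q)$ to $A_r(Q)$ degrades by a factor $(r/\rho)^{O(1)}$ and cannot deliver a uniform constant. The role of Theorem~\ref{thm:GFest} together with the Dirichlet Green-function/harmonic-measure comparison is precisely to bypass this loss, by effectively identifying, up to uniform constants, the Robin harmonic measure of $B(Q,r)$ from the pole $A_r(Q)$ with a Dirichlet harmonic measure of a ball of comparable size from the same pole, so that the classical Dirichlet Bourgain estimate can be invoked once and yields the required uniform lower bound.
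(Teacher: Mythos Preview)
Your approach is correct but takes a genuinely different route from the paper. The paper does \emph{not} invoke Theorem~\ref{thm:GFest} at all; instead it splits on whether $u(X):=\omega_R^X(B(Q,r))$ exceeds $3/4$ somewhere on $\Delta_1=\partial\Omega\cap B(Q,r/4)$. If so, the oscillation decay of \cite[Theorem~4.5]{Robin1} applied to $1-u$ produces a nearby corkscrew point where $u\geq 1/2$, and a Harnack chain finishes. If not, the paper builds an auxiliary function $v$ solving a mixed Dirichlet/Neumann problem on a tent domain $T(Q,r/4K)$ (Dirichlet on the interior boundary, Neumann data $1/8$ on $\partial\Omega$), checks via the maximum principle that $u\geq av$, and then proves a separate lemma (Lemma~\ref{neumanndensity2}) giving $v(\xi_0)\gtrsim r^{2-n}\sigma(B(Q,r))$ at some corkscrew-type point, by a scaling argument together with a porosity bound for $\{X:\delta(X)\leq\tau\}$. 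This makes the Bourgain estimate logically independent of the main Green function theorem; your version instead treats it as a corollary of Theorem~\ref{thm:GFest} plus the classical Dirichlet theory, which is shorter and more conceptual once that machinery is in place, and is in fact exactly the mechanism the paper later uses in the proof of Theorem~\ref{t:keepainfinity}.

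One point in your Dirichlet-regime sketch needs care: the claim ``$\rho_y\simeq\rho$'' for all $y\in B(Q,r)\cap\partial\Omega$ is not justified when $|y-Q|\gg\rho$, since doubling only compares $\sigma(B(y,\rho))$ and $\sigma(B(Q,\rho))$ when $|y-Q|\lesssim\rho$. You can either (i) observe that $\rho_y$ is slowly varying in the sense that $|y-z|<\rho_y$ forces $\rho_z\simeq\rho_y$, which is enough to make the Fubini step go through, or (ii) replace the Fubini argument by a Vitali covering of $B(Q,r/2)\cap\partial\Omega$ by balls $B(y_i,\rho_{y_i}/5)$ and use $a\sigma(B(y_i,\rho_{y_i}))\rho_{y_i}^{2-n}\simeq 1$ together with the Dirichlet comparison $G_D(A_r(Q),A_{y_i})\simeq\rho_{y_i}^{2-n}\omega_D^{A_r(Q)}(B(y_i,\rho_{y_i}))$ to obtain $\omega_R^{A_r(Q)}(B(Q,r))\gtrsim\sum_i\omega_D^{A_r(Q)}(B(y_i,\rho_{y_i}))\gtrsim\omega_D^{A_r(Q)}(B(Q,r/2))\gtrsim 1$.
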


Notice that in the Neumann case and for small balls, we expect the harmonic measure to be spread 
homogeneously on $\d\Omega$, hence the measure of $B(Q,r)$ to be very small.

		\begin{proof} 
			Let $u(X)=\omega_R^X(B(Q,r)\cap \partial \Omega)$ for $X \in \Omega$. 
			We divide the proof in two cases.\\
			
			\noindent \textbf{Case 1.} 
Set $\Delta_1 = \d\Omega \cap B(Q,r/4)$ and first assume that $\sup_{\Delta_1}u > \frac{3}{4}$.
This is what we would expect close to the Dirichlet regime.
	Let $\widetilde{u}=1-u$. Then $0\leq \widetilde{u}\leq 1$, and $\inf_{\Delta_1}\widetilde{u}< 1/4$. 
	Note that since $\widetilde{u} = \omega^X_{R}(\d\Omega \sm B(Q,r))$ for $X \in \Omega$, 
	$\widetilde{u}$ (weakly) satisfies homogeneous Robin conditions on $B(Q,r)\cap \partial \Omega$. 
	Let $P\in \Delta_1$ be such that $\tilde{u}(P)\leq 1/4$. 
	We are setting up to apply the oscillation estimate in Theorem 4.5 of \cite{Robin1}, on sufficiently small
balls centered at the origin $P$, and zero right-hand side $f$. 
Let $\eta\in (0,1)$ be the constant from that theorem and let $l \geq 1$ be large enough that $\eta^l \leq \frac{1}{4}$. Note that $\eta$ (and thus $l$) depends only on the geometric constants of $\Omega$ an the ellipticity of $A$ (but not on $a$). Let $\bar{X}$ be a corkscrew point for $P$ at scale $r/(4K^l)$ (where $K$ is the geometric constant
in the theorem). Applying $l$ times 
Theorem 4.5 of \cite{Robin1} to $\widetilde{u}$, we get that 
 $$
 \widetilde{u}(\bar{X})- \widetilde{u}(P)  
 < \eta^l  \, \mathrm{osc}_{B(P, r/4)}\, \widetilde{u} < 1/4 .
 $$ 
    Thus $u(\bar{X}) \geq 1/2$. After connecting $\bar{X}$ to $A_r(Q)$ by Harnack chains, we get 
    $u(A_r(Q)) \geq c$, as needed. \\
 
 \noindent	\textbf{Case 2.} 
We may now assume that $\sup_{\Delta_1}u \leq\frac{3}{4}$. 
We may think of this case as the ``Neumann regime" (see the comment below). 
We need a localization argument, so let  $T = T(Q, r/4K)$ be the tent domain of \ref{l:tentspaces},
with $K > 1$ as in the lemma; this way 
\begin{equation}\label{6a2}
\Omega \cap B(Q, r/4K) \subset T \subset B(Q,r/4) \cap \Omega.
\end{equation}
Also let $S = S(Q, r/(4K))=\Omega \cap \partial T$ be the ``exterior boundary'' of $T$,
and recall that $T$ comes with a measure $\sigma_\star$ on $\d T$, which coincides with $\sigma$
on $\d T \cap \d\Omega$.
By Theorem 2.10 in \cite{Robin1} we can solve the following partial Neumann problem on $T$
(morally, $\frac{\d v}{\d n} = \frac{1}{8}$): find $v \in W^{1,2}(T)$ such that 
$v = 0$ on $S$ (that is, its trace vanishes on $S$) and 
\begin{equation}\label{6a3}
\int_{T} A\nabla v \nabla \varphi =\int_{ \d T \cap \d \Omega} \frac{\varphi}{8} \, d\sigma
\end{equation} 
for all $\varphi$ with trace zero on $S$. 

Define $h:=u- av$.  Then $h\geq 0$ on $S$. 
Also recall from Theorem 5.5 in \cite{Robin1} that since $u$ is the Robin harmonic measure of $\d\Omega \cap B(Q,r)$,
\begin{equation}\label{6a4}
\frac{1}{a} \int_{\d\Omega} A\nabla u \nabla \varphi + \int_{\d\Omega} u \varphi d\sigma
= \int_{\d\Omega \cap B(Q,r)} \varphi d\sigma
\end{equation}
for all $\varphi \in C^{\infty}_c(\R^n)$, so for every  $0 \leq \varphi \in W^{1,2}(T)$ 
with vanishing trace on $S$, there holds 
\begin{equation}\label{6a5}
\int_{T} A\nabla h \nabla \varphi 
	= a\int_{\d T \cap \d \Omega}\varphi -\varphi u -\frac{\varphi}{8}  \geq 0,
\end{equation}
because $u \leq\frac{3}{4}$ on $\Delta_1 =  \d\Omega \cap B(Q,r/4)$ and by \eqref{6a2}.
Plugging in $h^{-}$ in the inequality above 	it follows that $h(x)\geq 0$ for $x\in T$, so that $u\geq v$ on $T$. 

\begin{lemma} \label{neumanndensity2}  
Let $v$ solve \eqref{6a3} as above. 
There is exist a constant $c_0> 0$, that depends only on on the ellipticity of $A$ 
and the geometric constants of $\Omega$, such that 
\begin{equation}\label{6a6} 
v(\xi_0)   \geq c_0 r^{2-n} \sigma(B(Q, r))  \ \text{ for some $\xi_0 \in B(Q, r/2)$ such that }
\delta(\xi_0) \geq c_0 r.
\end{equation} 
\end{lemma}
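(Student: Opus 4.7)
The plan is to test the weak formulation \eqref{6a3} against two carefully chosen functions: first a cutoff $\chi$ supported near $Q$, which will produce a lower bound on the local Dirichlet energy of $v$, and then $v$ itself, which converts that energy into an integral of $v$ on the Neumann portion of the boundary. A pointwise bound at a corkscrew point then follows from a ``largest at corkscrew'' argument.

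For the first step, I would pick $\chi \in W^{1,2}(T)$ with $\chi = 0$ on $S$, $\chi \equiv 1$ on $T \cap B(Q, r/(16K))$, $\chi$ supported in $T \cap B(Q, r/(8K))$, $0 \leq \chi \leq 1$, and $|\nabla \chi| \leq C/r$. Testing \eqref{6a3} against $\chi$, using the doubling property of $\sigma$ and the inclusion $\partial\Omega \cap B(Q, r/(16K)) \subset \partial T \cap \partial\Omega$ granted by \eqref{6a2}, gives
\begin{equation*}
\int_T A \nabla v \cdot \nabla \chi = \frac{1}{8} \int_{\partial T \cap \partial\Omega} \chi \, d\sigma \geq c\, \sigma(B(Q, r)).
\end{equation*}
Cauchy-Schwarz, uniform ellipticity, and the bound $\|\nabla \chi\|_{L^2(T)} \leq C r^{(n-2)/2}$ then yield the localized energy estimate $\int_{T \cap B(Q, r/(8K))} |\nabla v|^2 \geq c\, r^{2-n} \sigma(B(Q,r))^2$.

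For the second step, I note that since the trace of $v$ on $S$ vanishes and $T$ is an extension domain, $v$ is (after approximation) an admissible test function in \eqref{6a3}. Plugging $\varphi = v$ gives the identity $\int_T A \nabla v \cdot \nabla v = \tfrac{1}{8} \int_{\partial T \cap \partial\Omega} v\, d\sigma$. Combining with the Step 1 lower bound, uniform ellipticity, and the doubling bound $\sigma(\partial T \cap \partial\Omega) \leq C\, \sigma(B(Q, r))$, I obtain
\begin{equation*}
\fint_{\partial T \cap \partial\Omega} v\, d\sigma \geq c\, r^{2-n}\, \sigma(B(Q, r)),
\end{equation*}
so there exists $p \in \partial T \cap \partial\Omega \subset \partial\Omega \cap B(Q, r/4)$ with $v(p) \geq c\, r^{2-n}\, \sigma(B(Q, r))$. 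Applying (an adaptation of) \cite[Lemma 3.3]{Robin1} at scale $r/4$, the corkscrew point $A_p$ for $B(p, r/4)$ satisfies $v(A_p) \geq c\, v(p)$. Since $p \in B(Q, r/4)$, this $A_p$ lies in $B(Q, r/2)$ and has $\delta(A_p) \geq r/(4M) \geq c_0 r$, so $\xi_0 := A_p$ meets the conclusion of the lemma.

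The delicate point is the last step, where we appeal to a ``largest at corkscrew'' property in a mixed Dirichlet-Neumann setting. The proof in \cite[Lemma 3.3]{Robin1} was written for the Robin problem with homogeneous boundary data, but it only uses the maximum principle together with the uniform boundedness of the boundary flux; both conditions hold here (the Neumann flux is the constant $1/8$ and the Dirichlet part forces $v \geq 0$ by Lemma \ref{lem:mixedmax}), so the adaptation should be routine, requiring only to carry one additional bounded forcing term through the argument. A minor technical check concerns the admissibility of $v$ as a test function in \eqref{6a3}, which we obtain by approximating $v$ in the $W^{1,2}$ norm by functions in the test class (those with vanishing trace on $S$), using that $T$ is an extension domain.
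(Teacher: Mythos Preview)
Your Steps 1--5 are correct and give an attractive shortcut: testing \eqref{6a3} first against a cutoff and then against $v$ itself yields $\fint_{\partial T\cap\partial\Omega} v\,d\sigma \geq c\,r^{2-n}\sigma(B(Q,r))$, hence a boundary point $p\in\partial T\cap\partial\Omega$ with $v(p)\geq c\,r^{2-n}\sigma(B(Q,r))$.

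The gap is Step 6. The inequality $\sup_B u \le C\,u(A)$ in \cite[Lemma~3.3]{Robin1} relies on $u$ being, near the boundary, a Neumann \emph{sub}solution: for homogeneous Robin data one has $\partial_\nu u=-au\le 0$, and this sign is what drives the local-maximum estimate. Your $v$ satisfies $\partial_\nu v=1/8>0$, so it is a Neumann \emph{super}solution, and the argument does not adapt by ``carrying one bounded forcing term''. In the flat half-ball model one sees the obstruction directly: $\partial_\nu v=1/8$ forces $v$ to \emph{decrease} as you leave the boundary, so the boundary value can dominate the nearby interior values. If you push the inhomogeneous flux through a De~Giorgi/Moser iteration, the resulting error term scales exactly like $r^{2-n}\sigma(B(Q,r))$ (by the dimensional analysis that makes the lemma scale-invariant), i.e.\ the same order as the lower bound you have on $v(p)$, so it cannot be absorbed. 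A secondary issue: the corkscrew $A_p$ for $B(p,r/4)$ in $\Omega$ need not lie in $T$, so $v(A_p)$ may be undefined; taking a corkscrew in $T$ fixes this, but the main obstruction above remains.

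The paper sidesteps the boundary entirely. After normalizing to $r=1$, $\sigma(B(Q,1))=1$, it pairs your Step~1 lower bound $\int_T|\nabla v|\ge c$ with the \emph{upper} bound $\int_T|\nabla v|^2\le C$ (from the existence theorem). A porosity estimate $|\{X:\delta(X)\le\tau\}|\le C\tau^\alpha$ then shows, via Cauchy--Schwarz, that a definite portion of $\int|\nabla v|$ sits at distance $\ge\tau$ from $\partial\Omega$; on some small ball there one gets $\int|\nabla v|^2\ge c$, and interior Caccioppoli converts this into $\mathrm{osc}\,v\ge c$, producing the interior point $\xi_0$ directly without ever comparing boundary and interior values of $v$.
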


Let us first check that Lemma \ref{l:Bourgain}  will follow at soon as we prove this.
Since $\xi_0$ is relatively far from $\d\Omega$, we can connect it to $A_r(Q)$ with a 
Harnack chain, and so $\omega_R^{A_r(Q)}(B(Q,r)\cap \partial \Omega) =: u(A_r(Q))
\geq C^{-1} u(\xi_0) \geq C^{-1} v(\xi_0) \geq C^{-1} r^{2-n} \sigma(B(Q, r))$, as needed.

So we are left with Lemma \ref{neumanndensity2} to prove. 
Note that this should be easier to do, as we no longer have Robin boundaries to take care of. In fact, the lemma is not much more than the fact that $v > 0$ on $\Omega \cap B(Q,r)$, plus a little bit of uniformity and scale invariance.

\ms
\begin{proof} [Proof of Lemma \ref{neumanndensity2}].   
We first test \eqref{6a3} against a standard bump function $\varphi$ supported on $B(Q, r/(4K))$
and equal to $1$ on $B(Q, r/(8K))$; we get that
\begin{equation}\label{6c7}
C^{-1}\sigma(B(Q,r)) \leq C^{-2}\sigma(B(Q, r/(8K))\leq  \int_{\d T \cap \d \Omega} \frac{\varphi}{8} d\sigma 
=  \int_{T} A\nabla v \nabla \varphi 
\leq Cr^{-1} \int_{T} |\nabla v|.
\end{equation}
Now Theorem 2.10 in \cite{Robin1} gives a little more than the existence of $v$, it also says that
$||v||_{W^{1,2}(T)} \leq C$, where unfortunately we did not write the explicit dependence of $C$
on $r$ and $\sigma(B(x,r))$. So let us not use this directly, but say that $C$ is an absolute constant in the 
case when $r=1$ and $\sigma(B(Q,r)) = 1$. So we finish the proof of the lemma in this case first, 
and then we will deduce the general case by scaling.

We set $B = B(Q, 1/4)$ so that \eqref{6c7} gives $\int_B |\nabla v| \geq C^{-1}$ while \cite[Theorem 2.10]{Robin1} gives
$\int_{B} |\nabla v|^2 \leq C$. We first want to check that $|\nabla v|$ is also reasonably large somewhere
far from $\d\Omega$. Let $\tau > 0$ be small, to be chosen soon, and
set $Z_\tau = \big\{ X \in B(Q,1/4)  \, ; \, \delta(X) \leq 4\tau \big\}$. We need to know that 
\begin{equation}\label{6c8}
\big|Z_\tau\big| \leq C \tau^\alpha   
\end{equation}
for some constants $C$ and $\alpha > 0$ that depend only on the corkscrew constant for $\Omega$.
This is a standard porosity argument; let us provide it at the end of the proof. Then by Cauchy-Schwarz
$$
\int_{Z_\tau} |\nabla v|  \leq \big|Z_\tau\big|^{1/2} \Big\{\int_{Z_\tau} |\nabla v|^2 \Big\}^{1/2}
\leq C \big|Z_\tau\big|^{1/2} \leq C \tau^{\alpha/2} \leq  \frac12 \int_B |\nabla v|
$$
as long as $\tau$ is chosen small enough. So
$$
\int_{B \sm Z_\tau} |\nabla v| \geq (2C)^{-1}.
$$
From now on, $\tau$ is chosen and we let our constants $C$ depend on $\tau$ as well.
Cover stupidly $B \sm Z_\tau$ by $C$ balls of radius $\tau$ centered on it, and then choose 
one such ball $B(y, \tau)$, $y \in B \sm Z_\tau$, such that 
$\int_{B(y, \tau)} |\nabla v| \geq C^{-1}$. Then 
$\int_{B(y, \tau)} |\nabla v|^2 \geq C^{-1}$ as well, by Cauchy-Schwarz.
Now we use the Caccioppoli inequality (here simpler than usual because $v$ is elliptic in $B(y, 2\tau)$, 
and find that the oscillation of $v$ in $B(y, 2\tau)$ is more than $C^{-1}$. Since $v \geq 0$,
we find $\xi_0 \in B(y, 2\tau) \subset B(Q, r/2)$ such that $v(\xi_0) \geq C^{-1}$,
which completes the proof of the lemma when $r=1$ and  $\sigma(B(Q,r)) = 1$.

In the general case, we start from $v$ and define $w$ by
$w(X) = v(rX)$ for $X \in r^{-1} T$. Then write \eqref{6a3}, but call the variable
$r X$, with $X \in r^{-1}T$. This yields
\begin{equation}\label{6c9}
\int_{T} A(rX) \nabla v(rX)  \nabla \varphi(rX)  r^n dX 
=\int_{ \d T \cap \d \Omega} \frac{\varphi(rX)}{8} \, d\sigma(rX).
\end{equation}
For the integral on the right, we use the pushforward $\sigma^\sharp$ of $\sigma$
by $Y \mapsto r^{-1} Y$, the one which does not change the total masses. Thus
$\sigma^\sharp(r^{-1} (\d T \cap B(Q, r))) = \sigma ( \d T \cap B(Q, r))$.
We finish the change of variable and write \eqref{6c9} as 
\begin{equation}\label{6c10}
\int_{T} \wt A(X) r^{-1}\nabla w(X)  r^{-1}\nabla \wt\varphi(X)  r^n dX
=\int_{ \d T \cap \d \Omega} \frac{\wt\varphi(X)}{8} \, d\sigma^\sharp(X),
\end{equation}
where $\wt \varphi(X) = \varphi(rX)$ and similarly for $\wt A$. That is,
$w$ satisfies an equation similar to \eqref{6a3}, but now with the radius $1$
and the measure $r^{2-n} \sigma^{\sharp}$, which gives $B(r^{-1}Q,1)$ the mass 
$m = r^{2-n} \sigma(B(Q, r))$. So $m^{-1} w$ is as in our special case, and we can find
 $\xi_1 \in B(r^{-1}Q,1/2)$, at distance $\tau$ from the boundary, such that $m^{-1} w(\xi_1) \geq C^{-1}$. 
Then $\xi_0 = r \xi_1$ does the job, in particular because $v(r\xi_1) = m w(\xi_1) \geq C^{-1}m$. 
The general case follows.

We promised a proof of \eqref{6c8} to complete the argument. We will essentially show that $\d\Omega$
is porous because of the corkscrew condition, and hence its (Minkowski) dimension is $< n$.

Denote by $\bD$ the set of usual dyadic cubes in $\R^n$, then call $\ell(Q)$ the side length of the cube $Q \in \bD$,
let $\bD_k$ be the set of cubes $Q \in \bD$ such that $\ell(Q) = 2^{-k}$. Observe that by the corkscrew condition there is an integer $m > 0$ such that, as long as $k$ is large enough so that $2^{-k} \leq \diam(\Omega)$, for each $Q \in \bD_k$, with
 we can find a cube $R \in \bD_{k + m}$ such that $R \subset Q$ and 
$2R$ (the cube with the same center and $2$ times the side length) does not meet $\d\Omega$.
This can be for trivial reasons, if $\frac12 Q$ does not meet $\d\Omega$, or else because if we can find
$\xi \in \frac12 Q \cap \d\Omega$, a corkscrew ball for $B(\xi, 2^{-k-2})$ necessarily contains $2R$ for some
$R \in \bD_{k + m}$. 

Start from a cube $Q_0$ that meets $Z_{\tau}$, with $\ell(Q_0) = 2^{-k_0} \sim 1$, and then for each $j \geq 0$
cut $Q_0$ into the collection $\bD_j(Q_0)$ that are contained on $Q_0$ and such that $\ell(Q) = 2^{-k_0 - jm}$.
We restrict our attention to the indices $j$ such that $2^{-k_0 - jm} \geq C \tau $, i.e., restrict to cubes
such that $\ell(Q) \geq C \tau $. Thus we stop when $2^{-jm} \sim \tau$, i.e., when 
$j \sim \frac{\log(1/\tau)}{m \log(2)}$. 

Each time $R \in \bD_j(Q_0)$ is such that $2R$ does not meet $\d\Omega$, we know that 
$R$ does not meet $Z_\tau$, so $R$ cannot contribute to $|Z_\tau|$. So we eliminate $R$ from
our collection, and we also eliminate its children and descendants. But we observed that for each cube $Q$ of our collection,
at least one cube $R \subset Q$ of the next generation is eliminated, and so at each generation we eliminate 
a proportion of at least $c 2^{-mn} |Q|$ of the remaining mass.
Because of this, at the last generation $j$ we only kept a mass of at most $(1-c 2^{-mn})^j |Q_0|$. 
So $\log( \frac{|Z_\tau \cap Q_0|}{|Q_0|}) \leq j \log(1-c 2^{-mn}) \leq - c j$, for some very small $c>0$.
Since $j \geq c\log(1/\tau)$ (for $\tau$ small), we get that 
$\frac{|Z_\tau \cap Q_0|}{|Q_0|} \leq e^{- c \log(1/\tau)} \leq C \tau^\alpha$ for some small $\alpha > 0$.
Of course we can do this for each cube $Q_0$ that meets $Z_{\tau}$, and there is at most $C$ such cubes.
This proves \eqref{6c8}, and now Lemma  \ref{neumanndensity2} follows.
\end{proof}
As checked above, this also concludes the proof of Lemma \ref{l:Bourgain}.
\end{proof}

\ms
From here we aim to compare the harmonic measure and the Green function $G_R = G_R^a$. 
The first comparison follows easily from our estimates on the Green function in Corollary~\ref{cor:upperestalways}:
		
		\begin{lemma}\label{l:lowermeasure}
Let $n\geq 3$. There exist constants $C > 0, M > 0$ (depending on the geometry of $\Omega$ and the ellipticity of $A$) such that for all $Q \in \partial \Omega$ and $r < \mathrm{diam}(\Omega)/10$ with 
$r^{2-n} a \sigma(B(Q,r)) \geq C^{-1}$ and for all $X\in \Omega \setminus B(A_r(Q), r/C)$ 
	\begin{equation}\label{e:lowermeasure}
		r^{n-2}G_R(X, A_r(Q)) \leq M \omega_R^X(B(Q,2r)\cap \partial \Omega).
	\end{equation}
		\end{lemma}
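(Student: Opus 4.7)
The plan is to prove $v \le Mw$ on $\widetilde\Omega := \Omega \setminus \overline{B(A_r(Q), r/C)}$ by a mixed-boundary maximum principle, where
\[
v(X) := r^{n-2} G_R(X,A_r(Q)), \qquad w(X) := \omega_R^X(B(Q,2r) \cap \d\Omega),
\]
and $C > 1$ is a large geometric constant to be fixed. First I would bound $v$ from above on $\d B(A_r(Q), r/C)$: by Corollary~\ref{cor:upperestalways} when $a\sigma(\d\Omega) \ge \diam(\Omega)^{n-2}$ (and by the upper bound in Theorem~\ref{l:neumannregime} in the other regime, which is also applicable since $r^{2-n}a\sigma(B(Q,r))$ is bounded), we have $G_R(X,A_r(Q)) \le C_0 |X-A_r(Q)|^{2-n}$, so $v \le C_0 C^{n-2}$ on the inner sphere.

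Next I would bound $w$ from below on the same inner sphere. The hypothesis $r^{2-n} a \sigma(B(Q,r)) \ge C^{-1}$ together with Lemma~\ref{l:Bourgain} gives $w(A_r(Q)) \ge \omega_R^{A_r(Q)}(B(Q,r)\cap \d\Omega) \ge c_1 > 0$. Since the corkscrew condition ensures $\delta(A_r(Q)) \ge r/M_0$, choosing $C$ so that $C > 4M_0$ forces $B(A_r(Q),2r/C)$ to lie well inside $\Omega$; on this ball $w$ is a nonnegative solution of $-\mathrm{div}(A\nabla w)=0$ (the Robin boundary plays no role), so the interior Harnack inequality propagates the lower bound to $w \ge c_2 > 0$ on $\d B(A_r(Q), r/C)$. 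Setting $M := C_0 C^{n-2}/c_2$, the function $h := Mw - v$ is therefore nonnegative on $\d B(A_r(Q),r/C)$.

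To conclude, subtract the weak formulations (Theorem~\ref{t:GreenfunctionExistence} for $v$, \eqref{e:weaksol}/\eqref{e:representationformula} for $w$): for any $\varphi \in C_c^\infty(\R^n \setminus \overline{B(A_r(Q),r/C)})$,
\[
\int_\Omega A \nabla h \nabla \varphi + a\int_{\d\Omega} h\varphi\, d\sigma = a\int_{\d\Omega} M\chi_{B(Q,2r)\cap \d\Omega}\, \varphi\, d\sigma,
\]
which is \eqref{2a11} with nonnegative Robin datum $f = M\chi_{B(Q,2r)\cap \d\Omega}$. By Lemma~\ref{l:onesidedpunch} (applied with $\lambda = 1/(CM_0) < 1/2$), the punched domain $\widetilde\Omega$ remains one-sided NTA, so Lemma~\ref{lem:mixedmax} applies and gives $h \ge 0$ throughout $\widetilde\Omega$. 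This is exactly \eqref{e:lowermeasure}.

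The argument is essentially routine: each ingredient is already available, and the only care is verifying that the weak formulations can legitimately be subtracted and that Lemma~\ref{lem:mixedmax} applies verbatim. The one subtle point (and the main reason the Robin hypothesis $r^{2-n}a\sigma(B(Q,r)) \ge C^{-1}$ is needed) is the lower bound on $w(A_r(Q))$ from the Bourgain estimate; without this quantitative control, the comparison would collapse as $a \to 0$. Since Lemma~\ref{l:Bourgain} is tailored precisely to this regime, there should be no obstacle.
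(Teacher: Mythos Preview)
Your proof is correct and follows essentially the same route as the paper's: bound $v$ above on the inner sphere via Corollary~\ref{cor:upperestalways}, bound $w$ below there via Lemma~\ref{l:Bourgain} (plus Harnack), then apply the mixed maximum principle Lemma~\ref{lem:mixedmax} on the punched domain, noting that $Mw - v$ has nonnegative Robin data on $\d\Omega$. The paper is slightly terser (it observes that the hypothesis $r^{2-n}a\sigma(B(Q,r)) \ge C^{-1}$ already forces $\diam(\Omega)^{2-n}a\sigma(\d\Omega) \ge C^{-1}$ via \eqref{3b12}, so Corollary~\ref{cor:upperestalways} applies directly without splitting into regimes), but your version is equally valid and more explicit about the one-sided NTA property of the punched domain and the Harnack step.
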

		
		
		\begin{proof}
Since both the Green function and the harmonic measure are $A$-harmonic functions of $X$ 
in $W^{1,2}(\Omega\setminus (B(A_r(Q), r/C)))$ , it suffices to check the behavior on $\partial \Omega$ 
and on $\d_A = \partial B(A_r(Q), r/C)$. We start on $\d_A$, where we apply
Corollary \ref{cor:upperestalways} to get that $G_R(X, A_r(Q)) \leq C |X-A_r(Q)|^{2-n} = C r^{2-n}$.
To be fair, the corollary seems to require that $\diam(\Omega)^{2-n} a \sigma(\d\Omega) \geq 1$ while we 
only assume that $r^{2-n} a \sigma(B(Q,r)) \geq C^{-1}$
(and hence $\diam(\Omega)^{2-n} a \sigma(\d\Omega) \geq C^{-1}$), but the reader will easily check that
this only makes our constants a little larger. Then Lemma \ref{l:Bourgain} implies that the desired
inequality \eqref{e:lowermeasure} holds on $\d_A$. 

On $\partial \Omega$ we note that the Green function has homogeneous Robin data while the harmonic measure has non-negative Robin data. So by the maximum principle with mixed Robin and Dirichlet data,  Lemma \ref{lem:mixedmax}, our estimate \eqref{e:lowermeasure} holds. 
		\end{proof}
		
The following more precise result contains estimates in both directions.

\begin{theorem}\label{t:hmgfequiv}
Let $n \geq 3$. There exists a constant $C > 1$, which depends on the ellipticity of the matrix $A$ and the geometric constants of $(\Omega, \sigma)$, %
such that for all $Q\in \partial \Omega$ and $r < \mathrm{diam}(\Omega)/(10C)$,
\begin{equation}\label{e:hmgfequiv}
C^{-1} G_R(X, A_r(Q)) \leq \frac{\omega_R^{X}(B(Q, r)\cap \partial \Omega)}{r^{n-2}\min\{1, ar^{2-n}\sigma(B(Q,r))\}} \leq CG_R(X,A_r(Q)),
\end{equation}
for all $X\in \Omega \setminus B(Q, Cr)$.   
\end{theorem}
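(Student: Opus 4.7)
The plan is to start from the representation formula
\[
\omega_R^X(B(Q,r)\cap \partial\Omega) \;=\; a\int_{B(Q,r)\cap\partial\Omega} G_R(X,y)\,d\sigma(y),
\]
which is immediate from \eqref{e:representationformula} applied to the Robin harmonic extension of $\chi_{B(Q,r)\cap\partial\Omega}$, and then to compare $G_R(X,y)$ with $G_R(X,A_r(Q))$ on $B(Q,r)\cap\overline\Omega$. Throughout I fix $X \in \Omega \setminus B(Q,Cr)$ with $C$ large enough to accommodate a few applications of Harnack's inequality and of the boundary Harnack principle, and set $w(y) := G_R(y,X)$; by \eqref{e:Greenidentity} this $w$ is a nonnegative weak solution of the homogeneous Robin problem in the punctured region $B(Q,Cr/2)\cap\Omega$, so all the tools of Sections~\ref{s:prelim}--\ref{s:dirichletregime} are at my disposal. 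The proof splits naturally according to whether $ar^{2-n}\sigma(B(Q,r))$ is $\leq 1$ (Neumann regime) or $\geq 1$ (Dirichlet regime).

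In the Neumann regime, the Robin boundary Harnack inequality (Theorem~\ref{thm:bdryharnack}) applies directly to $w$ on $B(Q,r)$ and yields
\[
\theta\, w(A_r(Q)) \;\leq\; w(y) \;\leq\; \theta^{-1}\, w(A_r(Q))
\qquad \text{for every } y \in B(Q,r)\cap\overline\Omega.
\]
Plugging this into the representation formula gives $\omega_R^X(B(Q,r)\cap \partial\Omega) \simeq a\sigma(B(Q,r))\,G_R(X,A_r(Q))$, and since $\min\{1, ar^{2-n}\sigma(B(Q,r))\} = ar^{2-n}\sigma(B(Q,r))$ in this regime, both inequalities of \eqref{e:hmgfequiv} follow at once.

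In the Dirichlet regime, the two directions require different arguments. For the lower bound, applying Lemma~\ref{l:lowermeasure} at radius $r/2$ (which remains in the Dirichlet regime by the doubling property \eqref{1n4}) gives $(r/2)^{n-2}G_R(X,A_{r/2}(Q)) \leq M\,\omega_R^X(B(Q,r)\cap\partial\Omega)$, and then $G_R(X,A_{r/2}(Q)) \simeq G_R(X,A_r(Q))$ by a short interior Harnack chain joining two corkscrew points at comparable scales. For the matching upper bound I test the weak Robin equation for $w$ against a smooth cutoff $\varphi$ supported in $B(Q,2r)$, equal to $1$ on $B(Q,r)$ and with $|\nabla\varphi| \leq C/r$, and use that the boundary term $a\int_{\partial\Omega} w\varphi\,d\sigma$ is nonnegative to obtain
\[
a\int_{B(Q,r)\cap\partial\Omega} w\,d\sigma \;\leq\; -\int_\Omega A\nabla w \nabla \varphi \;\leq\; \frac{C}{r}\int_{B(Q,2r)\cap\Omega}|\nabla w|.
\]
Cauchy--Schwarz together with the (Robin) Caccioppoli inequality of Lemma~\ref{l3a1} then controls the right-hand side by $Cr^{n-2}\sup_{B(Q,3r)\cap\Omega} w$. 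The main obstacle is the pointwise estimate $w \leq C\,w(A_r(Q))$ on $B(Q,3r)\cap\Omega$: the boundary Harnack of Theorem~\ref{thm:bdryharnack} is unavailable in this regime, so I invoke \cite[Lemma 3.3]{Robin1} (which transfers the value of a Robin solution to a nearby corkscrew point regardless of regime) and then connect that corkscrew point to $A_r(Q)$ by a Harnack chain of bounded length. Since $\min\{1, ar^{2-n}\sigma(B(Q,r))\} = 1$ here, the resulting bound $\omega_R^X(B(Q,r)\cap \partial\Omega) \leq Cr^{n-2}G_R(X,A_r(Q))$ closes \eqref{e:hmgfequiv}.
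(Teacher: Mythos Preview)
Your argument follows the paper's proof essentially verbatim: the Neumann regime via the boundary Harnack principle on $G_R$, the Dirichlet-regime lower bound via Lemma~\ref{l:lowermeasure}, and the Dirichlet-regime upper bound by testing the Green function equation against a cutoff, then Cauchy--Schwarz, Caccioppoli, and \cite[Lemma 3.3]{Robin1}. One small slip to fix: the representation formula gives $\omega_R^X(E)=a\int_E G_R(X,y)\,d\sigma(y)$, and as a function of $y$ this is $\wt G_R(y,X)$ (a solution of the \emph{adjoint} problem, see \eqref{2b8}), not $G_R(y,X)$ as you wrote; the paper handles this explicitly, and since $A^T$ has the same ellipticity all the cited estimates still apply, so your argument goes through once you work with the adjoint Green function.
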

		
\begin{proof}
		
	Let $\varphi \in C_c^\infty(\mathbb R^n)$ be such that 
$ \chi_{B(Q,r)} \leq \varphi \leq \chi_{B(Q, 2r)}$.

We have two cases. When $ar^{2-n}\sigma(B(Q,r)) \gtrsim 1$ we first observe that by (5.4) in \cite{Robin1} 
\begin{equation} \label{6a13}
\omega_R^{X}(B(Q,r)\cap \partial \Omega) \leq \int_{\d\Omega} \varphi d\omega_R^{X}
= a\int_{\partial \Omega} \varphi(Y) G_R(X,Y)\, d\sigma(Y).
\end{equation}
also recall the representation formula (5.3) (and the notation of (5.2))  in \cite{Robin1}, which says that 
\begin{equation} \label{6a14}
0 = \varphi(X) = b(G_R(\cdot, X), \varphi) := 
\int_{\Omega} A \nabla G_R(\cdot, X), \nabla\varphi + a \int_{\d\Omega} \nabla G_R(Y, X) \varphi(Y) d\sigma(Y)
\end{equation}
where we used the fact that $X\notin \mathrm{spt}(\varphi)$.
The last term is the same as the last term in \eqref{6a13}, but for the adjoint matrix coefficient $\wt A$
(see \eqref{2b8}), so we apply \eqref{6a14} to the adjoint problem and get that
\begin{equation} \label{6a15}
\omega_R^{X}(B(Q,r)\cap \partial \Omega) \leq - \int_{\Omega} \wt A \nabla G_R(X, \cdot), \nabla\varphi
= -\int_\Omega A(Y) \nabla\varphi(Y) \nabla_Y G_R(X, Y) dY.
\end{equation}
	From here we can argue as in \cite{kenigbook}. Let's recall that argument quickly: 	
we use Cauchy-Schwartz, Caccioppoli and Lemma 3.3 of \cite{Robin1}
(the fact that solutions tend to get larger at corkscrew points) to get that 
\begin{multline*}
\omega_R^{X}(B(Q,r)\cap \partial \Omega) 
\leq Cr^{\frac{n-2}{2}}\left(\int_{B(Q,2r)\cap \Omega} |\nabla_Y G_R(X, Y)|^2 
\, dY\right)^{1/2}\\
\leq Cr^{n-2}\left(\fint_{B(Q,4r)\cap \Omega} |G_R(X, Y)|^2\, dY\right)^{1/2} 
\leq Cr^{n-2}G_R(X, A_{4r}(Q)). 
\end{multline*}
Here we can apply \cite[Lemma 3.3]{Robin1} because $G_R$ 
weakly satisfies homogeneous Robin boundary conditions and is non-negative on the boundary.
 Now $G_R(X, A_{4r}(Q)) \leq C G(X, A_r(Q))$ by Harnack (and because $X\in \Omega\backslash B(Q, Cr)$),
 and the upper bound in \eqref{e:hmgfequiv} follows. 
 The lower bound follows from \eqref{e:lowermeasure}.

In the second case, $ar^{2-n}\sigma(B(Q,r)) \ll 1$, we use again the representation formula ((5.4) in \cite{Robin1}) 
and Lemma 3.3 of \cite{Robin1} to get that 
$$
\omega_R^X(B(Q,r)\cap \partial \Omega) 
\leq a\int_{\partial \Omega} \varphi(Y)  G_R(X,Y)\, d\sigma(Y) 
\leq Ca\sigma(B(Q, 2r)\cap \partial \Omega) \, G_R(X, A_{2r}(Q)). 
$$ 
Again the upper bound in \eqref{e:hmgfequiv} 
follows by 
a Harnack chain argument and the doubling property of $\sigma$.

For the lower bound when $ar^{2-n}\sigma(B(Q,r)) \ll 1$ we use the representation formula a third time to  
see that 
$$
\omega_R^X(B(Q,r)\cap \partial \Omega) 
\geq a\int_{\partial \Omega} \widetilde{\varphi}(Y)  G_R(X,Y)\, d\sigma(Y), 
$$ 
where $\widetilde{\varphi} \in C_c^\infty(\mathbb R^n)$ and 
$\chi_{B(Q,r/2)} \leq \widetilde{\varphi} \leq \chi_{B(Q,r)}$. 
We now use the Harnack inequality for solutions with homogeneous Robin data 
(see Theorem 4.4 and Remark 7 following it in \cite{Robin1}) to see that (as long as $r^{2-n}\sigma(B(Q,r)) \ll 1$
and $X$ is far enough from $B(Q,r)$) 
$$
\omega_R^X(B(Q,r)\cap \partial \Omega) 
\geq C^{-1}a \sigma(B(Q, r/2)\cap \partial \Omega) \, G(X, A_r(Q)). 
$$ 
As above the lower bound follows by 
a Harnack chain argument and the doubling property of $\sigma$.
		\end{proof}
		
		From this comparison principle, we can prove one of the fundamental properties of the Robin harmonic measure, the fact that it is uniform doubling. 
		
\begin{theorem}\label{t:doubling}[Doubling]
Let $n\geq 3$. There exists a constant $C > 0$ (depending on the ellipticity of $A$ and the geometric constants 
of $(\Omega, \sigma)$), %
such that for all $Q\in \partial \Omega$ and 
$r < \mathrm{diam}(\Omega)/(10C)$, 
we have 
\begin{equation}\label{e:doubling}
				\omega^X_R(B(Q,2r)\cap \partial \Omega) \leq C\omega_R^X(B(Q,r)\cap \partial \Omega), \qquad \forall X\in \Omega\backslash B(Q,Cr).
			\end{equation}
		\end{theorem}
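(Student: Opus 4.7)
The plan is to deduce doubling directly from the comparison principle \eqref{e:hmgfequiv} in Theorem \ref{t:hmgfequiv}, applied simultaneously at scales $r$ and $2r$. Let $C_0$ denote the constant appearing in Theorem \ref{t:hmgfequiv}; throughout I would work under the hypothesis $X\in \Omega \setminus B(Q, 2C_0 r)$, so that both scales are admissible, and take this as the constant $C$ in the final statement (enlarging it further as needed).

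First I would write, using the upper half of \eqref{e:hmgfequiv} at scale $2r$ and its lower half at scale $r$,
\begin{equation*}
\frac{\omega^X_R(B(Q,2r)\cap \partial\Omega)}{\omega^X_R(B(Q,r)\cap \partial\Omega)}
\;\leq\; C_0^{2}\,
\frac{(2r)^{n-2}\min\bigl\{1,\,a(2r)^{2-n}\sigma(B(Q,2r))\bigr\}}{r^{n-2}\min\bigl\{1,\,ar^{2-n}\sigma(B(Q,r))\bigr\}}
\cdot \frac{G_R(X,A_{2r}(Q))}{G_R(X,A_r(Q))}.
\end{equation*}
I would then handle the two factors on the right independently.

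For the Green function ratio, $Z\mapsto G_R(X,Z)$ is a nonnegative weak solution of the adjoint equation on $\Omega\setminus\{X\}$ (cf.\ \eqref{2b8} and the remarks after Theorem \ref{t:GreenfunctionExistence}). Since $A_r(Q)$ and $A_{2r}(Q)$ both lie in $B(Q, 2r)$ at distance at least $M^{-1}r$ from $\partial\Omega$, and our standing assumption $X\notin B(Q, 2C_0 r)$ keeps $X$ well separated from $B(Q, 3r)$, the two corkscrew points may be joined by a Harnack chain of bounded length inside $\Omega\setminus\{X\}$ consisting of balls of radius $\simeq r$ staying inside $B(Q,3r)$. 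The interior Harnack inequality (applied to the adjoint equation) then gives $G_R(X,A_{2r}(Q))\leq C\, G_R(X,A_r(Q))$.

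For the normalization ratio, $(2r)^{n-2}/r^{n-2} = 2^{n-2}$ is harmless, so it remains to control the ratio of the two $\min$'s. I would split into the obvious cases: if both minima equal $1$, the ratio is $1$; if both equal the $a$-term, the ratio is $2^{2-n}\sigma(B(Q,2r))/\sigma(B(Q,r))\leq C$ by the doubling of $\sigma$ in \eqref{1n4}; if the numerator is $a(2r)^{2-n}\sigma(B(Q,2r))\leq 1$ and the denominator is $1$, the ratio is $\leq 1$; and finally if the numerator is $1$ while the denominator is $ar^{2-n}\sigma(B(Q,r))<1$, then the hypothesis $a(2r)^{2-n}\sigma(B(Q,2r))\geq 1$ together with doubling of $\sigma$ forces $ar^{2-n}\sigma(B(Q,r))\geq 2^{n-2}/C$, so the ratio is still bounded. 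In every case the normalization ratio is $\leq C$, and combining with the Harnack estimate for the Green function yields \eqref{e:doubling}.

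There is no real obstacle here beyond bookkeeping: the only minor subtlety is making sure the case analysis for the $\min$ terms is exhaustive, which it is by doubling of $\sigma$, and that the Harnack chain connecting $A_r(Q)$ to $A_{2r}(Q)$ avoids $X$, which is automatic from $X\in \Omega\setminus B(Q, 2C_0 r)$.
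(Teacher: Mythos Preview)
Your proof is correct and follows essentially the same approach as the paper: apply Theorem~\ref{t:hmgfequiv} at both scales, use Harnack to compare $G_R(X,A_{2r}(Q))$ with $G_R(X,A_r(Q))$, and absorb the normalization factors. The paper's proof is simply a compressed version of yours, writing the chain $\omega_R^X(B(Q,2r)) \leq C\,G_R(X,A_r(Q))\,r^{n-2}\min\{1, ar^{2-n}\sigma(B(Q,r))\} \leq C\,\omega_R^X(B(Q,r))$ in one line and leaving implicit the case analysis for the $\min$ terms that you spell out.
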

		
		\begin{proof}  
			We apply Theorem \ref{t:hmgfequiv} (twice) and the Harnack inequality to get $$\omega_R^X(B(Q, 2r)\cap \partial \Omega) \leq CG_R(X, A_r(Q))r^{n-2}\min\{1, ar^{2-n}\sigma(B(Q,r)) \} 
				\leq C\omega_R^X(B(Q,r)\cap \partial \Omega),$$
				as desired.
		\end{proof}
		
		Before we can prove the change of pole formula, we need to prove a boundary comparison principle. Roughly this result says that any two non-negative Robin solutions must grow at the boundary at the same rate. These results have been proven for Dirichlet boundary value problem (e.g. in \cite{JKNTA} and \cite{BoundaryHarnack}). We are unaware of any results for the Robin problem in this direction even in smooth domains, and believe it might be of independent interest.
		
\begin{theorem}\label{t:boundarycomp}
There exist constants $C, \tilde{K} > 0$ (depending on the ellipticity of $A$ and the geometric constants 
of $(\Omega, \sigma)$), 
such that if $u, v \geq 0$ weakly solve the Robin problem with homogeneous data in $B(Q, \tilde{K}r)$, $Q\in \p\Omega$, then \begin{equation}\label{e:boundarycomp}
C^{-1}\frac{u(X)}{v(X)} \leq \frac{u(A_r(Q))}{v(A_r(Q))} \leq C\frac{u(X)}{v(X)}, \qquad \forall X\in B(Q, r). 
			\end{equation}
		\end{theorem}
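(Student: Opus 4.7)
The plan is to adapt the classical Dirichlet boundary Harnack principle of \cite{BoundaryHarnack} to the Robin setting. The structural fact enabling this adaptation is that any linear combination of weak solutions to the homogeneous Robin problem is again a weak solution with homogeneous Robin data; in particular, $\alpha u + \beta v$ satisfies \eqref{e:weaksol} with $f=0$ for any constants $\alpha, \beta$. After normalizing so that $u(A_r(Q)) = v(A_r(Q)) = 1$, it suffices to show that $u(X)/v(X)$ is bounded above and below by universal constants for $X \in B(Q,r)\cap \Omega$.

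First I would dispose of the Neumann regime, where $a(\tilde K r)^{2-n}\sigma(B(Q,\tilde K r)) \leq 1$. Iterating Theorem \ref{thm:bdryharnack} over dyadic scales $r, r/2, r/4, \ldots$ (all of which remain in the Neumann regime by \eqref{3b12}, since $d+2-n > 0$) we find that for some $\theta \in (0,1)$ depending only on the geometry and ellipticity, $\inf_{B(Q,r)\cap \Omega} u \geq \theta \sup_{B(Q,r)\cap\Omega} u$, and similarly for $v$. Hence both $u$ and $v$ are pinched between two bounded constants, and the ratio $u/v$ is trivially comparable to $u(A_r(Q))/v(A_r(Q)) = 1$.

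In the Dirichlet regime, where $a(\tilde K r)^{2-n}\sigma(B(Q,\tilde K r)) > 1$, I would invoke two ingredients: a Carleson-type estimate asserting that $\sup_{\Omega \cap B(Q',\rho)} u \leq C u(A_\rho(Q'))$ for $Q' \in \partial \Omega \cap B(Q, r/2)$ and $\rho \leq r/4$, which follows from \cite[Lemma 3.3]{Robin1}; and the standard interior Harnack inequality (which does not involve the Robin boundary data, hence is uniform in $a$). Define
\begin{equation*}
M_k := \sup_{B(Q,2^{-k}r)\cap \Omega} \frac{u}{v}, \qquad m_k := \inf_{B(Q,2^{-k}r)\cap \Omega} \frac{u}{v},
\end{equation*}
and consider $\phi_k := u - m_k v$ and $\psi_k := M_k v - u$, both non-negative solutions with homogeneous Robin data in $B(Q,2^{-k}r)$. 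By a pigeonhole argument at the corkscrew point $A_k := A_{2^{-k}r}(Q)$, one of $\phi_k, \psi_k$, call it $\eta_k$, satisfies $\eta_k(A_k) \geq \tfrac{1}{2}(M_k - m_k) v(A_k)$. Transferring this lower bound from $A_k$ throughout $B(Q, 2^{-k-1}r) \cap \Omega$ by interior Harnack chains, while simultaneously using the Carleson estimate to bound $v$ from above on that same ball by $C v(A_k)$, yields $\eta_k \geq c(M_k - m_k) v$ on $B(Q, 2^{-k-1}r) \cap \Omega$. This translates to the oscillation decay $M_{k+1} - m_{k+1} \leq (1-c)(M_k - m_k)$; iterating from $M_0 - m_0 \leq C$ (again by the Carleson estimate) yields geometric decay and hence \eqref{e:boundarycomp}.

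The main obstacle is ensuring that the lower bound transfer in the iteration is uniform in the Robin parameter $a$. In the Dirichlet regime, solutions may vanish near the boundary, so the boundary Harnack of Theorem \ref{thm:bdryharnack} no longer applies and a naive adaptation of the Dirichlet proof fails at boundary points. The Carleson estimate from \cite[Lemma 3.3]{Robin1}, which controls $u$ near the boundary by its value at an interior corkscrew point with a constant independent of $a$, precisely fills this gap and allows the Dirichlet-style iteration to proceed uniformly across both regimes.
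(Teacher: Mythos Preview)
Your proposal has a genuine gap in the Dirichlet-regime iteration. The step ``transferring this lower bound from $A_k$ throughout $B(Q,2^{-k-1}r)\cap\Omega$ by interior Harnack chains \ldots\ yields $\eta_k \geq c(M_k-m_k)v$'' does not follow from the tools you list. Interior Harnack chains from $A_k$ to a point $X$ with $\delta(X)\ll 2^{-k}r$ have length $\sim\log(2^{-k}r/\delta(X))$, so they give only $\eta_k(X)\geq c^{\,C\log(2^{-k}r/\delta(X))}\eta_k(A_k)$, which degenerates as $X\to\partial\Omega$. The Carleson estimate \cite[Lemma 3.3]{Robin1} gives an \emph{upper} bound $v(X)\leq C v(A_k)$, so after invoking it you still need $\eta_k(X)\geq c'\eta_k(A_k)$ uniformly in $X$; but that is a one-sided boundary Harnack for $\eta_k$, which is exactly what you are trying to prove. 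In the large-$a$ limit $\eta_k$ behaves like a Dirichlet solution vanishing on $\partial\Omega$, so no such lower bound is available without further input (e.g.\ a comparison to the Green function or to harmonic measure, as in Caffarelli--Fabes--Mortola--Salsa or Jerison--Kenig). You neither state nor prove such a comparison, and it is not a formal consequence of \cite[Lemma 3.3]{Robin1}.

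The paper sidesteps this difficulty by following De~Silva--Savin \cite{BoundaryHarnack}: instead of an oscillation decay for $u/v$, it proves a positivity-propagation claim --- if $w$ solves the homogeneous Robin problem in $B(Q,r+\rho)$, satisfies $w\geq -1$ everywhere and $w\geq M$ on $\{\delta\geq\theta\rho\}$, then $w\geq 0$ on $B(Q,r)$ --- and applies it to $w=(Cv-u)/\sup_{B(Q,2r)} u$. The claim is established by an iteration in which the crucial near-boundary improvement comes from the density lemma \cite[Lemma 3.5]{Robin1} applied to the Neumann supersolution $1-w^-$ (via \cite[Lemma 3.4]{Robin1}), together with a porosity estimate on $\{\delta\leq\theta\rho\}$. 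This machinery replaces the comparison-to-harmonic-measure step that your outline is missing and is what makes the constants uniform in $a$.
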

	
We should note that the proof of Theorem \ref{t:boundarycomp} would be much simpler if we allowed $C$ to 
depend on $ar^{2-n}\sigma(B(Q,r))$ and thus could invoke the Harnack inequality at the boundary. 
This more complicated iterative argument ensures 
that the same $C > 0$ works at all scales.

\begin{proof}
	We follow closely the proof of De Silva-Savin \cite{BoundaryHarnack}, which is written for Dirichlet boundary conditions but is extremely flexible. 
			
			\noindent {\bf Claim:} There exists constants $\theta\in (0,1), M > 1$, depending only on the geometric constants of $\Omega$ and the ellipticity of $A$, such that for all $Q\in \partial \Omega$ and $r\geq \rho > 0$ if $w$ is a weak solution to the homogeneous Robin problem in $B(Q, r+\rho)$ such that \begin{equation}\label{e:firstcondition}\begin{aligned} w(x) &\geq -1, \qquad \forall x \in B(Q,r+\rho)\cap \Omega\\
			 w(x) &\geq M,\qquad \forall x\in B(Q, r+\rho)\cap \{y\in \Omega \mid \mathrm{dist}(y,\partial \Omega) \geq \theta \rho\},\end{aligned}\end{equation} then $w \geq 0$ in $B(Q, r)$. 
			
\medskip 
To prove the claim we will show that there us a small constant $\tau > 0$ (depending on the geometric constants 
and the ellipticity of $A$) 
such that if $w$ satisfies \eqref{e:firstcondition} in $B(Q,r+\rho)$, then 
\begin{equation}\label{e:secondcondition}
\begin{aligned} 
w(x) &\geq -\tau, \qquad \forall x \in B(Q,r+\rho/2)\cap \Omega\\
	w(x) &\geq M\tau,\qquad \forall x\in B(Q, r+\rho/2)\cap \{y\in \Omega\mid \mathrm{dist}(y,\partial \Omega) \geq \theta \rho/2\}. 
\end{aligned}
\end{equation}
	If \eqref{e:secondcondition} holds, then $w/\tau$ satisfies \eqref{e:firstcondition} in $B(Q, r+\rho/2)$. 
Iterating this process we get that $w \geq -\tau^k$ in $B(Q, r + \rho/2^k)$. By continuity $w \geq 0$ in $B(Q,r)$ and our claim is proven. 
			
	To show that \eqref{e:firstcondition} implies \eqref{e:secondcondition} we first show that the lower bound on $w$ away from $\partial \Omega$ improves. Note that $w + 1 \geq 0$ in $B(Q, r+\rho)$ and for any $x\in B(Q,r+\rho)$ with $\mathrm{dist}(x, \partial \Omega) \geq \theta \rho$ we have $w(x)+1 \geq M+1$. 
Pick any $y\in B(Q, r+\rho/2)$ with $\mathrm{dist}(y, \partial \Omega) \geq \theta\rho/2$. 
	If $\mathrm{dist}(y, \partial \Omega) \geq \theta\rho$, we know that $w(y)+1 \geq M+1$ and
we are happy. Otherwise, 
pick $q \in \d\Omega$ such that $|q-y| \leq \theta \rho$, then choose a corkscrew point $x$ for the 
ball $B(q, C\theta \rho)$, where $C$ is just chosen so large that $\mathrm{dist}(x, \partial \Omega) \geq \theta\rho$.
There is a Harnack chain that connects $y$ to $x$ in (a probably larger ball) $B(q, C^2 \theta \rho)$, and if 
$\theta$ is small enough, this chain stays in $B(Q,r+\rho)$. Hence $w(x)+1 \geq M+1$ and
$w(y) + 1 \geq C^{-1}(w(x)+1) \geq C^{-1}(M+1)$. So, if $M$ is large enough, we get that
$w(y) \geq C^{-1}(M+1) - 1 \geq \tau M$, for some small $\tau > 0$. Here 
$M$ and $\tau$ depend only on ellipticity and geometric constants, and in particular are independent of $\theta > 0$ (as long as $\theta$ is small enough). This takes care of the part of \eqref{e:secondcondition} close to the boundary. 

For the lower bound on the entirety of $B(Q, r + \rho/2)$ we intend to use repeatedly a density lemma,  Lemma 3.5 in \cite{Robin1},
which gives an improvement of infimum by a small constant $c_\eta < 10^{-1}$. We will use it $\ell$ times, 
where $\ell$ is the smallest integer such that 
\begin{equation}\label{6a20}
(1-c_{\eta})^\ell \leq \tau.
\end{equation}
We will later choose $\theta$ small enough, depending on all these constants (that is, $\tau$, $\ell$, the geometric constant $K \geq 1$ from Lemma 3.5 in \cite{Robin1}, and an additional geometric constant
$C_0$).

We want to show that $w \geq -\tau$ whenever $y\in B(Q, r+\rho/2)$, so we can assume that 
$\mathrm{dist}(y, \partial \Omega) < \theta \rho/2$, because otherwise $w \geq M\tau$ by assumption. 
Then let $q \in \d\Omega$ be such that $|q-y| \leq \theta \rho/2 < \rho/4$ (if $\theta$ is small enough).
Set $B_k = B(q, r_k)$, with $r_k = C_0(2K)^{\ell -k} \theta \rho$, for $0 \leq k \leq \ell$; 
notice that the largest ball, $B_0 = B(q, C_0 (2K)^{\ell} \theta \rho)$, is contained in 
$B(q, \rho/4) \subset B(Q,r+\rho)$ if $\theta$ is small enough.

We start with an argument for $B_0$. 
Observe that since $w$ is a weak solution in $B_0 \subset B(Q,r+\rho)$, with homogeneous data, 
Lemma 3.4 in \cite{Robin1}, says that $1-w^-$ is a non-negative Neumann supersolution in $B_0$. 
It is also non-negative because $w \geq -1$.
We want to apply Lemma~3.5 in \cite{Robin1} to $1-w^-$ in the ball
$B_1 = (2K)^{-1} B_0$, so we need to check the main assumption, which is that 
\begin{equation}\label{6a21}
\big|\big\{ x \in \Omega \cap 2B_1 \, ; \, 1- w^-(x) \geq 1 \big\}\big| \geq \eta | \Omega \cap 2 B_1|,
\end{equation}
for some $\eta \in (0,1)$, and we may as well take $\eta = 1/2$. The condition is satisfied
as soon as $w \geq 0$, hence by assumption as soon as $x \in  \Omega\cap B_0 \sm Z_\theta$, where
\begin{equation}\label{6a22}
Z_{\theta} = \big\{ x \in \Omega \cap B_1 \, ; \, \dist(x, \d\Omega) \leq \theta \rho\big\}.
\end{equation}
So it will be enough to show that $|2B_1 \cap Z_\theta| \leq \frac12 |\Omega \cap 2B_1|$.
We apply \eqref{6c8}, which say that
$|2B_1 \cap Z_\theta| \leq C (\theta \rho / r_1)^\alpha r_1^{n} \leq C (\theta / r_1)^\alpha |\Omega \cap B_1|$, 
where $C$ and $\alpha > 0$ depend only on the corkscrew constants. But $\theta r/ r_1 \leq C_0^{-1}$,
so we may now choose $C_0$ so large that $|2B_1 \cap Z_\theta| \leq \frac12 |\Omega \cap B_1|$, 
and by the same argument 
\begin{equation}\label{6a23}
|2B_k \cap Z_\theta| \leq \frac12 |\Omega \cap B_k|
\end{equation}
for all $k \leq \ell$.

Return to \eqref{6a21}; with our choice of $C_0$, we get it, then we can apply 
Lemma 3.5 in \cite{Robin1}, and get that $1- w^- \geq c_\eta$ on $B_1$, where $c_\eta > 0$
is given by the lemma. In other words, $w^- \leq (1-c_\eta)$ on $B_1$.

We are now ready to prove by induction that $w^- \leq (1-c_\eta)^k$ on $B_k$ for $k \leq \ell$.
Indeed, suppose this is the case for some $k < \ell$. Consider $v_k = 1 - \frac{w^-}{(1- c_\eta)^k}$. Then
$v_k$ is a nonnegative Neumann supersolution on $B_k$, just as for $1-w^-$ before, and we may apply 
Lemma~3.5 in \cite{Robin1} to it, on $B_{k+1}$. We still have that $w^- = 0$ on $\Omega \cap B_k \sm Z_\theta$,
so \eqref{6a23} yields
\begin{equation}\label{6a24}
\big|\big\{ x \in \Omega \cap 2B_1 \, ; \, v_k \geq 1 \big\}\big| 
\geq |\Omega \cap 2B_{k+1} \sm Z_\theta| \geq \frac12  |\Omega \cap 2B_{k+1}|,
\end{equation}
so we can apply the lemma, find that $v_k \geq c_\eta$ and hence $w^- \leq (1- c_\eta)^{k+1}$
on $B_{k+1}$. For the final step, we see that $w^- \leq (1- c_\eta)^{l} \leq \tau$ on $B_\ell$.
But clearly $y \in B_\ell$ (because $|q-y| \leq \theta \rho$ and $r_\ell = C_0 \theta \rho > \theta \rho$), so
$w(y) \geq -\tau$, as needed. This completes the proof of \eqref{e:secondcondition}, and the claim follows.

		\medskip
Let us return to the statement of our Theorem, and let $u, v$ be the given non-negative solutions to the homogeneous Robin problem. Since the statement of the theorem  
is invariant under $u\mapsto \lambda u$ for any constant $\lambda > 0$ 
we can guarantee that $u(A_r(Q)) = v(A_r(Q)) = 1$. By symmetry it suffices to show that there exists a $C > 0$ (universal) such that $Cv-u \geq 0$ in $B(Q,r)$. 
We will show that there is a constant $C > 0$  such that 
$$
w:= \frac{Cv - u}{\sup_{B(Q, 2r)} u}
$$ 
satisfies the conditions of \eqref{e:firstcondition} in $B(Q, 2r) = B(Q,r+r)$. Indeed, by construction $w\geq -1$ in $B(Q, 2r)$. Given $\theta \in (0,1)$ as in the Claim above, we can connect any $y\in B(Q, 2r)$ with 
$\mathrm{dist}(y,\partial \Omega)\geq \theta r$ to $A_r(Q)$ by a Harnack chain with length depending on 
$\theta > 0$ and the NTA constants. Thus there exists an $\eta \in (0,1)$ (depending on $\theta$, the NTA constants and the ellipticity of $A$) such that 
$$w(y) + 1 \geq \eta\left(\frac{Cv(A_r(Q)) - u(A_r(Q))}{\sup_{B(Q, 2r)} u} + 1\right) \geq \eta \frac{C-1}{\sup_{B(Q,2r)} u}
$$ 
for all such $y$. 
Apply Lemma 3.3 in \cite{Robin1} to $u$ to see that $\sup_{B(Q,2r)} u \leq \tilde{C}u(A_r(Q)) = \tilde{C}$ for some geometric constant $\tilde{C} > 0$. So picking $C > 0$ large enough such that $\eta \frac{C-1}{\tilde{C}} \geq M+1$ we have that $w$ satisfies the conditions of the Claim. Thus $w\geq 0$ in all of $B(Q,r)$, which means
that $u \leq C v$ there.
The theorem follows. 
	\end{proof}

An immediate consequence of Theorem \ref{t:boundarycomp}, 
the change of pole formula for the 
Robin harmonic measure.
			
	\begin{corollary}\label{c:changeofpole}
Let $n\geq 3$. There exists a $C  >0$, which depends on the ellipticity of $A$ and the geometric constants 
of $(\Omega, \sigma)$, 
such that if $E\subset B(Q,r)\cap \partial \Omega$ and $X,Y \notin B(Q, CR)\cap \Omega$ then 
\begin{equation}\label{e:changeofpole}
					C^{-1}\frac{\omega^X_R(E)}{\omega_R^X(B(Q,r)\cap \partial \Omega)} \leq \frac{\omega^Y_R(E)}{\omega_R^Y(B(Q,r)\cap \partial \Omega)} \leq C\frac{\omega^X_R(E)}{\omega_R^X(B(Q,r)\cap \partial \Omega)}.
\end{equation}
	\end{corollary}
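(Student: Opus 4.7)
The corollary will be the standard change-of-pole consequence of the boundary comparison principle Theorem \ref{t:boundarycomp}, exactly parallel to the Dirichlet argument in Kenig's book. Set $\Delta := B(Q,r)\cap\partial\Omega$ and, for $Z\in\Omega$, define
\[
u(Z) := \omega_R^Z(E), \qquad v(Z) := \omega_R^Z(\Delta).
\]
By the representation formula \eqref{e:representationformula} (applied with boundary data $\chi_E$ and $\chi_\Delta$ respectively), both $u$ and $v$ are non-negative weak solutions of the Robin problem in $\Omega$ and, because $E\subset\Delta$, both satisfy the \emph{homogeneous} Robin condition at every point of $\partial\Omega\setminus\overline{\Delta}$. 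Consequently, on any ball $B(P,\tilde Ks)$ centered on $\partial\Omega$ and disjoint from $\overline{\Delta}$, the functions $u$ and $v$ weakly solve the homogeneous Robin problem in the sense required by Theorem \ref{t:boundarycomp}.

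Fix $C$ large enough (depending on $\tilde K$), pick $Q^*\in\partial\Omega$ with $|Q^*-Q|\simeq Cr$, and let $A^*$ be a corkscrew point for $B(Q^*,r)$, so that $\delta(A^*)\simeq r$ and $B(Q^*,\tilde Kr)\cap\overline\Delta=\emptyset$. The plan is to show that for every $Z\in\Omega\setminus B(Q,Cr)$,
\[
\frac{u(Z)}{v(Z)} \simeq \frac{u(A^*)}{v(A^*)},
\]
with constants depending only on the geometric and ellipticity parameters; applying this with $Z=X$ and $Z=Y$ and taking the quotient gives \eqref{e:changeofpole}.

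To establish this comparison, I would split on $\delta(Z)$. \textbf{Case 1: $\delta(Z)\geq r/10$.} Then $Z$ and $A^*$ both lie at distance $\gtrsim r$ from $\partial\Omega$, so a Harnack chain from $Z$ to $A^*$ can be built using \ref{CC2} (and if necessary, Lemma \ref{l:onesidedpunch} to avoid $B(Q,r/2)$); applying the interior Harnack inequality separately to $u$ and $v$ along this chain gives $u(Z)\simeq u(A^*)$ and $v(Z)\simeq v(A^*)$, hence the ratio comparison. \textbf{Case 2: $\delta(Z)<r/10$.} Let $P_Z\in\partial\Omega$ be a closest boundary point to $Z$; for $C$ chosen large enough (depending on $\tilde K$), $|P_Z-Q|\geq (C-1/10)r$, so the ball $B(P_Z,\tilde Kr/10)$ is disjoint from $\overline{B(Q,r)}$. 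Theorem \ref{t:boundarycomp} applied on the ball $B(P_Z,r/10)$ then yields
\[
\frac{u(Z)}{v(Z)} \simeq \frac{u(A_{r/10}(P_Z))}{v(A_{r/10}(P_Z))},
\]
since $Z\in B(P_Z,r/10)$. The corkscrew point $A_{r/10}(P_Z)$ satisfies $\delta\gtrsim r$, so Case 1 applies to it and reduces its ratio in turn to $u(A^*)/v(A^*)$.

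The main technical point is tracking that the Harnack chain in Case 1 has length controlled by the one-sided NTA data so that the accumulated multiplicative factors depend only on geometric constants; this is handled exactly as in the classical Dirichlet setting (if need be, by iterating Theorem \ref{t:boundarycomp} at a geometrically increasing sequence of scales instead of using a single long interior Harnack chain). Everything else — identifying the homogeneous Robin region, positioning the reference ball, and invoking Theorem \ref{t:boundarycomp} at the right scale — is routine once the theorem is in place.
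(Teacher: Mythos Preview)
Your approach has a real gap in Case 1. When $\delta(Z)\geq r/10$ but $|Z-Q|$ is much larger than $r$ (which certainly happens, since $Z$ ranges over all of $\Omega\setminus B(Q,Cr)$), the interior Harnack chain from $Z$ to $A^*$ has length of order $\log(|Z-Q|/r)$, not a bound depending only on the NTA constants. The accumulated multiplicative factor is then roughly $C^{\log(\diam(\Omega)/r)}$, which blows up as $r\to 0$. Your suggested fix --- iterating Theorem~\ref{t:boundarycomp} at geometrically increasing scales --- does not help: each step contributes a fixed factor $C^2$ to the ratio, and there are $\sim\log(\diam(\Omega)/r)$ steps, so the constant still depends on $r$. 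There is no telescoping; Theorem~\ref{t:boundarycomp} controls the ratio on each ball, but these controls compose multiplicatively over a chain. (Also, Lemma~\ref{l:onesidedpunch} does not apply, since $Q\in\partial\Omega$.)

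The paper avoids this entirely by applying the comparison principle \emph{from the inside} rather than the outside. It first reduces (by outer regularity and a covering argument) to $E=B(P,s)\cap\partial\Omega$, then uses Theorem~\ref{t:hmgfequiv} to write
\[
\frac{\omega_R^X(E)}{\omega_R^X(\Delta)}\ \simeq\ \frac{G_R(X,A_s(P))}{G_R(X,A_r(Q))}\cdot(\text{factors not depending on the pole}).
\]
The problem becomes $\dfrac{G_R(X,A_s(P))}{G_R(X,A_r(Q))}\simeq\dfrac{G_R(Y,A_s(P))}{G_R(Y,A_r(Q))}$. Now apply Theorem~\ref{t:boundarycomp} to $u(\cdot)=G_R(X,\cdot)$ and $v(\cdot)=G_R(Y,\cdot)$ on the \emph{small} ball $B(Q,4r)$: both are solutions of the adjoint problem with homogeneous Robin data there, since $X,Y\notin B(Q,Cr)$. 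A single application finishes the proof, with no chains at all. The move you are missing is to swap which variable is frozen: compare Green functions in the second argument inside $B(Q,r)$, rather than harmonic measures in the pole outside.
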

			
			\begin{proof}
	Since $\omega^X_R$ and $\omega^X_R$ are outer regular, it is enough to prove this when $E$ is open
in $\d\Omega$, and by a covering argument we can further reduce to the case when 	
$E= B(P, s)\cap \partial \Omega \subset B(Q,r)\cap  \Omega$, with $P \in \d\Omega$. 	
				
				Let $X, Y \notin B(Q, Cr)\cap \partial \Omega$. Then $$\frac{\omega^X_R(B(P,s)\cap \partial \Omega)}{\omega^X_R(B(Q,r)\cap \partial \Omega)}\simeq \frac{G_R(X, A_s(P))s^{n-2}\min\{1,as^{2-n}\sigma(B(P,s))\}}{G_R(X,A_r(Q))r^{n-2}\min\{1, ar^{2-n}\sigma(B(Q,r))\}},$$
				by Theorem \ref{t:hmgfequiv}. We get a similar relation for the pole $Y$. 
				
				Thus, to prove the desired result, it suffices to show that 
		\begin{equation}\label{e:almostdone}
		\frac{G_R(X, A_s(P))}{G_R(X, A_r(Q))}\simeq \frac{G_R(Y, A_s(P))}{G_R(Y, A_r(Q))}
		\end{equation}
		or equivalently 
		\begin{equation}\label{e:almostdone2}
		\frac{G_R(X, A_s(P))}{G_R(Y, A_s(P))} \simeq \frac{G_R(X, A_r(Q))}{G_R(Y, A_r(Q))}.\end{equation}
				
	Using symmetry (i.e.,  \eqref{2b8}),  
note that $u(-) := G_R(X, -)$ and $v(-) := G_R(Y, -)$ satisfy the hypothesis of Theorem \ref{t:boundarycomp} in $B(Q, 4r)$, and thus \eqref{e:almostdone} follows immediately. 
			\end{proof}
			
			\subsection{The $A_\infty$ character of $\omega_R$}\label{ss:Ap}
			
			We end this section with a result mentioned in the introduction: a more precise description of the $A_\infty$ character of the Robin harmonic measure in the ``Dirichlet regime".  First we start with a quantitative result, which makes more precise the intuition that Robin harmonic measure is a ``smoothing out" of Dirichlet harmonic measure. We use the natural notation that $\omega_D^{X_0}$ is the Dirichlet harmonic measure for the operator $\mathrm{div}(A\nabla -)$ and domain $\Omega$. 
						
			\begin{lemma}\label{l:ainfinitylocal}
			Let $P \in \partial \Omega$ and $r = r_P > 0$ such that $I_{P}(r) = 1$. Furthermore, assume $X_0 \in \Omega \sm B(P, Cr)$. Then \begin{equation}\label{e:ainfinitylocal}\frac{d\omega_R^{X_0}}{d\sigma}(P) \simeq \frac{\omega_D^{X_0}(B(P,r))}{\sigma(B(P,r))}.\end{equation} The constants of comparability depend only on the geometric constants of $\Omega$ and the ellipticity of $A$, not on $P, r, a$. 
			
			\end{lemma}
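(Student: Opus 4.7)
The plan is to chain four comparisons: (i) identify $\frac{d\omega_R^{X_0}}{d\sigma}(P)$ as $aG_R(X_0, P)$ via the representation formula; (ii) use the Robin boundary Harnack principle to pass from $P$ to the corkscrew $A_r(P)$; (iii) compare the Robin and Dirichlet Green functions via Theorem~\ref{thm:GFest}; and (iv) translate back to Dirichlet harmonic measure through the classical CFMS estimate.

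For (i), applying \eqref{e:representationformula} to test functions supported near $P$ yields $d\omega_R^{X_0} = a G_R(X_0, \cdot)\,d\sigma$; since $X_0 \notin B(P, Cr)$, the function $y \mapsto G_R(X_0, y)$ is a weak Robin solution with homogeneous boundary data in a neighborhood of $P$, and is therefore H\"older continuous up to $\partial\Omega$ there, so the density is realized pointwise: $\frac{d\omega_R^{X_0}}{d\sigma}(P) = a\,G_R(X_0, P)$. For (ii), since $I_P(r) = 1$ the decay \eqref{3b12} gives $I_P(r/K) \leq 1$ for $K$ sufficiently large, hence Theorem~\ref{thm:bdryharnack} applied to $y \mapsto G_R(X_0, y)$ on $B(P, r/K)$ produces $G_R(X_0, P) \simeq G_R(X_0, A_r(P))$. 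For (iv), the classical CFMS estimate for the Dirichlet harmonic measure gives $\omega_D^{X_0}(B(P, r)) \simeq r^{n-2} G_D(X_0, A_r(P))$. Using the identity $a r^{2-n} = \sigma(B(P,r))^{-1}$---which is precisely the assumption $I_P(r) = 1$ rewritten---the claim \eqref{e:ainfinitylocal} reduces to the Green function comparison
\[
G_R(X_0, A_r(P)) \simeq G_D(X_0, A_r(P)).
\]

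The inequality $G_R \geq G_D$ is Lemma~\ref{lem:changea2}, so only the reverse direction requires work. For this we invoke Theorem~\ref{t:GuysConj} with $X = X_0$ and $Y = A_r(P)$. Condition \eqref{5a5} at $A_r(P)$ is immediate, since $\delta(A_r(P)) \sim r$ and $I_{Q_{A_r(P)}}(\delta(A_r(P))) \sim I_P(r) = 1$ by the doubling of $\sigma$ together with \eqref{3b12}. The analogous condition at $X_0$ is automatic when $\delta(X_0) \gtrsim \rho_{X_0}$. The main obstacle is the remaining case, where $X_0$ sits in its own Neumann boundary layer at some $Q_{X_0} \in \partial\Omega$: there one must first pass from $X_0$ to the corkscrew $A_{X_0}$ at scale $\rho_{X_0}$ on both sides of the desired comparison, using the Robin boundary Harnack principle on $B(Q_{X_0}, \rho_{X_0})$ (permissible because $I_{Q_{X_0}}(\rho_{X_0}) \sim 1$) together with the corresponding classical Dirichlet estimate, so that the ratio $G_R/G_D$ is preserved. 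Theorem~\ref{t:GuysConj} then applies directly to the reduced pair $(A_{X_0}, A_r(P))$, closing the argument and yielding \eqref{e:ainfinitylocal}.
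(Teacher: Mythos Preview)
Your four-step outline (representation formula, Robin boundary Harnack at $P$, Green function comparison, Dirichlet CFMS) is exactly the paper's proof, and your reductions via $I_P(r)=1$ are correct.

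The one place you go beyond the paper is in attempting to handle the case $\delta(X_0)<\rho_{X_0}$, and there the argument breaks. When $X_0$ sits in its own Neumann layer near $Q_{X_0}$, the Robin boundary Harnack does give $G_R(X_0,A_r(P))\simeq G_R(A_{X_0},A_r(P))$, but there is no ``corresponding classical Dirichlet estimate'' producing $G_D(X_0,A_r(P))\simeq G_D(A_{X_0},A_r(P))$: the Dirichlet Green function vanishes on $\partial\Omega$, so it genuinely degenerates as $\delta(X_0)\to 0$ while $G_R$ does not. Hence the ratio $G_R/G_D$ is \emph{not} preserved under this move; it blows up. In fact the lemma itself fails in that regime, since $\frac{d\omega_R^{X_0}}{d\sigma}(P)\simeq \frac{d\omega_R^{A_{X_0}}}{d\sigma}(P)$ stays bounded below while $\omega_D^{X_0}(B(P,r))\to 0$ as $X_0\to Q_{X_0}\notin B(P,r)$.

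The lemma should be read with the implicit extra hypothesis that $X_0$ also lies in the Dirichlet regime, i.e.\ $I_{Q_{X_0}}(\delta(X_0))\gtrsim 1$ (equivalently $\delta(X_0)\gtrsim \rho_{X_0}$); this is exactly how it is invoked later, with $X_0$ a corkscrew point at a scale where $I\gtrsim 1$. Under that hypothesis the problematic case does not arise, $A_{X_0}=X_0$ in \eqref{e:GFestclose}, and both your argument and the paper's go through directly.
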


			\begin{proof}
		From the representation formula, \eqref{e:representationformula}, and the boundary Harnack inequality we have $$\frac{d\omega_R^{X_0}}{d\sigma}(P) = aG_R(X_0, P) \simeq a G_R(X_0, A_r(P)).$$
		
		Applying the key estimate,  \eqref{e:GFestclose}, and the analogue of Theorem \ref{t:hmgfequiv} for the Dirichlet harmonic measure (which we know holds in the domains we consider, see, e.g. \cite{HMMTZ}) we get that $$\frac{d\omega_R^{X_0}}{d\sigma}(P) \simeq ar^{2-n} \omega_D^{X_0}(B(P, r)).$$ The result follows by the property that $I_P(r) = 1$. 
			\end{proof}

			Naively, Lemma \ref{l:ainfinitylocal} should immediately imply sharp estimates on the $A_\infty$ character of $\omega_R$ in $B(Q,r)$ as $I_Q(r)\uparrow \infty$. Unfortunately, the smoothing in \eqref{e:ainfinitylocal} seems to be hard to work with. However, similar arguments to those above show that if the Dirichlet harmonic measure is in $A_\infty$ then so is the Robin harmonic measure. 
			
			\begin{theorem*}[Theorem \ref{t:keepainfinity} restated]
			Let $A, \Omega$ be such that $\omega_D \in A_\infty(\sigma)$ where $\omega_D$ is the Dirichlet elliptic measure associated to the operator $\mathrm{div}(A\nabla -)$. Then for any $0 < a < \infty$ we have $\omega_R \in A_\infty(\sigma)$ with constants that depend only on the $A_\infty$-character of $\omega_D$ (and in particular are independent of $a$). 
			\end{theorem*}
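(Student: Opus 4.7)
The strategy is to prove the equivalent (quantitative) reverse Hölder inequality
\[
\left(\fint_{B(P,s)\cap\partial\Omega}k_R^p\,d\sigma\right)^{1/p}
\leq C\fint_{B(P,s)\cap\partial\Omega}k_R\,d\sigma
\]
for $k_R := d\omega_{R,a}^{X_0}/d\sigma$ (defined a.e. by \cite[Theorem 1.2]{Robin1}), with $p>1$ and $C$ uniform in $a$, $X_0$, $P$, $s$. By standard manipulations this is equivalent to the stated power bound with $\theta = 1 - 1/p$. The argument splits according to the index $I_P(s) = as^{2-n}\sigma(B(P,s))$. If $I_P(s) \leq C_0$ (the Neumann regime), then \cite[Theorem 1.3]{Robin1} yields the much stronger pointwise bound $k_R \simeq \omega_{R,a}^{X_0}(B(P,s))/\sigma(B(P,s))$ on $B(P,s)\cap\partial\Omega$, with constants depending only on the geometric data, which trivially gives the reverse Hölder inequality. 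Hence, it suffices to treat the case $I_P(s)>C_0$.

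In this Dirichlet regime, for $\sigma$-a.e. $Q\in B(P,s)\cap\partial\Omega$ the critical scale $r_Q$ (defined by $I_Q(r_Q)\simeq 1$) satisfies $r_Q \leq C_1 s$, by the monotonicity estimate \eqref{3b12} and doubling of $\sigma$. One checks that $X_0 \in \Omega\setminus B(Q, Cr_Q)$ for all such $Q$ (because $r_Q\leq s \leq r$ and $X_0 \in \Omega\setminus B(Q,Cr)$), so Lemma \ref{l:ainfinitylocal} applies and yields, pointwise a.e. on $B(P,s)\cap\partial\Omega$,
\[
k_R(Q) \simeq \frac{\omega_D^{X_0}(B(Q,r_Q))}{\sigma(B(Q,r_Q))} \leq M_\sigma(k_D)(Q),
\]
where $k_D = d\omega_D^{X_0}/d\sigma$ (which exists on $B(P,C_2 s)\cap\partial\Omega$ thanks to $\omega_D \in A_\infty(\sigma)$) and $M_\sigma$ is the Hardy--Littlewood maximal operator with respect to the doubling measure $\sigma$ on $\partial\Omega$. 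By the $A_\infty$ hypothesis, $k_D$ satisfies a reverse Hölder inequality of some exponent $p>1$ on all balls of $\partial\Omega$ (with constants depending only on the $A_\infty$-character of $\omega_D$), and $M_\sigma$ is bounded on $L^p(\sigma)$ by the standard Muckenhoupt theory on spaces of homogeneous type. Combining these,
\[
\left(\fint_{B(P,s)\cap\partial\Omega}k_R^p\,d\sigma\right)^{1/p}
\leq C\left(\fint_{B(P,C_3 s)\cap\partial\Omega}k_D^p\,d\sigma\right)^{1/p}
\leq C'\fint_{B(P,C_4 s)\cap\partial\Omega}k_D\,d\sigma
= C'\frac{\omega_D^{X_0}(B(P,C_4 s))}{\sigma(B(P,C_4 s))}.
\]

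For the matching lower bound on $\fint_{B(P,s)} k_R\,d\sigma$, I would use the lower bound in Lemma \ref{l:ainfinitylocal} and a Vitali-type cover: extract, among $\{B(Q,r_Q)\}_{Q\in B(P,s)\cap\partial\Omega}$, a disjoint subfamily $\{B(Q_j,r_{Q_j})\}_j$ whose $5$-dilates cover $B(P,s)\cap\partial\Omega$. Then
\[
\int_{B(P,s)\cap\partial\Omega}\frac{\omega_D^{X_0}(B(Q,r_Q))}{\sigma(B(Q,r_Q))}\,d\sigma(Q)
\;\gtrsim\;\sum_j \omega_D^{X_0}(B(Q_j,r_{Q_j}))
\;\gtrsim\; \omega_D^{X_0}(B(P,s)\cap\partial\Omega),
\]
using doubling of $\omega_D$ on the right (which holds in our setting by the analogue of Theorem \ref{t:doubling} for $\omega_D$, known in this generality). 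Combined with doubling of $\omega_D$ once more to pass from $B(P,s)$ to $B(P,C_4 s)$ at the cost of a multiplicative constant, this yields $\fint_{B(P,s)} k_R\,d\sigma \gtrsim \omega_D^{X_0}(B(P,C_4 s))/\sigma(B(P,C_4 s))$, which together with the upper bound above produces the reverse Hölder inequality for $k_R$.

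\textbf{Main obstacle.} The delicate point is the lower bound: the smoothing in Lemma \ref{l:ainfinitylocal} happens at a variable scale $r_Q$, and one must show that integrating the average densities $\omega_D^{X_0}(B(Q,r_Q))/\sigma(B(Q,r_Q))$ over $Q\in B(P,s)\cap\partial\Omega$ recovers $\omega_D^{X_0}$ on a comparable ball. Absent $A_\infty$ of $\omega_D$ this would fail (the averages could be much bigger than the original), but the $A_\infty$-hypothesis together with the Vitali-covering argument and doubling provides exactly the two-sided control required to close the estimate with constants independent of $a$.
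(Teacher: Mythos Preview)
Your proposal is essentially correct and takes a genuinely different route from the paper's proof. Both arguments rest on the same input, namely Lemma~\ref{l:ainfinitylocal} (equivalently the Green function comparison \eqref{e:GFestclose}), but they organize the estimate differently.

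You prove the reverse H\"older inequality for the density $k_R$: bound $k_R(Q)\lesssim M_\sigma k_D(Q)$ pointwise, then combine $L^p(\sigma)$-boundedness of $M_\sigma$ with $RH_p$ for $k_D$. The matching lower bound on $\fint k_R$ is obtained via a Vitali cover at the variable critical scale $r_Q$ together with the doubling of $\omega_D$ (which holds in one-sided NTA domains independently of the $A_\infty$ hypothesis). Two routine points are worth flagging: first, since $r_Q\leq Cs$ one should localize the maximal function (replace $k_D$ by $k_D\chi_{B(P,Cs)}$) before invoking the global $L^p$ bound; second, for the Vitali step one should take the threshold $C_0$ large enough that $r_Q\ll s$ for $Q\in B(P,s)$ (via \eqref{3b12}), so that the disjoint balls $B(Q_j,r_{Q_j})$ actually sit inside a fixed dilate of $B(P,s)$ and carry full $\sigma$-mass.

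The paper instead works at the level of sets. It covers $E$ (not all of $\partial\Omega$) by balls $B(P_i,r_i)$ with $I_{P_i}(r_i)=1$, writes $\omega_R^{X_0}(E)\lesssim\sum_i\omega_D^{X_0}(B_i)\,\frac{\sigma(B_i\cap E)}{\sigma(B_i)}$, recognizes the second factor as $\lesssim M\chi_E$ on $B(P_i,r_i/5)$, and then applies H\"older's inequality \emph{against} $\omega_D$ together with the $A_p(\sigma)$ property of $\omega_D$ (i.e., boundedness of $M_\sigma$ on $L^p(\omega_D)$). The denominator $\omega_R^{X_0}(B(Q,r_0))$ is handled by the Bourgain estimate (Lemma~\ref{l:Bourgain}), bypassing any covering argument for the lower bound. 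In short: the paper integrates the maximal function of $\chi_E$ against $d\omega_D$, while you integrate the maximal function of $k_D$ against $d\sigma$; these are dual formulations of essentially the same mechanism. Your route gives $RH_p$ directly and is closer in spirit to classical weight theory, while the paper's route avoids the Vitali lower bound by exploiting Bourgain's estimate.
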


To be precise, in convention with what it means for the harmonic measure to satisfy $A_\infty$ estimates, we will show that there exists a $C > 0$ and a $\theta \in (0,1)$ such that if $Q\in \partial \Omega, r_0 > 0$ and $E \subset B(Q,r_0)\cap \partial \Omega$ then \begin{equation}\label{e:ainfinityrewrite}\frac{\omega_R^{X_0}(E)}{\omega_R^{X_0}(B(Q, r_0))} \leq C \left(\frac{\sigma(E)}{\sigma(B(Q, r_0))}\right)^{\theta}, \qquad \forall X_0 \in \Omega \sm B(Q, Cr_0).\end{equation}
		Furthermore, we will show that the constants $C, \theta$ depend only on the corresponding constants for $\omega_D$ and not on $A$. 
		
\begin{proof}		Recall that \cite{Robin1} tells us \eqref{e:ainfinityrewrite} holds with $\theta = 1$ as long as $I_Q(r_0) \leq 1$. So we may assume that $I_Q(r_0) > 1$. Furthermore, by the change of pole formula (Corollary \ref{c:changeofpole}) we can assume that $X_0 = A_{Cr_0}(Q)$ for some large $C > 0$. 
		
		Cover $E \subset \partial \Omega$ with balls $B(P_i, r_i)$ such that $P_i \in \partial \Omega$ and $B(P_i,r_i)$ have bounded overlap (if we like, we can ask that $B(P_i, r_i/5)$ are disjoint, using the Vitali covering theorem). We pick the $r_i$ such that $I_{P_i}(r_i) = 1$. Then using the representation formula, \eqref{e:representationformula}, and the boundary Harnack inequality, Theorem \ref{thm:bdryharnack} $$\omega_R^{X_0}(E) \lesssim a\sum_{i=1}^n G_R(X_0, A_{r_i}(P_i)) \sigma(B(P_i, r_i)\cap E).$$ 
		
		Applying the key estimate, \eqref{e:GFestclose} and then the comparability of the Dirichlet Greens function and harmonic measure (i.e. the analogue of Theorem \ref{t:hmgfequiv} for the Dirichlet problem, which is known to hold on uniform domains, see, e.g. \cite{AH}, \cite{HMMTZ}) we get that $$\begin{aligned} \omega_R^{X_0}(E) \lesssim& a\sum_{i=1}^n G_D(X_0, A_{r_i}(P_i)) \sigma(B(P_i, r_i)\cap E)\\
		\simeq& \sum_{i=1}^n ar_i^{2-n} \omega_D^{X_0}(B(P_i, r_i)) \sigma(B(P_i,r_i)\cap E)\\
		\simeq& \sum_{i=1}^n \omega_D^{X_0}(B(P_i, r_i))\frac{\sigma(B(P_i, r_i)\cap E)}{\sigma(B(P_i, r_i))}.\end{aligned}$$ Here the last line follows because $I_{P_i}(r_i) = 1$. 
		
		Let $Mf(x) :=\sup_{x\in B} \frac{1}{\sigma(B)} \int_{B} f(y)\, d\sigma(y),$ be the Hardy-Littlewood maximal function, where we take the supremum over all balls centered on $\partial \Omega$ containing $x$. For any $y\in B(P_i, r_i/5)\cap \partial \Omega$ it is easy to see that $$M\chi_E(y) \geq \frac{\sigma(B(P_i, r_i)\cap E)}{\sigma(B(P_i, r_i))}.$$ Using the doubling of Dirichlet Harmonic measure (which also holds in uniform domains, see again \cite{AH, HMMTZ}) we get that $$\begin{aligned}\omega_R^{X_0}(E) \leq& C\left(\sum_{i=1}^n \left(\omega_D^{X_0}(B(P_i, r_i/5))\right)^{q(1-1/p)}\right)^{1/q}\left(\sum_{i=1}^n \omega_D^{X_0}(B(P_i, r_i/5))\left(\frac{\sigma(B(P_i,r_i)\cap E)}{\sigma(B(P_i, r_i))}\right)^p\right)^{1/p}\\
		\leq& C\left(\sum_{i=1}^n \omega_D^{X_0}(B(P_i, r_i/5))\right)^{1/q} \left(\int_{\partial \Omega \cap B(Q, r_0)} (M\chi_E)^p(y)\, d\, \omega_D^{X_0}(y)\right)^{1/p}.\end{aligned}$$ Above we could have picked $p, q$ to be any conjugate pair, we will fix them shortly. 
		
		Note that each $B(P_i, r_i/5) \subset B(Q, 2r_0)$ and so $\sum_{i=1}^n \omega_D^{X_0}(B(P_i, r_i/5)) \leq C\omega_D^{X_0}(B(Q, r_0))$ by doubling.  We finally use the assumption that $\omega_D^{X_0} \in A_p(\sigma)$ for some $p > 1$. It follows that $$\omega_R^{X_0}(E) \leq  C\omega_D^{X_0}(B(Q, r_0))^{1/q}\left(\int_{B(Q, r_0)} \chi_E^p\, d\omega^{X_0}_D\right)^{1/p} = C\omega_D^{X_0}(B(Q, r_0)) \left(\frac{\omega^{X_0}_D(E)}{\omega^{X_0}_D(B(Q, r_0))}\right)^{1/p}.$$
		
		Using the Bourgain estimate (Lemma \ref{l:Bourgain})  for the Robin harmonic measure and then using one more time that $\omega_D \in A_\infty(\sigma)$ we get that $$\frac{\omega_R^{X_0}(E)}{\omega_R^{X_0}(B(Q, r_0))} \leq C\left(\frac{\omega^{X_0}_D(E)}{\omega^{X_0}_D(B(Q, r_0))}\right)^{1/p} \leq \tilde{C} \left(\frac{\sigma(E)}{\sigma(B(Q, r_0))}\right)^{\tau/p},$$ for some $\tilde{C} > 0$ and $\tau \in (0,1)$. The result follows. 
		\end{proof}
		
 An attentive reader may be concerned that $\omega_D \in A_p(\sigma)$ whereas it seems we have proven that $\omega_R \in A_{\tilde{p}}(\sigma)$ for some $\tilde{p}> p$. This doesn't comport with our understanding that $\omega_R$ somehow converges to $\omega_D$ when $a\uparrow \infty$. However, we should note that we made a large overestimate in the above when we wrote $\sum \omega_D^{X_0}(B(P_i, r_i/5)) \leq C\omega_D^{X_0}(B(Q, r_0)) \simeq C$. Indeed, as $a\uparrow \infty$ we see that $\sum \omega_D^{X_0}(B(P_i, r_i/5)) \rightarrow \omega_D^{X_0}(E)$ (since the $B(P_i, r_i)$ become a finer and finer cover of $E$). Tracking this through the rest of the proof we see that we do not lose an exponent in the ``Dirichlet-limit". 
 
\section{Applications to the total Flow problem}\label{s:Flux}

We turn now to the computations of total flow through a ``pre-fractal" lung. 
Recall that $\Omega$ will be a domain such that 
\begin{equation} \label{7a1}
B(0,4) \subset \Omega \subset B(0,C)
\end{equation}
for some ``geometric'' constant $C \geq 4$.

We also take a measure $\sigma$ on $\d\Omega$, and we assume that 
\begin{equation} \label{7a2}
(\Omega, \sigma) \text{ is a one-sided pair of mixed dimension, as in Definition \ref{d:mixed}.}
\end{equation}
In addition, for our main estimates, we assume that 
\begin{equation} \label{7a3}
(\Omega, \sigma) \text{ is pre-fractal, i.e. satisfies the conditions in Definition \ref{d:piecewisesmooth}.}
\end{equation}

We seek information on the total flow problem described below, and it will be important that the constants 
in our various estimates only depend on $(\Omega, \sigma)$ through the constant $C$ in \eqref{7a1},
and the geometric constants in \eqref{7a2} and \eqref{7a3}, 
not on the Robin parameter $a$. We'll also try to use \eqref{7a3} as seldom as we can,
because the definitions and basic properties in the general case are probably interesting.

For $a\in (0,\infty)$ we let $u_a$ be the unique solution of the system
\begin{equation}\label{7a4}
\begin{aligned} 
-\Delta u_a =& 0, \qquad \text{in}\,\, \Omega \backslash B_1 \text{ (with, here and below, $B_1 = B(0,1)$)} \\
u_a=& 1, \qquad \text{on}\,\, B_1\\
\partial_\nu u_a + au_a =& 0, \qquad \text{on}\,\, \partial \Omega.
\end{aligned}
\end{equation}
There are different ways to obtain existence and uniqueness here. 
One way is to use all our assumptions, notice that 
$\partial \Omega$ is piecewise Lipschitz, , 
and solve \eqref{7a4} classically (the reader may even assume that $\d\Omega$ is smooth,
if they find this more comfortable, but we will never use smoothness in the estimates).
The advantage of this local Lipschitz assumption is that we will feel free to compute various 
quantities by integrating by parts (for real, without using weak versions of everything).

But we slightly prefer to use the methods of this paper, notice that since $B_4\subset \Omega$,
$\Omega \sm B_1$ is also a one-sided NTA domain (see Lemma \ref{l:onesidedpunch}),
and let $u_a$ be the unique function of $W^{1,2}(\Omega \sm B_1)$ such that $u = 1$ on $B_1$
(and hence $Tr(u_a)|_{\partial B_1}= 1$) and which is a weak solution in the sense that 
\begin{equation}\label{7a5}
 \int_{\Omega} \nabla u_a \nabla \varphi + a\int_{\partial \Omega}u_a \varphi d\sigma = 0, \qquad \forall \varphi \in C_c^\infty(\mathbb R^n\backslash \ol B_1). 
\end{equation}
Then the existence of a solution follows by arguing as in \cite[Theorem 2.5]{Robin1} and uniqueness 
follows as well from the weak maximum principle Lemma \ref{lem:mixedmax}. 
And we can use our results  (i.e., \cite[Theorem 4.5]{Robin1}) 
to check that $u_a$ is H\"older continuous all the way to $\d\Omega$, as usual with uniform bounds
that do not depend on $a$.

Finally,  here we decided to work with the Laplacian to simplify our formulas, and in particular 
$A \equiv I_n$ is a symmetric matrix, so we are allowed to use a variational definition as well: 
we may let $u_a$ be the unique minimizer in $W^{1,2}(\Omega)$ of the functional
\begin{equation}\label{7a6}
J_a(u) = \int_{\Omega} |\nabla u|^2 + a \int_{\d\Omega} u^2 d\sigma, 
\end{equation}
under the constraint that $u = 1$ on $B_1$.
In this context, which we patiently avoided using up to now, the existence and uniqueness follow because the energy $\int_{\Omega} |\nabla u|^2$ (together with the given values on $B_1$) 
controls the $L^2$ norm of the trace of $u$ on $\d\Omega$, and then our functional is convex. 
We skip the easy details because we will not need the variational definition after all. 
The fact that the variational solution also solves the weak
problem \eqref{7a5} is fairly easy (compare $J_a(u_a)$ to $J_a(u_a + t\varphi)$ and differentiate in $t$), 
but we skip this too.

In this section we fix $(\Omega, \sigma)$ and want to study the way the total flow, $F(a)$, of $u_a$
through $\d\Omega$ varies with $a$. In the context of the lung, we think about how much oxygen is 
absorbed by unit of time, in a stationary diffusion process.
As we will explain soon, the most flexible definition consists in measuring the total flow near $\d B_1$. We take 
\begin{equation}\label{7a7}
F(a) := \int_{\d B_1}\partial_{\nu} u_a \, d\mathcal H^{n-1}|_{\partial B_1}= a \int_{\d \Omega} u_a d\sigma,
\end{equation}
but let us explain a bit. First $\nu$ is the unit normal to $\partial B_1$ oriented in the direction of the origin. The normal derivative exists (and is even 
continuous) because $u = 1$ on $\d B_1$ and is harmonic on some $B_r \sm \ol B_1$, $r > 1$,
but the worried reader may also compute the flux on a sphere $\d B_r$, $r > 1$ close to $1$,
where $u$ is smooth; the result is the same since $u$ is harmonic.
Moreover, the total flow is positive because Lemma~\ref{lem:mixedmax} (applied
to $1-u$ and $f = a$) says that $u_a \leq 1$ on $\Omega$, or by the second formula.
For the second formula, we use the definition \ref{7a5}, with a smooth radial function 
$\varphi$ which is equal to $0$ near $B_1$ and to $1$ on $B_R \sm B_r$ for some
$r > 1$ close to $1$ and $R$ so large that $\ol \Omega \subset B_R$, and interpolates nicely on the remaining annuli. When $r$ tends to $1$, $ \int_{\Omega} \nabla u_a \nabla \varphi$ tends to $-\int_{\d B_1}\partial_{\nu} u_a \, d\mathcal H^{n-1}|_{\partial B_1}$, which is thus equal to the common value of  
$a\int_{\partial \Omega}u_a \varphi d\sigma = a\int_{\partial \Omega}u_a d\sigma$; 
\eqref{7a7} follows.

\begin{remark}\label{r7a1}
{\bf About a specific choice of $\sigma$.}
So far we do not need the pre-fractal condition \eqref{7a3} on $\sigma$,
and we shall state some of our results without this condition.
But if we use it,  Definition \ref{d:piecewisesmooth} says that, up to multiplicative constants,
$\sigma(B(Q,r))$, $Q \in \d\Omega$, depends only on $r$. That is, $\sigma$ has some homogeneity
with respect to translations.
In particular, since $\d\Omega$ is also Lipschitz at small scales, a small density argument shows that
$\sigma$ is absolutely continuous with respect to $\sigma_0 = \H^{n-1}_{\vert \d\Omega}$
(the usual surface measure), and by the homogeneity condition, we can even say that 
\begin{equation}\label{7a8}
\sigma = f \sigma_0 \ \text{ with a density $f$ such that }
C^{-1} \lambda \leq f(y) \leq C \lambda 
\ \text{ for } y \in \d\Omega,
\end{equation}
where $\lambda > 0$ is a constant. Since multiplying $\sigma$ by $\lambda$ amounts to dividing
$a$ by $\lambda$, we shall assume without loss of generality that $\lambda = 1$, which is a way
of normalizing $\sigma$ so that it is similar to $\sigma_0$. 
This will make our discussion of cases a little easier.

We may even add the requirement that 
\begin{equation}\label{7a9}
\sigma = \sigma_0 = \H^{n-1}_{\vert \d\Omega}.
\end{equation}
This will make it easier to define the total flow and compute it in different ways, but then we lose a little bit of generality,
because allowing a nonconstant density $f$ in \eqref{7a8} is a hidden way of allowing a Robin parameter $a$ which depends on the point $y\in \partial \Omega$. That is, replacing $a$ with $a(x)$ and $d\sigma$ with $\frac{a}{a(x)} d\sigma$ does not change the solution. 

 Yet observe that when \eqref{7a9} actually holds we have
\begin{equation} \label{7a10}
a\int_{\partial \Omega} u_a\, d\sigma  = F(a) = -\int_{\partial \Omega}\partial_{\nu} u_a \, d\sigma
\end{equation}
We can see this in a number of ways but the easiest is to use the divergence theorem to show the second equality. The first equality is in \eqref{7a7}. Underlying this confusion is that the weak and classical (\eqref{7a5} and \eqref{7a4}) formulations of the Robin condition are only equivalent when $\sigma = \mathcal H^{n-1}|_{\partial \Omega}$. 
\end{remark}

\begin{remark} \label{r7a2}
{\bf About the lung.}
As mentioned in the introduction, we are attempting to model the deep lung, where we assume 
that oxygen density is governed by  diffusion, and is absorbed at the boundary with local permeability governed by $a$\footnote{Physically, $a$ is the local permeability of $\partial \Omega$ divided by the diffusion constant, but we set the latter constant equal to 1 for the sake of simplicity and because it does not change our arguments}. Here $a = 0$ (Neumann) would mean  no absorption, and $a = +\infty$
means total absorption. We are not thinking of optimizing
$\Omega$ here, but we allow coefficient $a$ to vary. In particular, we
ask how the total flow $F(a)$ (the quantity of oxygen that gets absorbed through $\partial \Omega$ per unit of time) depends on $a$.
For instance, $a$ may change in a given lung because of some pathology.
In the case of the human lung, $a$ is quite small, and the lung compensates by being vaguely fractal, and having 
a very large value of $\sigma(\d\Omega)$. But we could imagine other situations, and we do not want
to limit ourselves to a self-similar case.
That is, our homogeneity assumption says that $\sigma(B(Q,r)) \sim g(r)$ for some reasonably nice function $g$. 
We can take $g(r) = C r^d$ for $r \geq \ell$, with $n-2 < d < n$ (because of the corkscrew and  mixed dimension assumptions), but we want to allow more complicated functions $g$ as well. For instance, the reader
may take their favorite fractal of dimension $d \in (n-2, n)$, smooth it up at scale $\ell \ll 1$, and get an example.

We decided to take a model with a slightly diffused source, i.e., with the boundary constraint $u = 1$ on 
$B_1$. One could also have decided to put a point source at the origin, i.e., replace $u_a$
with the Green function $G_R^a$ with a pole at the origin. We sill see soon that the two functions are
often comparable, so the two problems are not so different.

A good part of the discussion below is valid in dimension $n=2$, but in the last subsection we will
use our estimates on the Robin Green function, and thus restrict to $n \geq 3$.

We also chose to restrict to the Laplacian to simplify the exposition, but we do not expect major changes for
elliptic operators, though at several points we use the variational structure of the total flow and so it is possible that our theorems only hold for symmetric elliptic operators. In any case, we did not check any details except for the Laplacian. 
\end{remark}

The rest of this paper is devoted to proving Theorem \ref{thm:Fluxest}.

\subsection{Monotonicity of $u_a$ and of the total flow}

The estimates in this subsection are still valid in ambient dimension $n=2$, as they do not use our
estimates on the Green function. 
We will need some general information on the solutions $u_a$ and the total flow. 
We start with the monotonicity of $a \mapsto u_a$. We have two obvious endpoints for the family 
of solutions $\{ u_a \}$, namely $u_0 = 1$ (the obvious solution of our equations with $a=0$, where for instance the uniqueness comes from Lemma \ref{lem:mixedmax}), and the solution $u_\infty$ of the Dirichlet problem, i.e., 
the continuous function on $\ol \Omega$ such that $u_\infty = 1$ on $B_1$,
$u_\infty = 0$ on $\d\Omega$, and $\Delta u = 0$ on $\Omega \sm \ol B_1$.

\begin{lemma}\label{l7a1}
Assume \eqref{7a1} and \eqref{7a2}. We have that 
\begin{equation} \label{7a11}
0 < u_\infty(X) < u_b(X) \leq u_a(X) < 1
\ \text{ for all $0 < a < b < +\infty$ and } X \in \ol\Omega \sm \ol B_1.
\end{equation}
\end{lemma}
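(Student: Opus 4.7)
The plan is to reduce every inequality to the mixed weak maximum principle Lemma~\ref{lem:mixedmax}, applied in $\Omega_1 := \Omega \sm \overline{B_1}$ (which is one-sided NTA by Lemma~\ref{l:onesidedpunch}, since \eqref{7a1} gives $\delta(0) \geq 4$). For the upper bound $u_a \leq 1$, I would let $v_a := 1 - u_a$, note that $v_a \in W^{1,2}(\Omega_1)$ has trace $0$ on $\partial B_1$, and subtract \eqref{7a5} from the trivial identity for the constant function $1$ to obtain
\begin{equation*}
\int_{\Omega_1} \nabla v_a \cdot \nabla \varphi + a \int_{\partial \Omega} v_a \varphi \, d\sigma = a \int_{\partial \Omega} \varphi \, d\sigma
\end{equation*}
for every $\varphi \in C_c^\infty(\R^n \sm \overline{B_1})$. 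Since the right-hand side is non-negative, Lemma~\ref{lem:mixedmax} gives $v_a \geq 0$, i.e.\ $u_a \leq 1$. Applied directly to $u_a$ with $f=0$ and the positive datum $u_a|_{\partial B_1} = 1$, the same lemma also yields $u_a \geq 0$.

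For the monotonicity $u_b \leq u_a$ when $0 < a < b$, I would set $w := u_a - u_b$, which vanishes on $\partial B_1$. Subtracting the two weak formulations \eqref{7a5} and using $a u_a - b u_b = a w - (b-a) u_b$ produces
\begin{equation*}
\int_{\Omega_1} \nabla w \cdot \nabla \varphi + a \int_{\partial \Omega} w \varphi \, d\sigma = (b-a) \int_{\partial \Omega} u_b \varphi \, d\sigma.
\end{equation*}
This is exactly a Robin equation with parameter $a$ and non-negative boundary source $(b-a) u_b$, so Lemma~\ref{lem:mixedmax} yields $w \geq 0$. The strict inequalities $u_a < 1$ and $u_b < u_a$ then follow from the strong maximum principle in $\Omega_1$ (both $1 - u_a$ and $u_a - u_b$ are non-negative harmonic functions which cannot be identically zero, since that would force the respective non-trivial positive source on $\partial \Omega$ to vanish), upgraded to strict inequality up to $\partial \Omega$ by the boundary Harnack inequality of Theorem~\ref{thm:bdryharnack}.

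For the comparison with $u_\infty$, I would introduce the pointwise monotone limit $u_\infty^\sharp := \lim_{a \to \infty} u_a$, which exists on $\overline{\Omega_1}$ and satisfies $u_\infty^\sharp \leq u_b$ for every $b > 0$. Uniform interior Caccioppoli and Harnack estimates give local uniform convergence $u_a \to u_\infty^\sharp$ in $\Omega_1$, so $u_\infty^\sharp$ is harmonic there. A uniform bound $F(a) = a \int_{\partial \Omega} u_a \, d\sigma \leq C$, which follows from the interior gradient estimate $|\nabla u_a| \leq C$ on $\partial B_1$ (since $u_a \leq 1$ and $u_a$ is harmonic on the annulus $\{1 < |x| < 2\} \subset \Omega$), together with Fatou, forces $\int_{\partial \Omega} u_\infty^\sharp \, d\sigma = 0$, and hence $u_\infty^\sharp = 0$ at $\sigma$-a.e.\ point of $\partial \Omega$. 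In the one-sided NTA setting the Wiener criterion then identifies $u_\infty^\sharp$ with the continuous Dirichlet solution $u_\infty$, and the strict inequality $u_\infty(X) < u_b(X)$ is immediate: pick any $a > b$ and chain $u_\infty(X) \leq u_a(X) < u_b(X)$ from the previous paragraph.

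The main obstacle is the last step, namely identifying the monotone limit $u_\infty^\sharp$ with the classical continuous Dirichlet solution $u_\infty$ in the rough one-sided NTA setting; this requires invoking the Wiener criterion and the fact that the set of irregular boundary points carries vanishing harmonic measure. In the pre-fractal setting of Definition~\ref{d:piecewisesmooth} boundary regularity is automatic and the identification is trivial, but at the generality of the present lemma (where only \eqref{7a1} and \eqref{7a2} are assumed) this is the one delicate point that must be checked carefully.
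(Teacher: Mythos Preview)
Your treatment of the non-strict inequalities $0\le u_a\le 1$ and $u_b\le u_a$ via Lemma~\ref{lem:mixedmax} is exactly the paper's argument. Two issues remain.

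First, Theorem~\ref{thm:bdryharnack} as stated applies only to \emph{homogeneous} Robin solutions, but $1-u_a$ and $u_a-u_b$ carry strictly positive Robin data ($a$ and $(b-a)u_b$ respectively), so your upgrade to strict positivity on $\partial\Omega$ does not go through as written. The paper instead invokes the version in \cite[Theorem~4.4]{Robin1} that tolerates a small nonzero right-hand side, applied on balls of sufficiently small radius so that this right-hand side is small enough; alternatively one can argue as in the proof of Lemma~\ref{lem:changea} via \cite[Corollary~5.3]{Robin1}, once one knows $u_b>0$ on $\partial\Omega$ (and \emph{that} does follow from Theorem~\ref{thm:bdryharnack}, since $u_b$ itself is a homogeneous Robin solution).

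Second, your route to $u_\infty<u_b$ through the monotone limit $u_\infty^\sharp$ is an unnecessary detour that manufactures the very obstacle you flag. The paper simply takes an intermediate parameter $c>b$ and compares $u_\infty$ with $u_c$ directly: the difference $u_c-u_\infty$ is harmonic in $\Omega\setminus\overline{B_1}$, has vanishing trace on $\partial B_1$, and on $\partial\Omega$ its trace equals $u_c>0$ (the trace of $u_\infty$ vanishes by definition of the Dirichlet solution). Hence $(u_c-u_\infty)^-$ has vanishing trace on the entire boundary of $\Omega\setminus\overline{B_1}$, and testing the equation against it gives $u_c\ge u_\infty$ immediately, with no appeal to Wiener regularity or identification of limits. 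Then $u_\infty\le u_c<u_b$ by the strict monotonicity already in hand. This direct comparison is both shorter and works at the full generality of \eqref{7a1}--\eqref{7a2}.
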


\begin{proof}
Recall that all these functions exist and are continuous on $\ol\Omega$ for any $(\Omega, \sigma)$ of mixed type, and so the statements make sense even if we do not assume that $\partial \Omega$ is smooth. 

We said earlier that the fact $0 \leq u \leq 1$ comes from the maximum principle
Lemma \ref{lem:mixedmax}. Then $u_a > 0$ on $\ol\Omega$, by  \cite[Theorem 4.4]{Robin1}
(see also Remark 7 above (4.19) there), which says the the whole $\d\Omega$ is active. 
For the central inequality, consider $v = (b-a)^{-1} (u_a- u_b)$; we want to check that $v \geq 0$.
We will write the computation with strong solutions to simplify, but the argument translates in weak case,
as in the proof of Lemma \ref{lem:changea}. Notice that $v$ vanishes on $\d B_1$, is a (weak) solution to $\Delta v=0$
on $\Omega \sm \ol B_1$, and satisfies the boundary condition 
\begin{equation} \label{7a12}
\d_{\nu} v + a v = \frac{- a u_a + b u_b + a(u_a-u_b)}{b-a} =  u_b 
\end{equation}
on $\d\Omega$. Lemma \ref{lem:mixedmax} then says that $v \geq 0$ on $\ol\Omega$, as needed.

At this point, if we wanted the strict inequalities in \eqref{7a11} to hold in $\Omega$ but not necessarily on $\partial \Omega$, we could invoke the strong maximum principle. As it is, a short argument gives the strict inequalities through $\partial \Omega$. Take $b > a$.
We know that $u_a \neq u_b$, so  $v > 0$ somewhere, and by the maximum
principle this has to happen also somewhere on $\d\Omega$. Then we want to apply 
\cite[Theorem 4]{Robin1} on a small ball $B = B(q,\rho)$ centered on $\d\Omega$. 
If $\rho$ is small enough, the near-Neumann condition (4.15) there is satisfied;
if $b-a$ is small enough, which we can safely assume, we also get that the right-hand side
$\beta = (b-a) u_a$ is small enough, and then the theorem says that 
$\inf_{\Omega \cap B} v \geq \theta \sup_{\Omega \cap B} v$. Iterating this with a finite number of balls
with the same small radius, centered all over the boundary, gives $v > c > 0$  on $\d\Omega$, and the strict inequality 
follows (by the maximum principle again).
The strict inequality in $u_\infty(X) < u_b(X)$ and $u_a(X) < 1$ follow too, because we can take intermediate values of the parameter.
\end{proof}

In order to prove monotonicity of the total flow, it is useful for us to differentiate, pointwise, $u_a$ as a function of $a$.

\begin{lemma}\label{l7a2}
Still assume \eqref{7a1} and \eqref{7a2}. The map $a \mapsto  u_a$
(say, from $(0, +\infty)$ to $L^{\infty}(\ol \Omega)$) has a derivative at every $a > 0$,
namely the function $w_a \in W^{1,2}(\Omega \sm \ol B_1)$ which has a vanishing trace on $\d B_1$,
is harmonic on $\Omega \sm \ol B_1$, and satisfies (weakly) the boundary condition $\d_\nu w_a + a w_a = -u_a$ on 
$\d\Omega$. We take $w_a = 0$ on $B_1$ to complete the definition.
\end{lemma}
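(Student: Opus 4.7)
The plan is to first construct $w_a$ as the solution of a mixed Dirichlet/Robin boundary value problem, and then show that the difference quotient $v_h := (u_{a+h}-u_a)/h$ converges to it in $L^\infty(\overline\Omega)$ by a maximum-principle argument.

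For the first step, consider the variational space $V := \{ \varphi \in W^{1,2}(\Omega \sm \overline{B_1}) : \mathrm{Tr}(\varphi)|_{\partial B_1} = 0 \}$, equipped with the bilinear form $B(u,\varphi) = \int_{\Omega \sm \overline{B_1}} \nabla u \cdot \nabla \varphi + a \int_{\partial \Omega} u\,\varphi\, d\sigma$. The Poincaré inequality (applicable since any $\varphi \in V$ has vanishing trace on $\partial B_1$, a hypersurface of positive measure contained well inside $\Omega$) implies $B$ is coercive on $V$. Since $u_a \in L^\infty(\partial\Omega) \subset L^2(\partial\Omega)$, Lax–Milgram produces a unique $w_a \in V$ with $B(w_a,\varphi) = -\int_{\partial\Omega} u_a \varphi\,d\sigma$ for all $\varphi \in V$; this is exactly the weak statement that $w_a$ is harmonic in $\Omega \sm \overline{B_1}$ and satisfies $\partial_\nu w_a + a w_a = -u_a$. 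I extend $w_a$ by $0$ on $B_1$ as in the statement.

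Next I examine the difference quotient. For $h$ small, subtracting \eqref{7a5} at $a$ from its analogue at $a+h$ yields, for every $\varphi \in V$,
\begin{equation*}
B(v_h, \varphi) = -\int_{\partial \Omega} u_{a+h}\,\varphi\, d\sigma, \qquad v_h := \frac{u_{a+h}-u_a}{h}.
\end{equation*}
Since $\mathrm{Tr}(u_a)|_{\partial B_1} = 1$ for all $a$, $v_h$ lies in $V$, so subtracting the defining relation of $w_a$ gives
\begin{equation*}
B(v_h - w_a, \varphi) = -\int_{\partial\Omega} (u_{a+h}-u_a)\,\varphi\,d\sigma = -h \int_{\partial\Omega} v_h\,\varphi\,d\sigma.
\end{equation*}
In other words, $v_h - w_a$ is a weak solution of the same mixed problem with inhomogeneous Robin datum $-h v_h|_{\partial\Omega}$.

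To upgrade this to $L^\infty$ convergence, I need a uniform $L^\infty$ bound on $v_h$. Since $0 \le u_{a+h} \le u_a \le 1$ on $\overline\Omega$ by Lemma \ref{l7a1}, the function $-v_h = (u_a - u_{a+h})/h \ge 0$ satisfies the mixed problem with data $-v_h = 0$ on $\partial B_1$ and $\partial_\nu(-v_h) + a(-v_h) = u_{a+h} \in [0,1]$ on $\partial\Omega$. Comparing with the constant supersolution $1/a$ via Lemma~\ref{lem:mixedmax} gives $\|v_h\|_{L^\infty(\Omega)} \le 1/a$. Applying the same comparison argument to $v_h - w_a$, whose boundary datum on $\partial\Omega$ has absolute value at most $h \|v_h\|_\infty \le h/a$, yields
\begin{equation*}
\|v_h - w_a\|_{L^\infty(\Omega)} \le h / a^2 \xrightarrow[h \to 0]{} 0,
\end{equation*}
which is exactly the claim. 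The main obstacle is really just a bookkeeping one: one must be careful in setting up the weak formulation so that $v_h$ and $w_a$ both lie in the same test space $V$ and the cancellation producing the factor of $h$ goes through cleanly; once that is arranged, the rest is immediate from the maximum principle of Lemma \ref{lem:mixedmax}.
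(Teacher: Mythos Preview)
Your proof is correct and follows essentially the same strategy as the paper's: construct $w_a$ by solving the mixed Dirichlet/Robin problem, derive the boundary value problem satisfied by the difference $v_h - w_a$, and close by a maximum-principle argument.

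The one genuine difference is in the endgame. The paper bounds $v_h$ (their $v_b$) by the auxiliary solution $w^1$ with Robin datum $1$, then invokes $W^{1,2}$ continuity of the solution operator together with the H\"older estimates from \cite{Robin1} to pass to $L^\infty$. You instead use the explicit constant barrier $1/a$ (since $\partial_\nu(1/a) + a\cdot(1/a) = 1 \geq u_{a+h}$) and then the constant $|h|/a^2$, applying Lemma~\ref{lem:mixedmax} directly. This is more elementary and yields the quantitative rate $\|v_h - w_a\|_{L^\infty} \leq |h|/a^2$, which the paper's argument does not give. One small presentational point: your Step 4 is written for $h>0$; you should remark that for $h<0$ the inequalities $0 \leq u_a \leq u_{a+h} \leq 1$ reverse and the same barrier argument goes through with the sign of $v_h$ flipped.
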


\begin{proof}
The formula \eqref{7a12} makes this appear reasonable, but let us check the details. 
We start with the existence and uniqueness, for $\psi \in L^\infty(\d\Omega)$ of a weak solution of
$\Delta w = 0$ in $W^{1,2}(\Omega \sm \ol B_1)$, with a vanishing trace on $\d B_1$, 
and which satisfies (weakly) the Robin boundary condition $\d_\nu w + a w = \psi$ on $\d\Omega$. 
In \cite[Theorem 2.10.]{Robin1}, we solved a similar equation, but only with the (weak) Neumann
condition $\d_\nu w  = \psi$. This was unfortunate, but the result also works with the Robin condition above;
the proof goes as in \cite[Theorem 2.10.]{Robin1}, where we use the restriction to 
$W_0 = \big\{ u \in W \, ; \, Tr(u) = 0 \text{ on } \d B_1\big\}$ of the bilinear form of
\cite[Theorem 2.5.]{Robin1} (i.e., with the extra term $a\int_{\d\Omega} Tr(u) Tr(\varphi) d\sigma$).

Now we choose $\psi = -u_a$ to obtain our candidate $w_a$,  and also consider a solution $w^{1}$ corresponding to $\psi = 1$. The latter will be a useful majorant proving that $a \mapsto u_a$ is locally Lipschitz on $(0, +\infty)$.

Fix $a$ and consider the same $v_b = (b-a)^{-1}(u_a - u_b)$ as for \eqref{7a12}. Then $v_b$ is bounded by the maximum principle, since it is a solution with bounded data. To be more precise,
$\d_{\nu} v_b + a v_b = u_b$, and since $0 \leq u_b \leq 1$ (see \eqref{7a11})
the maximum principle (Lemma \ref{lem:mixedmax}) implies that $0 \leq v_b \leq w^1$.
But $w^1$ is H\"older continuous on $\Omega$ and thus bounded (arguing similarly as $u_a$),
so $0 \leq v_b \leq C$ and $a \mapsto u_a$ is locally Lipschitz.

We want to make sure that the candidate we built by solving a boundary problem is indeed the derivative that we were seeking, and that essentially follows from uniqueness. Indeed, let $b$ tend to $a$ in the boundary condition $\d_{\nu} v_b + a v_b = u_b$,
or equivalently  $\d_{\nu} (v_b+w_a) + a (v_b+w_a) = u_b -u_a$.
The right-hand side tends to $0$ in $L^{\infty}$-norm, and the theorem of existence and uniqueness 
used above (for the boundary condition $\d_\nu w + a w = \psi$), yields $\lim_{b \to a} ||v_b + w_a||_{W^{1,2}} = 0$.
Yet as always, our H\"older control of solutions also yields 
$||v_b+w_a||_{L^\infty(\d\Omega)} \leq C ||v_b+w_a||_{W^{1,2}}$, and the lemma follows (recall that we changed the sign of $v_b$ for convenience).
\end{proof}

Let us also check that 
\begin{equation} \label{7a13}
u_\infty = \lim_{a \to +\infty} u_a 
\end{equation}
where the convergence is uniform on $\d\Omega$, hence on $\ol\Omega$.
We already know that $u_a$ is a decreasing function of $a$, so it has a limit $v$.
In addition, the $u_a$ are uniformly H\"older continuous near $\d\Omega$
\footnote{Here we are using the important fact that the constants in the estimates from \cite{Robin1}, in particular Theorem 4.5 there, do not depend on $a$}. So $v$ also is 
H\"older continuous, and by Dini the convergence is uniform.  
We now want to show that $v \equiv 0$ on $\d \Omega$. If otherwise, then $v > \varepsilon > 0$ on a subset $E$ of $\d\Omega$, and $u_a \geq \varepsilon$
there too (by monotonicity), and $J_a(u_a) \geq a \varepsilon^2 \sigma(E)$. 
But if $u_{00}$ is a smooth function that is equal to $1$ on $B_1$ and $0$ outside of $B_2$ then $J_a(u_{00}) \leq c_d$ (a dimensional constant). For $a$ large enough we  contradict the fact that $u_a$ minimizes $J_a$ amongst all $W^{1,2}$ functions with trace 1 on $\partial B_1$.\footnote{
Note that here we would need a different argument
if we were using a non symmetric operator $- {\rm div} A \nabla$.}
Since $v \geq 0$ because $u_a \geq 0$, we get $v = 0$ on $\d\Omega$,
and \eqref{7a13} follows.

We are ready to prove that $F$ is nondecreasing, and even compute its derivative in terms
of $w_a$. We claim that 
\begin{equation} \label{7a14}
F'(a) = \int_{\d B_1} \d_{\nu} w_a d\sigma \geq 0,
\end{equation}
where again $\d_{\nu} w_a$ exists (and is continuous) on $\d B_1$, because $w_a$ is continuous,
equal to $0$ on $\d B_1$, and is harmonic on some $B_r \sm \ol B_1$, $r > 0$. 
The identity in \eqref{7a14} holds by taking the derivative in $a$ of the flux definition \eqref{7a7}
of  $F$; there is no real convergence problem, but the worried reader may also take a radius $r > 1$,
notice that the total flow can also be computed on $\d B_r$, and use the fact that the derivatives
of $u_a$ and $w_a$ on $\d B_r$ are controlled by the $L^\infty$ norms. The facts that 
$F'(a)$ and $\d_{\nu} w_a d\sigma$ are nonnegative come from the fact that $w_a = 0$ on $\d B_1$
and $w_a \leq 0$ on $\Omega$.

We end this section with other computations of $F$ and its derivative, this time in the prefractal case.

\begin{lemma}\label{l7a3}
Assume \eqref{7a1}, \eqref{7a2}, \eqref{7a3}, and even \eqref{7a9}. Then  
\begin{equation}\label{e:fluxequalsenergy}
F(a) = \int_{\Omega \backslash B_1} |\nabla u_a|^2 + a\int_{\partial \Omega} u_a^2 d\sigma= J_a(u_a)
\end{equation}
and
\begin{equation} \label{7a16}
F(\infty) : = \lim_{a \to +\infty} F(a) 
= \int_{\d B_1}  \d_{\nu} u_a \, d\sigma 
= \int_{\Omega \backslash B_1} |\nabla u_\infty|^2,
\end{equation}
where $u_\infty$ is the Dirichlet solution.
\end{lemma}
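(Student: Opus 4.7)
The key observation is that under \eqref{7a3} and \eqref{7a9}, the boundary $\d\Omega$ is piecewise Lipschitz (a finite union of Lipschitz patches at scale $\ell$ by compactness) and $\sigma = \H^{n-1}|_{\d\Omega}$, so the classical Gauss--Green formula applies on $\Omega \sm \ol B_1$. Moreover, $u_a$ is smooth in the interior and, being a weak solution with homogeneous Robin data, satisfies the Robin condition $\d_\nu u_a = -a u_a$ on $\d\Omega$ in the sense of traces. The solutions $u_a$ (for all $a\in(0,+\infty]$) are continuous on $\overline{\Omega}$ with trace $1$ on $\d B_1$, and both $u_a$ and $u_\infty$ are harmonic in a fixed neighborhood of $\d B_1$ inside $\Omega$ (e.g.\ in $B_3 \sm \ol B_1 \subset \Omega$, by \eqref{7a1}).

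For identity \eqref{e:fluxequalsenergy}, I apply the divergence theorem to $\diver(u_a \nabla u_a) = |\nabla u_a|^2$ on $\Omega\sm\ol B_1$, with outward conormal $\nu_{\text{out}}$ of this domain. By choice of orientation, $\nu_{\text{out}}$ equals the usual outward normal of $\Omega$ on $\d\Omega$, and equals the normal pointing toward the origin on $\d B_1$, which matches the $\nu$ used in \eqref{7a7}. Using $u_a=1$ on $\d B_1$ together with the Robin condition on $\d\Omega$,
\begin{equation*}
\int_{\Omega\sm B_1}|\nabla u_a|^2
= \int_{\d\Omega} u_a\,\d_{\nu_{\text{out}}} u_a\, d\sigma + \int_{\d B_1} u_a\,\d_\nu u_a\, d\H^{n-1}
= -a\int_{\d\Omega}u_a^2\, d\sigma + F(a),
\end{equation*}
which rearranges to \eqref{e:fluxequalsenergy}. (If one prefers to bypass classical Gauss--Green at $\d\Omega$, the same identity follows by testing the weak formulation \eqref{7a5} against $\varphi = u_a-1$, which lies in $W^{1,2}(\Omega)$, vanishes on $B_1$, and has vanishing trace on $\d B_1$, hence is admissible by density using that $\Omega\sm\ol B_1$ is an extension domain via Lemma~\ref{l:onesidedpunch}.)

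For identity \eqref{7a16}, the second equality is proved by the same Gauss--Green argument applied to $u_\infty$: since $u_\infty\equiv 0$ on $\d\Omega$, the $\d\Omega$ boundary term drops out and one is left with $\int_{\Omega\sm B_1}|\nabla u_\infty|^2 = \int_{\d B_1}\d_\nu u_\infty\, d\H^{n-1}$. For the first equality (the passage to the limit), start from $F(a) = \int_{\d B_1}\d_\nu u_a\, d\H^{n-1}$, which is part of the definition \eqref{7a7}. By \eqref{7a13}, $u_a \to u_\infty$ uniformly on $\ol\Omega$. Since both $u_a$ and $u_\infty$ are harmonic on the fixed annulus $B_3\sm\ol B_1$ with a common boundary value on $\d B_1$, interior estimates for harmonic functions (e.g., applied to $u_a-u_\infty$ on balls of radius $\simeq 1$ centered on $\d B_2$, then propagated to $\d B_1$ via the Poisson representation in the annulus) promote the uniform convergence on, say, $\d B_2$ to uniform convergence of $\nabla u_a \to \nabla u_\infty$ on $\d B_1$. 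Integrating then gives $F(a)\to \int_{\d B_1}\d_\nu u_\infty\, d\H^{n-1}$.

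The main technical obstacle is simply justifying the boundary computation up to the rough set $\d\Omega$; this is handled cleanly by the piecewise Lipschitz structure guaranteed by Definition~\ref{d:piecewisesmooth} together with the normalization \eqref{7a9}, and as noted above the weak-formulation approach even circumvents it entirely for the first identity. Everything else is standard: strong convergence of solutions by \eqref{7a13} plus interior regularity for harmonic functions delivers the flux limit.
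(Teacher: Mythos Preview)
Your proof is correct and follows essentially the same route as the paper: apply Gauss--Green (equivalently, integrate by parts using $\Delta u_a=0$) on $\Omega\sm\ol B_1$, split the boundary into $\d\Omega$ and $\d B_1$, and use the Robin condition on $\d\Omega$ together with $u_a=1$ on $\d B_1$ to recover \eqref{e:fluxequalsenergy}; then repeat for $u_\infty$ and pass to the limit via \eqref{7a13}. Your alternative derivation of \eqref{e:fluxequalsenergy} by testing \eqref{7a5} against $\varphi=u_a-1$ is a nice touch that the paper does not spell out. The only cosmetic difference is in handling the limit at $\d B_1$: the paper simply moves the flux computation to a sphere $\d B_r$ with $r>1$, where interior gradient estimates apply directly by harmonicity, whereas you invoke the Poisson representation in the annulus to carry convergence down to $\d B_1$; both work, but the paper's version is marginally more direct.
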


\begin{proof}
We use our regularity assumption to integrate by parts. We start with the energy
\begin{equation} \label{7a17}
\int_{\Omega \backslash B_1} |\nabla u_a|^2
= \int_{\d(\Omega \sm \ol B_1)} u_a \, \d_{\nu} u_a \, d\sigma
= \int_{\d B_1} u_a \, \d_{\nu} u_a \, d\sigma 
- a \int_{\d\Omega} u_a^2 \, d\sigma
\end{equation}
since $\Delta u = 0$ on $\Omega \sm \ol B_1$, and because 
$\d_{\nu} u_a = - a u_a$ on $\d\Omega$. Then observe that 
\begin{equation} \label{7a18}
\int_{\d B_1} u_a \, \d_{\nu} u_a \, d\sigma
= \int_{\d B_1}  \d_{\nu} u_a \, d\sigma
= F(a)
\end{equation}
because $u_a = 1$ on $\d B_1$ and because the total flow of $u_a$ on 
$\d(\Omega \sm \ol B_1)$ vanishes ($u_a$ is harmonic there); 
now \eqref{e:fluxequalsenergy} follows.

When $a$ tends to $+\infty$, we know that $u_a$ tends to $u_\infty$, and by uniform convergence and maybe computing the flux on a sphere $\d B_r$, $r < 1$, we see that $F(a)$ tends to the total flow of $u_\infty$. Finally, the first part of
\eqref{7a17} yields 
$$
\int_{\Omega \backslash B_1} |\nabla u_\infty|^2 
= \int_{\d(\Omega \sm \ol B_1)} u_\infty \, \d_{\nu} u_\infty \, d\sigma
= \int_{\d B_1} u_\infty \, \d_{\nu} u_\infty \, d\sigma = \int_{\d B_1}  \d_{\nu} u_\infty \, d\sigma = F(a)
$$
because $u_\infty = 0$ on $\d\Omega$ and $u_\infty = 1$ on $\d B_1$.
\end{proof}

Before moving onto finer computations of the total flow, we give one last estimate on $F(\infty)$. If $u_\infty^\Omega$ is the solution to \eqref{7a4} with $a =+\infty$ and in $\Omega$ then by the maximum principle we have that if $\Omega \subset \tilde{\Omega}$ that $u_\infty^\Omega \leq u_\infty^{\tilde{\Omega}}$. The Hopf maximum principle then implies that $F(\infty, \Omega) \geq F(\infty, \tilde{\Omega})$ (where we modify the notation introduced in \eqref{7a7} in the obvious way). Since $B(0, 4)\subset \Omega \subset B(0, C)$ (for some constant $C > 0$) we have $F(\infty) = F(\infty, \Omega) \simeq c$ where $c > 0$ depends on the dimension and the constant $C > 0$ in \eqref{7a1}. Indeed, we can explicitly compute $c = F(\infty, B(0,4)) = (n-2) \sigma(\d B_1) (1-4^{2-n})^{-1}$.

\subsection{Global bounds on the total flow} 

From now on, we restrict again to $n \geq 3$. 
For the estimates in this subsection, we only use  \eqref{7a1} and \eqref{7a2},
and we compare $u_a(X)$ with $G_R^a(X,0)$ (the Robin Green function for $\Omega$)
to get some bounds on the total flow $F(a)$. In particular, all constants $C > 0$ depend only on the constants in \eqref{7a1} and \eqref{7a2}. 

We start with the Dirichlet-like case. Let us show that 
\begin{equation}\label{7a19}
C^{-1} \leq F(a) \leq C  \ \text{ when } a\sigma(\partial \Omega) \geq 1.
\end{equation}
We already know that $F(a) \leq F(\infty) \leq F(\infty, B(0,4))$ (the Dirichlet flux for $B(0,4)$), 
so we only need the lower bound. 
We first want to check that (when $a\sigma(\partial \Omega) \geq 1$)
\begin{equation}\label{7a20}
C^{-1} \leq G_R^a(X,0) \leq C  \ \text{ for } X \in \d B_1.
\end{equation}

If $a\sigma(\partial \Omega) \leq \diam(\Omega)^{n-2}$ then $(a\sigma(\partial \Omega))^{1/(n-2)}=:\rho \geq 1$ and \eqref{7a20} follows immediately from \eqref{e:GFestNeumannend}. If $a\sigma(\partial \Omega) \geq \diam(\Omega)^{n-2}$, we note that $|X-0| = 1 \leq \delta(X)$ and $|X|\leq  \delta(0)$ for any $X \in \d B_1$. Thus \eqref{e:GFestMedium} gives $G_R^a(X,Y) \simeq G_D(X,0) \simeq |X|^{2-n} = 1$ by the classic estimates of Gr\"uter-Widman \cite{GW}. This proves \eqref{7a20}.

Recall that $u_a = 1$ on $\d B_1$. We deduce from \eqref{7a20} and the maximum principle of 
Lemma~\ref{lem:mixedmax} that $u_a \simeq G_R^a(\cdot,0)$ on $\Omega \sm B_1$
(note that it is easy to check, arguing as in Lemma \ref{lem:donutsareconnected}, 
that $\Omega \backslash B_1$  is $1$-sided NTA). Finally,
$$
F(a) = a \int_{\d \Omega} u_a d\sigma  \simeq  a \int_{\d \Omega} G_R^a(x,0) d\sigma(x) = 1  
$$
by \eqref{7a7},  \eqref{e:fluxcondition}, and the symmetry of $G_R^a$. This completes the proof of 
\eqref{7a19}.

\ms
Now we go to the Neumann-like situation, and prove that 
\begin{equation}\label{7a21}
F(a) \simeq a \sigma(\d \Omega)
\ \text{ when } a \sigma(\partial \Omega) \leq 1.
\end{equation}
As before, $\rho := (a\sigma(\partial \Omega))^{\frac{1}{n-2}} \leq 1$, so we can apply 
Theorem \ref{l:neumannregime}, and \eqref{e:Neumannregimefar} says that $G_R^a(X,0) \simeq \rho^{2-n}$
for $X \in \d B_1$. The maximum principle now yields
$G_R^a(X, 0) \simeq u_a(X)\rho^{2-n}$ for $X\in \Omega \sm B_1$, and hence as before
$$
F(a) = a \int_{\d \Omega} u_a d\sigma  \simeq  a  \int_{\d \Omega} \rho^{n-2}  G_R^a(x,0) d\sigma(x) = 
\rho^{n-2} = a \sigma(\partial \Omega),
$$
(by \eqref{7a7} and  \eqref{e:fluxcondition}), which is \eqref{7a21}.

\subsection{The difference in total flow when $a \sigma(\partial \Omega) \geq \diam(\Omega)^{n-2}$.}
In this last subsection, we use the full list of assumptions, i.e.,  \eqref{7a1}, \eqref{7a2}, \eqref{7a3}, and \eqref{7a9},
and prove a more precise asymptotic expansion of $F(a)$ near $+\infty$, in terms of a Makarov entropy function.
More precisely, we bound the quantity $F(\infty) - F(a)$ computed in Lemma \ref{l7a3},
first in terms of $G_R^a$, and then in terms of the entropy. Note that the quantity 
$\displaystyle \frac{1}{F(a)} - \frac{1}{F(\infty)} = \frac{F(\infty) - F(a)}{F(a) F(\infty)} 
\simeq \frac{F(\infty) - F(a)}{F(\infty)^2}$ was studied numerically in 
\cite{Marcelfractal}, 
as a sort of difference of resistivities, in a situation close to ours. 
Starting with Lemma \ref{l7a3} and integrating by parts several times we obtain the key algebraic relation: \begin{equation}\label{e:magicdiff}
\begin{aligned} 
F(\infty)-F(a) =& \int_{\Omega \sm B_1} |\nabla u_\infty|^2 -\int_{\Omega \sm B_1} |\nabla u_a|^2
-a\int_{\partial \Omega}u_a^2 d\sigma
\\ =&  \int_{\Omega \sm B_1} \nabla (u_\infty -u_a) \cdot \nabla (u_\infty + u_a)
-a\int_{\partial \Omega}u_a^2 d\sigma
\\ =& \int_{\partial (\Omega \sm \ol B_1)} (u_\infty -u_a) \partial_\nu (u_\infty +u_a)
 - a\int_{\partial \Omega} u_a^2 d\sigma
 \\ =& \int_{\partial \Omega} (-u_a) (\partial_\nu u_\infty - a u_a)
 - a\int_{\partial \Omega} u_a^2 d\sigma
= -\int_{\partial \Omega }u_a \, (\partial_{\nu} u_\infty) \, d\sigma,
\end{aligned}
\end{equation}
because $u_\infty$ and $u_a$ are harmonic, $u_\infty - u_a = 0$ on $\d B_1$,
and because $\d_\nu u_a = - a u_a$ and $u_\infty =0$ on $\d\Omega$. Here $\partial_{\nu} u_\infty$
exists, and the integration by parts make sense,  because $u_\infty$ is positive harmonic, vanishes at the boundary,
and $\d\Omega$ is locally Lipschitz; 
the reader may add the assumption that $\d\Omega$ is locally smooth (with no estimate), and not lose much information. 

We will feel better with $-\d_\nu u_\infty$ replaced by  $-\partial_{\nu_X} G_D(\cdot, 0)$,
the Poisson kernel for $\Omega$, so let us check that
$$
-\d_\nu u_\infty \simeq -\partial_{\nu_X} G_D(\cdot, 0) 
\ \text{ on } \d\Omega.
$$
Here we do not imply nice uniform estimates for these two functions, but just for the ratio of the two.
This is easy, because we already checked for the proof of \eqref{7a19} that $G_D(\cdot, 0) \simeq u_\infty$
on $\d B_1$ (recall that $u_\infty = 1$ there, \eqref{7a13}, and use \eqref{7a20}), hence on 
$\Omega \sm \ol B_1$ by the maximum principle. Then this stays true with the normal derivatives,
as announced. Now by \eqref{e:magicdiff}, 
\begin{eqnarray}\label{e:intbypartsmagic}
F(\infty)-F(a) &=& -\int_{\partial \Omega }u_a \, (\partial_{\nu} u_\infty) \, d\sigma
\simeq -\int_{\partial \Omega }u_a \, \partial_{\nu_X} G_D(\cdot, 0) \, d\sigma
= \int_{\partial \Omega} u_a \, d\omega_D
\nn\\
& \simeq& \int_{\partial \Omega} G_R^a(\cdot,0)\, d\omega_D,
\end{eqnarray}
where we denote by $\omega_D$ the Dirichlet harmonic measure on $\d\Omega$, with pole at  $0$,
and we used the fact (proved \eqref{7a19}) that $u_a(X) \simeq G_R^a(X,0)$. 
All this happens with constants that depend only on our geometric constants, and not on $a$.

We now use Theorm \ref{t:DirichletRegime} to simplify the expression in \eqref{e:intbypartsmagic}.
That is, we want to  discretize the expression  at some correct scale $r_a$ and replace 
$G_R^a$ by values of $G_D$ computed at corkscrew points. Roughly speaking, $r_a$ will be the scale at which we
switch from Neumann to Dirichlet behavior, and we shall see that, thanks for the homogeneity condition
in our pre-fractal assumption,  it does not really depend on $X \in \d\Omega$.

For each  $X \in \d\Omega$, we chose in \eqref{5a2} a radius $\rho_X \in (0,1)$.
Here, we are interested in $X \in \d\Omega$ (and then the definitions are simpler because $Q_X = X$),
and also $Y = 0$ (and then we pick any $Q_0 \in \d\Omega$). Then we set
$r_X = \min(10^{-1}, \rho_X)$ and let $A_X$ be a corkscrew point for $B(X, r_X)$; for the origin, we
use $A_0 = 0$. Theorm \ref{t:DirichletRegime} says that 
\begin{equation} \label{7a24}
C^{-1} G_D(A_X, 0) \leq G_R^a(X,0) \leq CG_D(A_X,0).
\end{equation}
We promised a fixed radius $r_a$, so we let $r_a = r_{X_0}$ for any $X_0 \in \d\Omega$.
We claim that 
\begin{equation}\label{7a25}
r_X \simeq \rho_X \simeq r_a
\ \text{ for all } X \in \d\Omega.
\end{equation}
Indeed, \eqref{5a3} says that $C^{-1} \leq I_X(\rho_X) \leq 1$, where 
$I_X(\rho) = a \rho^{2-n} \sigma(B(X,\rho))$ is the Neumann-to-Dirichlet index of  \eqref{3c12}
and \eqref{5a3}. Then the first condition of Definition \ref{d:piecewisesmooth}
says that $I_X(r_a) \simeq I_{X_0}(r_a) \simeq 1$, and now the fact that $\rho_X \simeq r_a$
follows because  $C^{-1} \leq I_X(\rho_X) \leq 1$ and \eqref{3b12} says that $I_X(\rho)$
decays at some definite speed (so $\rho_X$ is essentially unique). 

Now we discretize. Cover $\d\Omega$ with balls  $B_{i} = B(Q_{i}, r_{a})$ centered on $\d\Omega$, and with bounded overlap.
Let $A_i$ be a corkscrew point for $B_i$. Then \eqref{7a24} (together with the Harnack inequality) says that 
$G_R^a(X,0) \simeq G_D(A_X,0) \simeq G_D(A_i,0)$ for $X \in \d\Omega \cap B_i$, so
\begin{equation}\label{7a26}
F(\infty)-F(a) \simeq \int_{\partial \Omega} G_R^a(\cdot,0)\, d\omega_D
\simeq \sum_i \omega_D(B_i) G_D(A_i,0).
\end{equation}
But $G_D(A_i,0) \simeq r_a^{2-n} \omega_D(B_i)$, in the domains considered here (see \cite{HMT, AHMT}. See also \cite{BrunoJoseph}).

Finally 
\begin{equation}\label{7a26}
F(\infty)-F(a) \simeq  r_a^{2-n} \sum_i \omega_D(B_i)^2 =: r_a^{2-n} S(\omega_D, r_a, 2), 
\end{equation}
 where $S(\omega_D, r_a, 2)$ is the entropy function of Makarov \cite[Page 9]{Makarov}. The keen eyed reader will note that this is not precisely the entropy functional, but rather is comparable to it, 
 due to the doubling of Harmonic measure in 1-sided NTA domains, see \cite{AH}. The constants of comparability depend only on the geometric constants of $\Omega$.

The entropy functional encodes some average regularity of $\omega_D$. 
In general we cannot say much about its behavior, 
especially in the current generality, but when $a$ is so large that $r_a \leq C\ell$, 
we can use the local regularity of $\d\Omega$ and Dahlberg's theorem to estimate it.

\medskip

\noindent {\bf Analysis of the Entropy when $r_a \leq C\ell$.} 
When $r_a \leq C\ell$ we know that $B_i \cap \partial \Omega$ is given by the graph of a Lipschitz function 
(or the union of a small number of 
such graphs). Then, by \eqref{7a9}, 
 $\sigma(B_i) \simeq r_a^{n-1}$ and $r_a \simeq \frac{1}{a}$ (because $I_X(r_a) \simeq 1$). 
 So 
 $$ 
 F(\infty) - F(a) \simeq  a^{n-2} \sum_i \omega_D(B_i)^2  
 \simeq a^{-n} \sum_{i} \left(\frac{\omega_D(B_i)}{\sigma(B_i)}\right)^2  
 \simeq a^{-n}\sum_{i} \left(\fint_{B_{i}\cap \partial \Omega} \frac{d\omega_D}{d\sigma}(P)\, d\sigma(P)\right)^2.
 $$ 
 At this point we recall Dahlberg's theorem on Dirichlet harmonic measure in Lipschitz domains. 
 In this context it states that there exists a constant $M > 1$, that depends on the Lipschitz norm $L$
 in the pre-fractal assumption and NTA constants of $\Omega$,
 such that for any $r < C\ell$ and any $Q\in \partial \Omega$ we have 
\begin{multline*}
 \left(\fint_{B(Q,r)\cap \partial \Omega} \frac{d\omega_D}{d\sigma}(P)\, d\sigma(P) \right)^2 
 \leq \fint_{B(Q,r)\cap \partial \Omega} \left(\frac{d\omega_D}{d\sigma}\right)^2(P)\, d\sigma (P)\\
 \leq M \left(\fint_{B(Q,r)\cap \partial \Omega} \frac{d\omega_D}{d\sigma}(P)\, d\sigma(P) \right)^2.
\end{multline*}
 A consequence of this inequality is $\omega_D$ lies in $ A_\infty(\sigma)$ locally, but the inequality is stronger, as it states that $\omega_D^0$ actually satisfies a reverse 2-H\"older inequality. 
So we may write 
\begin{equation}\label{}
F(\infty) - F(a) 
\simeq a^{-n}\sum_i \fint_{B_i \cap \partial \Omega} \left(\frac{d\omega_D}{d\sigma}\right)^2(P)\, d\sigma(P) 
\simeq a^{-1} \int_{\partial \Omega}\left(\frac{d\omega_D}{d\sigma}\right)^2(P)\, d\sigma(P).
\end{equation}
This is our desired result and we emphasize again 
that the implicit constants of comparability depend on the constants in 
Definitions \ref{d:piecewisesmooth} and \ref{d:mixed}, and $C$ in \eqref{7a1},
but not on $a$.

\end{document}